\newtheorem{assumption}{Assumption}
\newtheorem{proposition}{Proposition}
\newtheorem{lemma}{Lemma}
\newtheorem{example}{Example}
\def\bs{\boldsymbol}
\def\tbf{\textbf}
\def\t{^{\top}}
\def\bSig{\bs{\Sigma}}
\begin{document}
\bibliographystyle{asa}

{
\singlespacing
\title{\textbf{Testing the Order of Multivariate Normal Mixture Models}
\author{Hiroyuki Kasahara\thanks{Address for correspondence: Hiroyuki Kasahara, Vancouver School of Economics, University of British Columbia, 997-1873 East Mall, Vancouver, BC V6T 1Z1, Canada.   The authors thank Pengfei Li and participants at the conference on Advances in Finite Mixture and other Non-regular Models, Guilin, China in 2018  for helpful comments and the Institute of Statistical Mathematics for the facilities and the use of SGI ICE X. This research is support by the Natural Science and Engineering Research Council of Canada and JSPS Grant-in-Aid for Scientific Research (C) No. 26380267.  }\\
Vancouver School of Economics\\
University of British Columbia\\
hkasahar@mail.ubc.ca \and Katsumi Shimotsu\\
Faculty of Economics \\
University of Tokyo\\
shimotsu@e.u-tokyo.ac.jp
}}
\maketitle
}
\begin{abstract} 
Finite mixtures of multivariate normal distributions have been widely used in empirical applications in diverse fields such as statistical genetics and statistical finance. Testing the number of components in multivariate normal mixture models is a long-standing challenge even in the most important case of testing homogeneity. This paper develops likelihood-based tests of the null hypothesis of $M_0$ components against the alternative hypothesis of $M_0 + 1$ components for a general $M_0 \geq 1$. For heteroscedastic normal mixtures, we propose an EM test and derive the asymptotic distribution of the EM test statistic. For homoscedastic normal mixtures, we derive the asymptotic distribution of the likelihood ratio test statistic. We also derive  the asymptotic distribution of the likelihood ratio test statistic and EM test statistic under local alternatives and show the validity of parametric bootstrap. The simulations show that the proposed test has good finite sample size and power properties.
\end{abstract}

Key words: asymptotic distribution; EM test; likelihood ratio test; multivariate normal mixture models; number of components

\section{Introduction}

Finite mixtures of multivariate normal distributions have been widely used in empirical applications in diverse fields such as statistical genetics and statistical finance. Comprehensive surveys on theoretical properties and applications can be found, for example, \citet{lindsay95book}, \citet{mclachlanpeel00book}, and \citet{fruehwirth06book}.

In many applications of finite mixture models, the number of components is of substantial interest. In multivariate normal mixture models, however, testing for the number of components has been an unsolved problem even in the most important case of testing homogeneity. For general finite mixture models, the asymptotic distribution of the likelihood ratio test statistic (LRTS) has been derived as a functional of the Gaussian process \citep{dacunha99as, liushao03as, zhuzhang04jrssb, azais09esaim}. These results are not applicable to normal mixtures because normal mixtures have an undesirable mathematical property that invalidates key assumptions in these works \citep{chenlifu12jasa}. In particular, the normal density with mean $\mu$ and variance $\sigma^2$, $f(y;\mu,\sigma^2)$, has the property $\frac{\partial^2}{\partial \mu \partial \mu} f(y;\mu,\sigma^2) = 2\frac{\partial}{\partial \sigma^2} f(y;\mu,\sigma^2)$. This leads to the loss of ``strong identifiability'' condition introduced by \citet{chen95as}. As a result, neither Assumption (P1) of \citet{dacunha99as} nor Assumption 7 of \citet{azais09esaim} holds, and Assumption 3 of \citet{zhuzhang04jrssb} is violated, while Corollary 4.1 of \citet{liushao03as} does not hold in normal mixtures. Heteroscedastic normal mixture models have an additional problem called the infinite Fisher information problem \citep{lcm09bm} that the score of the LRTS has infinite variance if the range of the variance is unrestricted.
 
This paper develops likelihood-based tests of the null hypothesis of $M_0$ components against the alternative hypothesis of $M_{0} + 1$ components for a general $M_0 \geq 1$ in multivariate normal mixtures. We consider both heteroscedastic and homoscedastic mixtures. For heteroscedastic normal mixtures, we propose an EM test by building on the EM approach pioneered by \citet{lcm09bm} and \citet{lichen10jasa}. The asymptotic null distribution of the proposed EM test statistic is shown to be the maximum of $M_0$ random variables, each of which is a projection of a Gaussian random variable on a cone. For homoscedastic normal mixtures, we derive the asymptotic distribution of the LRTS because homoscedastic normal mixtures do not suffer from the infinite Fisher information problem. 

In univariate heteroscedastic normal mixtures, \citet{chenli09as} develop an EM test for $M_0 = 1$ against $M_0=2$, and \citet{chenlifu12jasa} develop an EM test for testing $H_0:M = M_0$ against $H_A:M > M_0$. Our result may be viewed as generalization of \citet{chenli09as} to the multivariate case. \citet{kasaharashimotsu15jasa} develop an EM test for testing $H_0:M = M_0$ against $H_A:M = M_0+1$ for general $M_0 \geq 1$ in finite normal mixture regression models. In univariate homoscedastic normal mixtures, \citet{chenchen03sinica} derive the asymptotic distribution of the LRTS. Our results generalize the results in \citet{chenchen03sinica} to multivariate homoscedastic normal mixtures. For some specific models such as binomial mixtures, the asymptotic distribution of the LRTS has been derived by, for example,  \citet{ghoshsen85book, chernofflander95jspi, lemdanipons97spl, chenchen01cjstat, chenchen03sinica, cck04jrssb, garel01jspi, garel05jspi}.

The remainder of this paper is organized as follows. Section 2 introduces the likelihood ratio test for heteroscedastic multivariate normal mixture models as a precursor of the EM test and derives the asymptotic distribution of the LRTS. Section 3 introduces the EM test and derives the asymptotic distribution of the EM test statistic. Section 4 derives the asymptotic distribution of the LRTS and EM test statistics under local alternatives and Section 5 shows the validity of parametric bootstrap. Section 6 analyzes  homoscedastic multivariate normal mixture models. Section 7 reports the simulation results and provides empirical applications. Appendix A contain proofs, and Appendices B--D collect auxiliary results.  

We collect notation. Let $:=$ denote ``equals by definition.'' Boldface letters denote vectors or matrices. For a matrix $\bs{B}$, denote its $(i,j)$ element by $B_{ij}$, and let $\lambda_{\min}(\bs{B}) $ and $\lambda_{\max}(\bs{B})$ be the smallest and the largest eigenvalue of $\bs{B}$, respectively. For a $k$-dimensional vector $\bs{x} = (x_1,\ldots,x_k)\t$ and a matrix $\bs{B}$, define $|\bs{x}| := (\bs{x}\t\bs{x})^{1/2}$ and $|\bs{B}| := (\lambda_{\max}(\bs{B}\t\bs{B}))^{1/2}$. Let $\bs{x}^{\otimes k} := \bs{x} \otimes \bs{x} \otimes \cdots \otimes \bs{x}$ ($k$ times). Let $\mathbb{I}\{A\}$ denote an indicator function that takes value 1 when $A$ is true and 0 otherwise. $\mathcal{C}$ denotes a generic nonnegative finite constant whose value may change from one expression to another. Given a sequence $\{f(\bs{Y}_i)\}_{i=1}^n$, let $\nu_n(f(\bs{y})) := n^{-1/2} \sum_{i=1}^n [f(\bs{Y}_i) - Ef(\bs{Y}_i)]$ and $P_n(f(\bs{y})) := n^{-1} \sum_{i=1}^n f(\bs{Y}_i)$. All the limits are taken as $n \to \infty$ unless stated otherwise. 

\section{Heteroscedastic multivariate  finite normal mixture models}\label{sec:hetero}

Denote the density of a $d$-variate normal distribution with mean $\bs{\mu}+ \bs{\gamma}^\top\bs{z}$ and variance $\bs{\Sigma}$ by 
\begin{equation} \label{normal_density}
f(\bs{x}|\bs{z};\bs{\gamma},\bs{\mu},\bs{\Sigma}):= (2\pi)^{-\frac{d}{2}}(\det \bs{\Sigma})^{-\frac{1}{2}} \exp \left(-\frac{(\bs{x}-\bs{\mu}-\bs{\gamma}^\top\bs{z})\t\bs{\Sigma}^{-1}(\bs{x}-\bs{\mu}-\bs{\gamma}^\top\bs{z})}{2}\right),
\end{equation}
where $\bs{x}$ and $\bs{\mu}$ are $d \times 1$, $\bs{\gamma}$ is $d\times p$, and  $\bs{z}$ is $p \times 1$. Let $\Theta_{\bs{\gamma}} \subset \mathbb{R}^{dp}$, $\Theta_{{\bs{\mu}}} \subset \mathbb{R}^{d}$, and $\Theta_{\bs\Sigma} \subset \mathbb{S}^d_+$ denote the space of $\bs{\gamma}$, ${\bs{\mu}}$, and $\bs{\Sigma}$, respectively, where $\mathbb{S}^d_+$ denotes the space of $d\times d$ positive definite matrices. For $M \geq 2$, denote the density of $M$-component finite normal mixture distribution as:
\begin{equation}
f_M(\bs{x}|\bs{z};{\bs{\vartheta}}_M) : = \sum_{j = 1}^M \alpha_j f(\bs{x}|\bs{z}; \bs{\gamma},{\bs{\mu}}_j,\bs{\Sigma}_j),\label{general_model}
\end{equation}
where $\bs{\vartheta}_M := ({\bs{\alpha}},\bs{\gamma},{\bs{\mu}}_1,\ldots,{\bs{\mu}}_{M},\bs{\Sigma}_1,\ldots,\bs{\Sigma}_M)$ with ${\bs{\alpha}} : = (\alpha_1,\ldots,\alpha_{M-1})^{\top}$, and $\alpha_{M}$ being determined by $\alpha_{M}: = 1-\sum_{j = 1}^{M - 1} \alpha_j$. $\bs{\mu}_j$ and $\bs{\Sigma}_j$ are mixing parameters that characterize the $j$-th component, and $\alpha_j$s are mixing probabilities. $\bs{\gamma}$ is the coefficient of the covariate $\bs{z}$, and $\bs{\gamma}$ is assumed to be common to all the components. Define the set of admissible values of ${\bs{\alpha}}$ by $\Theta_{{\bs{\alpha}}} :=\{{\bs{\alpha}}: \alpha_j \geq 0, \sum_{j = 1}^{M - 1} \alpha_j \in [0,1] \}$, and let the space of ${\bs{\vartheta}}_M$ be $\Theta_{{\bs{\vartheta}}_M} := \Theta_{{\bs{\alpha}}}\times \Theta_{\bs{\gamma}}\times\Theta_{{\bs{\mu}}}^M\times\Theta_{\Sigma}^M$.  

The number of components $M$ is the smallest number such that the data density admits the representation (\ref{general_model}). Our objective is to test
\[
H_0:\ M = M_0\quad \text{against}\quad H_A: M = M_0 + 1.
\]

\subsection{Likelihood ratio test of $H_0: M = 1$ against $H_A: M = 2$} \label{section:homogeneity}

As a precursor of the EM test developed in Section \ref{section:emtest}, this section establishes the asymptotic distribution of the LRTS for testing the null hypothesis $H_0: M = 1$ against $H_A: M = 2$ when the data are from $H_0$. 


We consider a random sample of $n$ independent observations $\{\bs{X}_i,\bs{Z}_i\}_{i = 1}^n$ from the true one-component density $f(\bs{x}|\bs{z}; \bs{\gamma}^*, \bs{\mu}^*,\bs{\Sigma}^{*})$. Here, the superscript $*$ signifies the true parameter value. Let a two-component mixture density with ${\bs{\vartheta}}_2 = (\alpha,\bs{\gamma}, \bs{\mu}_1, \bs{\mu}_2,\bs{\Sigma}_1,\bs{\Sigma}_2) \in \Theta_{{\bs{\vartheta}}_2}$ be
\begin{equation}
f_2(\bs{x}|\bs{z};\bs{\vartheta}_2) := \alpha f(\bs{x}|\bs{z}; \bs{\gamma} ,\bs{\mu}_1,\bs{\Sigma}_1) + (1-\alpha) f(\bs{x}|\bs{z}; \bs{\gamma},\bs{\mu}_2,\bs{\Sigma}_2). \label{two-component}
\end{equation}
We partition the null hypothesis $H_0: m = 1$ into two as follows:
\[
H_{01}: (\bs{\mu}_{1},\bs{\Sigma}_1) = (\bs{\mu}_{2},\bs{\Sigma}_2)\ \text{ and }\ H_{02}: \alpha(1-\alpha) = 0.
\]
In the following, we focus on testing $H_{01}:(\bs{\mu}_{1},\bs{\Sigma}_1) = (\bs{\mu}_{2},\bs{\Sigma}_2)$ because, as discussed in \citet{chenli09as}, the Fisher information for testing $H_{02}$ is not finite unless the range of $\det(\bs{\Sigma}_1)/\det(\bs{\Sigma}_2)$ is restricted.
 
The log-likelihood function for testing $H_{01}:(\bs{\mu}_{1},\bs{\Sigma}_1) = (\bs{\mu}_{2},\bs{\Sigma}_2)$ is unbounded if $\det(\bs{\Sigma}_1)$ and $\det(\bs{\Sigma}_1)$ are not bounded away from 0 \citep{hartigan85book}. Therefore, we consider a maximum penalized likelihood estimator (PMLE) introduced by \citet{chentan09jmva}. Similar to \citet{chentan09jmva}, we use the following penalty function with $M=2$: 
\begin{equation} \label{pen_pmle}
p_n(\bs{\vartheta}_M) = \sum_{m=1}^M p_{n}(\bs{\Sigma}_m;\widehat{\bs{\Omega}}) = \sum_{m=1}^M -a_n\left\{ \text{tr}(\widehat{\bs{\Omega}}\bs{\Sigma}_m^{-1}) - \log ( \det(\widehat{\bs{\Omega}}\bs{\Sigma}_m^{-1})) -d \right\},
\end{equation}
where $\widehat{\bs{\Omega}}$ is the maximum likelihood estimator (MLE) of $\bs{\Sigma}$ from the one-component model, and $a_n$ is a non-random sequence such that $a_n \geq 1/n$ and $a_n =o(n)$.  Let $\widehat{\bs{\vartheta}}_2$ denote the PMLE that maximizes $PL_n({\bs{\vartheta}}_2):= \sum_{i = 1}^n f_2(\bs{X}_i|\bs{Z}_i;{\bs{\vartheta}}_2) + p_n(\bs{\vartheta}_2)$. 

\begin{assumption} \label{assn_consis} $\bs{Z}$ has finite second moment, and $\Pr(\bs{\gamma}^\top \bs{Z}_i \neq \bs{\gamma}^{*\top} \bs{Z}_i )>0$ for any $\bs{\gamma} \neq \bs{\gamma}^*$.
\end{assumption}
Model (\ref{two-component}) yields the true density $f(\bs{x}|\bs{z};\bs{\gamma}^*,\bs{\mu}^*,\bs{\Sigma}^{*})$ if ${\bs{\vartheta}}_2$ lies in the set $\Theta_2^* := \{{\bs{\vartheta}}_2 \in \Theta_{{\bs{\vartheta}}_2}: \{ (\bs{\mu}_1,\bs{\Sigma}_1) = (\bs{\mu}_2,\bs{\Sigma}_2) = (\bs{\mu}^*,\bs{\Sigma}^{*}), \bs{\gamma}=\bs{\gamma}^*\}\ \text{or}\ \{\alpha =1, (\bs{\mu}_1,\bs{\Sigma}_1) = (\bs{\mu}^*,\bs{\Sigma}^{*}), \bs{\gamma}=\bs{\gamma}^*\}\ \text{or}\ \{\alpha=0, (\bs{\mu}_2,\bs{\Sigma}_2) = (\bs{\mu}^*,\bs{\Sigma}^{*}), \bs{\gamma}=\bs{\gamma}^*\}\}$. The following proposition shows the consistency of $\widehat{\bs{\vartheta}}_2$.
\begin{proposition} \label{P-consis}
Suppose that Assumption \ref{assn_consis} holds. Then, under the null hypothesis $H_0: M=1$, $\inf_{{\bs{\vartheta}}_2 \in \Theta_2^*} |\widehat {\bs{\vartheta}}_2 - {\bs{\vartheta}}_2| \rightarrow_p 0$.
\end{proposition}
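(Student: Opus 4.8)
The plan is to prove Proposition~\ref{P-consis} by a Wald-type consistency argument for penalized $M$-estimators, adapted to the two features that make the problem nonstandard: $\Theta_{\bs{\vartheta}_2}$ is not compact (component covariances may degenerate to the boundary of $\mathbb{S}^d_+$ or explode, and $\bs{\mu}_1,\bs{\mu}_2,\bs{\gamma}$ may diverge), and the data density corresponds to the whole set $\Theta_2^*$ rather than a single point, so ``consistency'' can only mean $\inf_{\bs{\vartheta}_2\in\Theta_2^*}|\widehat{\bs{\vartheta}}_2-\bs{\vartheta}_2|\rightarrow_p0$. The skeleton is: (i) show that, with probability tending to one, the PMLE is confined to a region on which every component carrying mixing weight bounded away from zero has covariance eigenvalues bounded away from $0$ and $\infty$ and bounded mean, while the total weight of the remaining components tends to zero; (ii) on the resulting effectively compact set, establish a uniform law of large numbers for $n^{-1}PL_n$, the penalty being asymptotically negligible there; (iii) identify the maximizers of the limit criterion with $\Theta_2^*$ via an information inequality and the identifiability of finite normal mixture densities.

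Stage~(i) is the crux and the step I expect to be the main obstacle. The device is to compare $PL_n(\widehat{\bs{\vartheta}}_2)$ with $PL_n(\bs{\vartheta}_2^{\circ})$ for $\bs{\vartheta}_2^{\circ}\in\Theta_2^*$ having $\bs{\gamma}=\bs{\gamma}^*$, $\bs{\mu}_1=\bs{\mu}_2=\bs{\mu}^*$, $\bs{\Sigma}_1=\bs{\Sigma}_2=\bs{\Sigma}^*$: since $\widehat{\bs{\Omega}}\rightarrow_p\bs{\Sigma}^*$ and $\mathrm{tr}(\bs{A})-\log\det\bs{A}-d\ge0$ has a quadratic zero at $\bs{A}=\bs{I}$, one gets $p_n(\bs{\vartheta}_2^{\circ})=2p_n(\bs{\Sigma}^*;\widehat{\bs{\Omega}})=o_p(a_n)=o_p(n)$, hence $PL_n(\widehat{\bs{\vartheta}}_2)\ge\sum_i\log f(\bs{X}_i|\bs{Z}_i;\bs{\gamma}^*,\bs{\mu}^*,\bs{\Sigma}^*)+o_p(n)=nE[\log f(\bs{X}|\bs{Z};\bs{\gamma}^*,\bs{\mu}^*,\bs{\Sigma}^*)]+o_p(n)$. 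Two mechanisms then force confinement: first, $p_n(\bs{\Sigma}_m;\widehat{\bs{\Omega}})$ is nonpositive and diverges to $-\infty$ like $-a_n/\lambda_{\min}(\bs{\Sigma}_m)$ as $\lambda_{\min}(\bs{\Sigma}_m)\downarrow0$ and like $-a_n\log\lambda_{\max}(\bs{\Sigma}_m)$ as $\lambda_{\max}(\bs{\Sigma}_m)\uparrow\infty$, which — against the crude bound $\log f_2(\bs{X}_i|\bs{Z}_i;\bs{\vartheta}_2)\le-\tfrac d2\log(2\pi)-\tfrac12\min_m\log\det\bs{\Sigma}_m$ from bounding each Gaussian exponential by $1$ — excludes a degenerate or exploding covariance of any component whose weight does not vanish, the data being almost surely not concentrated on a shrinking set; second, $f(\bs{x}|\bs{z};\bs{\gamma},\bs{\mu},\bs{\Sigma})$ is exponentially small when $|\bs{x}-\bs{\mu}-\bs{\gamma}^\top\bs{z}|$ is large relative to $\lambda_{\max}(\bs{\Sigma})$, so $E|\bs{Z}|^2<\infty$ and the identification part of Assumption~\ref{assn_consis} ($\bs{\gamma}^\top\bs{Z}$ determines $\bs{\gamma}$) keep $\widehat{\bs{\mu}}_m$ and $\widehat{\bs{\gamma}}$ bounded. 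The delicacy is that a component is not excluded from having a vanishing covariance eigenvalue when its weight vanishes simultaneously — such a ``spike'' is in fact compatible with $\Theta_2^*$, which leaves the switched-off component's mean and covariance free — and the bookkeeping that shows a vanishing eigenvalue forces a vanishing weight (so that the offending component contributes negligibly to $f_2$) parallels the penalized-likelihood consistency proofs of \citet{chentan09jmva} and \citet{kasaharashimotsu15jasa}; an alternative that avoids it is to argue instead with the Hellinger distance between $f_2(\cdot|\cdot;\widehat{\bs{\vartheta}}_2)$ and the true density, using $\log x\le2(\sqrt{x}-1)$ together with an entropy bound for the class of square roots of two-component normal mixture densities.

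For stages~(ii)--(iii), on the effectively compact set the penalty satisfies $\sup|p_n|=O_p(a_n)=o_p(n)$ and $\bs{\vartheta}_2\mapsto\log f_2(\bs{x}|\bs{z};\bs{\vartheta}_2)$ is continuous and dominated by an integrable envelope of order $1+|\bs{x}|^2+|\bs{z}|^2$ (integrable because $E|\bs{Z}|^2<\infty$, hence $E|\bs{X}|^2<\infty$), so a uniform law of large numbers gives $n^{-1}PL_n(\bs{\vartheta}_2)\rightarrow_pE[\log f_2(\bs{X}|\bs{Z};\bs{\vartheta}_2)]$ uniformly. By Jensen's inequality $E[\log f_2(\bs{X}|\bs{Z};\bs{\vartheta}_2)]-E[\log f(\bs{X}|\bs{Z};\bs{\gamma}^*,\bs{\mu}^*,\bs{\Sigma}^*)]\le0$ with equality iff $f_2(\bs{x}|\bs{z};\bs{\vartheta}_2)=f(\bs{x}|\bs{z};\bs{\gamma}^*,\bs{\mu}^*,\bs{\Sigma}^*)$ for a.e.\ $(\bs{x},\bs{z})$; the identifiability of finite normal mixtures applied for a.e.\ fixed $\bs{z}$ then forces the two-component mixture in $\bs{x}$ to collapse to the single Gaussian with mean $\bs{\mu}^*+\bs{\gamma}^{*\top}\bs{z}$ and covariance $\bs{\Sigma}^*$, which — recovering $\bs{\gamma}=\bs{\gamma}^*$ via Assumption~\ref{assn_consis} — means either $(\bs{\mu}_1,\bs{\Sigma}_1)=(\bs{\mu}_2,\bs{\Sigma}_2)=(\bs{\mu}^*,\bs{\Sigma}^*)$ or one of $\alpha,1-\alpha$ is zero with the surviving component at the truth, i.e.\ the maximizer set is $\Theta_2^*$. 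A standard argmax argument for the near-maximizer $\widehat{\bs{\vartheta}}_2$ then yields $\inf_{\bs{\vartheta}_2\in\Theta_2^*}|\widehat{\bs{\vartheta}}_2-\bs{\vartheta}_2|\rightarrow_p0$; the infimum rather than convergence to a single point is intrinsic, since when a mixing weight degenerates the switched-off component's mean and covariance are unconstrained in $\Theta_2^*$ and $\widehat{\bs{\vartheta}}_2$ need only approach some element of the set.
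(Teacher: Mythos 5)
Your proposal is correct in outline, but it takes a different route from the paper only in the sense that it unpacks an argument the paper outsources entirely: the paper's proof of Proposition~\ref{P-consis} is two sentences long, verifying (via \citet{alexandrovich14jmva}) that the penalty (\ref{pen_pmle}) satisfies Assumptions C1--C3 of \citet{chentan09jmva} and then invoking Theorem~1 of \citet{chentan09jmva} and Corollary~3 of \citet{alexandrovich14jmva}. What you have written is essentially a reconstruction of what is inside those cited results -- a Wald-type penalized-likelihood consistency proof with the three stages (confinement, uniform LLN, identification) -- so the mathematical content coincides; your version has the merit of making explicit the two points the citation glosses over, namely the extension to the covariate $\bs{\gamma}^\top\bs{z}$ (where Assumption~\ref{assn_consis} is actually used) and the reason the conclusion must be stated as an infimum over the set $\Theta_2^*$. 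One caution on your stage~(i): the crude bound $\log f_2 \leq -\tfrac{d}{2}\log(2\pi)-\tfrac12\min_m\log\det\bs{\Sigma}_m$, summed over $n$ observations and set against a penalty of order $a_n/\lambda_{\min}(\bs{\Sigma}_m)$ with $a_n$ possibly as small as $1/n$, only confines $\lambda_{\min}(\widehat{\bs{\Sigma}}_m)$ above a threshold that shrinks polynomially in $n$, which is not enough to land on a fixed compact set for stage~(ii); the genuinely hard bookkeeping in \citet{chentan09jmva} is the sharper bound showing that a degenerating component can capture only a controlled number of observations, so that the likelihood gain is far smaller than the crude bound suggests. You correctly flag this as the main obstacle and defer it to the same references the paper cites, so the proposal stands, but a self-contained write-up would need that capture argument spelled out rather than the crude bound.
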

In testing $H_{01}$, the standard asymptotic analysis of the LRTS breaks down because the Fisher information matrix is degenerate. This is due to the fact that, for any $\bar {\bs{\vartheta}}_2$ such that $(\bs{\mu}_1,\bs{\Sigma}_1)=(\bs{\mu}_2,\bs{\Sigma}_2)$, the derivatives of the density of different orders  are linearly dependent as
\begin{align*}
&\nabla_{\bs{\mu}_1} f_2(\bs{x}|\bs{z};\bar{\bs{\vartheta}}_2) =\frac{\alpha}{1-\alpha} \nabla_{\bs{\mu}_2} f_2(\bs{x}|\bs{z};\bar{\bs{\vartheta}}_2),\ \nabla_{\bs{\Sigma}_1}f_2(\bs{x}|\bs{z};\bar{\bs{\vartheta}}_2) = \frac{\alpha}{1-\alpha} \nabla_{\bs{\Sigma}_2}l(y|\bs{x,z};\bar {\bs{\vartheta}}_2), \nonumber \\ 
&\nabla_{\mu_{1i}\mu_{1j}} f_2(\bs{x}|\bs{z};\bar{\bs{\vartheta}}_2) = 2\nabla_{\Sigma_{1,ij}} f_2(\bs{x}|\bs{z};\bar{\bs{\vartheta}}_2), \quad \nabla_{\mu_{2i}\mu_{2j}} f_2(\bs{x}|\bs{z};\bar{\bs{\vartheta}}_2) = 2\nabla_{\Sigma_{2,ij}} f_2(\bs{x}|\bs{z};\bar{\bs{\vartheta}}_2).
\end{align*}
This dependence leads to the loss of strong identifiability and causes substantial difficulties in existing literature.

We analyze the LRTS for testing $H_{01}: (\bs{\mu}_{1},\bs{\Sigma}_1)=(\bs{\mu}_{2},\bs{\Sigma}_2)$ by developing a higher-order approximation of the log-likelihood function  through an ingenious reparameterization that  extends the result of \citet{rotnitzky00bernoulli} and \citet{kasaharashimotsu15jasa}. Collect the unique elements in $\bs{\Sigma}$ into a $d(d+1)/2$-vector
\begin{equation*}
\begin{aligned}
\bs{v} &= (v_{11},v_{12},\ldots,v_{1d},v_{22},v_{23},\ldots,v_{2d},\ldots,v_{d-1,d-1},v_{d-1,d},v_{dd}) \t \\
& := (\Sigma_{11},2\Sigma_{12},\ldots,2\Sigma_{1d},\Sigma_{22},2\Sigma_{23},\ldots,2\Sigma_{2d},\ldots,\Sigma_{d-1,d-1},2\Sigma_{d-1,d},\Sigma_{dd}) \t.
\end{aligned}
\end{equation*}
Define the density of $N(\bs{\mu},\bs{\Sigma})$ parameterized in terms of $\bs{\mu}$ and $\bs{v}$ as 
\begin{equation} \label{fv_defn}
f_v(\bs{\mu},\bs{v}) : = f(\bs{\mu},\bs{S}(\bs{v})), \quad \text{ where }\ S_{ij}(\bs{v}):= \begin{cases}
v_{ii} & \text{if } i=j , \\
v_{ij}/2 & \text{if } i \neq j .
\end{cases}
\end{equation}
For a $d\times d$ symmetric matrix $\bs{A}$, define a function $\bs{w}(\bs{A}) \in \mathbb{R}^{d(d+1)/2}$ that collects the unique elements of $\bs{A}$ as
\begin{align*}
\bs{w}(\bs{A}) & := (A_{11},2A_{12},\ldots,2A_{1d},A_{22},2A_{23},\ldots,2A_{2d},\ldots,A_{d-1,d-1},2A_{d-1,d},A_{dd}) \t.
\end{align*}
Then $f_v(\bs{\mu},\bs{v})$ and $f(\bs{\mu},\bs{\Sigma})$ are related as
\[
f(\bs{\mu},\bs{\Sigma}) = f_v(\bs{\mu},\bs{w}(\bs{\Sigma})).
\]

We introduce the following one-to-one mapping between $(\bs{\mu}_1,\bs{\mu}_2,\bs{v}_1,\bs{v}_2)$ and the reparameterized parameter $(\bs{\lambda}_{\bs{\mu}},\bs{\nu}_{\bs\mu},\bs{\lambda}_{\bs{v}},\bs{\nu}_{\bs{v}})$:
\begin{equation} \label{repara2}
\begin{pmatrix}
{\bs{\mu}}_1\\
{\bs{\mu}}_2\\
\bs{v}_1\\
\bs{v}_2
\end{pmatrix}
 =
\begin{pmatrix}
\bs{\nu}_{\bs\mu} + (1-\alpha) \bs{\lambda}_{\bs\mu} \\
\bs{\nu}_{\bs\mu} -\alpha \bs{\lambda}_{\bs\mu}\\
\bs{\nu}_{\bs{v}} + (1- \alpha)(2\bs{\lambda}_{\bs{v}}+ C_1 \bs{w}(\bs{\lambda}_{\bs\mu}\bs{\lambda}_{\bs\mu}\t) )\\
\bs{\nu}_{\bs{v}} - \alpha(2\bs{\lambda}_{\bs{v}}+ C_2 \bs{w}(\bs{\lambda}_{\bs\mu}\bs{\lambda}_{\bs\mu}\t)
\end{pmatrix}, 
\end{equation}
where $C_1 := -(1/3)(1 + \alpha)$ and $C_2 := (1/3)(2 - \alpha)$. Collect the reparameterized parameters, except for $\alpha$, into one vector $\bs{\psi}$ defined as
\begin{equation*} 
\bs{\psi}:=(\bs{\gamma},\bs{\nu}_{\bs\mu},\bs{\nu}_{\bs{v}},\bs{\lambda}_{\bs\mu},\bs{\lambda}_{\bs{v}}) \in \Theta_{\bs{\psi}}.
\end{equation*}
In the reparameterized model, the null hypothesis of $H_{01}:(\bs{\mu}_{1},\bs{v}_1) = (\bs{\mu}_{2},\bs{v}_2)$ is written as $H_{01}:(\bs{\lambda}_{\bs\mu},\bs{\lambda}_{\bs v})= \bs{0}$, and the density is given by
\begin{equation} \label{loglike}
\begin{aligned}
g(\bs{x}|\bs{z};\bs{\psi},\alpha) & = \alpha f_v\left(\bs{x}\middle|\bs{z};\bs{\gamma},\bs{\nu}_{\bs\mu}+(1-\alpha)\bs{\lambda}_{\bs\mu}, \bs{\nu}_{\bs{v}} + (1 - \alpha)(2\bs{\lambda}_{\bs{v}} + C_1 \bs{w}(\bs{\lambda}_{\bs\mu}\bs{\lambda}_{\bs\mu}\t) ) \right) \\
& \quad + (1 - \alpha) f_v \left(\bs{x} \middle|\bs{z};\bs{\gamma},\bs{\nu}_{\bs\mu} -\alpha\bs{\lambda}_{\bs\mu},\bs{\nu}_{\bs{v}} - \alpha(2\bs{\lambda}_{\bs v}+ C_2 \bs{w}( \bs{\lambda}_{\bs\mu}\bs{\lambda}_{\bs\mu}\t) ) \right).
\end{aligned}
\end{equation}

Partition $\bs{\psi}$ as $\bs{\psi} = (\bs{\eta}\t,\bs{\lambda}\t)\t$, where $\bs{\eta}: = (\bs{\gamma}\t,\bs{\nu}_{\bs\mu}\t,\bs{\nu}_{\bs{v}}\t)\t \in \Theta_{\bs{\eta}}$ and $\bs{\lambda}: = (\bs{\lambda}_{\bs\mu}\t,\bs{\lambda}_{\bs v}\t)\t \in\Theta_{\bs{\lambda}}$. Denote the true values of $\bs{\eta}$, $\bs{\lambda}$, and $\bs{\psi}$ by $\bs{\eta}^*: = ((\bs{\gamma}^*)\t,({\bs{\mu}}^*)\t,(\bs{v}^{*})\t)\t$, $\bs{\lambda}^*: = \bs{0}$, and $\bs{\psi}^* = ((\bs{\eta}^*)\t, \bs{0}\t)\t$, respectively.  Under this reparameterization, the first derivative of (\ref{loglike}) with respect to (w.r.t., hereafter) $\bs{\eta}$ under $\bs{\psi} = \bs{\psi}^*$ is identical to the first derivative of the density of the one-component model:
\begin{equation}\label{dvareta}
\nabla_{\bs{\eta}}g(\bs{x}|\bs{z};\bs{\psi}^*,\alpha) =\nabla_{(\bs{\gamma}\t,\bs{\mu}\t,\bs{v}\t)\t} f_v(\bs{x}|\bs{z};\bs{\gamma}^*,\bs{\mu}^*,\bs{v}^{*}).
\end{equation}
On the other hand,   the first, second, and third derivatives of $g(\bs{x}|\bs{z};\bs{\psi},\alpha)$ w.r.t.\ $\bs{\lambda}_{\bs\mu}$ and the first derivative w.r.t.\ $\bs{\lambda}_{\bs v}$ become zero when evaluated at $\bs{\psi}=\bs{\psi}^*$. Consequently, the information on $\bs{\lambda}_{\bs\mu}$ and $\bs{\lambda}_{\bs v}$ is provided by the fourth derivative w.r.t.\ $\bs{\lambda}_{\bs\mu}$, the cross-derivative w.r.t.\ $\bs{\lambda}_{\bs{\mu}}$ and $\bs{\lambda}_{\bs v}$, and the second derivative w.r.t.\ $\bs{\lambda}_{\bs v}$. 

We derive the asymptotic distribution of the LRTS.  Let $f^*_v$ and $\nabla f^*_v$ denote $f_v(\bs{x}|\bs{z};\bs{\gamma}^*,\bs{\mu}^*,\bs{v}^*)$ and $\nabla f_v(\bs{x}|\bs{z};\bs{\gamma}^*,\bs{\mu}^*,\bs{v}^*)$, and let $d_\eta:=(p+d+d(d+1)/2)$, $d_{\mu v}:=d(d+1)(d+2)/6$, and $d_{\mu^4}:=d(d+1)(d+2)(d+3)/24$. Define the score vector $\bs{s}(\bs{x},\bs{z})$ as
\begin{equation} \label{score_defn}
\begin{aligned}
\bs{s}(\bs{x},\bs{z}) & := 
\begin{pmatrix}
\bs{s}_{\bs{\eta}}\\
\bs{s}_{\bs{\lambda}}
\end{pmatrix} := 
\begin{pmatrix}
\bs{s}_{\bs{\eta}}\\
\bs{s}_{\bs{\mu v}}\\
\bs{s}_{\bs{\mu}^4}
\end{pmatrix}
\quad \text{with }\
\underset{(d_\mu \times 1)}{\bs{s}_{\bs{\eta}}} := \frac{\nabla_{(\bs{\gamma}\t,\bs{\mu}\t,\bs{v}\t)\t}f^*_v }{ f^*_v}, \\
\underset{(d_{\mu v} \times 1)}{\bs{s}_{\bs{\mu v}}} &:= \left\{\frac{\nabla_{\mu_i \mu_j \mu_k} f^*_v}{ 3! f^*_v} \right\}_{1 \leq i \leq j \leq k \leq d}, \quad
\underset{(d_{\mu^4} \times 1)}{\bs{s}_{\bs{\mu}^4}} := \left\{ \frac{\nabla_{\mu_i \mu_j \mu_k \mu_\ell} f^*_v }{ 4! f^*_v} \right\}_{1 \leq i \leq j \leq k \leq \ell \leq d} ,
\end{aligned}
\end{equation}
where we suppress the dependence of $(\bs{s}_{\bs{\eta}}, \bs{s}_{\bs{\mu v}}, \bs{s}_{\bs{\mu}^4})$ on $(\bs{x},\bs{z})$. Collect the relevant reparameterized parameters as
\begin{equation} \label{tpsi_defn}
\bs{t}(\bs{\psi},\alpha) :=
\begin{pmatrix}
\bs{\eta}-\bs{\eta}^*\\
\bs{t}_{\bs{\lambda}}(\bs{\lambda},\alpha)
\end{pmatrix}
:=
\begin{pmatrix}
\bs{\eta}-\bs{\eta}^*\\
\alpha(1-\alpha)12 \bs{\lambda}_{\bs{\mu v}}\\
\alpha(1-\alpha)[12\bs{\lambda}_{\bs{v}^2}+ b(\alpha) \bs{\lambda}_{\bs{\mu}^4}]
\end{pmatrix},
\end{equation}
with $b(\alpha): = -(2/3) (\alpha^2 - \alpha + 1)<0$ and 
\begin{equation} \label{lambda_muv_defn}
\begin{aligned}
\underset{(d_{\mu v} \times 1)}{\bs{\lambda}_{\bs{\mu v}}} &:= \{(\bs{\lambda}_{\bs{\mu v}})_{ijk}\}_{1 \leq i \leq j \leq k \leq d}, \text{ where } (\bs{\lambda}_{\bs{\mu v}})_{ijk} :=\sum_{(t_1,t_2,t_3) \in p_{12}(i,j,k)} \lambda_{\mu_{t_1}}\lambda_{v_{t_2t_3}}, \\
\underset{(d_{\mu^4} \times 1)}{\bs{\lambda}_{\bs{v}^2}} &:= \{(\bs{\lambda}_{\bs{v}^2})_{ijk\ell}\}_{1 \leq i \leq j \leq k \leq \ell \leq d}, \text{ where } (\bs{\lambda}_{\bs{v}^2})_{ijk\ell}:= \sum_{(t_1,t_2,t_3,t_4) \in p_{22}(i,j,k,\ell)} \lambda_{v_{t_1t_2}}\lambda_{v_{t_3t_4}} ,\\
\underset{(d_{\mu^4} \times 1)}{\bs{\lambda}_{\bs{\mu}^4}} &:= \{(\bs{\lambda}_{\bs{\mu}^4})_{ijk\ell}\}_{1 \leq i \leq j \leq k \leq \ell \leq d}, \text{ where } (\bs{\lambda}_{\bs{\mu}^4})_{ijk\ell}:= \sum_{(t_1,t_2,t_3,t_4) \in p(i,j,k,\ell)} \lambda_{\mu_{t_1}}\lambda_{\mu_{t_2}}\lambda_{\mu_{t_3}}\lambda_{\mu_{t_4}} ,
\end{aligned}
\end{equation}
where $\sum_{(t_1,t_2,t_3) \in p_{12}(i,j,k)}$ denotes the sum over all distinct permutations of $(i,j,k)$ to $(t_1,t_2,t_3)$ with $t_2 \leq t_3$, $\sum_{(t_1,t_2,t_3,t_4) \in p_{22}(i,j,k,\ell)}$ denotes the sum over all distinct permutations of $(i,j,k,\ell)$ to $(t_1,t_2,t_3,t_4)$ with $t_1 \leq t_2$ and $t_3 \leq t_4$, and $\sum_{(t_1,t_2,t_3,t_4) \in p(i,j,k,\ell)}$ denotes the sum over all distinct permutations of $(i,j,k,\ell)$ to $(t_1,t_2,t_3,t_4)$.  In (\ref{lambda_muv_defn}), $\bs{\lambda}_{\bs{\mu v}}$ is a function of $\bs{\lambda}_{\bs{\mu}} \otimes \bs{\lambda}_{\bs{v}}$ and corresponds to the score vector $\bs{s}_{\bs{\mu v}}$. $\bs{\lambda}_{\bs{v}^2}$ is a function of $\bs{\lambda}_{\bs{v}}^{\otimes 2}$, and $\bs{\lambda}_{\bs{\mu}^4}$ depends on $\bs{\lambda}_{\bs{\mu}}^{\otimes 4}$. Here, $\alpha(1-\alpha)12\bs{\lambda}_{\bs{\mu v}}\t \bs{s}_{\bs{\mu v}}$ collects the unique elements that correspond to the cross-derivative with respect to $\bs{\lambda}_{\bs{\mu}}$ and $\bs{\lambda}_{\bs{v}}$ in the expansion of the log-likelihood function, and $\alpha(1-\alpha)[12\bs{\lambda}_{\bs{v}^2}+ b(\alpha) \bs{\lambda}_{\bs{\mu}^4}]\t \bs{s}_{\bs{\mu}^4}$ collects the unique elements of the second-order terms with respect to $\bs{\lambda}_{\bs{v}}$ and the fourth-order terms with respect to $\bs{\lambda}_{\bs{\mu}}$.

Let $L_n(\bs{\psi},\alpha): = \sum_{i = 1}^n \log g(\bs{X}_i|\bs{Z}_i;\bs{\psi},\alpha)$ denote the reparameterized log-likelihood function. Let $\widehat{\bs{\psi}} : = \arg\max_{\bs{\psi} \in \Theta_{\bs{\psi}}} PL_n(\bs{\psi},\alpha)$ denote the PMLE of $\bs{\psi}$, where $\Theta_{\bs{\psi}}$ is defined so that the value of ${\bs{\vartheta}}_2$ implied by $\bs{\psi}$ is in $\Theta_{{\bs{\vartheta}}_2}$. Let $(\widehat{\bs{\gamma}}_0,\widehat{\bs{\mu}}_0,\widehat{\bs{\Sigma}}_0)$ denote the one-component MLE that maximizes the one-component log-likelihood function $L_{0,n}(\bs{\gamma},{\bs{\mu}},\bs{\Sigma}) := \sum_{i=1}^n \log f (\bs{X}_i|\bs{Z}_i;\bs{\gamma},{\bs{\mu}},\bs{\Sigma})$. Define the LRTS for testing $H_{01}$ as, with $\epsilon_1 \in (0,1/2)$,
\begin{equation} \label{LRT1}
LR_{n}(\epsilon_1) := \max_{\alpha \in [\epsilon_1,1-\epsilon_1]} 2\{L_n(\widehat{\bs{\psi}},\alpha) - L_{0,n}(\widehat{\bs{\gamma}}_0,\widehat{\bs{\mu}}_0,\widehat{\bs{\Sigma}}_0)\}.
\end{equation}
We could use the penalized LRTS defined by $PLR_{n}(\epsilon_1):= \max_{\alpha \in [\epsilon_1,1-\epsilon_1]} 2\{PL_n(\widehat{\bs{\psi}},\alpha) - L_{0,n}(\widehat{\bs{\gamma}}_0,\widehat{\bs{\mu}}_0,\widehat{\bs{\Sigma}}_0)\}$ instead of $LR_n(\epsilon_1)$. Because the effect of the penalty term is negligible under our assumptions, $PLR_n(\epsilon_1)$ has the same asymptotic distribution as $LR_n(\epsilon_1)$.

With $(\bs{s}_{\bs{\eta}}, \bs{s}_{\bs{\lambda}})$ defined in (\ref{score_defn}), define 
\begin{equation} \label{I_lambda}
\begin{aligned}
&\bs{\mathcal{I}}_{\bs{\eta}} := E[\bs{s}_{\bs{\eta}}\bs{s}_{\bs{\eta}}\t], \quad \bs{\mathcal{I}}_{\bs{\lambda}} := E[\bs{s}_{\bs{\lambda}}\bs{s}_{\bs{\lambda}}\t], \quad \bs{\mathcal{I}}_{\bs{\lambda\eta} } := E[\bs{s}_{\bs{\lambda}}\bs{s}_{\bs{\eta}}\t],\\
&\bs{\mathcal{I}}_{\bs{\eta \lambda}} := \bs{\mathcal{I}}_{\bs{\lambda \eta}}\t, \quad \bs{\mathcal{I}}_{\bs{\lambda}.\bs{\eta}}:=\bs{\mathcal{I}}_{\bs{\lambda}}-\bs{\mathcal{I}}_{\bs{\lambda\eta}}\bs{\mathcal{I}}_{\bs{\eta}}^{-1}\bs{\mathcal{I}}_{\bs{\eta\lambda}}, \quad \bs{Z}_{\bs{\lambda}}:=(\bs{\mathcal{I}}_{\bs{\lambda}.\bs{\eta}})^{-1} \bs{G}_{\bs{\lambda}.\bs{\eta}},
\end{aligned}
\end{equation} 
where $\bs{G}_{\bs{\lambda}.\bs{\eta}} \sim N(0,\bs{\mathcal{I}}_{\bs{\lambda}.\bs{\eta}})$. The following sets characterize the limit of possible values of $\sqrt{n}\bs{t}_{\bs{\lambda}}(\bs{\lambda},\alpha)$ defined in (\ref{tpsi_defn}) as $n\rightarrow\infty$. Define 
\begin{equation} \label{Lambda-e} 
\begin{aligned}
\Lambda_{\bs{\lambda} }^{1} & := \left\{ \left((\bs{t}_{\bs{\mu v}} )\t, ( \bs{t}_{\bs{\mu}^4} ) \t \right)\t \in \mathbb{R}^{d_{\mu v}+d_{\mu^4}}: \bs{t}_{\bs{\mu v}} = \bs{\lambda}_{\bs{\mu v}},\ \bs{t}_{\bs{\mu}^4}= \bs{\lambda}_{\bs{v}^2} \text{ for some $\bs{\lambda} \in \mathbb{R}^{d+d(d+1)/2}$} \right\},\\
\Lambda_{\bs{\lambda} }^{2} & := \left\{ \left((\bs{t}_{\bs{\mu v}} )\t, ( \bs{t}_{\bs{\mu}^4} ) \t \right)\t \in \mathbb{R}^{d_{\mu v}+d_{\mu^4}}: \bs{t}_{\bs{\mu v}} = \bs{\lambda}_{\bs{\mu v}},\ \bs{t}_{\bs{\mu}^4}= -\bs{\lambda}_{\bs{\mu}^4} \text{ for some $\bs{\lambda} \in \mathbb{R}^{d+d(d+1)/2}$} \right\}.
\end{aligned}
\end{equation} 
For $j=1,2$, define $\widehat{\bs{t}}_{\bs{\lambda}}^{j}$ by
\begin{equation} \label{t-lambda}
r(\widehat{\bs{t}}_{\bs{\lambda}}^{j}) = \inf_{\bs{t}_{\bs{\lambda}} \in \Lambda_{\bs{\lambda} }^{j}}r(\bs{t}_{\bs{\lambda}}), \quad r(\bs{t}_{\bs{\lambda}}) := (\bs{t}_{\bs{\lambda}} -\bs{Z}_{\bs{\lambda}})\t \bs{\mathcal{I}}_{\bs{\lambda}.\bs{\eta}} (\bs{t}_{\bs{\lambda}} -\bs{Z}_{\bs{\lambda}}),
\end{equation}
where $\bs{\mathcal{I}}_{\bs{\lambda}.\bs{\eta}}$, $\bs{Z}_{\bs{\lambda}}$, and $\Lambda_{\bs{\lambda} }^{j}$ for $j=1,2$ are defined in (\ref{I_lambda})--(\ref{Lambda-e}).

The following proposition establishes the asymptotic null distribution of the LRTS. 
\begin{assumption} \label{A-taylor1}
$\bs{Z}$ has finite tenth moment. 
\end{assumption}
\begin{proposition} \label{P-LR-N1}
Suppose that Assumptions \ref{assn_consis} and \ref{A-taylor1} hold, $a_n$ in (\ref{pen_pmle}) satisfies $a_n = O(1)$, and $\bs{\mathcal{I}}:=E[\bs{s}(\bs{X},\bs{Z})\bs{s}(\bs{X},\bs{Z})\t]$ is finite and nonsingular. Then, under the null hypothesis of $M=1$,  $LR_{n}(\epsilon_1) \rightarrow_d \max\left\{ (\widehat{\bs{t}}_{\bs{\lambda}}^{1})\t \bs{\mathcal{I}}_{\bs{\lambda}.\bs{\eta}} \widehat{\bs{t}}_{\bs{\lambda}}^{1}, (\widehat{\bs{t}}_{\bs{\lambda}}^{2})\t \bs{\mathcal{I}}_{\bs{\lambda}.\bs{\eta}} \widehat{\bs{t}}_{\bs{\lambda}}^{2} \right\}$, where $LR_{n}(\epsilon_1)$ and $\widehat{\bs{t}}_{\bs{\lambda}}^{j}$ are defined in (\ref{LRT1}) and (\ref{t-lambda}), respectively.
\end{proposition}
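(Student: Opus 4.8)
The plan is to obtain a higher-order quadratic approximation of the reparameterized log-likelihood near $\bs{\psi}=\bs{\psi}^*$, profile out $\bs{\eta}$, and identify the resulting constrained quadratic maximization with the maximum of two cone-projection problems. First I would use Proposition \ref{P-consis} to localize: under $H_0$ and with $\alpha$ restricted to the compact set $[\epsilon_1,1-\epsilon_1]$, the only element of $\Theta_2^*$ compatible with $\alpha$ bounded away from $0$ and $1$ is $(\bs{\mu}_1,\bs{\Sigma}_1)=(\bs{\mu}_2,\bs{\Sigma}_2)=(\bs{\mu}^*,\bs{\Sigma}^*)$, $\bs{\gamma}=\bs{\gamma}^*$, so $\widehat{\bs{\psi}}\rightarrow_p\bs{\psi}^*$ uniformly in $\alpha$ and the analysis may be confined to a shrinking neighborhood of $\bs{\psi}^*$; the penalty terms in (\ref{pen_pmle}) are $o_p(1)$ there (as $a_n=O(1)$ and $\widehat{\bs\Sigma}_m\rightarrow_p\bs\Sigma^*$), so $LR_n(\epsilon_1)$ and its penalized version share the same limit. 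On that neighborhood I would Taylor-expand $g(\bs{x}|\bs{z};\bs{\psi},\alpha)/f_v^*-1$ in $(\bs{\eta}-\bs{\eta}^*,\bs{\lambda})$. The content of the reparameterization (\ref{repara2}), together with the normal-density identities relating $\bs{\mu}$-derivatives to $\bs{v}$-derivatives (the analogue of $\nabla_{\mu_i\mu_j}f=2\nabla_{\Sigma_{ij}}f$), is that at $\bs{\psi}^*$ the first-through-third $\bs{\lambda}_{\bs\mu}$-derivatives and the first $\bs{\lambda}_{\bs v}$-derivative vanish, and that the constants $C_1,C_2$ in (\ref{repara2}) and $b(\alpha)$ in (\ref{tpsi_defn}) are chosen precisely so that the surviving terms collapse onto the score $\bs{s}=(\bs{s}_{\bs\eta}\t,\bs{s}_{\bs{\mu v}}\t,\bs{s}_{\bs{\mu}^4}\t)\t$ of (\ref{score_defn}) with coefficient exactly $\bs{t}(\bs{\psi},\alpha)$ of (\ref{tpsi_defn}); thus $g/f_v^*-1=\bs{t}(\bs{\psi},\alpha)\t\bs{s}+R$ with an explicit remainder $R$. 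Verifying this collapse and bounding $R$ and the cubic terms uniformly is the computational core; it is where Assumption \ref{A-taylor1} (finite tenth moment of $\bs{Z}$, needed to control fifth-order Hermite-type terms multiplied by powers of $\bs{Z}$) enters.

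Writing $q_i:=\bs{t}\t\bs{s}_i+R_i$, using $\log(1+q)=q-q^2/2+O(q^3)$, $E[\bs{s}]=\bs{0}$, a uniform LLN for $P_n(\bs{s}\bs{s}\t)\rightarrow_p\bs{\mathcal{I}}$, and the standard expansion $2\{L_{0,n}(\widehat{\bs\gamma}_0,\widehat{\bs\mu}_0,\widehat{\bs\Sigma}_0)-L_{0,n}(\bs{\eta}^*)\}=\nu_n(\bs{s}_{\bs\eta})\t\bs{\mathcal{I}}_{\bs\eta}^{-1}\nu_n(\bs{s}_{\bs\eta})+o_p(1)$, one obtains, uniformly over the shrinking neighborhood and over $\alpha\in[\epsilon_1,1-\epsilon_1]$,
\[
2\{L_n(\bs{\psi},\alpha)-L_{0,n}(\widehat{\bs{\gamma}}_0,\widehat{\bs{\mu}}_0,\widehat{\bs{\Sigma}}_0)\}=2\sqrt{n}\,\bs{t}\t\nu_n(\bs{s})-n\,\bs{t}\t\bs{\mathcal{I}}\bs{t}-\nu_n(\bs{s}_{\bs\eta})\t\bs{\mathcal{I}}_{\bs\eta}^{-1}\nu_n(\bs{s}_{\bs\eta})+o_p(1),
\]
provided $\sqrt{n}\,\bs{t}$ stays bounded at the relevant parameter values. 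This boundedness is the step I expect to fight with: one must show $\sqrt{n}\,\bs{t}_{\bs{\lambda}}(\widehat{\bs{\lambda}},\alpha)=O_p(1)$ uniformly in $\alpha$. Because $\bs{\lambda}_{\bs\mu}$ and $\bs{\lambda}_{\bs v}$ enter $\bs{t}_{\bs\lambda}$ at different polynomial orders, they are consistent at the different rates $n^{-1/8}$ and $n^{-1/4}$; the mechanism is the usual one — were $\sqrt{n}\,\bs{t}_{\bs\lambda}$ unbounded along a subsequence, the negative-definite term $-n\,\bs{t}_{\bs\lambda}\t\bs{\mathcal{I}}_{\bs\lambda.\bs\eta}\bs{t}_{\bs\lambda}$ would dominate and drive the objective below its value $\geq0$ attained at $\bs{\psi}^*$ — but making it rigorous requires showing $R$ and the cross terms are genuinely of smaller order along every such path, again consuming Assumption \ref{A-taylor1}.

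Since $\bs{\eta}-\bs{\eta}^*$ ranges over a neighborhood of $\bs{0}$ and is thus asymptotically unconstrained, maximizing the displayed quadratic over $\bs{t}_{\bs\eta}$ replaces $\bs{\mathcal{I}}$ by the Schur complement $\bs{\mathcal{I}}_{\bs\lambda.\bs\eta}$, cancels the one-component term, and leaves $\sup\{2\bs{t}_{\bs\lambda}\t\bs{G}_{\bs\lambda.\bs\eta,n}-\bs{t}_{\bs\lambda}\t\bs{\mathcal{I}}_{\bs\lambda.\bs\eta}\bs{t}_{\bs\lambda}\}+o_p(1)$ over the set of achievable values of $\sqrt{n}\,\bs{t}_{\bs\lambda}(\bs{\lambda},\alpha)$, where $\bs{G}_{\bs\lambda.\bs\eta,n}:=\nu_n(\bs{s}_{\bs\lambda})-\bs{\mathcal{I}}_{\bs\lambda\bs\eta}\bs{\mathcal{I}}_{\bs\eta}^{-1}\nu_n(\bs{s}_{\bs\eta})\rightarrow_d\bs{G}_{\bs\lambda.\bs\eta}\sim N(\bs{0},\bs{\mathcal{I}}_{\bs\lambda.\bs\eta})$ by the CLT (using $\bs{\mathcal{I}}$ finite and nonsingular), and $\epsilon_1$ has disappeared. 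A scaling analysis of $\bs{t}_{\bs\lambda}(\bs{\lambda},\alpha)$ in (\ref{tpsi_defn}) shows the achievable set converges to $\Lambda_{\bs\lambda}^1\cup\Lambda_{\bs\lambda}^2$ of (\ref{Lambda-e}): taking $\bs{\lambda}_{\bs\mu}$ and $\bs{\lambda}_{\bs v}$ both of order $n^{-1/4}$ kills the $\bs{\lambda}_{\bs{\mu}^4}$ block and yields $\Lambda_{\bs\lambda}^1$, while $\bs{\lambda}_{\bs\mu}$ of order $n^{-1/8}$ with $\bs{\lambda}_{\bs v}$ of smaller order kills the $\bs{\lambda}_{\bs{v}^2}$ block and, since $b(\alpha)<0$, yields $\Lambda_{\bs\lambda}^2$; the two blocks cannot both be $O(1)$ without $\sqrt{n}\,\bs{t}_{\bs{\mu v}}$ diverging, so no further directions arise, and the factors $\alpha(1-\alpha)$, $b(\alpha)$, $C_1$, $C_2$ are absorbed into the free $\bs{\lambda}$, which is also why each $\Lambda_{\bs\lambda}^j$ is a cone (closed under nonnegative scaling). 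Finally, on each cone the stationarity condition $(\widehat{\bs{t}}_{\bs\lambda}^j)\t\bs{\mathcal{I}}_{\bs\lambda.\bs\eta}(\bs{Z}_{\bs\lambda}-\widehat{\bs{t}}_{\bs\lambda}^j)=0$ holds at the minimizer in (\ref{t-lambda}), which via completing the square gives $\sup_{\bs{t}_{\bs\lambda}\in\Lambda_{\bs\lambda}^j}\{2\bs{t}_{\bs\lambda}\t\bs{G}_{\bs\lambda.\bs\eta}-\bs{t}_{\bs\lambda}\t\bs{\mathcal{I}}_{\bs\lambda.\bs\eta}\bs{t}_{\bs\lambda}\}=(\widehat{\bs{t}}_{\bs\lambda}^j)\t\bs{\mathcal{I}}_{\bs\lambda.\bs\eta}\widehat{\bs{t}}_{\bs\lambda}^j$; taking the maximum over $j\in\{1,2\}$ and applying the continuous mapping theorem (metric projection onto each closed cone is a.s.\ single-valued and continuous since $\bs{G}_{\bs\lambda.\bs\eta}$ has a density) yields the stated limit.
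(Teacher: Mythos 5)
Your proposal is correct and follows essentially the same route as the paper's proof: a quadratic expansion of the reparameterized log-likelihood in $\bs{t}(\bs{\psi},\alpha)$, the rate argument giving $\sqrt{n}\,\bs{t}=O_p(1)$, profiling out $\bs{\eta}$ to pass to $\bs{\mathcal{I}}_{\bs{\lambda}.\bs{\eta}}$, the two-regime partition on the size of $\bs{\lambda}_{\bs{\mu}}$ (order $n^{-1/4}$ versus $n^{-1/8}$) that identifies the limit sets with the cones $\Lambda_{\bs{\lambda}}^{1}$ and $\Lambda_{\bs{\lambda}}^{2}$, and the final projection step (which the paper executes via Theorem 3(c) of Andrews (1999) rather than a direct continuous-mapping argument). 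The only cosmetic difference is that the paper's version of your "no further directions arise" step is a component-wise rate lemma (Lemma \ref{lemma_lambda_e}) handling the multivariate case where different coordinates of $\bs{\lambda}_{\bs{\mu}}$ may shrink at different rates, which your order-of-magnitude sketch would need to be refined to cover.
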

For each $j=1,2$, the random variable $(\widehat{\bs{t}}_{\bs{\lambda}}^{j})\t \bs{\mathcal{I}}_{\bs{\lambda}.\bs{\eta}} \widehat{\bs{t}}_{\bs{\lambda}}^{j}$ is a projection of a Gaussian random variable on a cone $\Lambda_{\bs{\lambda} }^{j}$. 

\begin{example}
When $d=1$ with $\bs{\lambda}=(\lambda_\mu,\lambda_v)\t$, we have $\Lambda_{\bs{\lambda} }^{1} \cup \Lambda_{\bs{\lambda} }^{2} = \mathbb{R}^2$ and  $LR_{n}(\epsilon_1) \rightarrow_d \bs{Z}_{\bs{\lambda}}\t \bs{\mathcal{I}}_{\bs{\lambda}.\bs{\eta}} \bs{Z}_{\bs{\lambda}} \sim \chi^2(2)$.
When $d=2$, we have $\bs{\lambda}=(\bs{\lambda}_{\bs{\mu}}\t,\bs{\lambda}_{\bs{v}}\t)\t=(\lambda_{\mu_1},\lambda_{\mu_2},\lambda_{v_{11}},\lambda_{v_{12}}, \lambda_{v_{22}})\t$, 
\[
\begin{aligned}
\bs{s}_{\bs{\mu v}} &= (\nabla_{\mu_1^3} f^*_v,\nabla_{\mu_1^2\mu_2} f^*_v,\nabla_{\mu_1\mu_2^2} f^*_v,\nabla_{\mu_2^3} f^*_v)\t/ 3! f^*_v,  \\
\bs{s}_{\bs{\mu}^4} & = (\nabla_{\mu_1^4} f^*_v,\nabla_{\mu_1^3\mu_2} f^*_v,\nabla_{\mu_1^2\mu_2^2} f^*_v,\nabla_{\mu_1\mu_2^3} f^*_v,\nabla_{\mu_2^4} f^*_v)\t/ 4! f^*_v,
 \end{aligned}
 \]
 and  $\Lambda_{\bs{\lambda} }^{j}$ is given by (\ref{Lambda-e}) with
\[
\begin{aligned} 
\bs{t}_{\bs{\mu v}} & =(\lambda_{\mu_1}\lambda_{v_{11}},\lambda_{\mu_1}\lambda_{v_{12}}+\lambda_{\mu_2}\lambda_{v_{11}}, \lambda_{\mu_1}\lambda_{v_{22}}+\lambda_{\mu_2}\lambda_{v_{12}},\lambda_{\mu_2}\lambda_{v_{22}})\t,\\ 
 \bs{t}_{\bs{\mu }^4} &= 
 \begin{cases} 
 ( \lambda_{v_{11}}^2, 2\lambda_{v_{11}}\lambda_{v_{12}}, 2\lambda_{v_{11}}\lambda_{v_{22}}+\lambda_{v_{12}}^2, 2\lambda_{v_{12}}\lambda_{v_{22}} , \lambda_{v_{22}}^2)\t& \text{if } j=1,\\ 
 -(\lambda_{\mu_1}^4,4\lambda_{\mu_1}^3\lambda_{\mu_2},6\lambda_{\mu_1}^2\lambda_{\mu_2}^2,4\lambda_{\mu_1}\lambda_{\mu_2}^3,\lambda_{\mu_2}^4)\t& \text{if } j=2.
 \end{cases}
 \end{aligned}
 \]
\end{example}

\subsection{Likelihood ratio test of $H_0: M = M_0$ against $H_A: M = M_0 + 1$ for $M_0\geq 2$} \label{sec-general}

This section establishes the asymptotic distribution of the LRTS for testing the null hypothesis of $M_0$ components against the alternative of $M_0+1$ components for general $M_0 \geq 1$. 


We consider a random sample of $n$ independent observations $\{\bs{X}_i,\bs{Z}_i\}_{i = 1}^n$ from the $M_0$-component $d$-variate finite normal mixture distribution, whose density with the true parameter value ${\bs{\vartheta}}_{M_0}^*=(\alpha_{1}^*,\ldots,\alpha_{M_0 - 1}^*,{\bs{\gamma}}^*,\bs{\mu}_1^*,\ldots,\bs{\mu}_{M}^*,\bs{\Sigma}_{1}^{*},\ldots,\bs{\Sigma}_{M}^{*})$ is
\begin{equation}
f_{M_0}(\bs{x}|\bs{z};{\bs{\vartheta}}_{M_0}^*):=\sum_{j=1}^{M_0} \alpha_{j}^{*} f(\bs{x}|\bs{z};{\bs{\gamma}}^*, \bs{\mu}_{j}^{*},\bs{\Sigma}_{j}^{*}), \label{true_model}
\end{equation}
where $\alpha_j^*>0$. We assume $(\bs{\mu}_{1}^*,\bs{\Sigma}^{*}_{1}) <\ldots< (\bs{\mu}_{M_0}^*,\bs{\Sigma}^{*}_{M_0})$ for identification. Let the density of an $(M_0+1)$-component mixture model be
\begin{equation}
f_{M_0+1}(\bs{x}|\bs{z};{\bs{\vartheta}}_{M_0+1}):=\sum_{j=1}^{M_0+1}\alpha_j f(\bs{x}|\bs{z};\bs{\gamma},\bs{\mu}_j,\bs{\Sigma}_j),\label{fitted_model}
\end{equation}
where ${\bs{\vartheta}}_{M_0+1} = (\alpha_1,\ldots,\alpha_{M_0},\bs{\gamma},\bs{\mu}_1,\ldots.,\bs{\mu}_{M_0+1},\bs{\Sigma}_1,\ldots,\bs{\Sigma}_{M_0+1})$. As in the case of the test of homogeneity, we partition the null hypothesis into two as $H_0 = H_{01} \cup H_{02}$, where $H_{01}: = \cup_{m=1}^{M_0} H_{0,1m}$ and $H_{02} := \cup_{m=1}^{M_0+1} H_{0,2m}$ with
\[
H_{0,1m} :(\bs{\mu}_1,\bs{\Sigma}_1) < \cdots < (\bs{\mu}_{m},\bs{\Sigma}_{m}) = (\bs{\mu}_{m + 1},\bs{\Sigma}_{m + 1}) < \cdots < (\bs{\mu}_{M_0 + 1},\bs{\Sigma}_{M_0 + 1}) \ \text{and}\ H_{0,2m}: \alpha_m = 0.
\]
The inequality constraints are imposed on $(\bs{\mu}_j,\bs{\Sigma}_j)$ for identification.

We focus on testing $H_{01}$ because the LRTS for testing $H_{02}$ has infinite Fisher information unless a stringent restriction is imposed on the admissible values of $\bs{\Sigma}_j$ \citep{kasaharashimotsu15jasa}. Define the set of values of ${\bs{\vartheta}}_{M_0+1}$ that yields the true density (\ref{true_model}) as 
\begin{equation*}
\Upsilon^*:=\{{\bs{\vartheta}}_{M_0 + 1}: f_{M_0 + 1}(\bs{X}|\bs{Z};{\bs{\vartheta}}_{M_0 + 1}) = f_{M_0}(\bs{X}|\bs{Z};{\bs{\vartheta}}_{M_0}^*)\text{ with probability one}\}.
\end{equation*} 
Under $H_{0,1m}$, the $(M_0 + 1)$-component model (\ref{fitted_model}) generates the true $M_0$-component density (\ref{true_model}) when $(\bs{\mu}_m,\bs{\Sigma}_m) = (\bs{\mu}_{m + 1},\bs{\Sigma}_{m + 1})=(\bs{\mu}_{m}^{*},\bs{\Sigma}^{*}_{m})$. Define the subset of $\Upsilon^*$ corresponding to $H_{0,1m}$ as
\begin{align}
\Upsilon_{1m}^* &:= \left\{{\bs{\vartheta}}_{M_0 + 1} \in \Theta_{{\bs{\vartheta}}_{M_0 + 1}}:\  \alpha_j = \alpha_{j}^{*}\ \text{and}\
(\bs{\mu}_j,\bs{\Sigma}_j)=(\bs{\mu}_{j}^*,\bs{\Sigma}_{j}^{*})\ \text{for $j < m$}; \right. \nonumber \\
& \qquad \left. 
\alpha_m + \alpha_{m + 1} = \alpha_{m}^{*}\ \text{and}\ (\bs{\mu}_m,\bs{\Sigma}_m)=(\bs{\mu}_{m + 1},\bs{\Sigma}_{m + 1})=(\bs{\mu}_{m}^{*},\bs{\Sigma}_{m}^{*});\   \right.\nonumber \\
& \qquad \left. \alpha_{j}=\alpha_{j - 1}^*\ \text{and}\ (\bs{\mu}_{j},\bs{\Sigma}_j)=(\bs{\mu}_{j - 1}^*,\bs{\Sigma}_{j - 1}^{*})\ \text{for $j> m + 1$};\ {\bs{\gamma}}={\bs{\gamma}}^* \right\}, \nonumber 
\end{align}
and define $\Upsilon_1^*:= \Upsilon_{11}^* \cup \cdots \cup \Upsilon_{1M_0}^*$.
 
Let $\Theta_{{\bs{\vartheta}}_{M_0 + 1}}(\epsilon_1)$ be a subset of $\Theta_{{\bs{\vartheta}}_{M_0 + 1}}$ such that $\alpha_j\in[\epsilon_1,1-\epsilon_1]$ for $j = 1,\ldots,M_0 + 1$, and define the PMLE by
\begin{equation}\label{PMLEs}
\begin{aligned}
\widehat{\bs{\vartheta}}_{M_0+1}(\epsilon_1) & := \mathop{\arg \max}_{{\bs{\vartheta}}_{M_0+1}\in\Theta_{{\bs{\vartheta}}_{M_0+1}}(\epsilon_1)}PL_n({\bs{\vartheta}}_{M_0 + 1}),\\
\widehat{\bs{\vartheta}}_{M_0} & := \mathop{\arg \max}_{{\bs{\vartheta}}_{M_0}\in\Theta_{{\bs{\vartheta}}_{M_0}}}PL_{0,n}({\bs{\vartheta}}_{M_0}),
\end{aligned}
\end{equation}
where $PL_n({\bs{\vartheta}}_{M_0 + 1}):=L_n({\bs{\vartheta}}_{M_0 + 1})+p_n(\bs{\vartheta}_{M_0+1})$ and $PL_{0,n}({\bs{\vartheta}}_{M_0}):=L_{0,n}({\bs{\vartheta}}_{M_0})+p_n(\bs{\vartheta}_{M_0})$ with $L_n({\bs{\vartheta}}_{M_0 + 1}):=\sum_{i = 1}^n \log f_{M_0 + 1}(\bs{X}_i|\bs{Z}_i;{\bs{\vartheta}}_{M_0 + 1})$ and $L_{0,n}({\bs{\vartheta}}_{M_0}):=\sum_{i = 1}^n \log f_{M_0}(\bs{X}_i|\bs{Z}_i;{\bs{\vartheta}}_{M_0})$ for the density (\ref{true_model})--(\ref{fitted_model}) and the penalty function in (\ref{pen_pmle}). 
We consider the LRTS for testing $H_{01}$ given by
\begin{equation}\label{LR-M_0}
LR_{n}^{M_0}(\epsilon_1):= 2\{L_n(\widehat{{\bs{\vartheta}}}_{M_0+1}(\epsilon_1))- L_{0,n}(\widehat{{\bs{\vartheta}}}_{M_0})\}.
\end{equation} 

Collect the score vector for testing $H_{0,11},\ldots,H_{0,1M_0}$ into one vector as
\begin{equation}
\widetilde{\bs{s}}(\bs{x},\bs{z}) :=
\begin{pmatrix}
\widetilde{\bs{s}}_{\bs{\eta}} \\ \widetilde{\bs{s}}_{\bs{\lambda}}
\end{pmatrix},\ \text{ where }\
\widetilde{\bs{s}}_{\bs{\eta}}:=
\begin{pmatrix}
\bs{s}_{\bs{\alpha}} \\ \bs{s}_{(\bs{\gamma},\bs{\mu}, \bs{v})} 
\end{pmatrix}\ \text{and}\ \widetilde{\bs{s}}_{\bs{\lambda}}: =
\begin{pmatrix}
\bs{s}_{\bs{\mu}\bs{v}}^1 \\
\bs{s}_{\bs{\mu}^4}^1 \\
 \vdots \\
\bs{s}_{\bs{\mu}\bs{v}}^{M_0} \\
\bs{s}_{\bs{\mu}^4}^{M_0} 
\end{pmatrix}, \label{stilde}
\end{equation}
where, with $f_0^*:=f_{M_0}(\bs{x}|\bs{z};{\bs{\vartheta}}_{M_0}^*)$ and for $m=1,\ldots,M_0$,
\begin{equation} \label{sh}
\begin{aligned}
\bs{s}_{\bs{\alpha}} & :=
\begin{pmatrix}
f_v(\bs{x}|\bs{z};\bs{\gamma}^*,\bs{\mu}_1^{*},\bs{v}_1^*)-f_v(\bs{x}|\bs{z};\bs{\gamma}^*,\bs{\mu}_{M_0}^{*},\bs{v}_{M_0}^*) \\
\vdots \\
f_v(\bs{x}|\bs{z};\bs{\gamma}^*,\bs{\mu}_{M_0-1}^{*},\bs{v}_{M_0-1}^*)-f_v(\bs{x}|\bs{z};\bs{\gamma}^*,\bs{\mu}^{*}_{M_0},\bs{v}_{M_0}^*) \\
\end{pmatrix} \Bigl/ f_0^*, \bigr.
\\
\bs{s}_{(\bs{\gamma},\bs{\mu}, \bs{v})} & :=\sum_{m=1}^{M_0}\alpha_{m}^* \nabla_{(\bs{\gamma}\t,\bs{\mu}\t, \bs{v}\t)\t} f_v(\bs{x}|\bs{z};\bs{\gamma}^*,\bs{\mu}_m^{*},\bs{v}_m^*)/ f_0^*, \\ 
\bs{s}_{\bs{\mu v}}^m &:= \left\{\alpha_{m}^* \nabla_{\mu_i \mu_j \mu_k} f^*_v(\bs{x}|\bs{z};\bs{\gamma}^*,\bs{\mu}_m^{*},\bs{v}_m^*) / (3! f_0^*) \right\}_{1 \leq i \leq j \leq k \leq d}, \\
\bs{s}_{\bs{\mu}^4}^m &:= \left\{ \alpha_{m}^* \nabla_{\mu_i \mu_j \mu_k \mu_\ell} f^*_v(\bs{x}|\bs{z};\bs{\gamma}^*,\bs{\mu}_m^{*},\bs{v}_m^*) / (4! f_0^*) \right\}_{1 \leq i \leq j \leq k \leq \ell \leq d} .
\end{aligned} 
\end{equation}

Define 
\begin{equation}\label{Itilde}
\begin{aligned}
&\widetilde{\bs{\mathcal{I}}}:= E[ \widetilde{\bs{s}}(\bs{X},\bs{Z})\widetilde{\bs{s}}(\bs{X},\bs{Z})^{\top}],\ \widetilde{\bs{\mathcal{I}}}_{\bs{\eta}}:= E[\widetilde {\bs{s}}_{\bs{\eta}} \widetilde {\bs{s}}_{\bs{\eta}}^{\top}], \ \widetilde{\bs{\mathcal{I}}}_{\bs{\lambda}\bs{\eta}}:= E[\widetilde {\bs{s}}_{\bs{\lambda}} \widetilde {\bs{s}}_{\bs{\eta}}^{\top}], \ \\
&\widetilde{\bs{\mathcal{I}}}_{\bs{\eta\lambda}}:= \widetilde{\bs{\mathcal{I}}}_{\bs{\lambda}\bs{\eta}}^{\top},\quad \widetilde{\bs{\mathcal{I}}}_{\bs{\lambda}}:= E[\widetilde {\bs{s}}_{\bs{\lambda}} \widetilde {\bs{s}}_{\bs{\lambda}}^{\top}],\ \widetilde{\bs{\mathcal{I}}}_{\bs{\lambda}.\bs{\eta}} :=\widetilde{\bs{\mathcal{I}}}_{\bs{\lambda}} - \widetilde{\bs{\mathcal{I}}}_{\bs{\lambda}\bs{\eta}}\widetilde{\bs{\mathcal{I}}}_{\bs{\eta}}^{-1} \widetilde{\bs{\mathcal{I}}}_{\bs{\eta\lambda}}.
\end{aligned}
\end{equation} 
Let $\widetilde{\bs{G}}_{\bs{\lambda}.\bs{\eta}}=((\bs{G}_{\bs{\lambda}.\bs{\eta}}^{1})^{\top},\ldots,(\bs{G}_{\bs{\lambda}.\bs{\eta}}^{M_0})^\top)^\top \sim N(0,\widetilde{\bs{\mathcal{I}}}_{\bs{\lambda}.\bs{\eta}})$ be an $\mathbb{R}^{M_0 (d_{\mu v} + d_{\mu^4})}$--valued random vector, and define ${\bs{\mathcal{I}}}_{\bs{\lambda}.\bs{\eta}}^m:=E[\bs{G}_{\bs{\lambda}.\bs{\eta}}^m (\bs{G}_{\bs{\lambda}.\bs{\eta}}^m)^\top]$ and $\bs{Z}_{\bs{\lambda}}^m:=({\bs{\mathcal{I}}}_{\bs{\lambda}.\bs{\eta}}^m)^{-1}\bs{G}_{\bs{\lambda}.\bs{\eta}}^m$. For $j=1,2$, similar to $\widehat{\bs{t}}_{\bs{\lambda}}^{j}$ in the test of homogeneity, define $\widehat{\bs{t}}_{\bs{\lambda},m}^{j}$ by
\begin{equation*}
r^m(\widehat{\bs{t}}_{\bs{\lambda},m}^{j}) = \inf_{\bs{t}_{\bs{\lambda}} \in \Lambda_{\bs{\lambda} }^{j}}r^m(\bs{t}_{\bs{\lambda}}), \quad r^m(\bs{t}_{\bs{\lambda}}) := (\bs{t}_{\bs{\lambda}} -\bs{Z}_{\bs{\lambda}}^m)\t \bs{\mathcal{I}}_{\bs{\lambda}.\bs{\eta}}^m (\bs{t}_{\bs{\lambda}} -\bs{Z}_{\bs{\lambda}}^m),
\end{equation*}
where $\Lambda_{\bs{\lambda} }^{j}$ is defined in (\ref{Lambda-e}).
The following proposition gives the asymptotic null distribution of the LRTS for testing $H_{01}$. In the neighborhood of $\Upsilon_{1h}^*$, the log-likelihood function permits a quadratic approximation in terms of polynomials of the parameters similar to testing $H_0:M=1$ against $H_A:M=2$. Consequently, the LRTS is asymptotically distributed as the maximum of $M_0$ random variables.

\begin{assumption}\label{A-vec-2} (a) $\alpha_{j}^*\in [\epsilon_1,1-\epsilon_1]$ for $j = 1,\ldots,M_0$. (b) $\widetilde{\bs{\mathcal{I}}}$ defined in (\ref{Itilde}) is nonsingular.
\end{assumption}

\begin{proposition} \label{local_lr-2}
Suppose that Assumptions \ref{assn_consis}, \ref{A-taylor1}, and \ref{A-vec-2} hold  and $a_n$ in (\ref{pen_pmle}) satisfies $a_n = o(1)$. Then, under the null hypothesis $H_0: M=M_0$, $LR_{n}^{M_0}(\epsilon_1) \rightarrow_d \max\{v_1,\ldots, v_{M_0}\}$, where $v_m := \max\left\{ (\widehat{\bs{t}}_{\bs{\lambda},m}^{1})\t \bs{\mathcal{I}}_{\bs{\lambda}.\bs{\eta}}^m \widehat{\bs{t}}_{\bs{\lambda},m}^1, (\widehat{\bs{t}}_{\bs{\lambda},m}^{2})\t \bs{\mathcal{I}}_{\bs{\lambda}.\bs{\eta}}^m \widehat{\bs{t}}_{\bs{\lambda},m}^2\right\}$.
\end{proposition}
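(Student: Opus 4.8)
The plan is to reduce the testing of $H_0: M = M_0$ against $H_A: M = M_0+1$ to $M_0$ separate problems, each of which is structurally identical to the test of homogeneity ($H_0: M=1$ vs $H_A: M=2$) analyzed in Proposition \ref{P-LR-N1}, and then to patch these local analyses together. First I would invoke a consistency result analogous to Proposition \ref{P-consis}: under Assumption \ref{assn_consis} and \ref{A-vec-2}(a), the PMLE $\widehat{{\bs{\vartheta}}}_{M_0+1}(\epsilon_1)$ converges in probability to the set $\Upsilon_1^* = \cup_{m=1}^{M_0}\Upsilon_{1m}^*$, and likewise $\widehat{{\bs{\vartheta}}}_{M_0}$ converges to ${\bs{\vartheta}}_{M_0}^*$ (the latter being standard since the $M_0$-component model is correctly specified and identified by the inequality ordering). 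Because $\Upsilon_1^*$ is a finite union of disjoint closed sets, on an event of probability approaching one $\widehat{{\bs{\vartheta}}}_{M_0+1}(\epsilon_1)$ lies in an arbitrarily small neighborhood of exactly one $\Upsilon_{1h}^*$; this lets me write $LR_n^{M_0}(\epsilon_1) = \max_{1\le m\le M_0}\{$ LRTS restricted to a shrinking neighborhood of $\Upsilon_{1m}^* \} + o_p(1)$.

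Next, for each fixed $m$, I would carry out the reparameterization exactly as in Section \ref{section:homogeneity}, but applied \emph{locally} to the $m$-th and $(m+1)$-th components of the fitted $(M_0+1)$-component density. That is, near $\Upsilon_{1m}^*$ the components $j<m$ and $j>m+1$ are regular (their parameters are identified and the log-likelihood is locally quadratic in $(\bs{\alpha},\bs{\gamma},\bs{\mu}_j,\bs{\Sigma}_j)$), while the pair $(\bs{\mu}_m,\bs{\Sigma}_m),(\bs{\mu}_{m+1},\bs{\Sigma}_{m+1})$ with mixing weight split $\alpha_m/(\alpha_m+\alpha_{m+1})$ is precisely the degenerate two-component configuration around the common value $(\bs{\mu}_m^*,\bs{\Sigma}_m^*)$. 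Introducing $(\bs{\lambda}_{\bs\mu},\bs{\lambda}_{\bs v},\bs{\nu}_{\bs\mu},\bs{\nu}_{\bs v})$ via the map (\ref{repara2}) for this pair and stacking the regular nuisance directions into $\widetilde{\bs\eta}$, I obtain a local expansion of $2\{L_n - L_{0,n}\}$ of the form $2\nu_n(\widetilde{\bs t}\t\widetilde{\bs s}) - \widetilde{\bs t}\t\widetilde{\bs{\mathcal I}}\,\widetilde{\bs t} + o_p(1)$, where only the block $\widetilde{\bs s}_{\bs\lambda}$ corresponding to the $m$-th pair carries the fourth-order/cross-order structure and the sign branch $\pm$ of $\bs t_{\bs\mu^4}$ from $b(\alpha)$ survives exactly as in (\ref{tpsi_defn})--(\ref{Lambda-e}). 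Concentrating out the regular directions $\widetilde{\bs\eta}$ yields the efficient score block $\bs{G}_{\bs\lambda.\bs\eta}^m$ with covariance ${\bs{\mathcal I}}_{\bs\lambda.\bs\eta}^m$; maximizing the concentrated quadratic over the cone $\Lambda_{\bs\lambda}^j$ (using $\sqrt n \widetilde{\bs t}_{\bs\lambda}$ ranging asymptotically over $\Lambda_{\bs\lambda}^1\cup\Lambda_{\bs\lambda}^2$, exactly as established for $d\ge 1$ in Proposition \ref{P-LR-N1}) gives $v_m = \max_j (\widehat{\bs t}_{\bs\lambda,m}^j)\t{\bs{\mathcal I}}_{\bs\lambda.\bs\eta}^m\widehat{\bs t}_{\bs\lambda,m}^j$. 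Taking the maximum over $m$ and appealing to the continuous mapping theorem with the joint convergence $\widetilde{\bs G}_{\bs\lambda.\bs\eta}\Rightarrow N(0,\widetilde{\bs{\mathcal I}}_{\bs\lambda.\bs\eta})$ delivers the claimed limit $\max\{v_1,\ldots,v_{M_0}\}$.

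Two technical points require care and I expect them to be the main obstacles. The first is the uniform validity of the fourth-order Taylor expansion over the shrinking neighborhood of each $\Upsilon_{1m}^*$: because the first through third $\bs{\lambda}_{\bs\mu}$-derivatives and the first $\bs{\lambda}_{\bs v}$-derivative vanish at $\bs\psi^*$, I must control the quartic remainder uniformly, which is where Assumption \ref{A-taylor1} (finite tenth moment of $\bs Z$) and the boundedness $a_n=o(1)$ of the penalty enter; this is a reprise of the analysis behind Proposition \ref{P-LR-N1} but must be checked to go through with the extra regular components present, i.e.\ that cross-terms between the $m$-th pair's $\bs\lambda$-directions and the other components' scores do not spoil the nonsingularity of $\widetilde{\bs{\mathcal I}}$ (guaranteed by Assumption \ref{A-vec-2}(b)) or the block structure after concentration. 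The second obstacle is justifying that the argmax of the penalized likelihood, restricted to $\alpha_m,\alpha_{m+1}\in[\epsilon_1,1-\epsilon_1]$, actually attains a neighborhood of $\Upsilon_{1m}^*$ rather than some spurious boundary configuration, and that the $o_p(1)$ coupling between $LR_n^{M_0}(\epsilon_1)$ and the pieced-together local statistics is genuinely uniform; this is handled by a standard but somewhat delicate argument bounding the likelihood outside a union of small neighborhoods of $\Upsilon_1^*$, combined with the fact that within each neighborhood only one pair of components can be near-degenerate (the ordering constraints separate distinct $(\bs\mu_j^*,\bs\Sigma_j^*)$), so the $M_0$ local problems are asymptotically decoupled except through the common Gaussian limit $\widetilde{\bs G}_{\bs\lambda.\bs\eta}$.
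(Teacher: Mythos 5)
Your proposal follows essentially the same route as the paper: localize around each $\Upsilon_{1m}^*$, reparameterize the merged pair $(m,m+1)$ via the map (\ref{repara2}) with $\tau=\alpha_m/(\alpha_m+\alpha_{m+1})$ in place of $\alpha$, apply the quadratic expansion of Lemma \ref{P-quadratic} with the stacked score $(\widetilde{\bs{s}}_{\bs{\eta}},\bs{s}_{\bs{\mu v}}^m,\bs{s}_{\bs{\mu}^4}^m)$, project onto the cones $\Lambda_{\bs{\lambda}}^1\cup\Lambda_{\bs{\lambda}}^2$ as in Proposition \ref{P-LR-N1}, and combine the $M_0$ local statistics via the joint convergence $(\bs{G}_{n,1}\t,\ldots,\bs{G}_{n,M_0}\t)\t\rightarrow_d(\bs{G}_1\t,\ldots,\bs{G}_{M_0}\t)\t$. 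The technical concerns you flag (uniformity of the quartic expansion with the extra regular components present, and the reduction of the global LRTS to the maximum of the local ones) are exactly the points the paper handles via the extension of Proposition \ref{P-consis} and the local penalized MLE construction, so the proposal is correct.
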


\section{EM test} \label{section:emtest}

Implementing the likelihood ratio test in Section \ref{sec:hetero} requires the researcher to choose a lower bound $\epsilon_1$ on $\alpha_j$ and assume $\alpha_j^* >\epsilon_1$.  In this section, we develop an EM test of $H_0:M= M_0$ against $H_A:M = M_0 + 1$  that does not require such a lower bound on $\alpha_j$.  For brevity, we  suppress covariate $\bs{Z}$ in this section. First, we develop an EM test statistic for testing $H_{0,1m}: (\bs{\mu}_m,\bs{\Sigma}_m)=(\bs{\mu}_{m + 1},\bs{\Sigma}_{m + 1})$. We construct $M_0$ sets $\{D_1^*,\cdots,D_{M_0}^*\}$ of admissible values of $(\bs{\mu},\bs{\Sigma})$, such that $D_m$ contains $(\bs{\mu}_m^*,\bs{\Sigma}_m^{*})$ but no other $(\bs{\mu}_j^*,\bs{\Sigma}_j^{*})$'s for $j \neq m$. For example, as in our simulation, we may assume that the first element of $\bs{\mu}$ are distinct, let $\overline{\mu}_j^*:=(\mu_{j1}^* +\mu_{j + 1,1}^*)/2$ with $\mu_{j1}$ denoting the first element of $\bs{\mu}_j$, and set $D_1^* = (-\infty , \overline{\mu}_{1}^*] \times\Theta_{\tilde{\bs{\mu}}}\times\Theta_{\bs{\Sigma}}$, $D_j^* =[\overline{\mu}_{j-1}^*, \overline{\mu}_{j}^*]\times\Theta_{\tilde{\bs{\mu}}}\times\Theta_{\bs{\Sigma}}$ for $j = 2,\ldots,M_0 - 1$, and $D_{M_0}^* = [\overline{\mu}_{M_0 - 1}^*, \infty ) \times\Theta_{\tilde{\bs{\mu}}} \times \Theta_{\bs{\Sigma}}$, where $\Theta_{\tilde{\bs{\mu}}}$ denotes the space of $\tilde{\bs{\mu}}:=(\mu_2,\ldots,\mu_d)\top$.

Collect the mixing parameters of the $(M_0 + 1)$-component model into one vector as $\bs{\varsigma}:=(\bs{\mu}_1,\ldots,\bs{\mu}_{M_0 + 1},\bs{\Sigma}_1,\ldots,\bs{\Sigma}_{M_0 + 1}) \in \Theta_{\bs{\varsigma}}:=\Theta_{\bs{\mu}}^{M_0 + 1}\times \Theta_{\bs{\Sigma}}^{M_0 + 1}$. For $m = 1,\ldots,M_0$, define a restricted parameter space of $\bs{\varsigma}$ by $\Xi_m^*: = \{\bs{\varsigma} \in \Theta_{\bs{\varsigma}}: (\bs{\mu}_j,\bs{\Sigma}_j) \in D_j^* \ \text{for}\ j = 1,\ldots, m - 1;\ (\bs{\mu}_m,\bs{\Sigma}_m),\ (\bs{\mu}_{m + 1},\bs{\Sigma}_{m + 1})\in D_m^*;\ (\bs{\mu}_j,\bs{\Sigma}_j) \in D_{j - 1}^* \ \text{for}\ j = m + 2,\ldots,M_0 + 1 \}$. Let $\widehat{\Xi}_m$ and $\widehat{D}_m$ be consistent estimates of $\Xi_m^*$ and $D_m^*$, which can be constructed from  the PMLE  of the $M_0$-component model. We test $H_{0,1m}: (\bs{\mu}_m,\bs{\Sigma}_m)=(\bs{\mu}_{m + 1},\bs{\Sigma}_{m + 1})$ by estimating the $(M_0 + 1)$-component model (\ref{fitted_model}) under the restriction $\bs{\varsigma} \in \widehat{\Xi}_m$. For example, when we test $H_{0,11}:(\bs{\mu}_1,\bs{\Sigma}_1) = (\bs{\mu}_2,\bs{\Sigma}_2)$ in a three-component model, the restriction can be given as $(\bs{\mu}_1,\bs{\Sigma}_1), (\bs{\mu}_2,\bs{\Sigma}_2) \in \widehat{D}_1$ and $(\bs{\mu}_3,\bs{\Sigma}_3) \in \widehat{D}_2$.

Define the penalty term $p_n^m(\bs{\vartheta}_{M_0+1})$ on $\bs{\Sigma}_j$'s as
\begin{equation}\label{penalty-em}
p_n^m(\bs{\vartheta}_{M_0+1}) := \sum_{j=1}^{M_0+1}p^m_{n}(\bs{\Sigma}_j;\bs{\Omega}_j), \quad p^m_{n}(\bs{\Sigma}_j;\bs{\Omega}_j):=- a_n \left\{ \text{tr}(\bs{\Omega}_j\bs{\Sigma}_j^{-1}) - \log ( \det({\bs{\Omega}}_j\bs{\Sigma}_j^{-1})) -d \right\},
\end{equation}
with $\bs{\Omega}_j = \widehat{\bs{\Sigma}}_j$ for $j=1,\ldots,m-1$, $\bs{\Omega}_j =\widehat{\bs{\Sigma}}_m$ for $j=m,m+1$, and $\bs{\Omega}_j =\widehat{\bs{\Sigma}}_{j-1}$ for $j=m+2,\ldots,M_0+1$,
where $\widehat{\bs{\Sigma}}_j$ is a consistent estimator of $\bs{\Sigma}_j$ from the $M_0$-component PMLE. This penalty term is a multivariate version of the one in \citet{chenlifu12jasa} and satisfies $p^m_{n}(\bs{\Omega}_j;\bs{\Omega}_j)=0$. Let $\mathcal{T}$ be a finite set of numbers from $(0,0.5]$, and let $p(\tau) \leq 0$ be a penalty term that is continuous in $\tau$, $p(0.5)=0$, and $p(\tau) \to -\infty$ as $\tau$ goes to $0$. 
 
For each $\tau_0 \in \mathcal{T}$, define the restricted penalized MLE as $\bs{\vartheta}_{M_0+1}^{m(1)}(\tau_0) := \\\mathop{\arg \max}_{\bs{\vartheta}_{M_0+1} \in \Theta^m(\tau_0)} (PL_n^m({\bs{\vartheta}}_{M_0 + 1}) + p(\tau_0))$, where $\Theta^m(\tau) := \{ \bs{\vartheta}_{M_0+1} \in \Theta_{\bs{\vartheta}_{M_0+1}}: \alpha_{m}/(\alpha_{m} + \alpha_{m + 1})=\tau \text{ and } \bs{\varsigma}\in \hat \Xi_m\}$ and $PL_{n}^m({\bs{\vartheta}}_{M_0+1}):= \sum_{i=1}^n f_{M_0+1}(\bs{X}_i ;\bs{\vartheta}_{M_0+1}) + p_n^m(\bs{\vartheta}_{M_0+1})$. Starting from $\bs{\vartheta}_{M_0+1}^{m(1)}(\tau_0)$, we update $\bs{\vartheta}_{M_0+1}$ by the following generalized EM algorithm. Henceforth, we suppress $(\tau_0)$ from $\bs{\vartheta}_{M_0+1}^{m(k)}(\tau_0)$. Suppose we have already calculated $\bs{\vartheta}_{M_0+1}^{m(k)}$. For $i = 1,\ldots,n$ and $j = 1,\ldots,M_0+1$, define the weights for an E-step as 
\begin{align*}
w_{ij}^{(k)} &:=
\begin{cases}
\alpha_j^{(k)} f(\bs{X}_i;{\bs{\mu}}_j^{(k)},\bs{\Sigma}_j^{(k)}) / f_{M_0+1}(\bs{X}_i;{\bs{\vartheta}}_{M_0+1}^{m(k)}) & \mbox{for } j=1,\ldots,m-1,\\
\alpha_{j}^{(k)} f(\bs{X}_i;{\bs{\mu}}_j^{(k)},\bs{\Sigma}_j^{(k)}) / f_{M_0+1}(\bs{X}_i;{\bs{\vartheta}}_{M_0+1}^{m(k)}) & \mbox{for } j=m+2,\ldots,M_0+1,\\
\end{cases} \\
w_{im}^{(k)} &:= \frac{\tau^{(k)}(\alpha_m^{(k)}+\alpha_{m+1}^{(k)}) f(\bs{X}_i;{\bs{\mu}}_m^{(k)},\bs{\Sigma}_m^{(k)})}{f_{M_0+1}(\bs{X}_i;{\bs{\vartheta}}_{M_0+1}^{m(k)})}, \ w_{i,m+1}^{(k)} := \frac{(1-\tau^{(k)})(\alpha_m^{(k)}+\alpha_{m+1}^{(k)})f(\bs{X}_i;{\bs{\mu}}_{m+1}^{(k)},\bs{\Sigma}_{m+1}^{(k)}) }{f_{M_0+1}(\bs{X}_i;{\bs{\vartheta}}_{M_0+1}^{m(k)})}.
\end{align*}
In an M-step, update $\tau$ and $\bs{\alpha}$ by
\begin{align*}
\tau^{(k+1)} &:= \mathop{\arg \max}_{\tau} \left\{\sum_{i=1}^n w_{im}^{(k)} \log(\tau) + \sum_{i=1}^n w_{i,m+1}^{(k)} \log(1-\tau) + p(\tau) \right\},\\
\alpha_j^{(k+1)} &:= 
n^{-1} \sum_{i=1}^n w_{ij}^{(k)} \quad \mbox{for } j=1, \ldots,M_0+1, 
\end{align*}
and update $\bs{\mu}_j$ and $\bs{\Sigma}_j$ for $j=1,\ldots,M_0+1$ by
\begin{equation*} 
\begin{aligned}
\bs{\mu}_j^{(k+1)} & : = \frac{\sum_{i=1}^n w_{ij}^{(k)} \bs{X}_i}{\sum_{i=1}^n w_{ij}^{(k)} },\quad
\bs{\Sigma}_j^{(k+1)} : = \frac{2a_n \bs{\Omega}_j + \bs{S}_j^{(k+1)}}{2a_n + \sum_{i=1}^n w_{ij}^{(k)}},
 \\
\end{aligned}
\end{equation*}
where $\bs{S}_j^{(k+1)}:= \sum_{i=1}^nw_{ij}^{(k)} \left(\bs{X}_i-\bs{\mu}_j^{(k+1)}\right) \left(\bs{X}_i-\bs{\mu}_j^{(k+1)}\right)\t$.
The penalized likelihood value never decreases after each generalized EM step \citep[Theorem 1]{dempster77jrssb}. Note that $\bs{\vartheta}_{M_0+1}^{m(k)}$ for $k \geq 2$ does not use the restriction $\hat \Xi_m$. For each $\tau_0 \in \mathcal{T}$ and $k$, define
\begin{equation*}
\text{M}_{n}^{m(k)}(\tau_0) : = 2\left\{PL_{n}^m(\bs{\vartheta}_{M_0+1}^{m(k)}(\tau_0) ) +p(\tau^{(k)}) - L_{0,n}(\widehat{\bs{\vartheta}}_{M_0}) \right\}, 
\end{equation*} 
where $\widehat{\bs{\vartheta}}_{M_0}$ and $L_{0,n}({\bs{\vartheta}}_{M_0})$ are defined in (\ref{PMLEs}). 

Finally, with a pre-specified number $K$, define the \textit{local EM test statistic} for testing $H_{0,1m}$ by taking the maximum of $\text{M}_n^{m(K)}(\tau_0)$ over $\tau_0\in\mathcal{T}$ as $\text{EM}_{n}^{m(K)} : = \max \{\text{M}_{n}^{m(K)}(\tau_0): \tau_0 \in \mathcal{T} \}$. The \textit{EM test statistic} is defined as the maximum of $M_0$ local EM test statistics:
\begin{equation}\label{EM-test}
\text{EM}_{n}^{(K)}: = \max\left\{\text{EM}_{n}^{1(K)},\text{EM}_{n}^{2(K)},\ldots,\text{EM}_{n}^{M_0(K)}\right\}.
\end{equation}
The following proposition shows that for any finite $K$, the EM test statistic is asymptotically equivalent to the penalized LRTS for testing $H_{01}$.

\begin{proposition} \label{EM_stat-1}
Suppose that  Assumptions \ref{assn_consis} and \ref{A-vec-2} hold, $a_n$ in (\ref{penalty-em}) satisfies $a_n = O(1)$, and $\{0.5\} \in \mathcal{T}$. Then, under the null hypothesis $H_0: M=M_0$, for any fixed finite $K$, $\text{EM}_{n}^{{(K)}} \rightarrow_d \max\{v_1,\ldots, v_{M_0}\}$ as $n \rightarrow \infty$, where the $v_m$'s are given in Proposition \ref{local_lr-2}.
\end{proposition}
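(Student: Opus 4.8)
\textbf{Proof plan for Proposition \ref{EM_stat-1}.}
The strategy is to show that, for each $m$ and each fixed $K$, the local EM test statistic $\text{EM}_n^{m(K)}$ has the same asymptotic distribution as the component $v_m$ appearing in Proposition \ref{local_lr-2}, and then take the maximum over $m=1,\ldots,M_0$. The key point is that although the EM iterations do \emph{not} explicitly maximize the penalized likelihood, after finitely many generalized EM steps starting from the restricted penalized MLE $\bs{\vartheta}_{M_0+1}^{m(1)}(\tau_0)$, the iterate $\bs{\vartheta}_{M_0+1}^{m(K)}(\tau_0)$ is already close enough to the unconstrained restricted maximizer that the resulting objective value $\text{M}_n^{m(K)}(\tau_0)$ matches the penalized LRTS up to $o_p(1)$. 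I would first establish, as in the analysis leading to Proposition \ref{local_lr-2}, the quadratic (fourth-order in $\bs{\lambda}_{\bs\mu}$, second-order in $\bs{\lambda}_{\bs v}$) expansion of the restricted log-likelihood $PL_n^m$ in a shrinking neighborhood of $\Xi_m^*$, in terms of the reparameterized quantities $\bs{t}_{\bs\lambda}(\bs\lambda,\alpha)$ from (\ref{tpsi_defn}). Consistency of the restricted PMLE (Proposition \ref{P-consis} applied componentwise, using the separation provided by the sets $D_j^*$ and $\widehat D_j$) guarantees that we are eventually in this neighborhood.

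Next I would analyze one generalized EM step in the reparameterized coordinates. The crucial observation is the one used by \citet{chenli09as}, \citet{chenlifu12jasa}, and \citet{kasaharashimotsu15jasa}: in the degenerate directions $\bs{\lambda}_{\bs\mu},\bs{\lambda}_{\bs v}$ the information enters only through $\bs{t}_{\bs\lambda}$, and $\sqrt n$ times the relevant reparameterized coordinates are $O_p(1)$ (of order $n^{-1/4}$ for $\bs{\lambda}_{\bs\mu}$ and $n^{-1/2}$ for $\bs{\lambda}_{\bs v},\bs{\lambda}_{\bs\mu}^{\otimes 2}$). One shows that the EM update acts, to leading order, as a contraction toward the constrained maximizer of the quadratic form $-r^m(\cdot)$ over the relevant cone, \emph{except} for the mixing proportion direction: since $\tau^{(k)}$ is updated freely (subject to $p(\tau)$) and $\{0.5\}\in\mathcal{T}$, the choice $\tau_0=0.5$ makes $\alpha_m(1-\alpha_m)$ maximal and the penalty $p(0.5)=0$ vanish, so that the $\tau$-direction does not shrink the quadratic form. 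Hence after $K\geq 1$ steps the iterate attains, up to $o_p(1)$, the supremum of the quadratic approximation, which by the construction in (\ref{t-lambda}) and the definition of $\bs{Z}_{\bs\lambda}^m$ equals $\max\{(\widehat{\bs t}_{\bs\lambda,m}^1)\t\bs{\mathcal I}_{\bs\lambda.\bs\eta}^m\widehat{\bs t}_{\bs\lambda,m}^1,(\widehat{\bs t}_{\bs\lambda,m}^2)\t\bs{\mathcal I}_{\bs\lambda.\bs\eta}^m\widehat{\bs t}_{\bs\lambda,m}^2\}=v_m$. Taking the maximum over the finite set $\mathcal T$ and over $m$, and invoking the negligibility of the penalty terms $p_n^m$ (since $a_n=O(1)$) as in Proposition \ref{P-LR-N1}, gives $\text{EM}_n^{(K)}\to_d\max\{v_1,\ldots,v_{M_0}\}$.

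The main obstacle is the second step: showing rigorously that a \emph{fixed finite} number of generalized EM iterations suffices to reach the constrained maximizer of the quadratic form up to $o_p(1)$, despite the non-standard $n^{-1/4}$ rate in the $\bs{\lambda}_{\bs\mu}$ direction and the fact that the iterates for $k\geq 2$ leave the restricted space $\widehat\Xi_m$. The resolution, following \citet{kasaharashimotsu15jasa}, is that the first EM step already moves the $O_p(1)$ reparameterized coordinate $\sqrt n\,\bs{t}_{\bs\lambda}$ to within $o_p(1)$ of its constrained optimum because the constrained maximization over the cone $\Lambda_{\bs\lambda}^j$ is, in these coordinates, essentially a single explicit projection; subsequent steps only produce higher-order corrections. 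One must carefully track the interplay between the $\bs{\lambda}_{\bs\mu}$ and $\bs{\lambda}_{\bs v}$ updates through the cross terms $C_1,C_2,b(\alpha)$ in the reparameterization (\ref{repara2})--(\ref{tpsi_defn}) and verify that the moment conditions in Assumption \ref{A-taylor1} (tenth moment of $\bs Z$) control the remainder uniformly over $\tau_0\in\mathcal T$; I would handle this with the empirical-process bounds $\nu_n(\cdot)$ and the nonsingularity of $\widetilde{\bs{\mathcal I}}$ in Assumption \ref{A-vec-2}.
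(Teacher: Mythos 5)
Your high-level skeleton is right (localize to each $m$, use the quadratic expansion from the proof of Proposition \ref{local_lr-2}, exploit $\{0.5\}\in\mathcal{T}$, and note the penalty is negligible), but the core of your argument for why finitely many EM steps suffice is the wrong mechanism, and it is where the proof would fail. You propose to analyze the generalized EM update directly in the reparameterized coordinates and show it "acts as a contraction toward the constrained maximizer," with the first step already reaching the optimum of the quadratic form up to $o_p(1)$ "because the constrained maximization over the cone is essentially a single explicit projection." That is not true of the actual M-step (weighted means and covariances); a single generalized EM iteration from a consistent starting point does not in general attain the maximizer up to $o_p(1)$ — EM converges linearly, not in one step — and nothing in the construction makes the update a projection onto $\Lambda_{\bs\lambda}^{j}$. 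Trying to prove this would be both very delicate and unnecessary.

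The missing idea is a sandwich argument that avoids analyzing the EM map altogether. First, $\bs{\vartheta}_{M_0+1}^{m(1)}(\tau_0)$ is by \emph{definition} the restricted penalized MLE over $\Theta^m(\tau_0)$, so for $\tau_0=0.5$ it is a local penalized MLE in a neighborhood of $\Upsilon_{1m}^*$ and repeating the proof of Proposition \ref{local_lr-2} gives $\text{EM}_n^{m(1)}=\max_j\{\omega_{n,m}^{j}\}+o_p(1)$ directly — no claim about an EM step being a projection is needed. Second, for $K\geq 2$ one bounds $\text{M}_n^{m(K)}(\tau_0)$ from below by $\text{M}_n^{m(1)}(\tau_0)$ using the monotonicity of the generalized EM algorithm (the penalized likelihood never decreases), and from above by the value at the local maximizer $\widetilde{\bs{\vartheta}}_{M_0+1}^{m}$ of $PL_n^m$ subject to $\alpha_m/(\alpha_m+\alpha_{m+1})=\tau^{(K)}$ in a small neighborhood of $\bs{\vartheta}_{M_0+1}^{m*}(\tau^{(K)})$; both bounds equal $\max_j\{\omega_{n,m}^{j}\}+o_p(1)$. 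The upper bound requires consistency of the iterates $\bs{\vartheta}_{M_0+1}^{m(K)}(\tau_0)$ and of $\tau^{(K)}$ even though the iterates leave $\widehat{\Xi}_m$ for $k\geq 2$ — you correctly flag this as an issue, and it is resolved by an induction on $k$ using the fact that each EM step preserves the penalized likelihood lower bound together with a separate lemma showing $\tau^{(k+1)}-\tau_0=o_p(1)$ whenever $\bs{\vartheta}_{M_0+1}^{m(k)}$ is consistent (Lemma \ref{tau_update}). Without replacing your contraction step by this sandwich, the proof does not go through.
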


\section{Asymptotic distribution under local alternatives} 
In this section, we derive the asymptotic distribution of our LRTS  and EM test statistic  under local alternatives. For brevity, we focus on the case of testing $H_0: M=1$ against $H_A: M=2$.  

Given a local parameter $\bs{h} =(\bs{h}_{\bs \eta}\t,
\bs{h}_{\bs \lambda }\t)\t$ and $\alpha  \in (\epsilon_1,1-\epsilon_1)$, we consider the sequence of contiguous local alternatives $\bs{\vartheta}_{n} = (\bs{\psi}_n\t,\alpha_n)\t = (\bs{\eta}_n\t,\bs\lambda_n\t,\alpha_n)\t\in\Theta_{\bs{\eta}}\times\Theta_{\bs\lambda}\times\Theta_{\alpha}$ such that, with $\bs{t}_{\bs\lambda}(\bs\lambda,\alpha)$ given by (\ref{tpsi_defn}), 
\begin{equation}\label{local-alternative}
\bs{h}_{\bs \eta}= \sqrt{n}(\bs{\eta}_n-\bs\eta^*),\quad \bs{h}_{\bs\lambda} =\sqrt{n} \bs{t}_{\bs\lambda}(\bs\lambda_{n},\alpha_n)+o(1),\quad \text{and }\ \alpha_n = \alpha+o(1).
\end{equation} 

Let $\mathbb{P}_{\bs{\vartheta}}^n$ be the probability measure on $\{\bs{X}_i\}_{i=1}^n$ conditional on $\{\bs{Z}_i\}_{i=1}^n$ under $\bs{\vartheta}$. Then, for the density (\ref{loglike}), the log-likelihood ratio is given by
\[
\log \frac{d\mathbb{P}_{\bs{\vartheta}_n}^n}{d \mathbb{P}_{\bs{\vartheta}^*}^n} = L_n(\bs{\psi}_n,\alpha_n)-L_n(\psi^*,\alpha)= \sum_{i=1}^n \log \left( \frac{g(X_i|Z_i;\bs{\eta}_n,\bs{\lambda}_n,\alpha_n)}{g(X_i|Z_i;\bs{\eta}^*,\bs{0},\alpha)} \right).
\]  

The following proposition provides the asymptotic distribution of the LRTS under contiguous local alternatives. 
\begin{proposition} \label{P-LAN2} Suppose that the assumptions of Proposition \ref{P-LR-N1} hold. Consider a sequence of contiguous local alternatives $\bs{\vartheta}_{n} = ((\bs{\eta}^*)\t,(\bs{\lambda}_n) \t,\alpha_n)$, where $\bs{\lambda}_n$ and $\alpha_n$  are given by (\ref{local-alternative}). Then, under $H_{1n}: \bs{\vartheta} = \bs{\vartheta}_{n}$, we have 
$LR_{n}(\epsilon_1) \rightarrow_d \max\left\{ (\widehat{\bs{t}}_{\bs{\lambda}}^{1})\t \bs{\mathcal{I}}_{\bs{\lambda}.\bs{\eta}} \widehat{\bs{t}}_{\bs{\lambda}}^{1}, (\widehat{\bs{t}}_{\bs{\lambda}}^{2})\t \bs{\mathcal{I}}_{\bs{\lambda}.\bs{\eta}} \widehat{\bs{t}}_{\bs{\lambda}}^{2} \right\}$, where $\widehat{\bs{t}}_{\bs{\lambda}}^1$ and $\widehat{\bs{t}}_{\bs{\lambda}}^2$ are defined as in (\ref{t-lambda}) but replacing $\bs{Z_{\lambda}}$ with $\left(\bs{I_{\lambda,\eta}}\right)^{-1} \bs{G_{\lambda.\eta}}+\bs{h}_{\bs\lambda}$. 
\end{proposition}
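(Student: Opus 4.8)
The plan is to exploit the contiguity of the local-alternative sequence $\{\mathbb{P}_{\bs{\vartheta}_n}^n\}$ to $\{\mathbb{P}_{\bs{\vartheta}^*}^n\}$ together with Le Cam's third lemma, so that the limiting distribution of $LR_n(\epsilon_1)$ under $H_{1n}$ is obtained from its behavior under the null by a location shift of the driving Gaussian process. First I would establish a locally asymptotically quadratic (LAQ) expansion of $L_n(\bs{\psi},\alpha)-L_n(\bs{\psi}^*,\alpha)$ in a shrinking neighborhood of $\bs{\psi}^*$, in terms of the reparameterized quantities $\sqrt{n}(\bs{\eta}-\bs{\eta}^*)$ and $\sqrt{n}\,\bs{t}_{\bs\lambda}(\bs\lambda,\alpha)$ and the score $\bs{s}(\bs{X},\bs{Z})$ of \eqref{score_defn}; this is precisely the higher-order Taylor expansion already developed for the proof of Proposition \ref{P-LR-N1}, so I would invoke it rather than re-derive it. Along the null-driven version, $LR_n(\epsilon_1) = \max_{\alpha}\; \sup\bigl\{ 2\nu_n(\bs{s})\t\bs{t} - \bs{t}\t\bs{\mathcal{I}}\bs{t} + o_p(1)\bigr\}$ over admissible $\bs{t}=(\bs{t}_{\bs\eta}\t,\bs{t}_{\bs\lambda}\t)\t$, and profiling out $\bs{t}_{\bs\eta}$ yields the projection representation with $\bs{G}_{\bs\lambda.\bs\eta}$ and $\bs{\mathcal{I}}_{\bs\lambda.\bs\eta}$ as in Proposition \ref{P-LR-N1}.

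Next I would compute the log-likelihood ratio $\Lambda_n := \log(d\mathbb{P}_{\bs{\vartheta}_n}^n/d\mathbb{P}_{\bs{\vartheta}^*}^n)$ and show it admits the LAN-type expansion $\Lambda_n = \bs{h}\t \nu_n(\bs{s}) - \tfrac12 \bs{h}\t\bs{\mathcal{I}}\bs{h} + o_p(1)$, where $\bs{h}=(\bs{h}_{\bs\eta}\t,\bs{h}_{\bs\lambda}\t)\t$ is the local parameter in \eqref{local-alternative}; the key input is that $\bs{h}_{\bs\lambda}=\sqrt{n}\,\bs{t}_{\bs\lambda}(\bs\lambda_n,\alpha_n)+o(1)$ exactly matches the $\bs{t}_{\bs\lambda}$-direction appearing in the quadratic expansion, so that the drift enters through the same finite-dimensional score $\bs{s}$ despite the degeneracy of the Fisher information in the original parameterization. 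Combining this with the joint CLT $(\nu_n(\bs{s}), \Lambda_n) \Rightarrow (\bs{G}, \bs{h}\t\bs{G}-\tfrac12\bs{h}\t\bs{\mathcal{I}}\bs{h})$ with $\bs{G}\sim N(\bs{0},\bs{\mathcal{I}})$, Le Cam's third lemma gives that under $H_{1n}$ the score $\nu_n(\bs{s})$ converges weakly to $N(\bs{\mathcal{I}}\bs{h},\bs{\mathcal{I}})$. Feeding this shifted Gaussian limit back into the quadratic-form maximization and profiling out $\bs{t}_{\bs\eta}$ (which absorbs the shift $\bs{h}_{\bs\eta}$ harmlessly, since $\bs{\eta}^*$ is an interior point), the effective drift on the $\bs\lambda$-block becomes $\bs{h}_{\bs\lambda}$ after the conditional-information reduction, so $\bs{G}_{\bs\lambda.\bs\eta}$ is replaced by $\bs{G}_{\bs\lambda.\bs\eta}+\bs{\mathcal{I}}_{\bs\lambda.\bs\eta}\bs{h}_{\bs\lambda}$, i.e. $\bs{Z}_{\bs\lambda}=(\bs{\mathcal{I}}_{\bs\lambda.\bs\eta})^{-1}\bs{G}_{\bs\lambda.\bs\eta}$ is replaced by $(\bs{\mathcal{I}}_{\bs\lambda.\bs\eta})^{-1}\bs{G}_{\bs\lambda.\bs\eta}+\bs{h}_{\bs\lambda}$, exactly as claimed. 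The $\max$ over $\alpha\in[\epsilon_1,1-\epsilon_1]$ and over the two cones $\Lambda^1_{\bs\lambda},\Lambda^2_{\bs\lambda}$ is handled by the continuous mapping theorem as in the null case.

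I expect the main obstacle to be the justification, uniformly over the relevant neighborhood, that the remainder terms in the higher-order quadratic expansion remain $o_p(1)$ under the \emph{alternative} measures $\mathbb{P}_{\bs{\vartheta}_n}^n$ rather than only under $\mathbb{P}_{\bs{\vartheta}^*}^n$; this is where contiguity does the work — $o_p(1)$ under the null transfers to $o_p(1)$ under $H_{1n}$ provided $\Lambda_n$ is tight, which the LAN expansion delivers. A secondary technical point is verifying that the degenerate directions (the fourth-order-in-$\bs\lambda_{\bs\mu}$ and second-order-in-$\bs\lambda_{\bs v}$ terms encoded in $\bs{t}_{\bs\lambda}$) are genuinely picked up by the local drift, i.e. that the definition of $\bs{h}_{\bs\lambda}$ in \eqref{local-alternative} is the right normalization so that no additional cross-terms between $\bs{h}_{\bs\eta}$ and $\bs{h}_{\bs\lambda}$ survive in the limit; this follows from \eqref{dvareta} and the fact that the first, second, and third $\bs\lambda_{\bs\mu}$-derivatives and first $\bs\lambda_{\bs v}$-derivative of $g$ vanish at $\bs{\psi}^*$, which makes the expansion block-triangular in exactly the way needed. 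The remaining steps — the CLT for $\nu_n(\bs{s})$, finiteness of $\bs{\mathcal{I}}$ under Assumption \ref{A-taylor1}, and the continuous mapping argument for the projections onto $\Lambda^1_{\bs\lambda}$ and $\Lambda^2_{\bs\lambda}$ — are routine given Proposition \ref{P-LR-N1}.
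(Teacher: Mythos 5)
Your proposal is correct and follows essentially the same route as the paper: establish the LAN expansion of $\log(d\mathbb{P}_{\bs{\vartheta}_n}^n/d\mathbb{P}_{\bs{\vartheta}^*}^n)$ (the paper's Lemma \ref{P-LAN}), use contiguity to transfer the quadratic approximation of Lemma \ref{P-quadratic} to $\mathbb{P}_{\bs{\vartheta}_n}^n$, and apply Le Cam's third lemma so that $\bs{G}_{\bs{\lambda}.\bs{\eta}n}$ acquires the drift $\bs{\mathcal{I}}_{\bs{\lambda}.\bs{\eta}}\bs{h}_{\bs{\lambda}}$, which shifts $\bs{Z}_{\bs{\lambda}}$ by $\bs{h}_{\bs{\lambda}}$ in the cone projections. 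Your observation that any $\bs{h}_{\bs{\eta}}$ component cancels after the conditional-information reduction is consistent with the paper, which simply notes $\bs{h}_{\bs{\eta}}=\bs{0}$ for the stated sequence.
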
 

In this proposition, the local alternatives are implicitly defined through the condition that $\bs{h}_{\bs\lambda} =\sqrt{n} \bs{t}_{\bs\lambda} (\bs\lambda_{n} ,\alpha_n)+o(1)$. We now give an example for $d=1$, where we explicitly construct local alternatives with different orders, including the ones with order $n^{1/8}$.
\begin{example} When $d=1$, for $\overline\alpha\in(\epsilon_1,1-\epsilon_1)$ and $\overline{\bs{\lambda}}:=(\overline{{\lambda}}_{\mu},\overline{{\lambda}}_{v})\t$, define  
\begin{align*}
&H_{1n}^a : \bs{\vartheta}_n^a=((\bs{\eta}_n^a)\t,(\bs{\lambda}_{n}^a)\t,\alpha_n^a)\t : = ((\bs\eta^*)\t,(\overline{\bs{\lambda}}/n^{1/4})\t,\overline\alpha + o(1))\t,\\
&H_{1n}^b : \bs{\vartheta}_n^b=((\bs{\eta}_n^b)\t,(\bs{\lambda}_{n}^b)\t,\alpha_n^b)\t : = ((\bs\eta^*)\t,\overline{{\lambda}}_{\mu}/n^{1/8},\overline{{\lambda}}_{v}/n^{3/8},\overline\alpha + o(1))\t.
\end{align*} 
Then, for $j\in\{a,b\}$, $\bs{h}_{\bs\lambda}^j =\sqrt{n} \bs{t}_{\bs\lambda} (\bs\lambda_{n}^j ,\alpha_n^j)+o(1)$ holds with $\bs{h}_{\bs\lambda}^a := 12\overline\alpha(1-\overline\alpha)\times ( \overline{{\lambda}}_{\mu} \overline{{\lambda}}_{v}, \overline{{\lambda}}_{v}^2)$ and $\bs{h}_{\bs\lambda}^b := \overline\alpha(1-\overline\alpha)\times ( 12\overline{{\lambda}}_{\mu} \overline{{\lambda}}_{v}, b(\overline\alpha)\overline{{\lambda}}_{\mu}^4 )\t$. Therefore, Proposition \ref{P-LAN2} gives the asymptotic distribution of $LR_{n}(\epsilon_1)$. 
\end{example}

\section{Parametric bootstrap} 

Given that it may not be easy to simulate the asymptotic distributions of the LRTS and the EM test statistic for testing $H_0: M=M_0$ against $H_A: M=M_0+1$, we provide the validity of parametric bootstrap. We consider the following parametric bootstrap to obtain the bootstrap critical value $c_{\alpha,B}$ and bootstrap $p$-value.
\begin{enumerate}
\item Using the observed data, compute $\widehat{\bs{\vartheta}}_{M_0}$ and compute $LR_{n}^{M_0}(\epsilon_1)$ in (\ref{LR-M_0}) and $\text{EM}_{n}^{{(K)}}$ in (\ref{EM-test}).
\item Given $\widehat{\bs{\vartheta}}_{M_0}$, generate $B$ independent samples $\{\bs X_1^b,\ldots,\bs X_n^b\}_{b=1}^B$ under $H_0$ with $\bs\vartheta_{M_0}=\widehat{\bs{\vartheta}}_{M_0}$ conditional on the observed value of $\{\bs Z_1,\ldots,\bs Z_n\}$.
\item For each simulated sample $\{\bs X_1^b,\ldots,\bs X_n^b\}$ with $\{\bs Z_1,\ldots,\bs Z_n\}$, compute $LR_{n}^{M_0,b}(\epsilon_1)$ and $\text{EM}_{n}^{{(K),b}}$ as in Step 1 for $b=1,\ldots,B$.
\item Let $c_{\alpha,B}$ be the $(1-\alpha)$ quantile of $\{LR_{n}^{M_0,b}\}_{b=1}^B$ or $\{EM_{n}^{(K),b}\}_{b=1}^B$, and define the bootstrap $p$-value as $B^{-1}\sum_{b=1}^B \mathbb{I}\{ LR_{n}^{M_0,b} > LR_{ n}^{M_0}\}$ or $B^{-1}\sum_{b=1}^B \mathbb{I}\{ EM_{n}^{(K),b} > EM_{ n}^{(K)}\}$.
\end{enumerate} 
The following proposition shows the consistency of the bootstrap critical values $c_{\alpha,B}$ for testing $H_0: M=1$. The case of testing $H_0:M=M_0$ for $M_0\geq 2$ can be proven similarly. 
\begin{proposition} \label{P-bootstrap} 
Suppose that the assumptions of Proposition \ref{P-LR-N1} holds. Then, the bootstrap critical values $c_{\alpha,B}$ converge to the asymptotic critical values in probability as $n$ and $B$ go to infinity under $H_0$ and under the local alternatives described in Propositions \ref{P-LAN2}.
\end{proposition}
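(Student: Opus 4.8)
## Proof proposal for Proposition \ref{P-bootstrap}

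The plan is to exploit the fact that the bootstrap data-generating process is an honest instance of the null model, so that the asymptotic analysis developed for Proposition \ref{P-LR-N1} (and Proposition \ref{P-LAN2}) can be re-run conditionally on the data. First I would recall that, under $H_0: M=1$, $\widehat{\bs{\vartheta}}_{M_0} = (\widehat{\bs{\gamma}}_0,\widehat{\bs{\mu}}_0,\widehat{\bs{\Sigma}}_0)$ is $\sqrt{n}$-consistent for $(\bs{\gamma}^*,\bs{\mu}^*,\bs{\Sigma}^*)$ by standard MLE theory for the regular one-component model; hence with probability approaching one the bootstrap parameter lies in a shrinking neighborhood of the truth. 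The bootstrap samples $\{\bs{X}_i^b\}_{i=1}^n$ are i.i.d.\ draws (conditional on $\{\bs{Z}_i\}$) from $f(\bs{x}|\bs{z};\widehat{\bs{\gamma}}_0,\widehat{\bs{\mu}}_0,\widehat{\bs{\Sigma}}_0)$, which is a one-component normal density with parameter converging to the true value. Therefore every ingredient of the proof of Proposition \ref{P-LR-N1} — the consistency result of Proposition \ref{P-consis}, the higher-order reparameterized expansion of the log-likelihood, the quadratic approximation in $\bs{t}_{\bs{\lambda}}$, and the cone-projection characterization of the limit — goes through verbatim with $(\bs{\gamma}^*,\bs{\mu}^*,\bs{\Sigma}^*)$ replaced by $(\widehat{\bs{\gamma}}_0,\widehat{\bs{\mu}}_0,\widehat{\bs{\Sigma}}_0)$, provided the relevant empirical-process and moment conditions hold uniformly over a neighborhood of the truth.

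The key steps, in order, are: (i) show the moment conditions of Assumption \ref{A-taylor1} and the nonsingularity of $\bs{\mathcal{I}}$ are stable under small perturbations of the generating parameter, so that for $n$ large the bootstrap model satisfies the hypotheses of Proposition \ref{P-LR-N1} almost surely along the data sequence; (ii) establish a conditional (bootstrap) functional central limit theorem, i.e.\ that $\nu_n^b(\bs{s}(\bs{x},\bs{z}))$ — the bootstrap empirical process of the score vector evaluated at the bootstrap parameter — converges in distribution, conditionally on the data and in probability, to the same Gaussian limit $\bs{G}_{\bs{\lambda}.\bs{\eta}} \sim N(0,\bs{\mathcal{I}}_{\bs{\lambda}.\bs{\eta}})$ that appears in Proposition \ref{P-LR-N1}; this uses a triangular-array / conditional-multiplier CLT together with uniform (in the generating parameter) control of the relevant envelope functions, which is where the tenth-moment assumption on $\bs{Z}$ is used; (iii) conclude by the continuous mapping theorem that $LR_n^{M_0,b}(\epsilon_1)$ and $\text{EM}_n^{(K),b}$ converge, conditionally in probability, to the same limiting law $\max\{(\widehat{\bs{t}}_{\bs{\lambda}}^1)\t\bs{\mathcal{I}}_{\bs{\lambda}.\bs{\eta}}\widehat{\bs{t}}_{\bs{\lambda}}^1,(\widehat{\bs{t}}_{\bs{\lambda}}^2)\t\bs{\mathcal{I}}_{\bs{\lambda}.\bs{\eta}}\widehat{\bs{t}}_{\bs{\lambda}}^2\}$; and (iv) since this limiting distribution is continuous (being a projection of a nondegenerate Gaussian onto a cone), deduce that the bootstrap quantile $c_{\alpha,B}$ converges in probability to the $(1-\alpha)$ quantile of the limit as $n,B\to\infty$, by the standard quantile-convergence argument (e.g.\ a Polya-type uniform convergence of the bootstrap CDF plus inversion). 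For the local-alternative statement, note that under the contiguous sequence of Proposition \ref{P-LAN2} the estimator $\widehat{\bs{\vartheta}}_{M_0}$ is still consistent for the true one-component parameter (the drift in $\bs{\lambda}_n$ vanishes), so the bootstrap DGP again converges to the null model and the same limit obtains; contiguity ensures the conditional convergence "in $\mathbb{P}_{\bs{\vartheta}^*}^n$-probability" is equivalent to "in $\mathbb{P}_{\bs{\vartheta}_n}^n$-probability."

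The main obstacle I anticipate is step (ii): establishing the conditional functional limit theorem uniformly enough to transfer the delicate higher-order expansion to the bootstrap world. Because the information about $\bs{\lambda}$ is carried by third- and fourth-order derivatives of the normal density, the score components $\bs{s}_{\bs{\mu v}}$ and $\bs{s}_{\bs{\mu}^4}$ are high-degree polynomials in $\bs{x}$ and $\bs{z}$ times the Gaussian density ratio, and their envelopes must be bounded by an integrable function uniformly over a neighborhood of $(\bs{\gamma}^*,\bs{\mu}^*,\bs{\Sigma}^*)$; one must check that the class of such functions is Donsker with a square-integrable envelope, and that the bootstrap empirical measure indexed by this class satisfies a conditional CLT with the correct covariance. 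The remaining subtlety is that the bootstrap objective must be maximized over the same constrained set implied by $\alpha\in[\epsilon_1,1-\epsilon_1]$ (or, for the EM statistic, over $\tau_0\in\mathcal{T}$ with the penalty $p(\tau)$), so the argmax / cone-projection step of Proposition \ref{P-LR-N1} and the EM-equivalence argument of Proposition \ref{EM_stat-1} must be invoked with parameters that are themselves random; this is handled by noting that all those arguments are continuous in the generating parameter and that convergence is uniform on the relevant shrinking neighborhood. I would organize the write-up so that the bulk of the work is a single lemma asserting "Proposition \ref{P-LR-N1} holds uniformly over generating parameters in a neighborhood of the truth, with the convergence being joint in the data and the bootstrap randomness," after which the quantile-consistency conclusion is routine.
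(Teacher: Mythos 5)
Your strategy is sound in outline but follows a genuinely different, and substantially more laborious, route than the paper. You propose to re-run the whole expansion conditionally on the data with the generating parameter replaced by $(\widehat{\bs{\gamma}}_0,\widehat{\bs{\mu}}_0,\widehat{\bs{\Sigma}}_0)$, which forces you to establish a triangular-array/conditional bootstrap CLT for the score process together with uniform-in-parameter control of the envelopes --- precisely the step you flag as the main obstacle and leave unresolved. The paper avoids this entirely by a two-step reduction: first, since $\sqrt{n}(\widehat{\bs{\eta}}_n-\bs{\eta}^*)=O_p(1)$, the Almost Sure Representation Theorem lets one replace the random bootstrap generating parameter by an almost surely convergent version, so that it suffices to prove the result along \emph{deterministic} sequences $\{\bs{\eta}_n\}$ with $\sqrt{n}(\bs{\eta}_n-\bs{\eta}^*)\to\bs{h}_{\bs{\eta}}$ finite; second, Lemma \ref{lemma_btsp} shows that for any such sequence the LRTS has the same limit as under $\bs{\eta}^*$, using the LAN machinery already built (Lemma \ref{P-LAN}, Le Cam's third lemma) and the key algebraic observation that the efficient score is invariant to $\bs{\eta}$-shifts, $\bs{G}_{\bs{\lambda}.\bs{\eta}n}^{\bs{h}}=\bs{G}_{\bs{\lambda}n}^{\bs{h}}-\bs{\mathcal{I}}_{\bs{\lambda}\bs{\eta}}\bs{\mathcal{I}}_{\bs{\eta}}^{-1}\bs{G}_{\bs{\eta}n}^{\bs{h}}=\bs{G}_{\bs{\lambda}n}$. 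What your approach buys is generality (it would work without the contiguity structure); what the paper's approach buys is that no new empirical-process theory is needed --- the orthogonalization of $\bs{\lambda}$ against $\bs{\eta}$ in the quadratic approximation does all the work. Your treatment of the local-alternative case (contiguity makes the two modes of convergence equivalent, and $\widehat{\bs{\eta}}_n$ remains $\sqrt{n}$-consistent) and of the EM statistic (asymptotic equivalence with the LRTS) matches the paper.
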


\section{  Homoscedastic multivariate finite normal mixture models} 

In this section, we consider testing the order of homoscedastic multivariate normal mixtures. We consider the likelihood ratio test but do not consider the EM test because, unlike heteroscedastic normal mixtures, homoscedastic normal mixture models do not suffer from infinite Fisher information and unbounded likelihood.

\subsection{Likelihood ratio test of $H_0: M = 1$ against $H_A: M = 2$}

Consider a two-component normal mixture density function with common variance:
\begin{equation*}
f_2(\bs{x}|\bs{z};\bs{\vartheta}_2) := \alpha f(\bs{x}|\bs{z}; \bs{\gamma} ,\bs{\mu}_1,\bs{\Sigma}) + (1-\alpha) f(\bs{x}|\bs{z}; \bs{\gamma},\bs{\mu}_2,\bs{\Sigma}), 
\end{equation*}
with ${\bs{\vartheta}}_2 = (\alpha,\bs{\gamma}, \bs{\mu}_1, \bs{\mu}_2,\bs{\Sigma}) \in \Theta_{{\bs{\vartheta}}_2}$. We assume $\Theta_{{\bs{\vartheta}}_2}$ is compact.
\begin{assumption} \label{assn_compact}
The parameter space $\Theta_{{\bs{\vartheta}}_2}$ is compact.
\end{assumption}
Assume $\alpha\in [0,3/4]$ without loss of generality. Then, the null hypothesis $H_0:M=1$ is written as
\[
H_0: \alpha(\bs{\mu}_1-\bs{\mu}_2)=0.
\]
For an arbitrary small $\zeta>0$, we partition the parameter space as $\Theta_{{\bs{\vartheta}}_2} = \Theta_{{\bs{\vartheta}}_2,\zeta}^1 \cup \Theta_{{\bs{\vartheta}}_2,\zeta}^2 $, where  
\[
\Theta_{{\bs{\vartheta}}_2,\zeta}^1 =\{{\bs{\vartheta}}_2\in \Theta_{{\bs{\vartheta}}_2} : |\bs{\mu}_{1}- \bs{\mu}_{2}|\leq \zeta \}\ \ \text{and}\ \ \Theta_{{\bs{\vartheta}}_2,\zeta}^2 =\{{\bs{\vartheta}}_2\in \Theta_{{\bs{\vartheta}}_2}: |\bs{\mu}_{1}- \bs{\mu}_{2}|  \geq \zeta \}.
\] 
Let $L_n({\bs{\vartheta}}_2): = \sum_{i = 1}^n \log f_2( \bs{X}_i|\bs{Z}_i;{\bs{\vartheta}}_2)$ denote the log-likelihood function, and define the two-component MLE by $\widehat{\bs{\vartheta}}_2 :=\arg\max_{ {\bs{\vartheta}}_2 \in \Theta_{{\bs{\vartheta}}_2}} L_n({\bs{\vartheta}}_2)$. Define the restricted two-component MLE by $\widehat{\bs{\vartheta}}_{2,\zeta}^j :=\arg\max_{ {\bs{\vartheta}}_2^j \in \Theta_{{\bs{\vartheta}}_2,\zeta}^j } L_n({\bs{\vartheta}}_2)$ for $j=1,2$ so that $L_n(\widehat {\bs{\vartheta}}_2) = \max\{ L_n(\widehat {\bs{\vartheta}}_{2,\zeta}^1 ),L_n(\widehat {\bs{\vartheta}}_{2,\zeta}^2 ) \}$. Let $(\widehat{\bs{\gamma}}_0,\widehat{{\mu}}_0,\widehat{{\sigma}}^2_0)$ denote the one-component MLE that maximizes the one-component log-likelihood function $L_{0,n}(\bs{\gamma},{ {\mu}},\bs{\sigma}) := \sum_{i=1}^n \log f ( \bs{X}_i|\bs{Z}_i;\bs{\gamma},{{\mu}}, {\sigma^2})$. Define the LRTS for testing $H_0$ as $LR_{n}:= 2\{L_n(\widehat {\bs{\vartheta}}_2) - L_{0,n}(\widehat{\bs{\gamma}}_0,\widehat{{\mu}}_0,\widehat{{\sigma}}^2_0)\}=\max \{LR_{n,\zeta}^1 ,LR_{n,\zeta}^2 \}$, where $LR_{n,\zeta}^j := 2\{L_n(\widehat{\bs{\vartheta}}_{2,\zeta}^j ) - L_{0,n}(\widehat{\bs{\gamma}}_0,\widehat{{\mu}}_0,\widehat{{\sigma}}^2_0)\}$. 

In the following, we derive the asymptotic null distribution of $LR_{n,\zeta}^1$, $LR_{n,\zeta}^2$, and $LR_{n}$ by using a similar approach to the heteroscedastic case. We introduce a reparameterization that extracts the direction in which the Fisher information matrix is singular and approximate the log-likelihood in terms of the polynomials of the reparameterized parameters.

\subsubsection{Asymptotic distribution of $LR_{n,\zeta}^1$}
In this section, we derive the asymptotic distribution of $LR_{n,\zeta}^1$. Let $\bs{v}=\bs{w}(\bs{\Sigma})$ and consider the following one-to-one mapping between $(\bs{\mu}_1,\bs{\mu}_2,\bs{v})$ and the reparameterized parameter $(\bs{\lambda} ,\bs{\nu}_{\bs\mu},\bs{\nu}_{\bs{v}})$:
\begin{equation} \label{repara2-homo}
\begin{pmatrix}
{\bs{\mu}}_1\\
{\bs{\mu}}_2\\
\bs{v}
\end{pmatrix}
 =
\begin{pmatrix}
\bs{\nu}_{\bs\mu} + (1-\alpha) \bs{\lambda} \\
\bs{\nu}_{\bs\mu} -\alpha \bs{\lambda} \\
\bs{\nu}_{\bs{v}} - \alpha(1-\alpha) \bs{w}(\bs{\lambda} \bs{\lambda} \t)
\end{pmatrix}.
\end{equation}
In the reparameterized model, the density is given by
\begin{equation} \label{loglike-homo}
\begin{aligned}
g(\bs{x}|\bs{z};\bs{\psi},\alpha) & = \alpha f_v\left(\bs{x}\middle|\bs{z};\bs{\gamma},\bs{\nu}_{\bs\mu}+(1-\alpha)\bs{\lambda}, \bs{\nu}_{\bs{v}} - \alpha(1-\alpha) \bs{w}(\bs{\lambda}\bs{\lambda}\t)\right) \\
& \quad + (1 - \alpha) f_v \left(\bs{x} \middle|\bs{z};\bs{\gamma},\bs{\nu}_{\bs\mu} -\alpha\bs{\lambda},\bs{\nu}_{\bs{v}} - \alpha(1-\alpha) \bs{w}(\bs{\lambda}\bs{\lambda}\t) \right).
\end{aligned}
\end{equation}

We partition $\bs{\psi}$ as $\bs{\psi} = (\bs{\eta}\t,\bs{\lambda}\t)\t$, where $\bs{\eta}: = (\bs{\gamma}\t,\bs{\nu}_{\bs\mu}\t,\bs{\nu}_{\bs{v}}\t)\t \in \Theta_{\bs{\eta}}$  and $\bs{\lambda} \in \Theta_{\bs{\lambda}}$. Denote the true values of $\bs{\eta}$, $\bs{\lambda}$, and $\bs{\psi}$ under $H_{0}$ by $\bs{\eta}^*: = ((\bs{\gamma}^*)\t,({\bs{\mu}}^*)\t,(\bs{v}^{*})\t)\t$, $\bs{\lambda}^*: = \bs{0}$, and $\bs{\psi}^* = ((\bs{\eta}^*)\t, \bs{0}\t)\t$, respectively. The first derivative of (\ref{loglike-homo}) w.r.t.\ $\bs{\eta}$ under $\bs{\psi} = \bs{\psi}^*$ is given by (\ref{dvareta}) and the first and second derivatives of $g(\bs{x}|\bs{z};\bs{\psi},\alpha)$ w.r.t.\ $\bs{\lambda}$ become zero when evaluated at $\bs{\psi}=\bs{\psi}^*$. Consequently, the information on $\bs{\lambda}$ is provided by the third and fourth derivatives w.r.t.\ $\bs{\lambda}$. 

Define the score vector $\bs{s}(\bs{x},\bs{z})$ as
\begin{equation} \label{score_defn-homo}
\begin{aligned}
\bs{s}(\bs{x},\bs{z}) & := 
\begin{pmatrix}
\bs{s}_{\bs{\eta}}\\
\bs{s}_{\bs{\lambda}}
\end{pmatrix} := 
\begin{pmatrix}
\bs{s}_{\bs{\eta}}\\
\bs{s}_{\bs{\mu^3}}\\
\bs{s}_{\bs{\mu}^4}
\end{pmatrix}
\quad \text{with }\
\underset{(d \times 1)}{\bs{s}_{\bs{\eta}}} := \frac{\nabla_{(\bs{\gamma}\t,\bs{\mu}\t,\bs{v}\t)\t}f^*_v }{ f^*_v}, \\
\underset{(d_{\mu^3} \times 1)}{\bs{s}_{\bs{\mu}^3}} &:= \left\{\frac{\nabla_{\mu_i \mu_j \mu_k} f^*_v}{3! f^*_v} \right\}_{1 \leq i \leq j \leq k \leq d}, \quad
\underset{(d_{\mu^4} \times 1)}{\bs{s}_{\bs{\mu}^4}} := \left\{ \frac{\nabla_{\mu_i \mu_j \mu_k \mu_\ell} f^*_v }{ 4!f^*_v} \right\}_{1 \leq i \leq j \leq k \leq \ell \leq d} ,
\end{aligned}
\end{equation}
where we suppress the dependence of $(\bs{s}_{\bs{\eta}}, \bs{s}_{\bs{\mu}^3}, \bs{s}_{\bs{\mu}^4})$ on $(\bs{x},\bs{z})$. Collect the relevant reparameterized parameters as
\begin{equation} \label{tpsi_defn-homo}
\bs{t}(\bs{\psi},\alpha) :=
\begin{pmatrix}
\bs{\eta}-\bs{\eta}^*\\
\bs{t}_{\bs{\lambda}}(\bs{\lambda},\alpha)
\end{pmatrix}
:=
\begin{pmatrix}
\bs{\eta}-\bs{\eta}^*\\
 \alpha(1-\alpha)(1-2\alpha)\bs{\lambda}_{\bs{\mu}^3}\\
\alpha(1-\alpha)(1-6\alpha+6\alpha^2)\bs{\lambda}_{\bs{\mu}^4}
\end{pmatrix},
\end{equation}
where $\underset{(d_{\mu^3} \times 1)}{\bs{\lambda}_{\bs{\mu}^3}} = \{(\bs{\lambda}_{\bs{\mu}^3})_{ijk}\}_{1 \leq i \leq j \leq k \leq d}$ with $(\bs{\lambda}_{\bs{\mu }^3})_{ijk} := \sum_{(t_1,t_2,t_3) \in p(i,j,k)} \lambda_{t_1} \lambda_{t_2} \lambda_{t_3}$, where $\sum_{(t_1,t_2,t_3) \in p(i,j,k)}$ denotes the sum over all distinct permutations of $(i,j,k)$ to $(t_1,t_2,t_3)$, and $\bs{\lambda}_{\bs{\mu}^4}$ is given in (\ref{lambda_muv_defn}). 
 
The third and fourth order derivatives of the density ratio w.r.t. $\bs{\lambda}$ are given by $\alpha(1-\alpha)(1-2\alpha)\bs{\lambda}_{\bs{\mu}^3}\t \bs{s}_{\bs{\mu^3}}$ and $\alpha(1-\alpha)(1-6\alpha+6\alpha^2)\bs{\lambda}_{\bs{\mu}^4}\t\bs{s}_{\bs{\mu}^4}$, respectively. When $\alpha$ is bounded away from $1/2$, the third order derivative identifies $\bs{\lambda}$ because the third order derivative dominates the fourth order derivative as $\bs{\lambda}\to 0$. 
When $\alpha=1/2$, the third order derivative is identically equal to zero and the fourth order derivative identifies $\bs{\lambda}$. When $\alpha$ is in the neighborhood of $1/2$ such that $1-2\alpha \propto \bs{\lambda}$, the third and fourth order derivatives jointly identify $\bs{\lambda}$. 

Accordingly, we characterize  the limit of possible values of $\sqrt{n}\bs{t}_{\bs{\lambda}}(\bs{\lambda},\alpha)$ defined in (\ref{tpsi_defn-homo}) as $n\rightarrow\infty$ by the following two sets: 
\begin{equation} \label{Lambda-e-homo}
\begin{aligned}
\Lambda_{\bs{\lambda} }^1 &:= \left\{\left( \bs{t}_{\bs{\mu}^3}\t, \bs{t}_{\bs{\mu}^4}\t \right)\t\in \mathbb{R}^{d_{\mu^3}+d_{\mu^4}}:\ \bs{t}_{\bs{\mu}^3}= \bs{\lambda_{\mu^3}},\ \bs{t}_{\bs{\mu}^4}= \bs{0} \text{ for some $ \bs{\lambda} \in \mathbb{R}^{d}$ }\right\}, \\
\Lambda_{\bs{\lambda} }^2 &:= \left\{\left( \bs{t}_{\bs{\mu}^3}\t, \bs{t}_{\bs{\mu}^4}\t \right)\t\in \mathbb{R}^{d_{\mu^3}+d_{\mu^4}}:\ \bs{t}_{\bs{\mu}^3}=c \bs{\lambda_{\mu^3}},\ \bs{t}_{\bs{\mu}^4}=- \bs{\lambda_{\mu^4}}\text{ for some $(\bs{\lambda}\t,c)\t \in \mathbb{R}^{d+1}$ }\right\},
\end{aligned}
\end{equation} 
where $\Lambda_{\bs{\lambda} }^1$ represents the case when $\alpha$ is bounded away from $1/2$ while, by choosing different values of $c$,  $\Lambda_{\bs{\lambda} }^2$ represents both  cases when $\alpha=1/2$  and when   $\alpha$ is in the neighborhood of $1/2$.

Define $\widehat{\bs{t}}_{\bs{\lambda}}^j $ by
\begin{equation} \label{t-lambda-homo}
r(\widehat{\bs{t}}_{\bs{\lambda}}^j ) = \inf_{\bs{t}_{\bs{\lambda}} \in \Lambda_{\bs{\lambda}}^j } r(\bs{t}_{\bs{\lambda}}), \quad r(\bs{t}_{\bs{\lambda}}) := (\bs{t}_{\bs{\lambda}} -\bs{Z}_{\bs{\lambda}})\t \bs{\mathcal{I}}_{\bs{\lambda}.\bs{\eta}} (\bs{t}_{\bs{\lambda}} -\bs{Z}_{\bs{\lambda}}),
\end{equation}
where $\Lambda_{\bs{\lambda} }^j$ for $j=1,2$ is defined in (\ref{Lambda-e-homo}) while $\bs{\mathcal{I}}_{\bs{\lambda}.\bs{\eta}}$ and $\bs{Z}_{\bs{\lambda}}$ are defined by 
\begin{align*}
& \bs{\mathcal{I}}_{\bs{\lambda}.\bs{\eta}}:=\bs{\mathcal{I}}_{\bs{\lambda}}-\bs{\mathcal{I}}_{\bs{\lambda\eta}}\bs{\mathcal{I}}_{\bs{\eta}}^{-1}\bs{\mathcal{I}}_{\bs{\eta\lambda}} \quad\text{and}\quad \bs{Z}_{\bs{\lambda}}:=(\bs{\mathcal{I}}_{\bs{\lambda}.\bs{\eta}})^{-1} \bs{G}_{\bs{\lambda}.\bs{\eta}}   \quad\text{with}\\ 
&\bs{\mathcal{I}}_{\bs{\eta}} := E[\bs{s}_{\bs{\eta}}\bs{s}_{\bs{\eta}}\t], \quad \bs{\mathcal{I}}_{\bs{\lambda}} := E[\bs{s}_{\bs{\lambda}}\bs{s}_{\bs{\lambda}}\t], \quad \bs{\mathcal{I}}_{\bs{\lambda\eta} } := E[\bs{s}_{\bs{\lambda}}\bs{s}_{\bs{\eta}}\t],\quad \bs{\mathcal{I}}_{\bs{\eta \lambda}} := \bs{\mathcal{I}}_{\bs{\lambda \eta}}\t,  
\end{align*}
given $(\bs{s}_{\bs{\eta}}, \bs{s}_{\bs{\lambda}})$ in (\ref{score_defn-homo}), where $\bs{G}_{\bs{\lambda}.\bs{\eta}} \sim N(0,\bs{\mathcal{I}}_{\bs{\lambda}.\bs{\eta}})$. 

The following proposition establishes the asymptotic null distribution of $LR_{n,\zeta}^1$. 
\begin{proposition} \label{P-LR-N1-homo-1}
Suppose that Assumptions \ref{assn_consis} and \ref{A-taylor1} hold and $\bs{\mathcal{I}} := E[\bs{s}(\bs{X},\bs{Z})\bs{s}(\bs{X},\bs{Z})\t]$ is finite and nonsingular. Then, under the null hypothesis of $H_{01}: \mu_1=\mu_2$, for any $\zeta>0$, $LR_{n,\zeta}^1 \rightarrow_d \max\left\{ (\widehat{\bs{t}}^1_{\bs{\lambda}})\t \bs{\mathcal{I}}_{\bs{\lambda}.\bs{\eta}} \widehat{\bs{t}}^1_{\bs{\lambda}}, (\widehat{\bs{t}}^2_{\bs{\lambda}})\t \bs{\mathcal{I}}_{\bs{\lambda}.\bs{\eta}} \widehat{\bs{t}}^2_{\bs{\lambda}} \right\}$.
\end{proposition}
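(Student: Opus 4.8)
The plan is to mirror the argument used for the heteroscedastic case (Proposition \ref{P-LR-N1}), specialized to $\Theta_{{\bs{\vartheta}}_2,\zeta}^1$ where $|\bs{\mu}_1-\bs{\mu}_2|\le\zeta$ is small so that the reparameterization (\ref{repara2-homo}) is well-defined and the relevant expansion is driven by third- and fourth-order derivatives in $\bs{\lambda}$. First I would establish, as a preliminary, that the restricted MLE $\widehat{\bs{\vartheta}}_{2,\zeta}^1$ is consistent for the set $\Theta_2^*$ (the homoscedastic analogue of Proposition \ref{P-consis}); compactness of $\Theta_{{\bs{\vartheta}}_2}$ (Assumption \ref{assn_compact}) replaces the penalty-based argument and makes this routine via a standard Wald-type argument using $\log(f_2/f^*)$. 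Consistency localizes the analysis to a neighborhood of $\bs\psi^*=((\bs\eta^*)\t,\bs 0\t)\t$.

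Next I would carry out the key Taylor expansion of the log-likelihood ratio $L_n(\bs\psi,\alpha)-L_{0,n}(\widehat{\bs\gamma}_0,\widehat\mu_0,\widehat\sigma_0^2)$ in terms of the reparameterized parameters. The crucial structural facts — already laid out in the excerpt — are that (i) $\nabla_{\bs\eta}g(\bs x|\bs z;\bs\psi^*,\alpha)$ equals the one-component score (\ref{dvareta}), so $\bs\eta$ enters regularly; and (ii) the first and second derivatives of $g$ with respect to $\bs\lambda$ vanish at $\bs\psi^*$, so the leading $\bs\lambda$-terms are the third and fourth derivatives, which collapse (via the normal-density identity $\nabla_{\mu_i\mu_j}f = 2\nabla_{\Sigma_{ij}}f$ and the covariance adjustment $-\alpha(1-\alpha)\bs w(\bs\lambda\bs\lambda\t)$ built into (\ref{repara2-homo})) into $\alpha(1-\alpha)(1-2\alpha)\bs\lambda_{\bs\mu^3}\t\bs s_{\bs\mu^3}+\alpha(1-\alpha)(1-6\alpha+6\alpha^2)\bs\lambda_{\bs\mu^4}\t\bs s_{\bs\mu^4}$, i.e. $\bs t_{\bs\lambda}(\bs\lambda,\alpha)\t\bs s_{\bs\lambda}$. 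The expansion then takes the generalized-quadratic form $\nu_n(\bs t\t\bs s) - \tfrac12 \bs t\t\bs{\mathcal I}\bs t + o_p(1)$ uniformly over the localized parameter set, after absorbing $\bs\eta-\bs\eta^*$ by concentrating it out (which produces the partialled-out information $\bs{\mathcal I}_{\bs\lambda.\bs\eta}$ and the residualized Gaussian limit $\bs G_{\bs\lambda.\bs\eta}$). Nonsingularity of $\bs{\mathcal I}$ guarantees the quadratic form is well-behaved; Assumptions \ref{assn_consis} and \ref{A-taylor1} (finite tenth moment of $\bs Z$) supply the moment bounds needed to control remainder terms and invoke the ULLN/empirical-process arguments for the $o_p(1)$ claims. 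Maximizing the quadratic form over $\bs t_{\bs\lambda}$ in the limiting feasible cone gives $\sup_{\bs t_{\bs\lambda}} \{2\nu_n(\bs t_{\bs\lambda}\t\bs s_{\bs\lambda}) - \bs t_{\bs\lambda}\t\bs{\mathcal I}_{\bs\lambda.\bs\eta}\bs t_{\bs\lambda}\} \to_d \sup_{\bs t_{\bs\lambda}} \{2\bs t_{\bs\lambda}\t\bs G_{\bs\lambda.\bs\eta} - \bs t_{\bs\lambda}\t\bs{\mathcal I}_{\bs\lambda.\bs\eta}\bs t_{\bs\lambda}\}$, which equals $\bs Z_{\bs\lambda}\t\bs{\mathcal I}_{\bs\lambda.\bs\eta}\bs Z_{\bs\lambda} - \inf_{\bs t_{\bs\lambda}} r(\bs t_{\bs\lambda})$ — but this is not yet the stated form.

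The final step, and the one I expect to be the main obstacle, is to identify the limiting feasible set of $\sqrt n\,\bs t_{\bs\lambda}(\bs\lambda_n,\alpha_n)$ and show it is exactly $\Lambda_{\bs\lambda}^1\cup\Lambda_{\bs\lambda}^2$, so that the supremum decomposes as the stated maximum over $j=1,2$. The subtlety is the behavior of the scalar coefficients $\alpha(1-\alpha)(1-2\alpha)$ and $\alpha(1-\alpha)(1-6\alpha+6\alpha^2)$ as $\alpha\to 1/2$: when $\alpha$ is bounded away from $1/2$ the cubic term dominates and, after rescaling, $\sqrt n\,\bs t_{\bs\lambda}$ approaches a point with $\bs t_{\bs\mu^3}\propto\bs\lambda_{\bs\mu^3}$ and $\bs t_{\bs\mu^4}=\bs 0$ (giving $\Lambda_{\bs\lambda}^1$); when $\alpha$ is in an $n^{-1/2}$-ish neighborhood of $1/2$ with $1-2\alpha$ of the same order as $\bs\lambda$, both terms contribute at the same rate with a free ratio $c$, and when $\alpha=1/2$ exactly only the quartic term survives with $\bs t_{\bs\mu^4}\propto-\bs\lambda_{\bs\mu^4}$ — both subsumed in $\Lambda_{\bs\lambda}^2$ via the parameter $c\in\mathbb R$. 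I would argue the $\subseteq$ direction by a case analysis on the rate of $1-2\alpha_n$ relative to $|\bs\lambda_n|$, extracting convergent subsequences; and the $\supseteq$ direction by explicitly exhibiting sequences $(\bs\lambda_n,\alpha_n)$ realizing any target in each cone (e.g., taking $\alpha_n-1/2$ proportional to a chosen constant times $|\bs\lambda_n|$ for $\Lambda_{\bs\lambda}^2$). Once the feasible set is pinned down, $\inf_{\bs t_{\bs\lambda}\in\Lambda_{\bs\lambda}^1\cup\Lambda_{\bs\lambda}^2} r(\bs t_{\bs\lambda}) = \min_{j=1,2}\inf_{\bs t_{\bs\lambda}\in\Lambda_{\bs\lambda}^j} r(\bs t_{\bs\lambda}) = \min_{j=1,2} r(\widehat{\bs t}_{\bs\lambda}^j)$, and the identity $\bs Z_{\bs\lambda}\t\bs{\mathcal I}_{\bs\lambda.\bs\eta}\bs Z_{\bs\lambda} - r(\widehat{\bs t}_{\bs\lambda}^j) = (\widehat{\bs t}_{\bs\lambda}^j)\t\bs{\mathcal I}_{\bs\lambda.\bs\eta}\widehat{\bs t}_{\bs\lambda}^j$ (Pythagorean decomposition of the projection onto the cone $\Lambda_{\bs\lambda}^j$) delivers $LR_{n,\zeta}^1\to_d\max_{j=1,2}(\widehat{\bs t}_{\bs\lambda}^j)\t\bs{\mathcal I}_{\bs\lambda.\bs\eta}\widehat{\bs t}_{\bs\lambda}^j$. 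Auxiliary linear-algebra facts — that $\bs\lambda\mapsto\bs t_{\bs\lambda}$ has the requisite polynomial structure and that the projection identity holds on each cone — I would relegate to the appendix, citing the parallel development in \citet{kasaharashimotsu15jasa} and the heteroscedastic proof.
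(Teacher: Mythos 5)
Your proposal follows essentially the same route as the paper's proof: a quadratic expansion in the reparameterized $\bs{t}(\bs{\psi},\alpha)$ (the paper's Lemma \ref{P-quadratic-homo-1}), concentration of $\bs{\eta}$ to obtain the partialled-out form $C_n$, a partition of the parameter space according to the magnitude of $1-2\alpha$ (the paper uses the explicit threshold $n^{-1/8}\log n$) showing that the feasible limit set is locally the cone $\Lambda_{\bs{\lambda}}^1\cup\Lambda_{\bs{\lambda}}^2$, and then the projection/Pythagorean step, which the paper carries out by invoking Theorem 3(c) of \citet{andrews99em}. The correspondence is essentially exact, including your identification of the delicate point (the behavior of the cubic and quartic coefficients as $\alpha\to 1/2$ and the rate comparison between $1-2\alpha_n$ and $|\bs{\lambda}_n|$).
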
 
\begin{example}
When $d=1$ with $\bs{\lambda}=\lambda$, we have $\bs{s_{\mu^3}} = {\nabla_{\mu^3 } f^*_v}/{3! f^*_v}$, $\bs{s_{\mu^4}} = {\nabla_{\mu^4} f^*_v}/{4! f^*_v}$, and the possible values of $\sqrt{n}\bs{t}_{\bs{\lambda}}(\bs{\lambda},\alpha)=\sqrt{n}\alpha(1-\alpha) \left( (1-2\alpha)\lambda^3, (1-6\alpha+6\alpha^2)\lambda^4\right)\t$ as $n\to \infty$ are given by $\Lambda_{\lambda}^1\cup \Lambda_{\lambda}^2=\mathbb{R}\times \mathbb{R}_-$. In this case, $LR_{n,\zeta}^1 \rightarrow_d (\widehat{\bs{t}}_{\bs{\lambda}})\t \bs{\mathcal{I}}_{\bs{\lambda}.\bs{\eta}} \widehat{\bs{t}}_{\bs{\lambda}} $ with $\widehat{\bs{t}}_{\bs{\lambda}} $ defined by
$r(\widehat{\bs{t}}_{\bs{\lambda}} ) = \inf_{\bs{t}_{\bs{\lambda}} \in \mathbb{R}\times \mathbb{R}_-} r(\bs{t}_{\bs{\lambda}})$.
 When $d=2$ with $\bs{\lambda}=(\lambda_1,\lambda_2)\t$, we have   $\Lambda_{\bs{\lambda} }^1= \left\{(\bs{\lambda_{\mu^3}}\t,\bs{0}\t)\t: (\lambda_1,\lambda_2)\t \in \mathbb{R}^2\right\}$ and $
\Lambda_{\bs{\lambda} }^2 := \left\{(c\bs{\lambda_{\mu^3}}\t,-\bs{\lambda_{\mu^4}}\t)\t: (\lambda_1,\lambda_2,c)\t \in \mathbb{R}^3\right\}$ with $\bs{\lambda_{\mu^3}} = (\lambda_1^3,3\lambda_1^2\lambda_2,3\lambda_1\lambda_2^2,\lambda_2^3)\t$ and $\bs{\lambda_{\mu^4}} = (\lambda_1^4,4\lambda_1^3\lambda_2,6\lambda_1^2\lambda_2^2,4\lambda_1\lambda_2^3,\lambda_2^4)\t$. 
\end{example}

\subsubsection{Asymptotic distribution of $LR_{n,\zeta}^2$}

This section derives the asymptotic distribution of $LR_{n,\zeta}^2$. We use the reparameterization (\ref{repara2-homo}) but collect the reparameterized parameters into $\bs{\phi}:=(\bs{\eta}\t,\alpha)\t$ and $\bs{\lambda}$, where $\bs{\eta}: = (\bs{\gamma}\t,\bs{\nu}_{\bs\mu}\t,\bs{\nu}_{\bs{v}}\t)\t$. Let the resulting density be
\begin{equation} \label{loglike-homo-2}
\begin{aligned}
h(\bs{x}|\bs{z};\bs{\phi},\bs{\lambda}) & := \alpha f_v\left(\bs{x}\middle|\bs{z};\bs{\gamma},\bs{\nu}_{\bs\mu}+(1-\alpha)\bs{\lambda}, \bs{\nu}_{\bs{v}} - \alpha(1-\alpha) \bs{w}(\bs{\lambda}\bs{\lambda}\t)\right) \\
& \quad + (1 - \alpha) f_v \left(\bs{x} \middle|\bs{z};\bs{\gamma},\bs{\nu}_{\bs\mu} -\alpha\bs{\lambda},\bs{\nu}_{\bs{v}} - \alpha(1-\alpha) \bs{w}(\bs{\lambda}\bs{\lambda}\t) \right).
\end{aligned}
\end{equation} 
The right hand side of (\ref{loglike-homo-2}) is the same as that of (\ref{loglike-homo}). When we restrict the parameter space to $\Theta_{{\bs{\vartheta}}_2,\zeta}^2$, the reparameterized density $h(\bs{x}|\bs{z};\bs{\phi},\bs{\lambda})$ in (\ref{loglike-homo-2}) becomes the one-component density if and only if $\alpha =0$. Furthermore, $\bs{\lambda}$ is not identified when $\alpha=0$. Denote the true value of $\bs{\phi}$ under $H_{0}$ by $\bs{\phi}^* = ((\bs{\eta}^*)\t, 0)\t$, where $\bs{\eta}^* := ((\bs{\gamma}^*)\t,({\bs{\mu}}^*)\t,(\bs{v}^{*})\t)\t$. The MLE of $\bs{\phi}$ under the restriction $\bs{\vartheta}_2\in\Theta_{{\bs{\vartheta}}_2,\zeta}^2$ converges to $\bs{\phi}^*$ in probability. 

Define $f_v^*(\bs{x}|\bs{z};\bs{\lambda}):=f_v\left(\bs{x}|\bs{z};\bs{\gamma}^*, \bs\mu^*+ \bs{\lambda}, \bs{v}^*\right)$ so that $f^*_v(\bs{x}|\bs{z};\bs{0}) = f^*_v$ and $\nabla f^*_v(\bs{x}|\bs{z};\bs{0}) = \nabla f^*_v$. Define the score vectors $\bs{s}(\bs{x},\bs{z};\bs{\lambda})$ indexed by $\bs{\lambda}$ as 
\begin{equation} \label{score_defn-homo-2}
\bs{s}(\bs{x},\bs{z};\bs{\lambda}) := 
\begin{pmatrix}
\bs{s}_{\bs{\eta}}\\ 
{s}_{\alpha}(\bs{\lambda})
\end{pmatrix},
\end{equation}
where $\bs{s}_{\bs{\eta}} =\nabla_{(\bs{\gamma}\t,\bs{\mu}\t,\bs{v}\t)\t}f_v^*/f_v^*$ as defined in (\ref{score_defn-homo}) and 
\begin{equation} \label{score_defn-homo-3}
{s}_{\alpha}(\bs{\lambda}) := 
\frac{f_v^*(\bs{x}|\bs{z};\bs{\lambda}) -f_v^* - \nabla_{\bs{\mu}\t} f_v^* \bs{\lambda} - \nabla_{\bs{v}\t} f_v^* \bs{\lambda}_{\bs{\mu}^2} }{ |\bs{\lambda}|^3 f_v^*},
\end{equation} 
where $\underset{(d_{\mu^2} \times 1)}{\bs{\lambda}_{\bs{\mu}^2}} := \{(\bs{\lambda}_{\bs{\mu}^2})_{ij}\}_{1 \leq i \leq j \leq d}$ with $(\bs{\lambda}_{\bs{\mu }^2})_{ij}:=\lambda_{ii}^2$ if $i=j$ and $2\lambda_{ij}$ if $i \neq j$. The division by $|\bs{\lambda}|^3$ is necessary to define $s_\alpha(\bs{\lambda})$ here because, if we were to define $s_\alpha(\bs{\lambda})$ as $(f_v^*(\bs{x}|\bs{z};\bs{\lambda}) -f_v^* - \nabla_{\bs{\mu}\t} f_v^* \bs{\lambda} - \nabla_{\bs{v}\t} f_v^* \bs{\lambda}_{\bs{\mu}^2})/ f_v^*$, then we have $s_\alpha(\bs{\lambda})\to 0$ as $\bs{\lambda}\to 0$, invalidating the approximation when $\bs{\lambda}$ is close to zero.

Collect the relevant reparameterized parameters as 
\begin{equation} \label{tpsi_defn-homo-2} 
\bs{t} (\bs{\phi},\bs\lambda) 
:=
\begin{pmatrix}
\bs{t}_{\bs{\eta}}(\bs\lambda)\\
t_\alpha(\bs{\lambda})
\end{pmatrix}\quad\text{with}\quad
\bs{t}_{\bs{\eta}}(\bs\lambda):=
\begin{pmatrix}
\bs{\gamma}-\bs{\gamma}^*\\
\bs{\nu_\mu}-\bs{\mu}^* \\
\bs{\nu_v}-\bs{v}^* 
\end{pmatrix}\quad \text{and} \quad
t_\alpha(\bs{\lambda}):=\alpha |\bs{\lambda}|^3.
\end{equation} 
 
In (\ref{score_defn-homo-3}), $s_\alpha(\bs{\lambda})$ is non-degenerate and not perfectly correlated with $\bs{s}_{\bs{\eta}}$ even when $\bs{\lambda} \to \bs{0}$. With $(\bs{s}_{\bs{\eta}}, {s}_{\alpha}(\bs{\lambda}))$ defined in (\ref{score_defn-homo-2}), define 
\begin{equation} \label{I_lambda-homo-2}
\begin{aligned}
&\bs{\mathcal{I}}_{\bs{\eta}} := E[\bs{s}_{\bs{\eta}}(\bs{s}_{\bs{\eta}})\t], \quad \bs{\mathcal{I}}_{\alpha\bs{\eta} }(\bs{\lambda}) := E[{s}_{\alpha}(\bs{\lambda}) \bs{s}_{\bs{\eta}} \t], \quad \bs{\mathcal{I}}_{\bs{\eta }\alpha}(\bs{\lambda}) := (\bs{\mathcal{I}}_{\alpha \bs{\eta}}(\bs{\lambda}))\t,\\
& {\mathcal{I}}_{\alpha}(\bs{\lambda}_1,\bs{\lambda}_2) := E[{s}_{\alpha}(\bs{\lambda}_1) {s}_{\alpha}(\bs{\lambda}_2) ],\quad {\mathcal{I}}_{\alpha.\bs{\eta}}(\bs{\lambda}_1,\bs{\lambda}_2):= {\mathcal{I}}_{\alpha}(\bs{\lambda}_1,\bs{\lambda}_2)-\bs{\mathcal{I}}_{\alpha\bs{\eta}}(\bs{\lambda}_1)(\bs{\mathcal{I}}_{\bs{\eta}})^{-1}\bs{\mathcal{I}}_{\bs{\eta}\alpha}(\bs{\lambda}_2), \\
& {Z}_{\alpha}(\bs{\lambda}):=( {\mathcal{I}}_{\alpha.\bs{\eta}}(\bs{\lambda}, \bs{\lambda}))^{-1} {G}_{\alpha.\bs{\eta}}(\bs{\lambda}),
\end{aligned}
\end{equation} 
where ${G}_{\alpha.\bs{\eta}}(\bs{\lambda})$ is a mean zero Gaussian process indexed by $\bs{\lambda}$ with $\text{Cov}({G}_{\alpha.\bs{\eta}}(\bs{\lambda}_1),{G}_{\alpha.\bs{\eta}}(\bs{\lambda}_2)) = {\mathcal{I}}_{\alpha.\bs{\eta}}(\bs{\lambda}_1,\bs{\lambda}_2)$. 
Define $\widehat{t}_\alpha(\bs{\lambda})$ by
\begin{align} \label{t-lambda-homo-2}
r(\widehat{t}_\alpha(\bs{\lambda}))= \inf_{ t_\alpha \geq 0 }r( t_\alpha),\quad 
 r( t_\alpha) := (t_\alpha - {Z}_{{\alpha}}(\bs{\lambda}))^2 {\mathcal{I}} _{{\alpha}.\bs{\eta}}(\bs{\lambda}, \bs{\lambda}).
 \end{align} 
The following proposition establishes the asymptotic null distribution of $LR_{n,\zeta}^{2}$.  Define $\bs{\mathcal{I}}(\bs{\lambda}) := E[\bs{s}(\bs{X},\bs{Z};\bs{\lambda}) \bs{s}(\bs{X},\bs{Z};\bs{\lambda})\t ]$.
\begin{proposition} \label{P-LR-N1-homo-2}
Suppose that Assumptions \ref{assn_consis}, \ref{A-taylor1}, and \ref{assn_compact} hold and $0 < \inf_{\Theta_{\bs{\lambda}} \setminus \{\bs{0}\}} \lambda_{\min}(\bs{\mathcal{I}}(\bs{\lambda})) \leq \sup_{\Theta_{\bs{\lambda}} \setminus \{\bs{0}\}} \lambda_{\max}(\bs{\mathcal{I}}(\bs{\lambda})) < \infty$. Then, under the null hypothesis of $H_{0}: M=1$, for any $\zeta>0$, $LR_{n,\zeta}^{2} \rightarrow_d \sup_{\Theta_{\bs{\lambda}} \cap \{|\bs{\lambda}|\geq \zeta\} }\ (\widehat{t}_\alpha(\bs{\lambda}) )^2 {\mathcal{I}}_{\alpha.\bs{\eta}}(\bs{\lambda}, \bs{\lambda})$.
\end{proposition}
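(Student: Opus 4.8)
The plan is to adapt the argument used for Proposition~\ref{P-LR-N1-homo-1}; the analysis is in fact easier because on $\Theta_{{\bs{\vartheta}}_2,\zeta}^2$ the reparameterized density $h(\bs{x}|\bs{z};\bs{\phi},\bs{\lambda})$ in (\ref{loglike-homo-2}) reduces to the one-component density \emph{precisely} when $\alpha=0$, and $\bs{\lambda}$ is unidentified only at that point. Hence $LR_{n,\zeta}^2$ is a one-sided likelihood ratio statistic for the scalar $\alpha\ge 0$, with $\bs{\lambda}$ a nuisance parameter present only under the alternative and $\bs{\eta}$ a regular Euclidean nuisance parameter. Since the MLE of $\bs{\phi}$ over $\Theta_{{\bs{\vartheta}}_2,\zeta}^2$ is consistent for $\bs{\phi}^*$ (as noted before the statement), I would localize around $\bs{\phi}^*$ and control all error terms uniformly over $\bs{\lambda}\in\Theta_{\bs{\lambda}}\cap\{|\bs{\lambda}|\ge\zeta\}$, which is compact by Assumption~\ref{assn_compact}; compactness also removes any difficulty with the location of the maximizing $\bs{\lambda}$.

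\emph{Uniform quadratic expansion.} The score $s_\alpha(\bs{\lambda})$ in (\ref{score_defn-homo-3}) is constructed so that, after removing the $(\bs{\mu},\bs{v})$-directions already spanned by $\bs{\lambda}$, the remaining part of $h(\bs{x}|\bs{z};\bs{\phi},\bs{\lambda})-f_v^*$ is $O(\alpha|\bs{\lambda}|^3)$ at first order in $\alpha$ and $O(\alpha^2|\bs{\lambda}|^6)=O(t_\alpha(\bs{\lambda})^2)$ at higher order, the cubic order being forced by the heat-equation identity $\nabla_{\mu_i\mu_j}f=2\nabla_{\Sigma_{ij}}f$ (which is exactly why the $\nabla_{\bs{v}\t}f_v^*\bs{\lambda}_{\bs{\mu}^2}$ term is subtracted). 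Writing $\bs{t}=(\bs{t}_{\bs{\eta}}(\bs{\lambda})\t,t_\alpha(\bs{\lambda}))\t$ and $\bs{s}(\bs{\lambda})=(\bs{s}_{\bs{\eta}}\t,s_\alpha(\bs{\lambda}))\t$ as in (\ref{tpsi_defn-homo-2})--(\ref{score_defn-homo-2}), and recentering through the one-component MLE via (\ref{dvareta}), I would show that, uniformly over $|\bs{\lambda}|\ge\zeta$ and over $\bs{t}$ in an $O(n^{-1/2})$ ball,
\[
2\{L_n(\bs{\phi},\bs{\lambda})-L_{0,n}(\widehat{\bs{\gamma}}_0,\widehat{{\mu}}_0,\widehat{{\sigma}}^2_0)\} = 2\sqrt{n}\,\bs{t}\t\nu_n(\bs{s}(\bs{\lambda})) - n\,\bs{t}\t\bs{\mathcal{I}}(\bs{\lambda})\bs{t} - \nu_n(\bs{s}_{\bs{\eta}})\t\bs{\mathcal{I}}_{\bs{\eta}}^{-1}\nu_n(\bs{s}_{\bs{\eta}}) + o_p(1).
\]
The remainder bound uses the smoothness of $f_v(\bs{x}|\bs{z};\bs{\gamma},\bs{\mu},\bs{v})$, Assumption~\ref{A-taylor1}, and the two-sided eigenvalue bounds assumed on $\bs{\mathcal{I}}(\bs{\lambda})$, which also make the profiling below well-defined uniformly in $\bs{\lambda}$.

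\emph{Profiling, one-sided maximization, and passage to the supremum.} For fixed $\bs{\lambda}$, maximizing the quadratic over the unconstrained $\bs{t}_{\bs{\eta}}$ replaces $s_\alpha(\bs{\lambda})$ by the efficient score $s_\alpha(\bs{\lambda})-\bs{\mathcal{I}}_{\alpha\bs{\eta}}(\bs{\lambda})\bs{\mathcal{I}}_{\bs{\eta}}^{-1}\bs{s}_{\bs{\eta}}$, cancels the $\nu_n(\bs{s}_{\bs{\eta}})\t\bs{\mathcal{I}}_{\bs{\eta}}^{-1}\nu_n(\bs{s}_{\bs{\eta}})$ term from the one-component MLE, and leaves the scalar criterion $2\sqrt{n}\,t_\alpha\,\nu_n\big(s_\alpha(\bs{\lambda})-\bs{\mathcal{I}}_{\alpha\bs{\eta}}(\bs{\lambda})\bs{\mathcal{I}}_{\bs{\eta}}^{-1}\bs{s}_{\bs{\eta}}\big)-n\,t_\alpha^2\,{\mathcal{I}}_{\alpha.\bs{\eta}}(\bs{\lambda},\bs{\lambda})$ to be maximized over $t_\alpha\ge 0$. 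Since this recentered average converges to $G_{\alpha.\bs{\eta}}(\bs{\lambda})$, the maximizer over $\sqrt{n}\,t_\alpha\ge 0$ converges to the projection $\widehat{t}_\alpha(\bs{\lambda})$ of $Z_\alpha(\bs{\lambda})=({\mathcal{I}}_{\alpha.\bs{\eta}}(\bs{\lambda},\bs{\lambda}))^{-1}G_{\alpha.\bs{\eta}}(\bs{\lambda})$ onto $[0,\infty)$ of (\ref{t-lambda-homo-2}), and the maximized value converges to $(\widehat{t}_\alpha(\bs{\lambda}))^2{\mathcal{I}}_{\alpha.\bs{\eta}}(\bs{\lambda},\bs{\lambda})$. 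To upgrade this pointwise statement to the supremum, I would establish joint weak convergence of $\bs{\lambda}\mapsto\big(\nu_n(\bs{s}_{\bs{\eta}}),\nu_n(s_\alpha(\bs{\lambda}))\big)$ in $\ell^\infty(\Theta_{\bs{\lambda}}\cap\{|\bs{\lambda}|\ge\zeta\})$ to a Gaussian process with covariance kernel given by $\bs{\mathcal{I}}_{\bs{\eta}}$, $\bs{\mathcal{I}}_{\alpha\bs{\eta}}(\cdot)$ and ${\mathcal{I}}_{\alpha}(\cdot,\cdot)$ from (\ref{I_lambda-homo-2}) (whose implied efficient-score process is $G_{\alpha.\bs{\eta}}(\cdot)$) --- a Donsker argument for the class $\{s_\alpha(\cdot;\bs{\lambda}):|\bs{\lambda}|\ge\zeta\}$, whose Lipschitz continuity in $\bs{\lambda}$ and square-integrable envelope come from the smoothness of $f_v^*(\cdot;\bs{\lambda})$ on $\{|\bs{\lambda}|\ge\zeta\}$ and Assumption~\ref{A-taylor1}. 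Combining the uniform expansion with continuity in $\bs{\lambda}$ of ${\mathcal{I}}_{\alpha.\bs{\eta}}(\bs{\lambda},\bs{\lambda})$ and of $G\mapsto(\max\{G,0\})^2$, the continuous mapping theorem applied to ``$\sup_{\bs{\lambda}}$ of the concentrated quadratic'' yields $LR_{n,\zeta}^{2}\to_d\sup_{\Theta_{\bs{\lambda}}\cap\{|\bs{\lambda}|\ge\zeta\}}(\widehat{t}_\alpha(\bs{\lambda}))^2{\mathcal{I}}_{\alpha.\bs{\eta}}(\bs{\lambda},\bs{\lambda})$.

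\emph{Main obstacle.} The crux is the uniform-in-$\bs{\lambda}$ quadratic expansion together with the Donsker property of $\{s_\alpha(\cdot;\bs{\lambda})\}$. The $|\bs{\lambda}|^3$ normalization in (\ref{score_defn-homo-3}) keeps $s_\alpha(\bs{\lambda})$ non-degenerate and not perfectly correlated with $\bs{s}_{\bs{\eta}}$ as $|\bs{\lambda}|$ approaches $\zeta$, but one still has to control the $O(t_\alpha^2)$ higher-order terms and the $\alpha$--$\bs{\eta}$ coupling uniformly in $\bs{\lambda}$, which is precisely where the eigenvalue bounds on $\bs{\mathcal{I}}(\bs{\lambda})$ and the tenth-moment condition enter. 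The one-sided optimization ($t_\alpha\ge0$) is handled by the standard Self--Liang/Andrews projection, and since $\Theta_{\bs{\lambda}}$ is compact there is no issue with the location of the maximizer; the univariate treatment of \citet{chenchen03sinica} is the template.
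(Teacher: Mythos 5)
Your proposal is correct and follows essentially the same route as the paper: a uniform-in-$\bs{\lambda}$ quadratic expansion around $\bs{\phi}^*$ (the paper's Lemma~\ref{P-quadratic-homo-2}, with the $\alpha$-score identified via Lemma~\ref{s_der_alpha} as exactly $|\bs{\lambda}|^3 f_v^* s_\alpha(\bs{\lambda})$), profiling out $\bs{\eta}$ against the one-component MLE to isolate the efficient score $G_{\alpha.\bs{\eta} n}(\bs{\lambda})$, a stochastic-equicontinuity/Donsker argument for $\nu_n(s_\alpha(\cdot))$ based on Lipschitz continuity with a square-integrable envelope, and the one-sided projection onto $t_\alpha\ge 0$ combined with the supremum over the compact set $\{|\bs{\lambda}|\ge\zeta\}$ (the paper invokes Theorem 1(c) of Andrews (2001) for this last step, which formalizes your continuous-mapping argument). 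No substantive differences.
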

 
\subsubsection{Testing $H_{0}: M=1$}

The following proposition derives the asymptotic distribution of $LR_{n}$.
The proof is omitted because it is a straightforward consequence of Propositions \ref{P-LR-N1-homo-1} and \ref{P-LR-N1-homo-2} in view of $LR_{n}=\lim_{\zeta\to 0}\max \{LR_{n,\zeta}^1 ,LR_{n,\zeta}^2 \}$.

\begin{proposition}\label{P-LR-N1-homo}
Suppose that Assumptions of Propositions \ref{P-LR-N1-homo-1} and \ref{P-LR-N1-homo-2} hold. Then, under the null hypothesis of $H_{0}: M=1$, 
\[ 
LR_{n} \rightarrow_d \max\left\{ (\widehat{\bs{t}}^1_{\bs{\lambda}})\t \bs{\mathcal{I}}_{\bs{\lambda}.\bs{\eta}} \widehat{\bs{t}}^1_{\bs{\lambda}}, (\widehat{\bs{t}}^2_{\bs{\lambda}})\t \bs{\mathcal{I}}_{\bs{\lambda}.\bs{\eta}} \widehat{\bs{t}}^2_{\bs{\lambda}}, \ \sup_{\Theta_{\bs{\lambda}} \setminus \{\bs{0}\} }\ (\widehat{t}_\alpha(\bs{\lambda}) )^2 {\mathcal{I}}_{\alpha.\bs{\eta}}(\bs{\lambda}, \bs{\lambda}) \right\},
\] 
where $\widehat{\bs{t}}_{\bs{\lambda}}^j $ for $j=1,2$ is defined in (\ref{t-lambda-homo}) and $\widehat{t}_\alpha(\bs{\lambda})$ is defined in (\ref{t-lambda-homo-2}).
\end{proposition}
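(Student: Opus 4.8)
The plan is to use the identity $LR_n=\max\{LR_{n,\zeta}^1,LR_{n,\zeta}^2\}$, which holds for every fixed $\zeta>0$ by construction of the restricted MLEs $\widehat{\bs{\vartheta}}_{2,\zeta}^j$, to combine the two limit laws of Propositions \ref{P-LR-N1-homo-1} and \ref{P-LR-N1-homo-2} through the continuous mapping theorem, and then to send $\zeta\downarrow 0$. Write $W^1:=\max\{(\widehat{\bs{t}}^1_{\bs{\lambda}})\t\bs{\mathcal{I}}_{\bs{\lambda}.\bs{\eta}}\widehat{\bs{t}}^1_{\bs{\lambda}},\,(\widehat{\bs{t}}^2_{\bs{\lambda}})\t\bs{\mathcal{I}}_{\bs{\lambda}.\bs{\eta}}\widehat{\bs{t}}^2_{\bs{\lambda}}\}$, $W^2_\zeta:=\sup_{\Theta_{\bs{\lambda}}\cap\{|\bs{\lambda}|\geq\zeta\}}(\widehat{t}_\alpha(\bs{\lambda}))^2{\mathcal{I}}_{\alpha.\bs{\eta}}(\bs{\lambda},\bs{\lambda})$, $V_\zeta:=\max\{W^1,W^2_\zeta\}$, and let $V_0:=\max\{W^1,W^2_0\}$ with $W^2_0:=\sup_{\Theta_{\bs{\lambda}}\setminus\{\bs{0}\}}(\widehat{t}_\alpha(\bs{\lambda}))^2{\mathcal{I}}_{\alpha.\bs{\eta}}(\bs{\lambda},\bs{\lambda})$; the target is $LR_n\rightarrow_d V_0$.

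First I would upgrade the marginal statements $LR_{n,\zeta}^1\rightarrow_d W^1$ and $LR_{n,\zeta}^2\rightarrow_d W^2_\zeta$ to the joint convergence $(LR_{n,\zeta}^1,LR_{n,\zeta}^2)\rightarrow_d(W^1,W^2_\zeta)$ for each fixed $\zeta$. This requires no new hypothesis: the proofs of Propositions \ref{P-LR-N1-homo-1} and \ref{P-LR-N1-homo-2} reduce each restricted LRTS to a quadratic maximization (a $\bs{\lambda}$-indexed one when $j=2$) driven by the empirical process $\nu_n$ evaluated at the fixed square-integrable scores in (\ref{score_defn-homo}) and (\ref{score_defn-homo-2})--(\ref{score_defn-homo-3}); stacking these scores and invoking the relevant (functional) central limit theorem once produces a single Gaussian limit, of which $W^1$ and $W^2_\zeta$ — hence $\widehat{\bs t}^j_{\bs\lambda}$ in (\ref{t-lambda-homo}) and $\widehat t_\alpha(\bs\lambda)$ in (\ref{t-lambda-homo-2}) — are continuous functionals. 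Applying the continuous mapping theorem to $(a,b)\mapsto\max\{a,b\}$ then gives, for every fixed $\zeta>0$,
\[
LR_n=\max\{LR_{n,\zeta}^1,LR_{n,\zeta}^2\}\ \rightarrow_d\ V_\zeta .
\]

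Next I would remove the dependence on $\zeta$. Because $LR_n$ is a single sequence that does not depend on $\zeta$, it has at most one weak limit, so the laws of $V_\zeta$ coincide for all $\zeta>0$. To identify this common law with that of $V_0$, observe that $W^1$ does not involve $\zeta$ and that the feasible sets $\Theta_{\bs{\lambda}}\cap\{|\bs{\lambda}|\geq\zeta\}$ increase to $\Theta_{\bs{\lambda}}\setminus\{\bs{0}\}$ as $\zeta\downarrow 0$; hence $W^2_\zeta\uparrow W^2_0$, and therefore $V_\zeta\uparrow V_0$ pointwise on the probability space carrying the limiting Gaussian object. Monotone pointwise convergence gives $V_\zeta\rightarrow_d V_0$, and combined with $V_\zeta\overset{d}{=}V_{\zeta'}$ for all $\zeta,\zeta'$ this forces $V_0\overset{d}{=}V_\zeta$; consequently $LR_n\rightarrow_d V_0$, and writing $V_0$ and $W^1$ out in full is exactly the stated conclusion.

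The main obstacle is the first step: one must check that $LR_{n,\zeta}^1$ and $LR_{n,\zeta}^2$ converge \emph{jointly}, not merely marginally, so that the continuous mapping theorem applies to their maximum. This reduces to verifying that, although the two cases use different reparameterizations, both LRTS are asymptotically continuous functionals of one and the same empirical-process limit, a fact that should be read off directly from the proofs of Propositions \ref{P-LR-N1-homo-1} and \ref{P-LR-N1-homo-2}. Everything after that is routine: the exchange of $\lim_n$ and $\lim_{\zeta\downarrow 0}$ rests only on the uniqueness of the weak limit of the fixed sequence $LR_n$ and on the monotone convergence of the supremum defining $LR_{n,\zeta}^2$, which is well behaved as $\bs{\lambda}\to\bs{0}$ precisely because of the normalization by $|\bs{\lambda}|^3$ built into $s_\alpha(\bs{\lambda})$ in (\ref{score_defn-homo-3}).
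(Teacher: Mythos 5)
Your proposal is correct and follows exactly the route the paper indicates: the paper omits the proof, stating only that the result is "a straightforward consequence of Propositions \ref{P-LR-N1-homo-1} and \ref{P-LR-N1-homo-2} in view of $LR_{n}=\lim_{\zeta\to 0}\max\{LR_{n,\zeta}^1,LR_{n,\zeta}^2\}$," and your argument fills in precisely the details that sketch presupposes. In particular, you correctly identify the two points worth making explicit — joint convergence of $(LR_{n,\zeta}^1,LR_{n,\zeta}^2)$ via a single stacked empirical-process limit so the continuous mapping theorem applies to the maximum, and the monotonicity-plus-uniqueness argument that lets $\zeta\downarrow 0$ — neither of which introduces anything beyond what the paper's own proofs of the two cited propositions already provide.
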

This result generalizes Theorem 2 of \citet{chenchen03sinica}, who derive the asymptotic distribution of the LRTS in the univariate case.

\subsection{Likelihood ratio test of $H_0: M = M_0$ against $H_A: M = M_0 + 1$ for $M_0\geq 2$} 

We consider a random sample $\{\bs{X}_i,\bs{Z}_i\}_{i = 1}^n$ generated from the following $M_0$-component $d$-variate normal mixture density model with common variance:
\begin{equation}
f_{M_0}(\bs{x}|\bs{z};{\bs{\vartheta}}_{M_0}^*):=\sum_{j=1}^{M_0} \alpha_{j}^{*} f(\bs{x}|\bs{z};{\bs{\gamma}}^*, \bs{\mu}_{j}^{*},\bs{\Sigma}^{*}), \label{true_model-homo}
\end{equation}
where ${\bs{\vartheta}}_{M_0}^*=(\alpha_{1}^*,\ldots,\alpha_{M_0 - 1}^*,{\bs{\gamma}}^*,\bs{\mu}_1^*,\ldots,\bs{\mu}_{M}^*,\bs{\Sigma}^{*} )$ and $\alpha_j^*>0$. We assume $\bs{\mu}_{1}^* <\ldots< \bs{\mu}_{M_0}^*$ for identification. The corresponding density of an $(M_0+1)$-component mixture model is given by
\begin{equation}
f_{M_0+1}(\bs{x}|\bs{z};{\bs{\vartheta}}_{M_0+1}):=\sum_{j=1}^{M_0+1}\alpha_j f(\bs{x}|\bs{z};\bs{\gamma},\bs{\mu}_j,\bs{\Sigma} ),\label{fitted_model-homo}
\end{equation}
where ${\bs{\vartheta}}_{M_0+1} = (\alpha_1,\ldots,\alpha_{M_0},\bs{\gamma},\bs{\mu}_1,\ldots.,\bs{\mu}_{M_0+1},\bs{\Sigma})$. Partition the null hypothesis as $H_0 = \cup_{m=1}^{M_0} H_{0,m}$ with $H_{0,m}: \alpha_m(\bs{\mu}_{m} - \bs{\mu}_{m + 1}) = 0$.   

Define the LRTS for testing $H_{01}$ as
\[
 LR_{n}^{M_0} := \max_{{\bs{\vartheta}}_{M_0+1}\in \Theta_{{\bs{\vartheta}}_{M_0+1}}}2\{ L_n({\bs{\vartheta}}_{M_0+1})- L_{0,n}(\widehat{{\bs{\vartheta}}}_{M_0})\},
\]
where $L_n({\bs{\vartheta}}_{M_0 + 1}):=\sum_{i = 1}^n \log f_{M_0 + 1}(\bs{X}_i|\bs{Z}_i;{\bs{\vartheta}}_{M_0 + 1})$, $L_{0,n}({\bs{\vartheta}}_{M_0})=\sum_{i = 1}^n \log f_{M_0}(\bs{X}_i|\bs{Z}_i;{\bs{\vartheta}}_{M_0})$, and $\widehat{\bs{\vartheta}}_{M_0}=\mathop{\arg \max}_{{\bs{\vartheta}}_{M_0}\in\Theta_{{\bs{\vartheta}}_{M_0}}}L_{0,n}({\bs{\vartheta}}_{M_0})$ for the densities (\ref{true_model-homo})--(\ref{fitted_model-homo}). Define $\widetilde{\bs{\lambda}}:=((\bs{\lambda}_1)\t,\ldots,(\bs{\lambda}_{M_0})\t)\t \in \Theta_{\widetilde{\bs{\lambda}}}$ with $\bs{\lambda}_m \in \Theta_{\bs{\lambda}_m}$.  Collect the score vector for testing $H_{0,1},\ldots,H_{0,M_0}$   as
\begin{equation}
\begin{aligned}
&\widetilde{\bs{s}}(\bs{x},\bs{z}) :=
\begin{pmatrix}
\widetilde{\bs{s}}_{\bs{\eta}} \\ \widetilde{\bs{s}}_{\bs{\lambda}}
\end{pmatrix}\quad \text{and}\quad 
\bar{\bs{s}}(\bs{x},\bs{z};\widetilde{\bs{\lambda}}):=
\begin{pmatrix}
\widetilde{\bs{s}}_{\bs{\eta}} \\ \bar{\bs{s}}_{\bs\alpha}(\tilde{\bs{\lambda}})
\end{pmatrix},
\ \text{ where }\\
&\widetilde{\bs{s}}_{\bs{\eta}}:= \left(\begin{array}{c} {\bs{s}}_{\bs{\alpha}} \\ {\bs{s}}_{{ (\bs{\gamma},\bs{\mu}, \bs{v}) }} 
 \end{array}\right),\quad \widetilde{\bs{s}}_{\bs{\lambda}}: =
\begin{pmatrix}
\bs{s}_{\bs{\mu}^3}^1 \\
\bs{s}_{\bs{\mu}^4}^1 \\
 \vdots \\
\bs{s}_{\bs{\mu}^3}^{M_0} \\
\bs{s}_{\bs{\mu}^4}^{M_0} 
\end{pmatrix}, \quad \text{and}\quad
\bar{\bs{s}}_{\bs\alpha}(\widetilde{\bs{\lambda}}): =
\begin{pmatrix}
{s}_{\bs{\alpha}}^1(\bs{\lambda}_1)\\ 
 \vdots \\ 
{s}_{\bs{\alpha}}^{M_0}(\bs{\lambda}_{M_0})
\end{pmatrix},
\end{aligned}
\label{stilde-homo}
\end{equation}
where $\bs{s}_{\bs{\mu^3}}^m := \left\{\alpha_{m}^* \nabla_{\mu_i \mu_j \mu_k} f^*_v(\bs{x}|\bs{z};\bs{\gamma}^*,\bs{\mu}_m^{*},\bs{v}^*) / (3!f_0^*) \right\}_{1 \leq i \leq j \leq k \leq d}$; $\bs{s}_{\bs{\alpha}}$, $\bs{s}_{(\bs{\gamma},\bs{\mu}, \bs{v})}$, and $\bs{s}_{\bs{\mu}^4}^m$ are defined similarly to those in (\ref{sh}) but using the density (\ref{true_model-homo}) in place of (\ref{true_model}) with the common value of $v^*$ across components; ${s}_{\alpha}^m(\bs{\lambda}_m)$ is defined as
\begin{equation*} 
{s}_{\alpha}^m(\bs{\lambda}_m) := \alpha_m^*
\frac{f_v^{m*}(\bs{\lambda}_m) -f_v^{m*}-\nabla_{\bs{\mu}} f^{m*}_v \bs{\lambda}_m - \nabla_{\bs{v}} f^{m*}_v \bs{\lambda}_{\bs{\mu}^2,m} }{ |\bs{\lambda}_m|^3f^{m*}_v},
\end{equation*} 
where $ f_v^{m*}(\bs{\lambda}_m):=f_v\left(\bs{x}|\bs{z};\bs{\gamma}^*, \bs\mu_m^*+ \bs{\lambda}_m, \bs{v}^*\right)$ and $f_v^{m*} :=f_v\left(\bs{x}|\bs{z};\bs{\gamma}^*, \bs\mu_m^*, \bs{v}^*\right)$, and $\bs{\lambda}_{\bs{\mu}^2,m} $ is defined similarly to $\bs{\lambda}_{\bs{\mu}^2}$ but with $\bs{\lambda}_m$ in place of $\bs{\lambda}$.

Define $\widetilde{\bs{\mathcal{I}}}$, $\widetilde{\bs{\mathcal{I}}}_{\bs{\eta}}$, $\widetilde{\bs{\mathcal{I}}}_{\bs{\lambda}\bs{\eta}}$, $\widetilde{\bs{\mathcal{I}}}_{\bs{\eta\lambda}}$, $\widetilde{\bs{\mathcal{I}}}_{\bs{\lambda}}$, $\widetilde{\bs{\mathcal{I}}}_{\bs{\lambda}.\bs{\eta}}$ similarly to those in (\ref{Itilde}) but using $\widetilde{\bs{s}}(\bs{x},\bs{z})$ defined in (\ref{stilde-homo}) in place of (\ref{stilde}). Let $\widetilde{\bs{G}}_{\bs{\lambda}.\bs{\eta}}=((\bs{G}_{\bs{\lambda}.\bs{\eta}}^{1})^{\top},\ldots,(\bs{G}_{\bs{\lambda}.\bs{\eta}}^{M_0})^\top)^\top \sim N(0,\widetilde{\bs{\mathcal{I}}}_{\bs{\lambda}.\bs{\eta}})$, and define ${\bs{\mathcal{I}}}_{\bs{\lambda}.\bs{\eta}}^m:=E[\bs{G}_{\bs{\lambda}.\bs{\eta}}^m (\bs{G}_{\bs{\lambda}.\bs{\eta}}^m)^\top]$ and $\bs{Z}_{\bs{\lambda}}^m:=({\bs{\mathcal{I}}}_{\bs{\lambda}.\bs{\eta}}^m)^{-1}\bs{G}_{\bs{\lambda}.\bs{\eta}}^m$. For $j=1,2$, define $\widehat{\bs{t}}_{\bs{\lambda},m}^j $ by
\begin{equation*}
r^m(\widehat{\bs{t}}_{\bs{\lambda},m}^j ) = \inf_{\bs{t}_{\bs{\lambda}} \in \Lambda_{\bs{\lambda} }^j}r^m(\bs{t}_{\bs{\lambda}}), \quad r^m(\bs{t}_{\bs{\lambda}}) := (\bs{t}_{\bs{\lambda}} -\bs{Z}_{\bs{\lambda}}^m)\t \bs{\mathcal{I}}_{\bs{\lambda}.\bs{\eta}}^m (\bs{t}_{\bs{\lambda}} -\bs{Z}_{\bs{\lambda}}^m),
\end{equation*} 
where $\Lambda_{\bs{\lambda}}^j$ is given by (\ref{Lambda-e-homo}).

Define $\widehat{t}_{\alpha,m}(\bs{\lambda}_m)$ by
\begin{align*} 
r^m(\widehat{t}_{\alpha,m} (\bs{\lambda}_m))= \inf_{ t_\alpha \geq 0 }r^m( t_\alpha),\quad 
 r^m( t_\alpha) := (t_\alpha - {Z}_{\bs{\alpha}}^m(\bs{\lambda}_m))^2 {\mathcal{I}} _{\bs{\alpha}.\bs{\eta}}^m(\bs{\lambda}_m, \bs{\lambda}_m),
\end{align*}
where ${\mathcal{I}}_{\bs{\alpha}.\bs{\eta}}^m(\bs{\lambda}_m, \bs{\lambda}_m)$ and ${Z}_{\bs{\alpha}}^m(\bs{\lambda}_m)$ are defined similarly to $ {\mathcal{I}}_{\alpha.\bs{\eta}}(\bs{\lambda}, \bs{\lambda})$ and $Z_{{\alpha}}(\bs{\lambda})$ in (\ref{I_lambda-homo-2}), respectively, but using $\widetilde{\bs{s}}_{\bs{\eta}}$ and $s_{\bs{\alpha}}^m(\bs{\lambda}_m)$ in place of ${\bs{s}}_{\bs{\eta}}$ and $s_{\alpha}(\bs{\lambda})$.

\begin{assumption}\label{A-vec-2-homo}  (a) The parameter spaces $\Theta_{\bs{\vartheta}_{M_0}}$ and $\Theta_{\bs{\vartheta}_{M_0+1}}$ are compact. (b)  $\widetilde{\bs{\mathcal{I}}}=E[\widetilde{\bs{s}}(\bs{X},\bs{Z})\widetilde{\bs{s}}(\bs{X},\bs{Z})\t]$ is finite and nonsingular and $0 < \inf_{\Theta_{\widetilde{\bs{\lambda}}}\setminus \{\bs{0}\}  } \lambda_{\min}(\bar{\bs{\mathcal{I}}}(\widetilde{\bs{\lambda}})) \leq \sup_{\Theta_{\widetilde{\bs{\lambda}}}\setminus \{\bs{0}\}} \lambda_{\max}(\bar{\bs{\mathcal{I}}}(\widetilde{\bs{\lambda}})) < \infty$, where $\bar{\bs{\mathcal{I}}}(\widetilde{\bs{\lambda}}):= E[\bar{\bs{s}}(\bs{X},\bs{Z};\widetilde{\bs{\lambda}}) (\bar{\bs{s}}(\bs{X},\bs{Z};\widetilde{\bs{\lambda}}))\t]$ and $\widetilde{\bs{s}}(\bs{X},\bs{Z})$ and $\bar{\bs{s}}(\bs{X},\bs{Z};\widetilde{\bs{\lambda}})$ are defined in (\ref{stilde-homo}). 
\end{assumption}

\begin{proposition} \label{local_lr-2-homo}
Suppose that Assumptions \ref{assn_consis}, \ref{A-taylor1}, and \ref{A-vec-2-homo} hold. Then, under the null hypothesis $H_0: m=M_0$, $LR_{n}^{M_0} \rightarrow_d \max\{v_1,\ldots, v_{M_0}\}$, where 
\[
v_m := \max\left\{ (\widehat{\bs{t}}_{\bs{\lambda},m}^1 )\t \bs{\mathcal{I}}_{\bs{\lambda}.\bs{\eta}}^m \widehat{\bs{t}}_{\bs{\lambda},m}^1 , \ (\widehat{\bs{t}}_{\bs{\lambda},m}^2 )\t \bs{\mathcal{I}}_{\bs{\lambda}.\bs{\eta}}^m \widehat{\bs{t}}_{\bs{\lambda},m}^2 , \ \sup_{\Theta_{\bs{\lambda}_m} \setminus \{\bs{0}\}}\ (\widehat{t}_{\alpha,m}(\bs{\lambda}_m) )^2 {\mathcal{I}}_{\bs{\alpha}.\bs{\eta}}^m(\bs{\lambda}_m, \bs{\lambda}_m) \right\}.
\]  
\end{proposition}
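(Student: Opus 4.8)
The plan is to follow the two templates already established in the paper: the localization-plus-maximum argument behind Proposition \ref{local_lr-2} and the two-regime analysis behind Proposition \ref{P-LR-N1-homo}. First I would prove consistency of the $(M_0+1)$-component MLE. Under the compactness of Assumption \ref{A-vec-2-homo}(a), a Wald-type argument gives $\widehat{\bs{\vartheta}}_{M_0+1}\rightarrow_p\Upsilon^*$, the set of $(M_0+1)$-component parameters reproducing $f_{M_0}(\bs{x}|\bs{z};{\bs{\vartheta}}_{M_0}^*)$. By identifiability of finite normal mixtures and the ordering $\bs{\mu}_1^*<\cdots<\bs{\mu}_{M_0}^*$, up to relabeling $\Upsilon^*=\bigcup_{m=1}^{M_0}\Upsilon_m^*$, where on $\Upsilon_m^*$ either $\bs{\mu}_m=\bs{\mu}_{m+1}=\bs{\mu}_m^*$ with $\alpha_m+\alpha_{m+1}=\alpha_m^*$ (a split of the $m$-th component) or $\alpha_m=0$ with $\bs{\mu}_m$ free (a vanishing-weight component), and the remaining $M_0$ components match the true configuration together with $\bs{\gamma}=\bs{\gamma}^*$ and $\bs{\Sigma}=\bs{\Sigma}^*$. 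These $M_0$ subsets are mutually separated, so a neighborhood of $\Upsilon^*$ splits into $M_0$ disjoint pieces and $LR_n^{M_0}=\max_{1\le m\le M_0}LR_n^{M_0,(m)}+o_p(1)$, where $LR_n^{M_0,(m)}$ is the LRTS restricted to the $m$-th piece.

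Fix $m$. On the $m$-th piece the spectator components $\{j\neq m,m+1\}$ stay near distinct true means, so perturbing their means, the mixing weights, $\bs{\gamma}$, and $\bs{\Sigma}$ contributes only first-order score terms, all collected in $\widetilde{\bs{s}}_{\bs{\eta}}$ of (\ref{stilde-homo}); the only direction in which the Fisher information degenerates is the split of the $(m,m+1)$ pair around $\bs{\mu}_m^*$, or in the vanishing-weight sub-case the ``birth'' direction of component $m$. Hence $LR_n^{M_0,(m)}$ reduces to a problem of the same form as the $M_0=1$ homoscedastic test of Section 6.1, except that the score of the degenerate direction is now orthogonalized against the larger vector $\widetilde{\bs{s}}_{\bs{\eta}}$; the relevant information and Gaussian limit are therefore the $m$-th diagonal blocks $\bs{\mathcal{I}}_{\bs{\lambda}.\bs{\eta}}^m$, $\bs{G}_{\bs{\lambda}.\bs{\eta}}^m$ in the split regime and ${\mathcal{I}}_{\bs{\alpha}.\bs{\eta}}^m(\cdot,\cdot)$, ${G}_{\bs{\alpha}.\bs{\eta}}^m(\cdot)$ (the process analogous to ${G}_{\alpha.\bs{\eta}}$) in the vanishing-weight regime.

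Next, for a small $\zeta>0$, partition the $m$-th piece into $\{|\bs{\mu}_m-\bs{\mu}_{m+1}|\le\zeta\}$ and $\{|\bs{\mu}_m-\bs{\mu}_{m+1}|\ge\zeta\}$, with restricted statistics $LR_{n,\zeta}^{m,1}$ and $LR_{n,\zeta}^{m,2}$. On the first set I would apply the reparameterization (\ref{repara2-homo}) to the $(m,m+1)$ pair and expand $\log f_{M_0+1}$ through fourth order in $\bs{\lambda}_m$ as in the proof of Proposition \ref{P-LR-N1-homo-1}; the leading nonvanishing terms are the third- and fourth-order $\bs{\lambda}_m$-polynomials encoded by $\bs{t}_{\bs{\lambda}}(\bs{\lambda}_m,\alpha_m)$ in (\ref{tpsi_defn-homo}), whose limiting admissible sets are the cones $\Lambda_{\bs{\lambda}}^1,\Lambda_{\bs{\lambda}}^2$ of (\ref{Lambda-e-homo}), yielding $LR_{n,\zeta}^{m,1}\rightarrow_d\max\{(\widehat{\bs{t}}_{\bs{\lambda},m}^1)\t\bs{\mathcal{I}}_{\bs{\lambda}.\bs{\eta}}^m\widehat{\bs{t}}_{\bs{\lambda},m}^1,(\widehat{\bs{t}}_{\bs{\lambda},m}^2)\t\bs{\mathcal{I}}_{\bs{\lambda}.\bs{\eta}}^m\widehat{\bs{t}}_{\bs{\lambda},m}^2\}$. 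On the second set the pair is separated, so reproducing the true density forces $\alpha_m\rightarrow0$; with score $s_{\alpha}^m(\bs{\lambda}_m)$ and local parameter $t_\alpha=\alpha_m|\bs{\lambda}_m|^3$, the uniform-in-$\bs{\lambda}_m$ empirical-process expansion and the projection onto $\{t_\alpha\ge0\}$ of the proof of Proposition \ref{P-LR-N1-homo-2} give $LR_{n,\zeta}^{m,2}\rightarrow_d\sup_{\Theta_{\bs{\lambda}_m}\cap\{|\bs{\lambda}_m|\ge\zeta\}}(\widehat{t}_{\alpha,m}(\bs{\lambda}_m))^2{\mathcal{I}}_{\bs{\alpha}.\bs{\eta}}^m(\bs{\lambda}_m,\bs{\lambda}_m)$. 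Letting $\zeta\to0$, using continuity of ${G}_{\bs{\alpha}.\bs{\eta}}^m$ and ${\mathcal{I}}_{\bs{\alpha}.\bs{\eta}}^m(\cdot,\cdot)$ and $LR_n^{M_0,(m)}=\max\{LR_{n,\zeta}^{m,1},LR_{n,\zeta}^{m,2}\}$, one obtains $LR_n^{M_0,(m)}\rightarrow_d v_m$; joint weak convergence of the underlying Gaussian objects together with Step 1 then gives $LR_n^{M_0}\rightarrow_d\max\{v_1,\ldots,v_{M_0}\}$.

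The main obstacle is making the localization in the first two steps rigorous: showing that in a neighborhood of $\Upsilon_m^*$ the only non-negligible degenerate direction is that of the $(m,m+1)$ pair, so that cross-terms between it and the would-be split/birth directions at the other indices are $o_p(1)$ and the effective information is precisely the $m$-th block of $\widetilde{\bs{\mathcal{I}}}_{\bs{\lambda}.\bs{\eta}}$ (respectively of the $\bs{\alpha}$-score covariance). Closely related is the uniform control, in $\bs{\lambda}_m$ and in $\zeta$, of the empirical process driving the vanishing-weight regime, which is what permits the $|\bs{\mu}_m-\bs{\mu}_{m+1}|\le\zeta$ and $\ge\zeta$ analyses to be merged as $\zeta\to0$. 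Both steps, however, parallel arguments already carried out in the proofs of Propositions \ref{local_lr-2} and \ref{P-LR-N1-homo}, so the remaining work is largely bookkeeping.
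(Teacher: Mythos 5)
Your proposal is correct and follows exactly the route the paper intends: the paper omits this proof, stating only that it combines the localization-into-$M_0$-pieces argument of Proposition \ref{local_lr-2} with the two-regime ($|\bs{\mu}_m-\bs{\mu}_{m+1}|\le\zeta$ versus $\ge\zeta$) analysis of Proposition \ref{P-LR-N1-homo}, which is precisely your strategy. Your write-up in fact supplies more detail than the paper does, and the points you flag as needing care (separation of the degenerate directions across $m$, uniform control in $\bs{\lambda}_m$ and $\zeta$) are the right ones and are handled by the cited proofs.
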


\section{Simulation} \label{section:simulation }
  
\subsection{Choice of penalty function} \label{section:penalty}

To apply our EM test, we need to specify the set $\mathcal{T}$, number of iterations $K$, and penalty functions for  $p(\tau)$. Based on our experience, we recommend $\mathcal{T} = \{0.1, 0.3, 0.5\}$ and $K = \{1,2,3\}$.    
We set $p(\tau)= \log(2\min\{\tau,1-\tau\})$ as suggested by \citet{chenli09as}. When estimating the model under the null hypothesis and computing $L_{0,n}(\widehat{\bs{\vartheta}}_{M_0})$, we use the penalty function (\ref{pen_pmle}) and set $a_n = n^{-1/2}$ as recommended by \citet{chentan09jmva}. For the alternative model,  we consider
$a_n = n^{-1/2}$  and $1$ to examine the sensitivity of the rejection frequencies to the choice of $a_n$.

\subsection{Simulation results} \label{section:results}

We examine the type I error rates and powers of the EM test by small simulations using mixtures of bivariate normal distributions. Computation was done using R \citep{R}. The critical values are computed by bootstrap with $399$ and $199$ bootstrap replications when testing $H_0:M=1$ and $H_0:M=2$, respectively. We use $2000$ replications, and the sample sizes are set to $200$ and $400$. 

Table \ref{table1} reports the type I error rates of the EM test of $H_0:M = 1$ against the alternative $H_1:M = 2$  under the null hypothesis using two models given at the bottom of Table \ref{table1}. In both models, the EM test statistics give accurate type I errors for $n=200$ and $400$ across two values of $a_n=n^{-1/2}$ and $1$. Table \ref{table3} reports the powers of the EM test  when  $a_n=1$ under three alternative models given in Table \ref{table2}. Comparing the rejection frequency of Model 1 with that of Model 2 or Model 3, the EM test shows higher power as the distance between two component distributions in the alternative model increases in terms of means (Model 2) or variance (Model 3).

Table \ref{table4} reports the type I error rates of the EM test of $H_0:M = 2$ against the alternative $H_1:M = 3$  under the  two null models given at the bottom of   Table \ref{table4}. The EM test gives accurate type I errors across two models, sample sizes, and the values of $a_n$. Table \ref{table6} reports the powers of the EM test of  $H_0:M = 2$ against the alternative $H_1:M = 3$ under the alternative model given in Table \ref{table5}. Overall, the EM test shows good power under finite sample size.

\section{Empirical applications}
The sequential hypothesis testing based on our  EM test provides a useful alternative to the AIC or the BIC in determining the number of components in empirical applications.\footnote{For penalty function in our empirical applications, we set $a_n=n^{-1/2}$ for the null model and $a_n=1$ for the alternative model. } 

\subsection{The flea beetles}

The flea beetles data available in R package \textbf{tourr} contains a sample of 74 flea beetles from three species, ``Concinna," ``Heikertingeri," and ``Heptapotamica'' with  21, 31, and 22 observations, respectively.\footnote{The data is originally from \citet{lubischew62bio}.}  Figure \ref{figure1} provides a scatter plot of two physical measurements ``tars1'' and ``aede1,'' which measure the
width of the first joint of the first tarsus in microns (the sum of measurements for both tarsi) and the maximal width of the aedeagus in the fore-part in microns, respectively, for each of three species. We sequentially test the number of components in this data set without utilizing the information on which species each observation is from.
As shown in Table \ref{table7}, the $p$-values of EM test for testing $H_0: M=1$ and $H_0: M=2$ are $0.00$ and $0.01$, suggesting that the number of components is larger than two. On the other hand, the $p$-values of the EM test for testing $H_0: M=3$ are between 0.32 and 0.36; consistent with the actual number of species in this data set, we fail to reject $H_0: M=3$. In contrast, both the AIC and the BIC incorrectly indicate that there is only one component. Table \ref{table8} compares the estimated three-component bivariate normal mixture model in the first panel  with  the single component models estimated from a subsample of each of three species in the second panel, showing that each of estimated three component distributions accurately captures the corresponding species.

\subsection{Analysis of differential gene expression}

A multivariate normal mixture model can be used to find differentially expressed genes by means of the posterior probability that an individual gene is non-differentially expressed. We analyze the rat dataset of 1,176 genes
in middle-ear mucosa of six rat samples, the first two without  pneumococcal middle-ear infection and the latter four with the disease \citep{pan02bio,he06csda}. As in \citet{pan02bio}, the data were normalized by  log-transformation and median centering. Denote the resulting expression levels of gene $i$  of sample $j$ by $x_{ij}$.  
We apply finite bivariate normal mixtures to model the sample average expression levels for gene $i$ under the two conditions,  $(z_{i0},z_{i1})=(\sum_{j=1}^2 x_{ij}/2,\sum_{j=3}^6 x_{ij}/4)$ for $i=1,\ldots,1176$.   
 
As shown in Table \ref{table9},  the sequential hypothesis testing based on EM test and the AIC indicate that there are six components; on the other hand, consistent with the result in \cite{he06csda}, the BIC chooses the five-component model.  Table \ref{table10} presents the estimates from the six component model. We classify each pair of gene expression levels into six clusters using their posterior probabilities and plot them in Figure \ref{figure2}. 

32 genes classified into cluster 5 show some evidence for differential expression with a mean difference of 0.23. Similarly,  13 genes in cluster 6 demonstrate strong  evidence for differential expression, albeit with large variability. In contrast, the genes in clusters  1--4 show a flat expression pattern, where the observations in each cluster center around the 45 degree line.

\newpage

\appendix
\def\thesection{Appendix \Alph{section}}

\section{Proof of propositions}\label{app}

\begin{proof}[Proof of Proposition \ref{P-consis}]

As shown by \citet[][p.\ 248]{alexandrovich14jmva}, $p_n(\bs{\vartheta}_M)$ satisfies Assumptions C1--C3 of \citet{chentan09jmva} under the stated condition on $a_n$. Therefore, the stated result follows from Theorem 1 of \citet{chentan09jmva} and Corollary 3 of \citet{alexandrovich14jmva}.
\end{proof}

\begin{proof}[Proof of Proposition \ref{P-LR-N1}]

The proof is similar to that of Proposition 3 of \citet{kasaharashimotsu15jasa}.  Let $\bs{t}_{\bs{\eta}}: = \bs{\eta} - \bs{\eta}^*$, so that $\bs{t}(\bs{\psi},\alpha)$ in (\ref{tpsi_defn}) is written as $(\bs{t}_{\bs{\eta}}\t,\bs{t}_{\bs{\lambda}}(\bs{\lambda},\alpha)\t)\t$. Let
\[
\bs{G}_{ n} := \nu_n (\bs{s}(\bs{x},\bs{z})) =
\begin{bmatrix}
\bs{G}_{\bs{\eta} n} \\
\bs{G}_{\bs{\lambda} n}
\end{bmatrix}, \quad
\begin{aligned}
\bs{G}_{\bs{\lambda}.\bs{\eta} n} &:= \bs{G}_{\bs{\lambda} n} - \bs{\mathcal{I}}_{\bs{\lambda} \bs{\eta}}\bs{\mathcal{I}}_{\bs{\eta} }^{-1} \bs{G}_{\bs{\eta} n}, \quad \bs{Z}_{\bs{\lambda} . \bs{\eta} n} := \bs{\mathcal{I}}_{\bs{\lambda}.\bs{\eta} }^{-1}\bs{G}_{\bs{\lambda}.\bs{\eta} n},\\
\bs{t}_{\bs{\eta}.\bs{\lambda} } &:= \bs{t}_{\bs{\eta}} + \bs{\mathcal{I}}_{\bs{\eta} }^{-1}\bs{\mathcal{I}}_{\bs{\eta}\bs{\lambda} } \bs{t}_{\bs{\lambda}}(\bs{\lambda},\alpha) .
\end{aligned}
\]
 Write 
\begin{align*}
LR_n(\epsilon_1) &= \max_{\alpha \in [\epsilon_1,1-\epsilon_1]} 2\{L_n(\widehat{\bs{\psi}},\alpha) - L_n(\bs{\psi}^*,\alpha) - [ L_{0,n}(\widehat{\bs{\gamma}}_0,\widehat{\bs{\mu}}_0,\widehat{\bs{\Sigma}}_0)- L_n(\bs{\psi}^*,\alpha) ]\} \\
 &= \max_{\alpha \in [\epsilon_1,1-\epsilon_1]} 2\{L_n(\widehat{\bs{\psi}},\alpha) - L_n(\bs{\psi}^*,\alpha)\} - 2\{ L_{0,n}(\widehat{\bs{\gamma}}_0,\widehat{\bs{\mu}}_0,\widehat{\bs{\Sigma}}_0)- L_{0,n}(\bs{\gamma}^*,\bs{\mu}^*,\bs{\Sigma}^*) \}. 
\end{align*}
We apply Lemma \ref{P-quadratic} in \ref{section:quadratic} and Lemma \ref{Ln_thm2} in \ref{section:expansion} to these two terms.

Note that the penalized MLE, $\widehat{\bs{\psi}}$, is consistent and that $p_n(\widehat {\bs{\vartheta}}_2) =o_p(1)$ from $a_n=O(1)$, $p_{n}(\bs{\Sigma};\bs{\Sigma})=0$ and $\widehat{\bs{\Sigma}}_1, \widehat{\bs{\Sigma}}_2, \widehat{\bs{\Omega}} \to_p \bs{\Sigma}$. Therefore, $\widehat{\bs{\psi}}$ is in the set $A_{n\varepsilon}(\delta)$ in Lemma \ref{P-quadratic}, and  Lemma \ref{P-quadratic} holds under the current set of assumptions.  Split the quadratic form in Lemma \ref{P-quadratic}(b) and write it as
\begin{equation} \label{LR_appn}
\sup_{\bs{\vartheta} \in A_{n\varepsilon}(\delta) } \left| 2 \left[L_n(\bs{\psi},\alpha) - L_n(\bs{\psi}^*,\alpha) \right] - B_n(\sqrt{n} \bs{t}_{\bs{\eta}.\bs{\lambda} }) - C_n(\sqrt{n} \bs{t}_{\bs{\lambda}}(\bs{\lambda},\alpha)) \right| =o_{p \varepsilon}(1),
\end{equation}
where
\begin{equation} \label{B_pi}
\begin{aligned}
B_n(\bs{t}_{\bs{\eta}.\bs{\lambda} }) & = 2\bs{t}_{\bs{\eta}.\bs{\lambda} }\t\bs{G}_{\bs{\eta} n} - \bs{t}_{\bs{\eta}.\bs{\lambda} }\t\bs{\mathcal{I}}_{\bs{\eta}}\bs{t}_{\bs{\eta}.\bs{\lambda} }, \\
C_n(\bs{t}_{\bs{\lambda}}(\bs{\lambda},\alpha)) &= 2\bs{t}_{\bs{\lambda}}(\bs{\lambda},\alpha)\t \bs{G}_{\bs{\lambda}.\bs{\eta} n} - \bs{t}_{\bs{\lambda}}(\bs{\lambda},\alpha)\t \bs{\mathcal{I}}_{\bs{\lambda}.\bs{\eta} } \bs{t}_{\bs{\lambda}}(\bs{\lambda},\alpha) \\
& = \bs{Z}_{\bs{\lambda} . \bs{\eta} n}\t \bs{\mathcal{I}}_{\bs{\lambda}.\bs{\eta} } \bs{Z}_{\bs{\lambda} . \bs{\eta} n}- (\bs{t}_{\bs{\lambda}}(\bs{\lambda},\alpha) - \bs{Z}_{\bs{\lambda} . \bs{\eta} n})\t\bs{\mathcal{I}}_{\bs{\lambda}.\bs{\eta} }(\bs{t}_{\bs{\lambda}}(\bs{\lambda},\alpha) - \bs{Z}_{\bs{\lambda} . \bs{\eta} n}).
\end{aligned}
\end{equation}
Observe that $2[L_{0,n}(\widehat{\bs{\gamma}}_0,\widehat{\bs{\mu}}_0,\widehat{\bs{\Sigma}}_0) - L_{0,n}(\bs{\gamma}^*,\bs{\mu}^*,\bs{\Sigma}^*)]   = \max_{\bs{t}_{\bs{\eta}.\bs{\lambda} }} B_n(\sqrt{n} \bs{t}_{\bs{\eta}.\bs{\lambda} }) + o_p(1)$ from applying Lemma \ref{Ln_thm2} to $L_{0,n}(\bs{\gamma},\bs{\mu},\bs{\Sigma})$ and noting that the set of possible values of both $\sqrt{n} \bs{t}_{\bs{\eta}}$ and $\sqrt{n}\bs{t}_{\bs{\eta}.\bs{\lambda} }$ approaches $\mathbb{R}^{d_\eta}$.  Therefore, in conjunction with  $p_{n}(\widehat{\bs{\vartheta}}_2)= o_p(1)$ and  (\ref{LR_appn}), we obtain
\begin{equation} \label{LR_appn2}
2[L_n(\widehat{\bs{\psi}},{\alpha}) -L_{0,n}(\widehat{\bs{\gamma}}_0,\widehat{\bs{\mu}}_0,\widehat{\bs{\Sigma}}_0)] = C_n(\sqrt{n}\bs{t}_{\bs{\lambda}}(\widehat{\bs{\lambda}},\alpha)) + o_p(1). 
\end{equation}
Split $\bs{t}_{\bs{\lambda}}(\bs{\lambda},\alpha)$ as $\bs{t}_{\bs{\lambda}}(\bs{\lambda},\alpha)=(\bs{t}_{\bs{\mu v}}(\bs{\lambda},\alpha)\t,\bs{t}_{\bs{\mu}^4}(\bs{\lambda},\alpha)\t)\t = (12c(\alpha) \bs{\lambda}_{\bs{\mu v}}\t, 
c(\alpha)[12\bs{\lambda}_{\bs{v}^2}+ b(\alpha) \bs{\lambda}_{\bs{\mu}^4}]\t)\t$ with $c(\alpha):=\alpha(1-\alpha)$. Partition the parameter space as $\Theta_{\bs{\lambda}} = \Theta_{\bs{\lambda}}^{1} \cup \Theta_{\bs{\lambda}}^2$ with
\begin{align*}
\Theta_{\bs{\lambda}}^{1} &:= \{ |\lambda_{\mu_i}| \leq n^{-1/8} (\log n)^{-1} \text{ for all $i\in\{1,\ldots,d\}$} \},\\ \Theta_{\bs{\lambda}}^{2} &:= \{ |\lambda_{\mu_i}| \geq n^{-1/8} (\log n)^{-1} \text{ for some $i\in\{1,\ldots,d\}$} \}.
\end{align*}
For $j\in \{1,2\}$, define $\ddot{\bs{\lambda}}^{j}$ by $C_n(\sqrt{n} \bs{t}(\ddot{\bs{\lambda}}^{j}, \alpha)) = \max_{\bs{\lambda} \in \Theta_{\bs{\lambda}}^{j}} C_n(\sqrt{n} \bs{t}_{\bs{\lambda}}(\bs{\lambda},\alpha))$. Then, we have
\begin{align} 
& \bs{t}(\ddot{\bs{\lambda}}^{j}, \alpha) =(\bs{t}_{\bs{\mu v}}(\ddot{\bs{\lambda}}^{j},\alpha)\t,\bs{t}_{\bs{\mu}^4}(\ddot{\bs{\lambda}}^{j},\alpha)\t)\t = O_p(n^{-1/2}), \label{ddot_rate} \\
& 2[L_n(\widehat{\bs{\psi}},{\alpha}) -L_{0,n}(\widehat{\bs{\gamma}}_0,\widehat{\bs{\mu}}_0,\widehat{\bs{\Sigma}}_0)] = \max_{j\in \{1,2\}} C_n(\sqrt{n} \bs{t}(\ddot{\bs{\lambda}}^{j}, \alpha)) + o_p(1), \label{LR_appn3}
\end{align}
where (\ref{ddot_rate}) follows from noting that $C_n(\sqrt{n}\bs{t}(\ddot{\bs{\lambda}}^{j}, \alpha)) \geq o_p(1)$ and using the argument following (\ref{rk_lower2}) in the proof of Lemma \ref{Ln_thm2}, and (\ref{LR_appn3}) holds because (i) $\max_{j \in \{1,2\}} C_n(\sqrt{n}\bs{t}(\ddot{\bs{\lambda}}^{j}, \alpha)) \geq 2[L_n(\widehat{\bs{\psi}},{\alpha}) -L_{0,n}(\widehat{\bs{\gamma}}_0,\widehat{\bs{\mu}}_0,\widehat{\bs{\Sigma}}_0)] + o_p(1)$ from the definition of $\bs{t}(\ddot{\bs{\lambda}}^{j}, \alpha)$ and (\ref{LR_appn2}), and (ii) $2[L_n(\widehat{\bs{\psi}},{\alpha}) -L_{0,n}(\widehat{\bs{\gamma}}_0,\widehat{\bs{\mu}}_0,\widehat{\bs{\Sigma}}_0)]\geq \max_{j \in \{1,2\}} C_n(\sqrt{n}\bs{t}(\ddot{\bs{\lambda}}^{j}, \alpha)) + o_p(1)$ from the definition of $\widehat{\bs{\psi}}$ and (\ref{LR_appn}).

We proceed to construct a parameter space $\tilde\Lambda_{\bs{\lambda}}^{j}$ that is locally equal to the cone $\Lambda_{\bs{\lambda}}^{j}$ defined in (\ref{Lambda-e}). Define $\ddot{\bs{\lambda}}_{\bs{\mu v}}^{j}$, $\ddot{\bs{\lambda}}_{\bs{v}^2}^{j}$, and $\ddot{\bs{\lambda}}_{\bs{\mu}^4}^j $ similarly to $\bs{\lambda}_{\bs{\mu v}}$, $\bs{\lambda}_{\bs{v}^2}$, $\bs{\lambda}_{\bs{\mu}^4}$ but using $\ddot{\bs{\lambda}}^{j}$ in place of $\bs{\lambda}$. Observe that the definition of $\Theta_{\bs{\lambda}}^{1}$  and $\Theta_{\bs{\lambda}}^{2}$, (\ref{ddot_rate}), and Lemma \ref{lemma_lambda_e}  in \ref{section:auxiliary}  imply that  $\ddot{\bs{\lambda}}_{\bs{\mu}^4}^{1} = o_p(n^{-1/2})$ and $\ddot{\bs{\lambda}}_{\bs{v}^2}^{2} = o_p(n^{-1/2})$. Therefore, 
\begin{align*}
&\bs{t}_{\bs{\mu v}}(\ddot{\bs{\lambda}}^{j},\alpha) =  12c(\alpha)\ddot{\bs{\lambda}}_{\bs{\mu v}}^{j}\ \text{ for $ j=1,2$}, \\
&\bs{t}_{\bs{\mu}^4}(\ddot{\bs{\lambda}}^{j},\alpha)= 
\begin{cases}
12 c(\alpha) \ddot{\bs{\lambda}}_{\bs{v}^2}^{1}+ o_p(n^{-1/2}) &\text{if } j=1,\\
c(\alpha) b(\alpha)\ddot{\bs{\lambda}}_{\bs{\mu}^4}^{2}+ o_p(n^{-1/2}) &\text{if } j=2.\\
\end{cases}
\end{align*}
Define 
\begin{equation*} 
\widetilde{\bs{t}}_{\bs{\mu v}} (\bs{\lambda},\alpha ) : = 12 c(\alpha)\bs{\lambda}_{\bs{\mu v}} \quad \text{and}\quad \widetilde{\bs{t}}_{\bs{\mu}^4}^{j}(\bs{\lambda},\alpha ): = 
\begin{cases}
12 c(\alpha) \bs{\lambda}_{\bs{v}^2}&\text{if } j=1,\\
c(\alpha) b(\alpha)\bs{\lambda}_{\bs{\mu}^4} & \text{if } j=2,
\end{cases} 
\end{equation*}
and
\begin{equation*} 
\widetilde{\Lambda}_{\bs{\lambda} }^{j}(\alpha) := \left\{ \left(\bs{t}_{\bs{\mu v}}\t, \bs{t}_{\bs{\mu}^4}\t\right)\t: \bs{t}_{\bs{\mu v}} = \widetilde{\bs{t}}_{\bs{\mu v}} (\bs{\lambda},\alpha ),\ \bs{t}_{\bs{\mu}^4}=\widetilde{\bs{t}}_{\bs{\mu}^4}^{j}(\bs{\lambda},\alpha)\ \text{for some $\bs{\lambda} \in \Theta_{\bs{\lambda}}$} \right\}.
\end{equation*}
Define $\widetilde{\bs{t}}_{\bs{\lambda}}^{j}$ by $C_n(\sqrt{n} \widetilde{\bs{t}}_{\bs{\lambda}}^{j}) = \max_{\bs{t}_{\bs{\lambda}} \in\widetilde{\Lambda}_{\bs{\lambda} }^{j} } C_n(\sqrt{n} \bs{t}_{\bs{\lambda}})$, then we have  $\max_{j \in \{1,2\}} C_n(\sqrt{n} \bs{t}(\ddot{\bs{\lambda}}^{j}, \alpha))= \max_{j \in \{1,2\}} C_n(\sqrt{n} \widetilde{\bs{t}}_{\bs{\lambda}}^{j}) +o_p(1)$. 
Therefore, in view of (\ref{LR_appn3}), we have
\begin{equation*}
2[L_n(\widehat{\bs{\psi}},{\alpha}) -L_{0,n}(\widehat{\bs{\gamma}}_0,\widehat{\bs{\mu}}_0,\widehat{\bs{\Sigma}}_0)] =\max_{j \in \{1,2\} } C_n(\sqrt{n} \widetilde{\bs{t}}_{\bs{\lambda}}^{j}) + o_p(1) .
\end{equation*}
The asymptotic distribution of the LRTS follows from applying Theorem 3(c) of \citet[][p.\ 1362]{andrews99em} to $C_n(\sqrt{n} \widetilde{\bs{t}}_{\bs{\lambda}}^{j})$. First, Assumption 2 of \citet{andrews99em} holds trivially for $C_n(\sqrt{n} \widetilde{\bs{t}}_{\bs{\lambda}}^{j})$. Second, Assumption 3 of \citet{andrews99em} holds with $B_T=n^{1/2}$ because $\bs{G}_{\bs{\lambda}.\bs{\eta} n} \to_d \bs{G}_{\bs{\lambda}.\bs{\eta}} \sim N(0,\bs{\mathcal{I}}_{\bs{\lambda}.\bs{\eta}})$ and $\bs{\mathcal{I}}_{\bs{\lambda}.\bs{\eta}}$ is nonsingular. Assumption 4 of \citet{andrews99em} holds from the same argument as (\ref{ddot_rate}). Assumption $5$ of \citet{andrews99em} follows from Assumption $5^*$ of \citet{andrews99em} because $\widetilde{\Lambda}_{\bs{\lambda}}^{j}$ is locally equal to the cone ${\Lambda}_{\bs{\lambda}}^{j}$. Therefore, it follows from Theorem 3(c) of \citet{andrews99em} that
$\max_{j \in \{1,2\}} C_n(\sqrt{n} \widetilde{\bs{t}}_{\bs{\lambda}}^{j}) \rightarrow_d \max_{j \in \{1,2\}} (\widehat{\bs{t}}_{\bs{\lambda}}^{j})^{\top}\bs{\mathcal{I}}_{\bs{\lambda},\bs{\eta}}\widehat{\bs{t}}_{\bs{\lambda}}^{j}$, 
giving the stated result.
\end{proof}

\begin{proof}[Proof of Proposition \ref{local_lr-2}]

For $m = 1,\ldots,M_0$, let $\mathcal{N}_{m}^* \subset \Theta_{\bs{\vartheta}_{M_0+1}}(\epsilon_1)$ be a sufficiently small closed neighborhood of $\Upsilon_{1m}^*$, such that  $\alpha_m,\alpha_{m + 1} > 0$ hold and $\Upsilon_{1k}^* \notin \mathcal{N}_{m}^*$ if $k\neq m$. For $\bs{\vartheta}_{M_0 + 1} \in \mathcal{N}_{m}^*$, we introduce the following one-to-one reparameterization, which is similar to (\ref{repara2}):
\begin{align*}
&\beta_{m}: = \alpha_{m} + \alpha_{m + 1}, \quad \tau: = \alpha_{m} /(\alpha_{m} + \alpha_{m + 1}), \\
&(\beta_1,\ldots,\beta_{m - 1},\beta_{m + 1}\ldots,\beta_{M_0 - 1})^{\top}: = (\alpha_1,\ldots,\alpha_{m - 1},\alpha_{m + 2},\ldots,\alpha_{M_0})^{\top},\\
&
\begin{pmatrix}
{\bs{\mu}}_m\\
{\bs{\mu}}_{m+1}\\
\bs{v}_m\\
\bs{v}_{m+1}
\end{pmatrix}
=
\begin{pmatrix}
\bs{\nu}_{\bs\mu} + (1-\tau) \bs{\lambda}_{\bs\mu} \\
\bs{\nu}_{\bs\mu} -\tau \bs{\lambda}_{\bs\mu}\\
\bs{\nu}_{\bs{v}} + (1- \tau)(2\bs{\lambda}_{\bs{v}}+ C_1 \bs{w}(\bs{\lambda}_{\bs\mu}\bs{\lambda}_{\bs\mu}\t) )\\
\bs{\nu}_{\bs{v}} - \tau(2\bs{\lambda}_{\bs{v}}+ C_2 \bs{w}(\bs{\lambda}_{\bs\mu}\bs{\lambda}_{\bs\mu}\t)
\end{pmatrix}, 
\end{align*}
where $\beta_{M_0} = 1 - \sum_{m = 1}^{M_0 - 1} \beta_m$,  and we suppress the dependence of $(\bs{\lambda}_{\bs{\mu}},\bs{\nu}_{\bs{\mu}},\bs{\lambda}_{\bs{v}},\bs{\nu}_{\bs{v}})$ on $\tau$. With this reparameterization, the null restriction $(\bs{\mu}_{m},\bs{\Sigma}_{m}) = (\bs{\mu}_{m + 1},\bs{\Sigma}_{m + 1})$ implied by $H_{0, 1m}$ holds if and only if $(\bs{\lambda}_{\bs{\mu}},\bs{\lambda}_{\bs{v}}) = \bs{0}$. Collect the reparameterized parameters except for $\tau$ into one vector $\bs{\psi}^m$, and let $\bs{\psi}^{m*}$ denote its true value. Define the reparameterized density as 
\begin{align*}
f_{\bs{M_0+1}}^m(\bs{x}|\bs{z}; \bs{\psi}^{m},\tau) & : = \beta_{m} g^m(\bs{x}|\bs{z}; \bs{\psi}^{m},\tau) + \sum_{j = 1}^{m - 1} \beta_j f_v(\bs{x}|\bs{z}; \bs{\gamma},\bs{\mu}_j,\bs{\Sigma}_j) + \sum_{j = m + 1}^{M_0} \beta_{j} f_v(\bs{x}|\bs{z}; \bs{\gamma},\bs{\mu}_{j + 1},\bs{\Sigma}_{j + 1}),
\end{align*}
where, similar to (\ref{loglike}),
\begin{align*}
g^m(\bs{x}|\bs{z}; \bs{\psi}^{m},\tau) & := \tau f_v \left(\bs{x}|\bs{z}; \bs{\gamma}, \bs{\nu}_{\bs{v}} + (1- \tau)(2\bs{\lambda}_{\bs{v}}+ C_1 \bs{w}(\bs{\lambda}_{\bs\mu}\bs{\lambda}_{\bs\mu}\t) ) \right)\\
& \quad + (1 - \tau) f_v \left(\bs{x}|\bs{z}; \bs{\gamma},\bs{\nu}_{\bs{\mu}} - \tau \bs{\lambda}_{\bs{\mu}}, \bs{\nu}_{\bs{v}} - \tau(2\bs{\lambda}_{\bs{v}}+ C_2 \bs{w}(\bs{\lambda}_{\bs\mu}\bs{\lambda}_{\bs\mu}\t) \right).
\end{align*} 
Observe that Lemma \ref{dv3}  in \ref{section:auxiliary} is applicable to $g^m(\bs{x}|\bs{z}; \bs{\psi}^{m},\tau)$ by replacing $\alpha$ with $\tau$. Define $L_n^m(\bs{\psi}^{m},\tau): = \sum_{i = 1}^n \log[f_{M_0+1}^m(\bs{X}_i|\bs{Z}_i; \bs{\psi}^{m},\tau)] $. Then, $L_{n}^m(\bs{\psi}^m,\tau) - L_n^m(\bs{\psi}^{m*},\tau)$ admits the same expansion as  $L_n(\bs{\psi},\alpha) - L_n(\bs{\psi}^*,\alpha)$ in Lemma \ref{P-quadratic} \ in \ref{section:quadratic} by replacing $(\bs{t}(\bs{\psi},\alpha), \bs{s}(\bs{x},\bs{z}), \bs{\mathcal{I}})$ with $(\bs{t}_{m}(\bs{\psi}^m,\tau), \bs{s}_{m}(\bs{x},\bs{z}), \bs{\mathcal{I}}^m)$, where $(\bs{s}_{m}(\bs{x},\bs{z}),\bs{\mathcal{I}}^m)$ is defined in the same manner as $(\bs{s}(\bs{x},\bs{z}),\bs{\mathcal{I}})$ but using $(\widetilde{\bs{s}}_{\bs{\eta}},\bs{s}_{\bs{\mu v}}^m,\bs{s}_{\bs{\mu}^4}^m)$ in place of $(\bs{s}_{\boldsymbol{\eta}},\bs{s}_{\bs{\lambda}})$. 

Define the local penalized MLE of $\bs{\psi}^{m}$ by
\begin{equation}
\widehat{\bs{\psi}}^m: = \arg\max_{\bs{\psi}^{m} \in\mathcal{N}_{m}^*} PL_n^m(\bs{\psi}^{m},\tau),\ \text{ where} \  PL_n^m(\bs{\psi}^{m},\tau): =L_n^m(\bs{\psi}^{m},\tau) + p_n(\bs{\psi}^{m}). \label{local_mle}
\end{equation}
Because $\bs{\psi}^{m*}$ is the only parameter value in $\mathcal{N}_m^*$ that generates true density, $\widehat{\bs{\psi}}^m - \bs{\psi}^{m*} = o_p(1)$ follows from a straightforward extension of Proposition \ref{P-consis}. For $\epsilon_\tau\in(0, 1/2)$, define the LRTS for testing $H_{0,1m}$ as $LR_{n,1m}(\epsilon_\tau) : = \max_{\tau\in[\epsilon_\tau,1 - \epsilon_\tau]}2\{L_n^m(\widehat{\bs{\psi}}^m,\tau) - L_{0,n}(\widehat{\bs{\vartheta}}_{M_0})\}$.  Observe that $p_n(\widehat{\bs{\psi}}^{m}) = o_p(1)$ because $a_n = o(1)$. Repeating the proof of Proposition \ref{P-LR-N1} for each local penalized MLE by replacing $\bs{G}_{n}$ with $\bs{G}_{n,m} := \nu_n (\bs{s}_m(\bs{x},\bs{z}))$ and collecting the results while noting that $(\bs{G}_{n,1}^{\top},\ldots ,\bs{G}_{n,M_0}^{\top})^{\top}\rightarrow_d (\bs{G}_1^{\top},\ldots ,\bs{G}_{M_0}^{\top})^{\top}$, we obtain
\begin{equation*}
(LR_{n,11}(\epsilon_\tau), \ldots, LR_{n,1M_0}(\epsilon_\tau))^{\top}\rightarrow_d (v_1,\ldots, v_{M_0})^{\top}, \end{equation*}
with $v_m$'s defined in Proposition \ref{local_lr-2}. Therefore, the stated result holds.
\end{proof}
 
\begin{proof}[Proof of Proposition \ref{EM_stat-1}]

For $j=1,2$, let $\omega_{n,m}^{j}$ be the sample counterpart of $(\widehat{\bs{t}}_{\bs{\lambda},m}^{j})^{\top} \bs{\mathcal{I}}_{\bs{\lambda}.\bs{\eta}}^m \widehat{\bs{t}}_{\bs{\lambda},m}^{j}$ in Proposition \ref{local_lr-2} such that the local LRTS satisfies $2 [L_n^m(\widehat{\bs{\psi}}_{\tau}^{m},\tau) - L_{0, n}(\widehat{\bs{\vartheta}}_{m_0}) ] = \max_{j}\{\omega_{n,m}^{j}\} + o_p(1)$, where $\widehat{\bs{\psi}}_{\tau}^{m}$ is the local penalized MLE defined as in (\ref{local_mle}) but using the penalty function $p_n^m(\bs{\vartheta}_{M_0+1})$ in (\ref{penalty-em}) in place of $p_n(\bs{\vartheta}_{M_0+1})$ in (\ref{pen_pmle}).

First, we show $\text{EM}_n^{m(1)} = \max_{j}\{\omega_{n,m}^{j}\} + o_p(1)$.  For $\tau \in (0, 1)$, define $\bs{\vartheta}_{M_0+1}^{m*}(\tau):= \{\bs{\vartheta}_{M_0+1} \in\Upsilon_{1m}^* : \alpha_m/(\alpha_m+\alpha_{m+1})=\tau\}$, which gives the true density.  Because $\bs{\vartheta}_{M_0+1}^{m*}(\tau_0)$ is the only value of $\bs{\vartheta}_{M_0+1}$ that yields the true density if $\bs{\varsigma} \in \Xi^*_m$ and $\alpha_m/(\alpha_m+\alpha_{m+1})=\tau_0$, $\bs{\vartheta}_{M_0+1}^{m(1)}(\tau_0)$ equals a reparameterized local penalized MLE in the neighborhood of $\bs{\vartheta}_{M_0+1}^{m*}(\tau_0)$.  Therefore, $2[PL_{n}(\bs{\vartheta}_{M_0+1}^{m(1)}(\tau_0)) - L_{0,n}(\widehat{\bs{\vartheta}}_{M_0})] =\max_{j}\{\omega_{n,m}^{j}\} + o_p(1)$ follows from repeating the proof of Proposition \ref{local_lr-2}, and  $\text{EM}_n^{m(1)} =\max_{j}\{\omega_{n,m}^{j}\} + o_p(1)$  holds by noting that $\{0.5\} \in \mathcal{T}$. 

We proceed to show that $\text{EM}_n^{m(K)} = \max_{j}\{\omega_{n,m}^{j}\} + o_p(1)$ for any finite $K$. Because a generalized EM step never decreases the likelihood value \citep{dempster77jrssb}, we have $PL_{n}^m(\bs{\vartheta}_{M_0+1}^{m(K)}(\tau_0))  + p( \tau^{(K)}) \geq PL_{n}^m(\bs{\vartheta}_{M_0+1}^{m(1)}(\tau_0))  + p( \tau_0)$. Therefore, it follows from Theorem 1 of \citet{chentan09jmva}, Lemma \ref{tau_update} in \ref{section:auxiliary}, and induction that $\bs{\vartheta}_{M_0+1}^{m(K)}(\tau_0) - \bs{\vartheta}_{M_0+1}^{m*}(\tau_0)= o_p(1)$ for any finite $K$. Let $\widetilde{\bs{\vartheta}}_{M_0+1}^{m}$ be the maximizer of $PL_{n}^m(\bs{\vartheta}_{M_0+1})$ under the constraint $\alpha_m/(\alpha_m + \alpha_{m+1}) = \tau^{(K)}$ in an arbitrary small closed neighborhood of $\bs{\vartheta}_{M_0+1}^{m*}(\tau^{(K)})$. Then, we have $PL_{n}(\widetilde{\bs{\vartheta}}_{M_0+1}^{m}) \geq PL_{n}^m({\bs{\vartheta}}_{M_0+1}^{m(K)}(\tau_0))+o_p(1)$ from the consistency of ${\bs{\vartheta}}_{M_0+1}^{m(K)}(\tau_0)$, and $2[PL_{n}(\widetilde{\bs{\vartheta}}_{M_0+1}^{m}) - L_{0,n}(\widehat{\bs{\vartheta}}_{M_0}) ] = \max_{j}\{\omega_{n,m}^{j}\} + o_p(1)$ holds from the definition of $\widetilde{\bs{\vartheta}}_{M_0+1}^{m}$. Furthermore, note that $PL_{n}(\bs{\vartheta}_{M_0+1}^{m(K)}(\tau_0)) \geq PL_{n}(\bs{\vartheta}_{M_0+1}^{m(1)}(\tau_0)) + o_p(1)$ from the definition of $\bs{\vartheta}_{M_0+1}^{m(K)}(\tau_0)$ and $\tau^{(K)} - \tau_0 = o_p(1)$, and we have already shown $2[PL_{n}(\bs{\vartheta}_{M_0+1}^{m(1)}(\tau_0)) - L_{0,n}(\widehat{\bs{\vartheta}}_{M_0})] = \max_{j}\{\omega_{n,m}^{j}\} + o_p(1)$. Therefore,  $2[PL_{n}({\bs{\vartheta}}_{M_0+1}^{m(K)}(\tau_0)) - L_{0, n}(\widehat{\bs{\vartheta}}_{M_0}) ] = \max_{j}\{\omega_{n,m}^{j}\} + o_p(1)$ holds  for all $m$, and  follows because $\tau^{(K)} - \tau_0 = o_p(1)$ and $\{0.5\} \in \mathcal{T}$. The stated result then follows from the definition of $\text{EM}_n^{(K)}$.
\end{proof}

\begin{proof}[Proof of Proposition \ref{P-LAN2}]
The proof follows the argument in the proof of Proposition \ref{P-LR-N1}. Observe that $\bs{h}_{\bs \eta}=0$ and $\bs{h}_{\bs\lambda} =\sqrt{n}\bs{t}_{\bs{\lambda}}(\bs{\lambda}_n,\alpha_n)+o(1)$ hold under $H_{1n}$. Therefore, Lemma \ref{P-LAN}   in \ref{section:auxiliary}  holds under $\mathbb{P}_{\bs{\vartheta}_n}^n$ implied by $H_{1n}$, and, in conjunction with Theorem 12.3.2 of \citet{lehmannromano05book}, Lemma \ref{P-quadratic} holds under $\mathbb{P}_{\bs\vartheta_n }^n$. Consequently, the proof of Proposition \ref{P-LR-N1} goes through if we replace $\bs{G}_{\bs{\lambda.\eta} n}\rightarrow_d \bs{G}_{\bs{\lambda.\eta} }$ with $G_{\bs{\lambda.\eta} n} \rightarrow_d G_{\bs{\lambda.\eta}} + ( \bs{\mathcal{I}_{\lambda}} - \bs{\mathcal{I}_{\lambda\eta}} \bs{\mathcal{I}_{\eta}}^{-1} \bs{\mathcal{I}_{\eta\lambda}} ) \bs{h}_{\bs\lambda} = \bs{G}_{\bs{\lambda.\eta}} + \bs{\mathcal{I}_{\lambda.\eta}} \bs{h_{\lambda}}$, and the stated result follows.
\end{proof}

\begin{proof}[Proof of Proposition \ref{P-bootstrap}]
The proof follows the argument in the proof of Theorem 15.4.2 in \citet{lehmannromano05book}. Define $\bf{C}_{\bs\eta}$ as the set of sequences $\{{\bs{\eta}}_n\}$ satisfying $\sqrt{n}({\bs{\eta}}_n - {\bs{\eta}}^*) \to \bs{h}_{\bs\eta}$ for some finite $ \bs{h}_{\bs\eta}$. 
Denote the MLE of the model with $M_0=1$ by $\hat{\bs{\eta}}_n$, then $\sqrt{n}(\hat{\bs{\eta}}_n - \bs{\eta}^*)$ converges in distribution to a $\mathbb{P}_{\bs{\vartheta}^*}$-a.s. finite random variable. Then, by the Almost Sure Representation Theorem (e.g., Theorem 11.2.19 of \citet{lehmannromano05book}), there exist random variables $\widetilde{\bs{\eta}}_n$ and $\widetilde{\bs{h}}_{\bs\eta}$ defined on a common probability space such that $\hat{\bs{\eta}}_n$ and $\widetilde{\bs{\eta}}_n$ have the same distribution and $\sqrt{n}(\widetilde{\bs{\eta}}_n -\bs \eta^*)\rightarrow \widetilde{\bs{h}}_{\bs\eta}$ almost surely. Therefore, $\{ \widetilde{\bs{\eta}}_n \}\in \bf{C}_{\bs\eta}$ with probability one, and the stated result under $H_0$ follows from Lemma \ref{lemma_btsp} in \ref{section:auxiliary} because $\hat{\bs{\eta}}_n$ and $\widetilde{\bs{\eta}}_n$ have the same distribution. 
For the MLE under $H_{1n}$, note that the proof of Proposition \ref{P-LAN2} goes through when $\bs{h}_{\bs\eta}$ is finite. Therefore, $\sqrt{n}(\hat{\bs{\eta}}_n - \bs{\eta}^*)$ converges in distribution to a $\mathbb{P}_{\bs\vartheta_n}$-a.s. finite random variable under $H_{1n}$. Hence, the stated result for LRTS follows from   repeating the argument in the case of $H_0$. The corresponding result for EM test  follows from the  asymptotic equivalence of  $LR_n^{M_0}$ and $EM_n^{(K)}$.
\end{proof}

\begin{proof}[Proof of Proposition \ref{P-LR-N1-homo-1}]
 
The proof is similar to that of Proposition \ref{P-LR-N1}. Let $(\widehat{\bs{\psi}},\widehat{\alpha})$ denote the reparameterization of $\widehat{\bs{\vartheta}}_2^1$. 
Write $\bs{t}(\bs{\psi},\alpha)$ in (\ref{tpsi_defn-homo}) as $(\bs{t}_{\bs{\eta}}\t,\bs{t}_{\bs{\lambda}}(\bs{\lambda},\alpha)\t)\t=(\bs{t}_{\bs{\eta}}\t,\bs{t}_{\bs{\mu}^3}(\bs{\lambda},\alpha)\t,\bs{t}_{\bs{\mu}^4}(\bs{\lambda},\alpha)\t)\t$. Repeating the argument that leads to (\ref{LR_appn2}) in the proof of Proposition \ref{P-LR-N1} but using Lemma \ref{P-quadratic-homo-1} in place of  Lemma \ref{P-quadratic}, we obtain
\begin{equation*}
2[L_n(\widehat{\bs{\psi}},\widehat{\alpha}) -L_{0,n}(\widehat{\bs{\gamma}}_0,\widehat{\bs{\mu}}_0,\widehat{\bs{\Sigma}}_0)] = C_n(\sqrt{n}\bs{t}(\widehat{\bs{\psi}},\widehat{\alpha})) + o_p(1),
\end{equation*}
where $C_n(\cdot)$ is defined as in (\ref{B_pi}) but using $\bs{s}(\bs{x},\bs{z})$ defined in (\ref{score_defn-homo}) in place of (\ref{score_defn}).

Partition the parameter space as $\Theta_{\bs{\lambda\alpha}}=\Theta_{\bs{\lambda\alpha}}^1\cup \Theta_{\bs{\lambda\alpha}}^2$ with
\begin{align} \label{theta_lambda_e-homo}
&\Theta_{\bs{\lambda\alpha}}^1 := \{ (\bs{\lambda},\alpha) \in \Theta_{\bs{\lambda\alpha}} : |1-2\alpha| \geq n^{-1/8} \log n \}\ \text{and}\ \Theta_{\bs{\lambda\alpha}}^2 := \{ (\bs{\lambda},\alpha) \in \Theta_{\bs{\lambda\alpha}} : |1-2\alpha| < n^{-1/8} \log n \},
\end{align} 
where $\Theta_{\bs\lambda\alpha}$ be the set of values of $(\bs\lambda,\alpha)$ such that the value of ${\bs{\vartheta}}_2$ implied by $(\bs{\lambda},\alpha)$ is in $\Theta_{{\bs{\vartheta}}_2,\zeta}^1$. 

Define $(\ddot{\bs{\lambda}}^j,\ddot{\alpha}^j)$ by $C_n(\sqrt{n} \bs{t}(\ddot{\bs{\lambda}}^j, \ddot{\alpha}^j)) = \max_{(\bs{\lambda},\alpha) \in \Theta_{\bs{\lambda\alpha}}^j} C_n(\sqrt{n} \bs{t}_{\bs{\lambda}}(\bs{\lambda},\alpha))$ for $j=1,2$. Then, repeating the argument following (\ref{ddot_rate})--(\ref{LR_appn3}), we have
\begin{align} 
& \bs{t}(\ddot{\bs{\lambda}}^j,\ddot{\alpha}^j) = (\bs{t}_{\bs{\mu}^3}(\ddot{\bs{\lambda}}^j,\ddot{\alpha}^j)\t,\bs{t}_{\bs{\mu}^4}(\ddot{\bs{\lambda}}^j,\ddot{\alpha}^j)\t)\t=O_p(n^{-1/2})\quad\text{for $j=1,2$}, \label{ddot_rate-homo} \\
& 2[L_n(\widehat{\bs{\psi}},\widehat{\alpha}) -L_{0,n}(\widehat{\bs{\gamma}}_0,\widehat{\bs{\mu}}_0,\widehat{\bs{\Sigma}}_0)] = \max_{j\in\{1,2\}}\left\{ C_n(\sqrt{n} \bs{t}(\ddot{\bs{\lambda}}^j, \ddot{\alpha}^j)) \right\}+ o_p(1).\label{LR_appn3-homo}
\end{align} 
 
Define $ \ddot{\bs{\lambda}}_{\bs{\mu}^3}^j$ and $\ddot{\bs{\lambda}}_{\bs{\mu}^4}^j $ similarly to $\bs{\lambda}_{\bs{\mu}^3}$ and $\bs{\lambda}_{\bs{\mu}^4}$ but using $\ddot{\bs{\lambda}}^j$ in place of $\bs{\lambda}$.
Observe that (\ref{theta_lambda_e-homo}) and (\ref{ddot_rate-homo}) imply that, with $c(\alpha):=\alpha(1-\alpha)$, 
\begin{equation} \label{Lambda-ddot-homo} 
\begin{aligned} 
&\bs{t}_{\bs{\mu}^3}(\ddot{\bs{\lambda}}^j,\ddot{\alpha}^j) = 
 c(\ddot{\alpha}^j)(1-2\ddot{\alpha}^j) \ddot{\bs{\lambda}}_{\bs{\mu}^3}^j + o_p(n^{-1/2})\quad\text{for $j=1, 2$}, \\
&\bs{t}_{\bs{\mu}^4}(\ddot{\bs{\lambda}}^j,\ddot{\alpha}^j) = 
\begin{cases} 
o_p(n^{-1/2}) &\text{if } j=1\\
 c(\ddot{\alpha}^j)(1-6\ddot{\alpha}^j+6(\ddot{\alpha}^j)^2) \ddot{\bs{\lambda}}_{\bs{\mu}^4}^j + o_p(n^{-1/2})&\text{if } j=2, 
 \end{cases} 
\end{aligned}
\end{equation}
where $\bs{t}_{\bs{\mu}^4}(\ddot{\bs{\lambda}}^1,\ddot{\alpha}^1) =o_p(n^{-1/2})$ holds because $|1-2\ddot{\alpha}^1|\geq n^{-1/8}\log n$ and $c(\ddot{\alpha}^{1})(1-2\ddot{\alpha}^1) \ddot{\bs{\lambda}}_{\bs{\mu}^3}^1=O_p(n^{-1/2})$ from (\ref{theta_lambda_e-homo}) and (\ref{ddot_rate-homo}) imply that $\ddot{\lambda}_i^1 = O_p(n^{-1/8}(\log n)^{-1/3})$ for any $i=1,\ldots,d$.

For $j=1,2$, consider the following set:
\begin{equation} \label{Lambda-tilde-e-homo}
\widetilde{\Lambda}_{\bs{\lambda} }^j := \left\{ \left( \bs{t}_{\bs{\mu}^3}\t, \bs{t}_{\bs{\mu}^4}\t\right)\t: \bs{t}_{\bs{\mu}^3} = \tilde{\bs{t}}_{\bs{\mu^3}}(\bs{\lambda},\alpha),\ \bs{t}_{\bs{\mu}^4} = \tilde{\bs{t}}_{\bs{\mu^4}}^j(\bs{\lambda},\alpha) \ 
\text{for some $(\bs{\lambda},\alpha) \in \Theta_{\bs{\lambda}\alpha}$}
\right\}, 
\end{equation}
where 
\begin{equation} \label{Lambda-tilde-constraint-homo}
 \tilde{\bs{t}}_{\bs{\mu}^3} (\bs{\lambda},\alpha) := 
 c(\alpha)(1-2\alpha) \bs{\lambda}_{\bs{\mu}^3}\quad\text{and}\quad
 \tilde{\bs{t}}_{\bs{\mu}^4}^j(\bs{\lambda},\alpha) := 
\begin{cases} 
0& \text{if } j=1,\\
 c(\alpha)(1-6\alpha+6\alpha^2) \bs{\lambda}_{\bs{\mu}^4} &\text{if } j=2.
 \end{cases} 
\end{equation} 
Define $\widetilde{\bs{t}}_{\bs{\lambda}}^j$ by $C_n(\sqrt{n} \widetilde{\bs{t}}_{\bs{\lambda}}^j) = \max_{\bs{t}_{\bs{\lambda}} \in\widetilde{\Lambda}_{\bs{\lambda} }^j } C_n(\sqrt{n} \bs{t}_{\bs{\lambda}})$. Then, it follows from (\ref{Lambda-ddot-homo}) and (\ref{Lambda-tilde-constraint-homo}) that $ C_n(\sqrt{n} \bs{t}(\ddot{\bs{\lambda}}^j, \ddot{\alpha}^j))= C_n(\sqrt{n} \widetilde{\bs{t}}_{\bs{\lambda}}^j) + o_p(1)$ for $j=1,2$. Therefore, in view of (\ref{LR_appn3-homo}), we have
\[
2[L_n(\widehat{\bs{\psi}},{\alpha}) -L_{0,n}(\widehat{\bs{\gamma}}_0,\widehat{\bs{\mu}}_0,\widehat{\bs{\Sigma}}_0)] = \max_{j\in\{1,2\}}\left\{C_n(\sqrt{n} \widetilde{\bs{t}}_{\bs{\lambda}}^{j}) \right\} + o_p(1) .
\]
Note that $\widetilde{\Lambda}_{\bs{\lambda} }^j$ in (\ref{Lambda-tilde-e-homo}) is locally (in a neighborhood of $\bs{\lambda}=0$) equal to the cone $\Lambda_{\bs{\lambda} }^j$ in (\ref{Lambda-e-homo}) for $j=1,2$ given that $\lim_{\alpha\rightarrow 1/2}(1-6\alpha+6\alpha^2) =-1/2$. Therefore, the stated result follows from applying Theorem 3(c) of \citet{andrews99em} by repeating the argument in the last paragraph of the proof of Proposition \ref{P-LR-N1}.
\end{proof}

\begin{proof}[Proof of Proposition \ref{P-LR-N1-homo-2}]

The proof is similar to that of Proposition \ref{P-LR-N1}. Let $(\widehat{\bs{\psi}}_{\bs{\lambda}}(\bs{\lambda}))=\arg\max_{{\bs{\psi}}_{\bs{\lambda}}\in \Theta_{\bs{\psi_\lambda}}} L_n( {\bs{\psi}}_{\bs{\lambda}}, {\bs{\lambda}}) $ for $\bs{\lambda}$ such that $|\bs{\lambda}|\geq \zeta$. 
 Let
\[
\bs{G}_{ n}(\bs{\lambda}) := \nu_n (\bs{s}(\bs{x},\bs{z};\bs{\lambda})) =
\begin{bmatrix}
\bs{G}_{\bs{\eta} n} \\
 {G}_{\alpha n}(\bs{\lambda})
\end{bmatrix}, \quad
\begin{aligned}
 {G}_{\alpha.\bs{\eta} n}(\bs{\lambda}) &:= {G}_{\alpha n}(\bs{\lambda}) - \bs{\mathcal{I}}_{\alpha\bs{\eta}}(\bs{\lambda})\bs{\mathcal{I}}_{\bs{\eta} }^{-1} \bs{G}_{\bs{\eta} n}, \\ 
 {Z}_{\alpha . \bs{\eta} n}(\bs{\lambda}) &:= \bs{\mathcal{I}}_{\alpha.\bs{\eta} }^{-1}{G}_{\alpha.\bs{\eta} n}(\bs{\lambda}).
\end{aligned}
\]

Following the argument that leads to (\ref{LR_appn2}) in the proof of Proposition \ref{P-LR-N1} but using Lemma \ref{P-quadratic-homo-2} in place of  Lemma \ref{P-quadratic}, we obtain
\begin{equation*} 
2[L_n(\widehat{\bs{\psi}}_{\bs{\lambda}}(\bs{\lambda}),\bs{\lambda})-L_{0,n}(\widehat{\bs{\gamma}}_0,\widehat{\bs{\mu}}_0,\widehat{\bs{\Sigma}}_0)] = C_n(\sqrt{n}|\bs{\lambda}|^3\widehat{\alpha}(\bs{\lambda}) ;\bs{\lambda}) + o_p(1),
\end{equation*}
where
\begin{equation*} 
C_n(t_\alpha;\bs{\lambda}) : = (Z_{\alpha .\bs{\eta} n}(\bs{\lambda}))^2{\mathcal{I}}_{\alpha.\bs{\eta} } (\bs{\lambda})- ( t_\alpha- {Z}_{\alpha.\bs{\eta} n}(\bs{\lambda}))^2 {\mathcal{I}}_{\alpha.\bs{\eta} }(\bs{\lambda}).
\end{equation*}

Define $\ddot{\alpha}(\bs{\lambda})$ by $C_n(\sqrt{n}|\bs{\lambda}|^3\ddot{\alpha}(\bs{\lambda});\bs{\lambda})=\max_{\alpha\in [0,3/4]} C_n(\sqrt{n} |\bs{\lambda}|^3\alpha ;\bs{\lambda}) $. Then, repeating the argument following (\ref{ddot_rate})--(\ref{LR_appn3}), we have 
\begin{align} 
& |\bs{\lambda}|^3 \ddot{\alpha}(\bs{\lambda}) = O_p(n^{-1/2}), \label{ddot_rate-homo-2} \\
& 2[L_n(\widehat{\bs{\psi}}_{\bs{\lambda}}(\bs{\lambda}),\bs{\lambda})-L_{0,n}(\widehat{\bs{\gamma}}_0,\widehat{\bs{\mu}}_0,\widehat{\bs{\Sigma}}_0)] = C_n(\sqrt{n} |\bs{\lambda}|^3\ddot{\alpha}(\bs{\lambda}) ;\bs{\lambda}) + o_p(1). \label{LR_appn3-homo2}
\end{align}
Define $\widetilde{t}_{\alpha}(\bs{\lambda})$ by $C_n(\sqrt{n}\widetilde{t}_{\alpha}(\bs{\lambda});\bs{\lambda})=\max_{t_\alpha\in \widetilde{\Lambda}_\alpha(\bs{\lambda})} C_n(\sqrt{n} t_{\alpha} ;\bs{\lambda}) $, where 
 $\widetilde{\Lambda}_\alpha(\bs{\lambda}): = \{t_\alpha: t_\alpha=|\bs{\lambda}|^3 \alpha \text{ for some $\alpha\in[0,3/4]$}\}$. Then, because $C_n(\sqrt{n}\widetilde{t}_{\alpha}(\bs{\lambda});\bs{\lambda})=C_n(\sqrt{n} |\bs{\lambda}|^3\ddot{\alpha}(\bs{\lambda}) ;\bs{\lambda}) $, (\ref{LR_appn3-homo2}) implies that
\[
2[L_n(\widehat{\bs{\psi}}_{\bs{\lambda}}(\bs{\lambda}),\bs{\lambda})-L_{0,n}(\widehat{\bs{\gamma}}_0,\widehat{\bs{\mu}}_0,\widehat{\bs{\Sigma}}_0)] = C_n(\sqrt{n}\widetilde{t}_{\alpha}(\bs{\lambda});\bs{\lambda}) + o_p(1).
\] 
The stated result follows from Theorem 1(c) of \citet{andrews01em} as
\begin{equation*} 
\sup_{\Theta_{\bs{\lambda}} \cap \{ |\bs{\lambda}|\geq \zeta \}} C_n(\sqrt{n}\widetilde{t}_{\alpha}(\bs{\lambda});\bs{\lambda}) \rightarrow_d \sup_{\Theta_{\bs{\lambda}} \cap \{ |\bs{\lambda}|\geq \zeta \}} (\widehat{{t}_\alpha}(\bs{\lambda}) )^2 {\mathcal{I}}_{\alpha.\bs{\eta}}(\bs{\lambda}),
\end{equation*} 
where Assumption 2 of \citet{andrews01em} trivially holds for $C_n(\sqrt{n}\widetilde{t}_{\alpha};\bs{\lambda})$; Assumption 3 of \citet{andrews01em} holds with $B_T=n^{1/2}$ because ${G}_{\alpha.\bs{\eta} n}(\bs{\lambda})\Rightarrow {G}_{\alpha.\bs{\eta}}(\bs{\lambda})$ from (\ref{weak_cgce}); Assumption 4 of \citet{andrews01em} holds from the same argument as (\ref{ddot_rate-homo-2}); Assumption 5 of \citet{andrews01em} holds because $\tilde \Lambda_\alpha(\bs{\lambda})$ is locally equal to the cone $\mathbb{R}_+$. 
\end{proof}

\begin{proof}[Proof of Proposition \ref{local_lr-2-homo}]

We omit the proof of Proposition \ref{local_lr-2-homo} because its argument is similar to that of Proposition \ref{local_lr-2} except that the proof of Proposition \ref{local_lr-2-homo} will refer to the proof of Proposition \ref{P-LR-N1-homo} in place of that of Proposition \ref{P-LR-N1}.
\end{proof}

\section{Quadratic approximation of the log-likelihood function}\label{section:quadratic}

 When testing the number of components by the likelihood ratio test, the Fisher information matrix becomes singular and the log-likelihood function will be approximated by a quadratic function of polynomials of parameters. Further, a part of parameter is not identified under the null hypothesis. This section establishes a quadratic approximation of the log-likelihood function  using the results in \ref{section:expansion} and \ref{section:auxiliary}. Lemma \ref{P-quadratic} considers the case of testing $H_0:M=1$ against $H_A:M=2$ in the heteroscedastic case. For a sequence $X_{n\varepsilon}$ indexed by $n=1,2,\ldots$ and $\varepsilon$, we write $X_{n\varepsilon} = O_{p\varepsilon}(a_n)$ if, for any $\Delta>0$, there exist $\varepsilon>0$ and $M, n_0 <\infty$ such that $\mathbb{P}(|X_{n\varepsilon}/a_n| \leq M) \geq 1- \Delta$ for all $n > n_0$, and we write $X_{n\varepsilon} = o_{p\varepsilon}(a_n)$ if, for any $\Delta_1,\Delta_2>0$, there exist $\varepsilon>0$ and $n_0$ such that $\mathbb{P}(|X_{n\varepsilon}/a_n| \leq \Delta_1) \geq 1- \Delta_2$ for all $n > n_0$. Loosely speaking, $X_{n\varepsilon} = O_{p\varepsilon}(a_n)$ and $X_{n\varepsilon} = o_{p\varepsilon}(a_n)$ mean that $X_{n\varepsilon} = O_{p}(a_n)$ and $X_{n\varepsilon} = o_{p}(a_n)$ when $\varepsilon$ is sufficiently small, respectively. 

\begin{lemma} \label{P-quadratic}
Suppose that Assumptions \ref{assn_consis} and \ref{A-taylor1} hold and $\bs{X}$ given $\bs{Z}$ has the density $f(\bs{x}|\bs{z}; \bs{\gamma},{\bs{\mu}},\bs{\Sigma})$ defined in (\ref{normal_density}). Let $L_n(\bs{\psi},\alpha): = \sum_{i = 1}^n \log g(\bs{X}_i|\bs{Z}_i;\bs{\psi},\alpha)$ with $g(\bs{x}|\bs{z};\bs{\psi},\alpha)$ defined in (\ref{loglike}). For $\alpha \in (0,1)$, define $\bs{s}(\bs{x},\bs{z})$ and $\bs{t}(\bs{\psi},\alpha)$ as in (\ref{score_defn}) and (\ref{tpsi_defn}), and let $\mathcal{N}_{\varepsilon} := \{ \bs{\vartheta}_2 \in \Theta_{\bs{\vartheta}_2} : |\bs{t}(\bs{\psi},\alpha)|< \varepsilon\}$ and $\bs{\mathcal{I}}:=E[\bs{s}(\bs{X},\bs{Z})\bs{s}(\bs{X},\bs{Z})\t]$. Then, for $\epsilon_\sigma\in(0,1)$ and any $\delta>0$, we have (a) $\sup_{\bs{\vartheta}_2 \in A_{n \varepsilon}(\delta)} |\bs{t}(\bs{\psi},\alpha)| = O_{p\varepsilon}(n^{-1/2})$;
\begin{equation*} 
(b)\ \sup_{\bs{\vartheta}_2 \in A_{n \varepsilon}(\delta)}\left|L_n(\bs{\psi},\alpha) - L_n(\bs{\psi}^*,\alpha) - \sqrt{n} \bs{t}(\bs{\psi},\alpha)\t \nu_n(\bs{s}(\bs{x},\bs{z})) + n \bs{t}(\bs{\psi},\alpha)\t \bs{\mathcal{I}} \bs{t}(\bs{\psi},\alpha)/2 \right| = o_{p\varepsilon}(1),
\end{equation*}
where $A_{n \varepsilon}(\delta) := \{\bs{\vartheta}_2 \in \mathcal{N}_\varepsilon: L_n(\bs{\psi},\alpha) - L_n(\bs{\psi}^*,\alpha) \geq -\delta \}$.
\end{lemma}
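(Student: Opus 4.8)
The plan is to derive the quadratic approximation by a careful Taylor expansion of $\log g(\bs{x}|\bs{z};\bs{\psi},\alpha)$ around $\bs{\psi} = \bs{\psi}^*$, exploiting the fact that the reparameterization in (\ref{repara2}) has been engineered so that the first derivatives w.r.t.\ $\bs{\lambda}_{\bs\mu}$ up to third order and the first derivative w.r.t.\ $\bs{\lambda}_{\bs v}$ all vanish at $\bs{\psi}^*$. Concretely, I would write $g(\bs{x}|\bs{z};\bs{\psi},\alpha) - f_v^* = f_v^*\,[\,\bs{t}(\bs{\psi},\alpha)\t \bs{s}(\bs{x},\bs{z}) + R(\bs{x},\bs{z},\bs{\psi},\alpha)\,]$, where $\bs{t}$ collects exactly the surviving polynomial directions (the $\bs\eta$-linear terms, the cross term $\alpha(1-\alpha)12\bs{\lambda}_{\bs{\mu v}}$, and the combined fourth-order/$\bs{\lambda}_{\bs v}$-quadratic term $\alpha(1-\alpha)[12\bs{\lambda}_{\bs{v}^2} + b(\alpha)\bs{\lambda}_{\bs{\mu}^4}]$) and $R$ is a remainder built from higher-order terms in the Taylor expansion of $f_v$ in $(\bs{\mu},\bs{v})$. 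This is the multivariate analogue of the expansion underlying Proposition 3 of \citet{kasaharashimotsu15jasa}; the identities relating $\nabla_{\mu_i\mu_j} f_v = 2\nabla_{\Sigma_{ij}} f_v$ are what force the lower-order $\bs\lambda$-directions to collapse and push the information into the fourth $\bs{\lambda}_{\bs\mu}$-derivative, the $\bs{\lambda}_{\bs\mu}\bs{\lambda}_{\bs v}$ cross-derivative, and the second $\bs{\lambda}_{\bs v}$-derivative. The bulk of this computation I would relegate to the auxiliary lemmas in \ref{section:expansion} and \ref{section:auxiliary} (Lemma \ref{Ln_thm2}, Lemma \ref{dv3}, Lemma \ref{lemma_lambda_e}), citing them rather than reproducing the algebra.

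Next I would control the remainder $R$. The key quantitative facts needed are: (i) the score $\bs{s}(\bs{x},\bs{z})$ has finite second moment under $f_v^*$ (this is where Assumption \ref{A-taylor1}, the tenth-moment condition on $\bs{Z}$, enters — the normal density derivatives are polynomials in $\bs{x}$ and $\bs{z}$, and ratios like $\nabla_{\mu_i\mu_j\mu_k\mu_\ell}f_v^*/f_v^*$ are degree-four in a centered Gaussian plus affine-in-$\bs{z}$ terms, so one needs enough moments of $\bs{Z}$ for the outer products to be integrable); and (ii) the remainder $R$ is, uniformly over $\mathcal{N}_\varepsilon$, of smaller order than $|\bs{t}(\bs{\psi},\alpha)|$ times an $L^2$-bounded envelope plus a term of order $|\bs{t}(\bs{\psi},\alpha)|^2$ times a small constant when $\varepsilon$ is small. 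Then, writing $L_n(\bs{\psi},\alpha) - L_n(\bs{\psi}^*,\alpha) = \sum_{i=1}^n \log(1 + \bs{t}\t\bs{s}(\bs{X}_i,\bs{Z}_i) + R_i)$ and using $\log(1+u) = u - u^2/2 + O(u^3)$ together with the LLN for $P_n(\bs{s}\bs{s}\t) \to_p \bs{\mathcal{I}}$ and a uniform (in $\bs{\psi}$) negligibility argument for the cubic term and the cross terms with $R$, one obtains the claimed expansion up to $o_{p\varepsilon}(1)$. The empirical-process notation $\nu_n$ and $P_n$ fixed in the introduction makes the bookkeeping clean: the linear term is $\sqrt{n}\,\bs{t}\t\nu_n(\bs{s}) + n\,\bs{t}\t E[\bs{s}]$, and $E[\bs{s}] = \bs{0}$ at the true parameter.

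For part (a), the argument is the standard ``localization'' bootstrap: on the event $A_{n\varepsilon}(\delta)$ the left side of (b) is $\geq -\delta$, so combining with (b) one gets $\sqrt{n}\,\bs{t}\t\nu_n(\bs{s}) - n\,\bs{t}\t\bs{\mathcal{I}}\bs{t}/2 \geq -\delta + o_{p\varepsilon}(1)$; since $\bs{\mathcal{I}}$ is positive definite on the relevant cone (nonsingularity is assumed, or at worst one works with the Moore–Penrose pseudo-inverse on the realized directions, as in \citet{kasaharashimotsu15jasa}), completing the square gives $n|\bs{t}|^2 \leq \mathcal{C}(|\nu_n(\bs{s})|^2 + \delta) = O_p(1)$, i.e.\ $|\bs{t}(\bs{\psi},\alpha)| = O_{p\varepsilon}(n^{-1/2})$, and then part (b) is re-invoked on this shrunk neighborhood where the remainder estimates become genuinely $o_{p\varepsilon}(1)$. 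The main obstacle is step (ii): establishing the uniform-in-$\bs\psi$ bound on the Taylor remainder $R$ with the correct rate, because the map from $\bs\lambda$ to the polynomial directions is highly degenerate (e.g.\ $\bs{\lambda}_{\bs\mu}$ enters only at fourth order, so $|\bs\lambda_{\bs\mu}|$ can be as large as $n^{-1/8}$ while $\sqrt{n}\bs{t}_{\bs\lambda}$ stays $O_p(1)$), and one must show that all Taylor terms not captured by $\bs{t}\t\bs{s}$ are dominated — this requires the delicate ordering of which $\bs\lambda$-monomials appear at which order, handled via Lemmas \ref{dv3} and \ref{lemma_lambda_e}, and it is also why the moment requirement is as high as ten rather than the naive four.
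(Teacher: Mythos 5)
Your overall architecture matches the paper's: expand the density ratio $\ell(\bs{y};\bs{\psi},\alpha)-1$ to fifth order, use Lemma \ref{dv3} (and Lemma \ref{mv_derivative}) to show that the surviving leading term is exactly $\bs{t}(\bs{\psi},\alpha)\t\bs{s}(\bs{x},\bs{z})$, factor every remainder term as an $L^2$-envelope times $O(|\bs{\psi}-\bs{\psi}^*|\,|\bs{t}(\bs{\psi},\alpha)|)$, and then feed this into the generic quadratic-expansion machinery of \ref{section:expansion}. Your diagnosis of where the tenth-moment condition enters and of the degeneracy ($|\bs{\lambda}_{\bs\mu}|$ as large as $n^{-1/8}$ while $\sqrt{n}\bs{t}_{\bs\lambda}$ stays bounded) is also on target, although Lemma \ref{lemma_lambda_e} is invoked in the proof of Proposition \ref{P-LR-N1}, not here; the remainder control inside Lemma \ref{P-quadratic} is done by direct algebraic identities expressing monomials such as $\lambda_{v_{ij}}\lambda_{v_{k\ell}}\lambda_{v_{mn}}$ and $\lambda_{\mu_{i}}\cdots\lambda_{\mu_{m}}$ in terms of the components of $\bs{t}(\bs{\psi},\alpha)$.

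The one genuine gap is in your argument for part (a). As written it is circular: you invoke the two-sided expansion (b) on all of $A_{n\varepsilon}(\delta)$ to conclude $n\bs{t}\t\bs{\mathcal{I}}\bs{t} = O_p(1)$, but (b) with an $o_{p\varepsilon}(1)$ error is only valid on an $O(n^{-1/2})$-neighborhood of $\bs{t}=\bs{0}$ (Lemma \ref{Ln_thm1}), which is exactly what part (a) is supposed to deliver. Your closing remark that (b) is ``re-invoked on this shrunk neighborhood'' shows you sense the issue, but you never say what one-sided bound is available on the \emph{unshrunk} $\varepsilon$-neighborhood. A plain $\log(1+u)\leq u$ applied to $u=\ell-1$ gives an upper bound with no negative quadratic term, so completing the square is not possible. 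The paper's device (Lemma \ref{Ln_thm2}) is to set $h=\sqrt{\ell}-1$, use $\log(1+h)\leq h$ together with the identity $2h=(\ell-1)-h^2$ to obtain $L_n(\bs{\psi},\alpha)-L_n(\bs{\psi}^*,\alpha)\leq \sqrt{n}\,\nu_n(\ell-1)-nP_n(h^2)$ uniformly on $\mathcal{N}_\varepsilon$, and then lower-bound $P_n(h^2)$ by $\tau(1+o_p(1))\bs{t}\t\bs{\mathcal{I}}\bs{t}$ via the truncation $h^2\geq \mathbb{I}\{\ell\leq\kappa\}(\ell-1)^2/(\sqrt{\kappa}+1)^2$. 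Only then does completing the square yield $|\bs{t}(\bs{\psi},\alpha)|=O_{p\varepsilon}(n^{-1/2})$, after which Lemma \ref{Ln_thm1} gives part (b). Without this (or an equivalent globally valid one-sided quadratic bound), part (a) does not follow.
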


\begin{proof}[Proof of Lemma \ref{P-quadratic}] We prove the stated result by using Lemma \ref{Ln_thm2}  in \ref{section:expansion}, where $\ell(\bs{y},\bs{\psi},\alpha):=g(\bs{x}|\bs{z};\bs{\psi}^*,\alpha)/g(\bs{x}|\bs{z};\bs{\psi},\alpha)$  with $\bs{y}:=(\bs{x}\t,\bs{z}\t)\t$ plays the role of $\ell(\bs{y},\bs{\vartheta})$ as defined in (\ref{density_ratio})  and $\bs{t}(\bs{\psi},\alpha)$ plays the role of $\bs{t}(\bs{\vartheta})$. Observe that $\bs{t}(\bs{\psi},\alpha)$ defined in (\ref{tpsi_defn}) satisfies $\bs{t}(\bs{\psi},\alpha) = 0$ if and only if $\bs{\psi}=\bs{\psi}^*$ because $\bs{\lambda}=0$ if and only if  the $(i,i,i,i)$th element of  $ 12 \bs{\lambda}_{\bs{ v}^2} + b(\alpha) \bs{\lambda}_{\bs{\mu}^4} $  is 0  for all $1 \leq i \leq d$. We expand $ \ell(\bs{y},\bs{\psi},\alpha) -1$ five times with respect to $\bs{\psi}$ and show that the expansion satisfies Assumption \ref{assn_expansion}  in \ref{section:expansion}.

Define 
\begin{equation} \label{v_defn}
\bs{v}(\bs{y};\bs{\vartheta}_2):=(\nabla_{\bs{\psi}} g(\bs{x}|\bs{z};\bs{\psi},\alpha)\t, \nabla_{\bs{\psi}^{\otimes 2}} g(\bs{x}|\bs{z};\bs{\psi},\alpha)\t, \ldots, \nabla_{\bs{\psi} ^{\otimes 5}} g(\bs{x}|\bs{z};\bs{\psi},\alpha)\t)\t/g(\bs{x}|\bs{z};\bs{\psi}^*,\alpha),
\end{equation}
which satisfies $E[\bs{v}(\bs{Y};\bs{\vartheta}_2)]=0$.  In order to apply Lemma \ref{Ln_thm2} to $ \ell(\bs{y},\bs{\psi},\alpha)  -1$, we first show
\begin{align} 
&\sup_{\bs{\vartheta}_2 \in \mathcal{N}_\varepsilon}\left|P_n [\bs{v}(\bs{y};\bs{\vartheta}_2)\bs{v}(\bs{y};\bs{\vartheta}_2)\t] - E[\bs{v}(\bs{Y};\bs{\vartheta}_2)\bs{v}(\bs{Y};\bs{\vartheta}_2)\t]\right| = o_p(1), \label{uniform_lln} \\
&\nu_n(\bs{v}(\bs{y};\bs{\vartheta}_2)) \Rightarrow \bs{W}(\bs{\vartheta}_2), \label{weak_cgce}
\end{align}
where $\bs{W}(\bs{\vartheta}_2)$ is a mean-zero continuous Gaussian process with $E[\bs{W}(\bs{\vartheta}_2)\bs{W}(\bs{\vartheta}_2')\t] = E[\bs{v}(\bs{Y};\bs{\vartheta}_2)\bs{v}(\bs{Y};\bs{\vartheta}_2')\t]$. (\ref{uniform_lln}) holds because $\bs{v}(\bs{Y}_i;\bs{\vartheta}_2)\bs{v}(\bs{Y}_i;\bs{\vartheta}_2)\t$ satisfies a uniform law of large numbers (see, for example, Lemma 2.4 of \citet{neweymcfadden94hdbk}) because $\bs{v}(\bs{y};\bs{\vartheta}_2)$ is continuous in $\bs{\vartheta}_2$ and $E\sup_{\bs{\vartheta}_2 \in \mathcal{N}_{\epsilon}}|\bs{v}(\bs{Y};\bs{\vartheta}_2)|^2 < \infty$ from the property of the normal density and Assumption \ref{A-taylor1}. (\ref{weak_cgce}) follows from Theorem 10.2 of \citet{pollard90book} if (i) $\Theta_{\bs{\vartheta}_2}$ is totally bounded, (ii) the finite dimensional distributions of $\nu_n(\bs{v}(\bs{y};\bs{\vartheta}_2))$ converge to those of $\bs{W}(\bs{\vartheta}_2)$, and (iii) $\{\nu_n(\bs{v}(\bs{y};\bs{\vartheta}_2)): n\geq 1\}$ is stochastically equicontinuous. Condition (i) holds because $\Theta_{\bs{\vartheta}_2}$ is compact in the Euclidean space. Condition (ii) follows from Assumption \ref{A-taylor1} and the multivariate CLT. Condition (iii) holds Theorem 2 of \citet{andrews94hdbk} because $\bs{v}(\bs{y};\bs{\vartheta}_2)$ is Lipschitz continuous in $\bs{\vartheta}_2$.

Note that the $(p+1)$-th order Taylor expansion of $g(\bs{\psi})$ around $\bs{\psi}=\bs{\psi}^*$ is given by
\begin{align*}
g(\bs{\psi})& = g(\bs{\psi}^*)+ \sum_{j=1}^p \frac{1}{j!} \nabla_{(\bs{\psi}^{\otimes j})\t} g(\bs{\psi}^*) (\bs{\psi}-\bs{\psi}^*)^{\otimes j} + \frac{1}{(p+1)!} \nabla_{(\bs{\psi}^{\otimes (p+1)})\t} g(\overline{\bs{\psi}}) (\bs{\psi}-\bs{\psi}^*)^{\otimes (p+1)} ,
\end{align*}
where $\overline{\bs{\psi}}$ lies between $\bs{\psi}$ and $\bs{\psi}^*$, and $\overline{\bs{\psi}}$ may differ from element to element of $\nabla_{(\bs{\psi}^{\otimes (p+1)})\t} g(\overline{\bs{\psi}})$.

Let $g^*$ and $\nabla g^*$ denote $g(\bs{x}|\bs{z};\bs{\psi}^*,\alpha)$ and $\nabla g(\bs{x}|\bs{z};\bs{\psi}^*,\alpha)$, and let $\nabla\overline{g}$ denote $\nabla g(\bs{x}|\bs{z};\overline{\bs{\psi}},\alpha)$. Let $\dot{\bs{\psi}}:=\bs{\psi} - \bs{\psi}^*$ and $\dot{\bs{\eta}}:=\bs{\eta} - \bs{\eta}^*$. Expanding $\ell(\bs{y};\bs{\psi},\alpha)$ five times around $\bs{\psi}^*$ while fixing $\alpha$ and using Lemma \ref{dv3}  in \ref{section:auxiliary}, we can write $\ell(\bs{y};\bs{\psi},\alpha)-1$ as 
\[
\ell(\bs{y};\bs{\psi},\alpha) -1 = s(\bs{y};\bs{\eta},\bs{\lambda}) + r(\bs{y};\bs{\eta},\bs{\lambda}),
\]
where  
\begin{equation*}
s(\bs{y};\bs{\eta},\bs{\lambda}) :=\frac{\nabla_{\bs{\eta}\t}g^* }{g^*}\dot{\bs{\eta}} + \frac{1}{2!} \frac{\nabla_{(\bs{\lambda}^{\otimes 2})\t} g^*}{g^*} \bs{\lambda}^{\otimes 2} + \frac{1}{4!} \frac{\nabla_{(\bs{\lambda}_{\bs{\mu}}^{\otimes 4})\t} g^* }{g^*} \bs{\lambda}_{\bs{\mu}}^{\otimes 4},
\end{equation*}
and, with $\dot{\bs{\psi}}_-:=(\dot{\bs{\eta}}\t,\bs{\lambda}_{\bs v}\t)\t$,
\begin{align}
r(\bs{y};\bs{\eta},\bs{\lambda}) &:= \frac{1}{2!} \frac{\nabla_{(\bs{\eta}^{\otimes 2})\t}g^*}{g^*} \dot{\bs{\eta}}^{\otimes 2} + \frac{1}{3!} \frac{\nabla_{(\bs{\psi}^{\otimes 3})\t} g^* }{g^*} \dot{\bs{\psi}}^{\otimes 3} \label{l_expand_2} \\
& \quad + \frac{1}{4!}\sum_{p=0}^{3}  \binom{4}{p}  \frac{\nabla_{(\bs{\psi}_-^{\otimes (4-p)} \otimes \bs{\lambda}_{\bs{\mu}}^{\otimes p} )\t} g^* }{g^*} (\dot{\bs{\psi}}_-^{\otimes (4-p)} \otimes \bs{\lambda}_{\bs{\mu}}^{\otimes p} ) \label{l_expand_3} \\
&\quad + \frac{1}{5!} \frac{\nabla_{(\bs{\lambda}_{\bs{\mu}}^{\otimes 5})\t} \overline{g} }{g^*} \bs{\lambda}_{\bs{\mu}}^{\otimes 5} +  \frac{1}{5!}  \sum_{p=0}^{4}  \binom{5}{p}\frac{\nabla_{(\bs{\psi}_-^{\otimes (5-p)} \otimes \bs{\lambda}_{\bs{\mu}}^{\otimes p} )\t} \overline{g} }{g^*} (\dot{\bs{\psi}}_-^{\otimes (5-p)} \otimes \bs{\lambda}_{\bs{\mu}}^{\otimes p} ) \label{l_expand_4}.
\end{align} 
 $s(\bs{y};\bs{\eta},\bs{\lambda})$ is the leading term in the expansion. We first show $s(\bs{y};\bs{\eta},\bs{\lambda}) = \bs{t}(\bs{\psi},\alpha)\t \bs{s}(\bs{x},\bs{z})$ with $\bs{s}(\bs{x},\bs{z})$ and $\bs{t}(\bs{\psi},\alpha)$ defined in (\ref{score_defn})--(\ref{lambda_muv_defn}). Let $f^*$ and $\nabla f^*$ denote $f(\bs{x}|\bs{z};\bs{\gamma}^*,\bs{\mu}^*,\bs{\Sigma}^*)$ and $\nabla f(\bs{x}|\bs{z};\bs{\gamma}^*,\bs{\mu}^*,\bs{\Sigma}^*)$. The first term of $s(\bs{y};\bs{\eta},\bs{\lambda})$  is simply $(\nabla_{(\bs{\gamma}\t,\bs{\mu}\t,\bs{v}\t)\t}f^*/f^*)\dot{\bs{\eta}}$. Using Lemmas \ref{mv_derivative} and \ref{dv3} and commutativity of partial derivatives, the second term of $s(\bs{y};\bs{\eta},\bs{\lambda})$ is written as $(1/2!)(\nabla_{(\bs{\lambda}^{\otimes 2})\t} g^*/g^*) \bs{\lambda}^{\otimes 2}
 = (\nabla_{(\bs{\lambda}_{\bs{\mu}} \otimes \bs{\lambda}_{\bs{v}})^{\top}} g^*/g^*) (\bs{\lambda}_{\bs{\mu}} \otimes \bs{\lambda}_{\bs{v}}) + (1/2) (\nabla_{(\bs{\lambda}_{\bs{v}}^{\otimes 2})\t} g^*/g^*) \bs{\lambda}_{\bs{v}}^{\otimes 2}$. Observe that
\begin{equation} \label{nabla_lambda_v_mu}
\begin{aligned}
\frac{\nabla_{(\bs{\lambda}_{\bs{\mu}} \otimes \bs{\lambda}_{\bs{v}})^{\top}} g^*}{g^*} (\bs{\lambda}_{\bs{\mu}} \otimes \bs{\lambda}_{\bs{v}}) 
& = \alpha(1-\alpha) \sum_{\substack{1\leq i \leq d \\ 1 \leq j \leq k \leq d}} \frac{\nabla_{\mu_i \mu_j \mu_k} f^* }{f^*} \lambda_{\mu_i}\lambda_{v_{jk}} \\
& = \alpha(1-\alpha) \sum_{1 \leq i \leq j \leq k \leq d} \frac{\nabla_{\mu_i \mu_j \mu_k} f^* }{f^*} \sum_{(t_1,t_2,t_3) \in p_{12}(i,j,k)} \lambda_{\mu_{t_1}}\lambda_{v_{t_2t_3}}\\
& = \alpha(1-\alpha) 12 \bs{s}_{\bs{\mu v}}\t \bs{\lambda}_{\bs{\mu v}},
\end{aligned}
\end{equation}
where $\sum_{(t_1,t_2,t_3) \in p_{12}(i,j,k)}$ denotes the sum over all distinct permutations of $(i,j,k)$ to $(t_1,t_2,t_3)$ with $t_2 \leq t_3$, and
\begin{align*}
\frac{1}{2} \frac{\nabla_{(\bs{\lambda}_{\bs{v}}^{\otimes 2})^{\top}} g^*}{g^*} \bs{\lambda}_{\bs{v}}^{\otimes 2} 
& = \frac{\alpha(1-\alpha)}{2} \sum_{\substack{1\leq i \leq j \leq d \\ 1 \leq k \leq \ell \leq d}} \frac{\nabla_{\mu_i \mu_j \mu_k \mu_\ell} f^* }{f^*} \lambda_{v_{ij}}\lambda_{v_{k\ell}} \\
& = \frac{\alpha(1-\alpha)}{2} \sum_{1 \leq i \leq j \leq k \leq \ell \leq d} \frac{\nabla_{\mu_i \mu_j \mu_k \mu_\ell} f^* }{f^*} \sum_{(t_1,t_2,t_3,t_4) \in p_{22}(i,j,k,\ell)} \lambda_{v_{t_1t_2}}\lambda_{v_{t_3t_4}},
\end{align*}
where $\sum_{(t_1,t_2,t_3,t_4) \in p_{22}(i,j,k,\ell)}$ denotes the sum over all distinct permutations of $(i,j,k,\ell)$ to $(t_1,t_2,t_3,t_4)$ with $t_1 \leq t_2$ and $t_3 \leq t_4$. From Lemma \ref{dv3}, the third term  of $s(\bs{y};\bs{\eta},\bs{\lambda})$ is written as
\begin{align*}
\frac{1}{4!} \frac{\nabla_{(\bs{\lambda}_{\bs{\mu}}^{\otimes 4})\t} g^* }{g^*} \bs{\lambda}_{\bs{\mu}}^{\otimes 4} 
& = \frac{\alpha(1-\alpha)}{4!} \sum_{1 \leq i \leq j \leq k \leq \ell \leq d} b(\alpha) \frac{\nabla_{\mu_i \mu_j \mu_k \mu_\ell} f^* }{f^*} \sum_{(t_1,t_2,t_3,t_4) \in p(i,j,k,\ell)} \lambda_{\mu_{t_1}}\lambda_{\mu_{t_2}}\lambda_{\mu_{t_3}}\lambda_{\mu_{t_4}},
\end{align*}
where $\sum_{(t_1,t_2,t_3,t_4) \in p(i,j,k,\ell)}$ denotes the sum over all distinct permutations of $(i,j,k,\ell)$ to $(t_1,t_2,t_3,t_4)$.  Therefore, the sum of $(1/2) (\nabla_{(\bs{\lambda}_{\bs{v}}^{\otimes 2})\t} g^*/g^*) \bs{\lambda}_{\bs{v}}^{\otimes 2}$ and $(1/4!) (\nabla_{(\bs{\lambda}_{\bs{\mu}}^{\otimes 4})\t} g^* / g^*) \bs{\lambda}_{\bs{\mu}}^{\otimes 4} $ is $\alpha(1-\alpha) \bs{s}_{\bs{\mu}^4}\t[12\bs{\lambda}_{\bs{v}^2} + b(\alpha)\bs{\lambda}_{\bs{\mu}^4}]$, and hence $s(\bs{y};\bs{\eta},\bs{\lambda})=\bs{t}(\bs{\psi},\alpha)\t \bs{s}(\bs{x},\bs{z})$ holds.  

$\bs{s}(\bs{x},\bs{z})$ clearly satisfies Assumption \ref{assn_expansion}(a)(b)(e) from Assumption \ref{A-taylor1}, the property of the normal density, (\ref{uniform_lln}), and (\ref{weak_cgce}). Therefore, the stated result holds if $r(\bs{y};\bs{\eta},\bs{\lambda})$ defined in (\ref{l_expand_2})--(\ref{l_expand_4}) satisfies Assumption \ref{assn_expansion}(c)(d). We proceed to show that  (\ref{l_expand_2})--(\ref{l_expand_4}) can be expressed as $\bs{\xi}(\bs{y};\bs{\vartheta}) O(|\bs{\psi}-\bs{\psi}^*||\bs{t}(\bs{\psi},\alpha)|)$ where $\sup_{\bs{\vartheta}_2 \in \mathcal{N}_\epsilon}|\bs{\xi}(\bs{y};\bs{\vartheta}_2)| \leq \sup_{\bs{\vartheta}_2 \in \mathcal{N}_\epsilon}|\bs{v}(\bs{y};\bs{\vartheta}_2)|$  with $\bs{v}(\bs{y};\bs{\vartheta}_2)$ defined in (\ref{v_defn}). Then, Assumption \ref{assn_expansion}(c)(d) follows from  the property of the normal density  and (\ref{weak_cgce}).

First, the first term on the right hand side of (\ref{l_expand_2}) is written as $(\nabla_{\bs{\eta}^{\otimes 2}} g^*/g^*) O(|\dot{\bs{\eta}}|^2)$. Second, write the second term in (\ref{l_expand_2}) as $(1/3!)\sum_{p=0}^3 \binom{3}{p} (\nabla_{(\bs{\eta}^{\otimes p} \otimes \bs{\lambda}^{\otimes (3-p)} )\t} g^*/g^*) (\dot{\bs{\eta}}^{\otimes p} \otimes \bs{\lambda}^{\otimes (3-p)} )$. The terms with $p \geq 1$ are written as $(\nabla_{\bs{\psi}^{\otimes 3}} g^*/g^*) O(|\dot{\bs{\eta}}|)O(|\bs{\lambda}|)$. Write the term with $p=0$ as $(1/3!)\sum_{q=0}^3 \binom{3}{q} A_q$, where $A_q:=(\nabla_{(\bs{\lambda}_{\bs v}^{\otimes q} \otimes \bs{\lambda}_{\bs \mu}^{\otimes (3-q)} )\t} g^*/g^*) (\bs{\lambda}_{\bs v}^{\otimes q} \otimes \bs{\lambda}_{\bs \mu}^{\otimes (3-q)} )$. We have $A_0=0$ because $\nabla_{\lambda_{\mu_i} \lambda_{\mu_j} \lambda_{\mu_k}} g^*=0$ from Lemma \ref{dv3}. From a similar argument to (\ref{nabla_lambda_v_mu}), we obtain $A_1=\sum_{i=1}^d \lambda_{\mu_i} ( \nabla_{(\bs{\lambda}_{\bs v} \otimes \bs{\lambda}_{\bs \mu} )\t} \nabla_{\lambda_{\mu_i}}g^*/g^*) (\bs{\lambda}_{\bs v} \otimes \bs{\lambda}_{\bs \mu}) = (\nabla_{\bs{\lambda}^{\otimes 3}}g^* / g^*) O(|\bs{\lambda}||\bs{\lambda}_{\bs{\mu v}}|)$. A similar argument gives  $A_2=(\nabla_{\bs{\lambda}^{\otimes 3}}g^*/g^*) O(|\bs{\lambda}||\bs{\lambda}_{\bs{\mu v}}|)$.  For $A_3$,  observe that, for any sequence $a_{ijk\ell m n}$,
\begin{align*}
& \sum_{1 \leq i \leq j \leq d} \sum_{1 \leq k \leq \ell \leq d} \sum_{1 \leq m \leq n \leq d} a_{ijk\ell m n} \lambda_{v_{ij}}\lambda_{v_{k\ell}} \lambda_{v_{m n}} \\
& = \sum_{1 \leq m \leq n \leq d} \lambda_{v_{m n}} \sum_{1 \leq i \leq j \leq k \leq \ell \leq d} a_{ijk\ell mn} \left( 12 \sum_{(t_1,t_2,t_3,t_4) \in p_{22}(i,j,k,\ell)} \lambda_{v_{t_1t_2}}\lambda_{v_{t_3t_4}} \right.\\
& \left. \quad + b(\alpha) \sum_{(t_1,t_2,t_3,t_4) \in p(i,j,k,\ell)} \lambda_{\mu_{t_1}}\lambda_{\mu_{t_2}}\lambda_{\mu_{t_3}}\lambda_{\mu_{t_4}} \right)
\\
& \quad -b(\alpha) \sum_{i=1}^d\sum_{j=1}^d\sum_{k=1}^d \lambda_{\mu_{i}}\lambda_{\mu_{j}}\lambda_{\mu_{k}} \sum_{1 \leq \ell \leq m \leq n \leq d}\ a_{ijk\ell m n} \sum_{(t_1,t_2,t_3) \in p_{12}(\ell,m,n)} \lambda_{\mu_{t_1}}\lambda_{v_{t_2t_3}}.
\end{align*}
Using this result with $a_{ijk\ell mn}=\nabla_{\lambda_{v_{i j}} \lambda_{v_{k \ell}} \lambda_{v_{m n}}}g^*/g^*$ gives $A_3=(\nabla_{\bs{\psi}^{\otimes 3}} g^*/g^*) [O(|\bs{\lambda}||12\bs{\lambda}_{\bs{v}^2}+ b(\alpha) \bs{\lambda}_{\bs{\mu}^4}|) + O(|\bs{\lambda}||\bs{\lambda}_{\bs{\mu v}}|)]$. Therefore, the terms on the right hand side of (\ref{l_expand_2}) are $\bs{v}(\bs{y};\bs{\vartheta}_2) O(|\bs{\psi}-\bs{\psi}^*||\bs{t}(\bs{\psi},\alpha)|)$. We proceed to bound (\ref{l_expand_3}). The terms in (\ref{l_expand_3}) with $p \geq 1$ are written as $(\nabla_{\bs{\psi}^{\otimes 4}}g^* / g^*) [ O(|\bs{\lambda}||\bs{\lambda}_{\bs{\mu v}}|) + O(|\bs{\lambda}||\dot{\bs{\eta}}|)]$ because they contain either $\sum_{1 \leq i \leq j \leq d} \lambda_{v_{ij}} (\bs{\lambda}_{\bs v} \otimes \bs{\lambda}_{\bs \mu})$ or $\bs{\bs{\lambda}_{\bs{\mu}} \otimes \dot{\bs{\eta}}}$. The term with $p=0$ is written as $(\nabla_{\bs{\psi}^{\otimes 4}}g^* / g^*) [O(|\bs{\lambda}||\bs{\lambda}_{\bs{\mu}^4}|) + O(|\bs{\lambda}||\bs{\lambda}_{\bs{\mu v}}|)]$ from a similar argument to bound $A_3$.

It remains to bound (\ref{l_expand_4}). For the first term in (\ref{l_expand_4}), observe that, for any sequence $a_{ijk\ell m}$,
\begin{align*}
& \sum_{i=1}^d\sum_{j=1}^d\sum_{k=1}^d \sum_{\ell=1}^d \sum_{m=1}^d a_{ijk\ell m} \lambda_{\mu_{i}} \lambda_{\mu_{j}} \lambda_{\mu_{k}} \lambda_{\mu_{\ell}} \lambda_{\mu_{m}} \\
& = \frac{1}{b(\alpha)} \sum_{m=1}^d \lambda_{\mu_{m}} \sum_{1 \leq i \leq j \leq k \leq \ell \leq d} a_{ijk\ell m} \left(b(\alpha) \sum_{(t_1,t_2,t_3,t_4) \in p(i,j,k,\ell)} \lambda_{\mu_{t_1}}\lambda_{\mu_{t_2}}\lambda_{\mu_{t_3}}\lambda_{\mu_{t_4}} \right.\\
& \left. \quad \quad + 12 \sum_{(t_1,t_2,t_3,t_4) \in p_{22}(i,j,k,\ell)} \lambda_{v_{t_1t_2}}\lambda_{v_{t_3t_4}} \right)
\\
& \quad -\frac{b(\alpha)}{12} \sum_{1 \leq i \leq j \leq d} \lambda_{v_{ij}} \sum_{1 \leq k \leq \ell \leq m \leq d}\ a_{ijk\ell m} \sum_{(t_1,t_2,t_3) \in p_{12}(k,\ell,m)} \lambda_{\mu_{t_1}}\lambda_{v_{t_2t_3}}.
\end{align*}
Therefore, the first term in (\ref{l_expand_4}) can be written as $(\nabla_{\bs{\psi}^{\otimes 5}}\overline{g}/g^*) [O(|\bs{\lambda}||12\bs{\lambda}_{\bs{v}^2}+ b(\alpha) \bs{\lambda}_{\bs{\mu}^4}|) + O(|\bs{\lambda}||\bs{\lambda}_{\bs{\mu v}}|)]$. The second term in (\ref{l_expand_4}) is written as $(\nabla_{\bs{\psi}^{\otimes 5}}\overline{g}/g^*) O(|\bs{\lambda}||\bs{t}(\bs{\psi},\alpha)|)$ from the same argument as (\ref{l_expand_3}), and the stated result follows.
\end{proof}
 
The following lemmas establish quadratic approximations of the log-likelihood function in the case of testing $H_0:M=1$ against $H_A:M=2$ in the homoscedastic case. Lemma \ref{P-quadratic-homo-1} considers the case $|\bs{\mu}_{1}- \bs{\mu}_{2}|\leq \zeta$, and Lemma \ref{P-quadratic-homo-2} considers the case $|\bs{\mu}_{1}- \bs{\mu}_{2}| \geq \zeta$. 
\begin{lemma} \label{P-quadratic-homo-1}
Suppose that Assumptions \ref{assn_consis} and \ref{A-taylor1} hold and $\bs{X}$ given $\bs{Z}$ has the density $f(\bs{x}|\bs{z}; \bs{\gamma},{\bs{\mu}},\bs{\Sigma})$ defined in (\ref{normal_density}). Let $L_n(\bs{\psi},\alpha): = \sum_{i = 1}^n \log g(\bs{X}_i|\bs{Z}_i;\bs{\psi},\alpha)$ with $g(\bs{x}|\bs{z};\bs{\psi},\alpha)$ defined in (\ref{loglike-homo}). For $\alpha \in (0,1)$, define $\bs{s}(\bs{x},\bs{z})$ and $\bs{t}(\bs{\psi},\alpha)$ as in (\ref{score_defn-homo}) and (\ref{tpsi_defn-homo}), and let $\mathcal{N}_{\varepsilon} := \{ \bs{\vartheta}_2 \in \Theta_{\bs{\vartheta}_2} : |\bs{t}(\bs{\psi},\alpha)|< \varepsilon\}$ and $\bs{\mathcal{I}}:=E[\bs{s}(\bs{X},\bs{Z})\bs{s}(\bs{X},\bs{Z})\t]$. Then, for any $\delta>0$  and $\zeta>0$, we have (a) $\sup_{\alpha \in [0,1]}\sup_{\bs{\vartheta}_2 \in A^1_{n \varepsilon}(\delta,\zeta) }|\bs{t}(\bs{\psi},\alpha)| = O_{p\varepsilon}(n^{-1/2})$;
\begin{equation*} 
(b)\ \sup_{\alpha \in [0,1]}\sup_{\bs{\vartheta}_2 \in A^1_{n \varepsilon}(\delta,\zeta)}\left|L_n(\bs{\psi},\alpha) - L_n(\bs{\psi}^*,\alpha) - \sqrt{n} \bs{t}(\bs{\psi},\alpha)\t \nu_n(\bs{s}(\bs{x},\bs{z})) + n \bs{t}(\bs{\psi},\alpha)\t \bs{\mathcal{I}} \bs{t}(\bs{\psi},\alpha)/2 \right| = o_{p\varepsilon}(1),
\end{equation*}
where $A^1_{n \varepsilon}(\delta,\zeta) := \{\bs{\vartheta}_2 \in \mathcal{N}_\varepsilon\cap \Theta_{{\bs{\vartheta}}_2,\zeta}^1: L_n(\bs{\psi},\alpha) - L_n(\bs{\psi}^*,\alpha) \geq -\delta \}$.
\end{lemma}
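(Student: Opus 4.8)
The plan is to follow the proof of Lemma \ref{P-quadratic} almost verbatim, replacing the heteroscedastic reparameterization (\ref{repara2}) by the homoscedastic one in (\ref{repara2-homo}). Writing $\bs{y}:=(\bs{x}\t,\bs{z}\t)\t$, I would apply Lemma \ref{Ln_thm2} of \ref{section:expansion} to $\ell(\bs{y},\bs{\psi},\alpha):=g(\bs{x}|\bs{z};\bs{\psi}^*,\alpha)/g(\bs{x}|\bs{z};\bs{\psi},\alpha)$ for $g$ in (\ref{loglike-homo}), with $\bs{t}(\bs{\psi},\alpha)$ from (\ref{tpsi_defn-homo}) playing the role of $\bs{t}(\bs{\vartheta})$ and $\bs{s}(\bs{x},\bs{z})$ from (\ref{score_defn-homo}) playing the role of the score. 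First one records the structural facts peculiar to the homoscedastic parameterization: since the common variance absorbs $\bs{\lambda}$ only through the quadratic term $-\alpha(1-\alpha)\bs{w}(\bs{\lambda}\bs{\lambda}\t)$, the first and second derivatives of $g$ with respect to $\bs{\lambda}$ vanish at $\bs{\psi}=\bs{\psi}^*$, while (as noted after (\ref{tpsi_defn-homo}), the homoscedastic analogue of Lemma \ref{dv3}) the third and fourth $\bs{\lambda}$-derivatives contract with $\bs{\lambda}$ to $\alpha(1-\alpha)(1-2\alpha)\bs{\lambda}_{\bs{\mu}^3}\t\bs{s}_{\bs{\mu}^3}$ and $\alpha(1-\alpha)(1-6\alpha+6\alpha^2)\bs{\lambda}_{\bs{\mu}^4}\t\bs{s}_{\bs{\mu}^4}$, respectively. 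One also checks that $\bs{t}(\bs{\psi},\alpha)=\bs{0}$ iff $\bs{\psi}=\bs{\psi}^*$ on $\Theta^1_{{\bs{\vartheta}}_2,\zeta}$: there $\bs{\mu}_1-\bs{\mu}_2=\bs{\lambda}$, the $(i,i,i)$th entry of $\bs{\lambda}_{\bs{\mu}^3}$ is $\lambda_i^3$ and the $(i,i,i,i)$th entry of $\bs{\lambda}_{\bs{\mu}^4}$ is $\lambda_i^4$, and $(1-2\alpha)$ and $(1-6\alpha+6\alpha^2)$ never vanish simultaneously, so $\bs{t}_{\bs{\lambda}}(\bs{\lambda},\alpha)=\bs{0}$ forces $\bs{\lambda}=\bs{0}$.

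Next I would Taylor-expand $\ell(\bs{y},\bs{\psi},\alpha)-1$ in $\bs{\psi}$ around $\bs{\psi}^*$ (holding $\alpha$ fixed) to fourth order with a fifth-order Lagrange remainder, and split it as a leading term $s(\bs{y};\bs{\eta},\bs{\lambda})$ collecting the first $\bs{\eta}$-derivative and the third and fourth $\bs{\lambda}$-derivatives, plus a remainder $r(\bs{y};\bs{\eta},\bs{\lambda})$ collecting everything else (second- and higher-order $\bs{\eta}$ terms, all mixed $\bs{\eta}$--$\bs{\lambda}$ terms, and the fifth-order remainder). Using the derivative identities above and commutativity of partial derivatives, $s(\bs{y};\bs{\eta},\bs{\lambda})=\bs{t}(\bs{\psi},\alpha)\t\bs{s}(\bs{x},\bs{z})$ with $\bs{s},\bs{t}$ exactly as in (\ref{score_defn-homo})--(\ref{tpsi_defn-homo}). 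Since $\bs{s}(\bs{x},\bs{z})$ is built from Hermite-type derivatives of the normal density, Assumption \ref{A-taylor1} and the moment properties of $f$ give Assumption \ref{assn_expansion}(a)(b)(e), together with the uniform law of large numbers and the weak convergence of the vector of derivatives of $g$ up to order five (the homoscedastic analogues of (\ref{uniform_lln})--(\ref{weak_cgce})), by the same stochastic-equicontinuity argument (Theorem 10.2 of \citet{pollard90book}, Theorem 2 of \citet{andrews94hdbk}). All of the constants appearing here and below are polynomials in $\alpha$ and hence bounded on $[0,1]$, which is what delivers the uniformity over $\alpha\in[0,1]$ in (a) and (b).

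The substantive step is verifying Assumption \ref{assn_expansion}(c)(d) for $r(\bs{y};\bs{\eta},\bs{\lambda})$, i.e.\ writing each of its terms as $\bs{\xi}(\bs{y};\bs{\vartheta}_2)\,O(|\bs{\psi}-\bs{\psi}^*|\,|\bs{t}(\bs{\psi},\alpha)|)$ with $\sup_{\mathcal{N}_\varepsilon}|\bs{\xi}|\le\sup_{\mathcal{N}_\varepsilon}|\bs{v}(\bs{y};\bs{\vartheta}_2)|$, where $\bs{v}$ is the homoscedastic analogue of the vector in (\ref{v_defn}). Every term of $r$ carrying at least one $\dot{\bs{\eta}}:=\bs{\eta}-\bs{\eta}^*$ factor is immediately $O(|\bs{\psi}-\bs{\psi}^*|\,|\bs{t}(\bs{\psi},\alpha)|)$, because $\dot{\bs{\eta}}$ is a sub-block of $\bs{t}(\bs{\psi},\alpha)$ and the remaining $\bs{\lambda}$-factors contribute $O(|\bs{\psi}-\bs{\psi}^*|)$ times a bounded quantity; the same holds for the pure-$\dot{\bs{\eta}}$ terms of order $\ge 2$. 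The only delicate terms are the \emph{pure}-$\bs{\lambda}$ remainder terms: the fourth-order pure-$\bs{\lambda}$ contributions not already in $s$ and the fifth-order Lagrange term. As in the $A_1,A_2,A_3$ computations of the proof of Lemma \ref{P-quadratic}, these must be re-expressed by symmetrization over index permutations so that each pulls out a factor $O(|\bs{\lambda}|)$ multiplying a quantity controlled by the coordinates of $\bs{t}_{\bs{\lambda}}(\bs{\lambda},\alpha)=\alpha(1-\alpha)\big((1-2\alpha)\bs{\lambda}_{\bs{\mu}^3}\t,(1-6\alpha+6\alpha^2)\bs{\lambda}_{\bs{\mu}^4}\t\big)\t$.

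I expect the main obstacle to be the bookkeeping of these $\alpha$-dependent prefactors and its interaction with the uniformity over $\alpha\in[0,1]$: a pure-$\bs{\lambda}$ third-order derivative of $g$ already carries the factor $\alpha(1-\alpha)(1-2\alpha)$ and a fourth-order one carries $\alpha(1-\alpha)(1-6\alpha+6\alpha^2)$ (or $\alpha(1-\alpha)(1-2\alpha)$), and one must combine these with the lower bounds $|\bs{\lambda}_{\bs{\mu}^3}|\gtrsim|\bs{\lambda}|^3$, $|\bs{\lambda}_{\bs{\mu}^4}|\gtrsim|\bs{\lambda}|^4$ and with the fact that $\max\{|1-2\alpha|,|1-6\alpha+6\alpha^2|\}$ is bounded away from $0$ on $[0,1]$ (with the two quantities vanishing at different points, and $\bs{\lambda}$ dropping out of $g$ altogether at $\alpha\in\{0,1\}$) to conclude that the pure-$\bs{\lambda}$ remainder terms are $O(|\bs{\lambda}|\,|\bs{t}_{\bs{\lambda}}(\bs{\lambda},\alpha)|)$ uniformly in $\alpha$; the boundedness of $|\bs{\lambda}|$ on $\Theta^1_{{\bs{\vartheta}}_2,\zeta}$ is used here. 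Given these bounds, part (a) follows from Lemma \ref{Ln_thm2} (the limiting quadratic form is nonsingular on $A^1_{n\varepsilon}(\delta,\zeta)$ because $\bs{\mathcal{I}}$ is nonsingular and $\bs{t}(\bs{\psi},\alpha)$ vanishes only at $\bs{\psi}^*$), and part (b) is its conclusion; the restriction to $\Theta^1_{{\bs{\vartheta}}_2,\zeta}$ plays no further role beyond keeping $\bs{\psi}$ in a fixed compact set so that the higher-order Taylor coefficients admit integrable envelopes.
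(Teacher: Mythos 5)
Your proposal is correct and follows essentially the same route as the paper's proof: a fifth-order Taylor expansion of the density ratio, identification of the leading term with $\bs{t}(\bs{\psi},\alpha)\t\bs{s}(\bs{x},\bs{z})$ via the homoscedastic analogue of Lemma \ref{dv3} (Lemma \ref{dv3-homo}), verification of Assumption \ref{assn_expansion}, and an appeal to Lemma \ref{Ln_thm2}. The only point where you over-anticipate difficulty is the remainder: since the \emph{entire} third- and fourth-order pure-$\bs{\lambda}$ blocks are absorbed into $s$ and all mixed terms of order $\le 3$ in $\bs{\lambda}$ vanish by Lemma \ref{dv3-homo}(a), every remainder term except the fifth-order Lagrange term carries a $\dot{\bs{\eta}}$ factor, so no $A_1$--$A_3$-style symmetrization over index permutations is needed and the fifth-order term is handled directly via $|\bs{\lambda}|^5\asymp|\bs{\lambda}||\bs{\lambda}_{\bs{\mu}^4}|$.
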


\begin{proof}[Proof of Lemma \ref{P-quadratic-homo-1}]

The proof is similar to that of Lemma \ref{P-quadratic} but using $g(\bs{x}|\bs{z};\bs{\psi},\alpha)$ defined in (\ref{loglike-homo}) in place of (\ref{loglike}).  We expand $g(\bs{x}|\bs{z};\bs{\psi},\alpha)/g(\bs{x}|\bs{z};\bs{\psi}^*,\alpha)-1$ five times with respect to $\bs{\psi}$ and show that the expansion satisfies Assumption \ref{assn_expansion}.

Observe that $\bs{t}(\bs{\psi},\alpha)$ defined in (\ref{tpsi_defn-homo}) satisfies $\bs{t}(\bs{\psi},\alpha) = 0$ if and only if $\bs{\psi}=\bs{\psi}^*$ because $\bs{\lambda}=0$  only if  the $(i,i,i)$th element of $\bs{\lambda}_{\bs{\mu}^3}$ or the $(i,i,i,i)$th element of $\bs{\lambda}_{\bs{\mu}^4}$ is 0  for all $1 \leq i \leq d$. 
Following the argument in the proof of Lemma \ref{P-quadratic} after (\ref{v_defn}), we may show that (\ref{uniform_lln})--(\ref{weak_cgce}) hold for $g(\cdot)$ defined in (\ref{loglike-homo}). Define $g^*$, $\nabla g^*$, and $\nabla\overline{g}$ as in the proof of Lemma \ref{P-quadratic} but using $g(\bs{x}|\bs{z};\bs{\psi},\alpha)$ defined in (\ref{loglike-homo}) in place of (\ref{loglike}). 
Expanding $\ell(\bs{y};\bs{\psi},\alpha):=g(\bs{x}|\bs{z};\bs{\psi}^*,\alpha)/g(\bs{x}|\bs{z};\bs{\psi},\alpha)$ five times around $\bs{\psi}^*$ while fixing $\alpha$ and using Lemma \ref{dv3-homo}  in \ref{section:auxiliary}, we can write $\ell(\bs{y};\bs{\psi},\alpha)-1$ as  
\begin{equation*}
\ell(\bs{y};\bs{\psi},\alpha) -1 =  s(\bs{y};\bs{\eta},\bs{\lambda})  +r(\bs{y};\bs{\eta},\bs{\lambda}), 
\end{equation*}
where
\[
s(\bs{y};\bs{\eta},\bs{\lambda}) := \frac{\nabla_{\bs{\eta}\t}g^* }{g^*}\dot{\bs{\eta}} + \frac{1}{3!} \frac{\nabla_{(\bs{\lambda}^{\otimes 3})\t} g^* }{g^*} \bs{\lambda}^{\otimes 3} + \frac{1}{4!} \frac{\nabla_{(\bs{\lambda}^{\otimes 4})\t} g^* }{g^*} \bs{\lambda}^{\otimes 4},
\]
with $\dot{\bs{\eta}}:=\bs{\eta}-\bs{\eta}^*$ and
\begin{align}
r(\bs{y};\bs{\eta},\bs{\lambda}) &: = \frac{1}{2!} \frac{\nabla_{({\bs{\eta}}^{\otimes 2})\t} g^* }{g^*}{\dot{\bs{\eta}}}^{\otimes 2}+ \frac{1}{3!} \frac{\nabla_{({\bs{\eta}}^{\otimes 3})\t} g^* }{g^*}{\dot{\bs{\eta}}}^{\otimes 3} \label{l_expand_2-homo} \\ 
& \quad + \sum_{p=0}^{3} \frac{1}{p!(4-p)!}\frac{\nabla_{({\bs{\eta}}^{\otimes (4-p)} \otimes \bs{\lambda}^{\otimes p} )\t} g^* }{g^*} (\dot{\bs{\eta}}^{\otimes (4-p)} \otimes \bs{\lambda}^{\otimes p} ) \label{l_expand_3-homo} \\
&\quad + \frac{1}{5!} \frac{\nabla_{(\bs{\lambda}^{\otimes 5})\t} \overline{g} }{g^*} \bs{\lambda}^{\otimes 5} + \sum_{p=0}^{4} \frac{1}{p!(5-p)!}\frac{\nabla_{({\bs{\eta}}^{\otimes (5-p)} \otimes \bs{\lambda}^{\otimes p} )\t} \overline{g} }{g^*} (\dot{\bs{\eta}}^{\otimes (5-p)} \otimes \bs{\lambda}^{\otimes p} ) \label{l_expand_4-homo}.
\end{align} 
We first show $s(\bs{y};\bs{\eta},\bs{\lambda}) = \bs{t}(\bs{\psi},\alpha)\t \bs{s}(\bs{x},\bs{z})$ with $\bs{s}(\bs{x},\bs{z})$ and $\bs{t}(\bs{\psi},\alpha)$ defined in (\ref{score_defn-homo}) and (\ref{tpsi_defn-homo}). The first term of $s(\bs{y};\bs{\eta},\bs{\lambda})$ is $\nabla_{(\bs{\gamma}\t,\bs{\mu}\t,\bs{v}\t)\t}f^*/f^*$. Using Lemma \ref{dv3-homo}, the second and third terms of $s(\bs{y};\bs{\eta},\bs{\lambda})$ are written as
\begin{align*}
& 
\frac{\alpha(1-\alpha) (1-2\alpha)}{3!} \sum_{1\leq i\leq j\leq k\leq d} \frac{\nabla_{\mu_i \mu_j \mu_k} f^* }{f^*} \sum_{(t_1,t_2,t_3)\in p(i,j,k)} \lambda_{t_1}\lambda_{t_2}\lambda_{t_3} \quad\text{and}\\ 
& 
\frac{ \alpha(1-\alpha)(1-6\alpha+6\alpha^2)}{4!} \sum_{1\leq i\leq j\leq k\leq \ell\leq d} \frac{\nabla_{\mu_i \mu_j \mu_k\mu_{\ell}} f^* }{f^*} \sum_{(t_1,t_2,t_3,t_4)\in p(i,j,k,\ell)} \lambda_{t_1}\lambda_{t_2}\lambda_{t_3}\lambda_{t_4},
\end{align*} 
where $\sum_{(t_1,t_2,t_3) \in p(i,j,k)}$ denotes the sum over all distinct permutations of $(i,j,k)$ to $(t_1,t_2,t_3)$ while $\sum_{(t_1,t_2,t_3,t_4) \in p(i,j,k,\ell)}$ denotes the sum over all distinct permutations of $(i,j,k,\ell)$ to $(t_1,t_2,t_3,t_4)$. Combining these results gives $s(\bs{y};\bs{\eta},\bs{\lambda}) = \bs{s}_{\bs{\eta}}\t \dot{\bs{\eta}} + \alpha(1-\alpha) (1-2\alpha) \bs{s}_{\bs{\mu^3}}\t \bs{\lambda}_{\bs{\mu^3}} + \alpha(1-\alpha)(1-6\alpha+6\alpha^2)\bs{s}_{\bs{\mu}^4}\t \bs{\lambda}_{\bs{\mu}^4}= \bs{t}(\bs{\psi},\alpha)\t \bs{s}(\bs{x},\bs{z})$, 
 where
$(\bs{s}_{\bs{\eta}},\bs{s}_{\bs{\mu^3}}, \bs{s}_{\bs{\mu}^4})$ satisfies Assumption \ref{assn_expansion}(a)(b)(e) from Assumption \ref{A-taylor1}, the property of the normal density, (\ref{uniform_lln}), and (\ref{weak_cgce}).

The stated result holds if  $\bs{r(\bs{y};\bs{\eta},\bs{\lambda})}$  can be written as $\bs{\xi}(\bs{y};\bs{\vartheta}_2) O(|\bs{\psi}||\bs{t}(\bs{\psi},\alpha)|)$ where $\sup_{\bs{\vartheta}_2 \in \mathcal{N}_\epsilon}|\bs{\xi}(\bs{y};\bs{\vartheta}_2)| \leq \sup_{\bs{\vartheta}_2 \in \mathcal{N}_\epsilon}|\bs{v}(\bs{y};\bs{\vartheta}_2)|$ with $\bs{v}(\bs{y};\bs{\vartheta}_2)$ defined in (\ref{v_defn}) but using $g(\bs{x}|\bs{z};\bs{\psi},\alpha)$ defined in (\ref{loglike-homo}), because then $r(\bs{y};\bs{\eta},\bs{\lambda})$ satisfies Assumption \ref{assn_expansion}(c)(d) from (\ref{uniform_lln}) and (\ref{weak_cgce}). First, the terms on the right hand side of (\ref{l_expand_2-homo}) are written as $(\nabla_{\bs{\eta}^{\otimes 2}} g^*/g^*) O(|\dot{\bs{\eta}}|^2)+(\nabla_{\bs{\eta}^{\otimes 3}} g^*/g^*) O(|\dot{\bs{\eta}}|^2)$. Second, the terms in (\ref{l_expand_3-homo}) are written as $(\nabla_{\bs{\psi}^{\otimes 4}} g^*/g^*) [O(|\dot{\bs{\eta}}||\bs{\lambda}|)+ O(|\dot{\bs{\eta}}|^2)]$. Finally, the terms in (\ref{l_expand_4-homo}) are written as 
$(\nabla_{\bs{\psi}^{\otimes 5}} \bar g /g^*) O(|\bs{\lambda_{\mu ^4}}||\bs{\lambda}|)+
(\nabla_{\bs{\psi}^{\otimes 5}} \bar g /g^*)[O(|\dot{\bs{\eta}}||\bs{\lambda}|)+ O(|\dot{\bs{\eta}}|^2)]$, and the stated result follows.
\end{proof}

\begin{lemma} \label{P-quadratic-homo-2}
Suppose that Assumptions \ref{assn_consis} and \ref{A-taylor1} hold  and $\bs{X}$ given $\bs{Z}$ has the density  $f(\bs{x}|\bs{z}; \bs{\gamma},{\bs{\mu}},\bs{\Sigma})$  defined in (\ref{normal_density}). Let $L_n(\bs{\phi},\bs{\lambda}):= \sum_{i=1}^n h(\bs{X}_i|\bs{Z}_i;\bs{\phi},\bs{\lambda})$ with $h(\bs{x}|\bs{z};\bs{\phi},\bs{\lambda})$ defined in (\ref{loglike-homo-2}). Define $\bs{s}(\bs{x},\bs{z};\bs{\lambda})$ and $\bs{t}(\bs{\phi},\bs{\lambda})$ as in (\ref{score_defn-homo-2}) and (\ref{tpsi_defn-homo-2}), and let $\mathcal{N}_{\varepsilon} := \{ \bs{\vartheta}_2 \in \Theta_{\bs{\vartheta}_2} : |\bs{t}(\bs{\phi},\bs{\lambda})|< \varepsilon\}$ and $\bs{\mathcal{I}}(\bs{\lambda}):=E[\bs{s}(\bs{X},\bs{Z};\bs{\lambda})\bs{s}(\bs{X},\bs{Z};\bs{\lambda})\t]$. Then, for any $|\bs{\lambda}|\geq \zeta> 0$ and any $\delta>0$, we have (a) $\sup_{\bs{\vartheta} \in A_{n\varepsilon}^2(\delta,\zeta)} |\bs{t}(\bs{\phi},\bs{\lambda}) | = O_{p\varepsilon}(n^{-1/2})$;
\begin{equation*} 
(b)\ \sup_{\bs{\vartheta} \in A_{n\varepsilon}^2(\delta,\zeta)}\left|L_n(\bs{\phi},\bs{\lambda}) - L_n(\bs{\phi^*},\bs{\lambda}) - \sqrt{n} \bs{t}(\bs{\phi},\bs\lambda) \t \nu_n(\bs{s}(\bs{x},\bs{z};\bs{\lambda})) + n \bs{t}(\bs{\phi},\bs\lambda) \t \bs{\mathcal{I}}(\bs{\lambda}) \bs{t}(\bs{\phi},\bs\lambda)/2 \right| = o_{p\varepsilon}(1),
\end{equation*}
where $A_{n\varepsilon}^2(\delta,\zeta) := \{\bs{\vartheta} \in \mathcal{N}_\varepsilon\cap \Theta_{{\bs{\vartheta}}_2,\zeta}^2: L_n(\bs{\phi},\bs{\lambda}) - L_n(\bs{\phi^*},\bs{\lambda}) \geq -\delta \}$.
\end{lemma}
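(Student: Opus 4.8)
The plan is to follow the template of the proof of Lemma \ref{P-quadratic}, applying Lemma \ref{Ln_thm2} in \ref{section:expansion} with the density ratio associated with the reparameterized density $h(\bs{x}|\bs{z};\bs{\phi},\bs{\lambda})$ of (\ref{loglike-homo-2}) and the true density $f_v^* = h(\bs{x}|\bs{z};\bs{\phi}^*,\bs{\lambda})$ playing the role of $\ell(\bs{y},\bs{\vartheta})$ in (\ref{density_ratio}) (with $\bs{y}:=(\bs{x}\t,\bs{z}\t)\t$), and $\bs{t}(\bs{\phi},\bs{\lambda})$ playing the role of $\bs{t}(\bs{\vartheta})$. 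The feature that makes this case lighter than Lemmas \ref{P-quadratic} and \ref{P-quadratic-homo-1} is that $\bs{\lambda}$ is held fixed with $\zeta \le |\bs{\lambda}| \le \mathcal{C}$ (the upper bound coming from compactness of $\Theta_{{\bs{\vartheta}}_2}$): on $\Theta_{{\bs{\vartheta}}_2,\zeta}^2$ the map $\bs{\phi} \mapsto h(\cdot|\cdot;\bs{\phi},\bs{\lambda})$ is $C^\infty$ in a neighborhood of $\bs{\phi}^* = ((\bs{\eta}^*)\t, 0)\t$ — the implied variance $\bs{\nu}_{\bs v} - \alpha(1-\alpha)\bs{w}(\bs{\lambda}\bs{\lambda}\t)$ stays positive definite there — and there is no degeneracy beyond the single direction $\alpha \to 0$, so a finite-order Taylor expansion in $(\bs{\eta}-\bs{\eta}^*,\alpha)$ around $(\bs{\eta}^*,0)$ suffices; here, since $|\bs{\phi}-\bs{\phi}^*| = O_{p\varepsilon}(n^{-1/2})$ carries the standard rate (part (a)), the higher-order terms are readily absorbed into the remainder.

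First I would record that $\bs{t}(\bs{\phi},\bs{\lambda}) = \bs{0}$ if and only if $\bs{\phi} = \bs{\phi}^*$ — $\bs{t}_{\bs{\eta}}(\bs{\lambda}) = \bs{0}$ iff $\bs{\eta} = \bs{\eta}^*$, and $t_\alpha(\bs{\lambda}) = \alpha|\bs{\lambda}|^3 = 0$ iff $\alpha = 0$ because $|\bs{\lambda}| \ge \zeta > 0$ — and that $|\bs{\phi}-\bs{\phi}^*| \asymp |\bs{t}(\bs{\phi},\bs{\lambda})|$ uniformly over $\zeta \le |\bs{\lambda}| \le \mathcal{C}$. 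Next I would establish the analogues of (\ref{uniform_lln})--(\ref{weak_cgce}) for the vector $\bs{v}(\bs{y};\bs{\vartheta}_2)$ of derivatives of $h(\bs{x}|\bs{z};\bs{\phi},\bs{\lambda})/f_v^*$ with respect to $\bs{\phi}$ up to the order of the expansion: a uniform law of large numbers for $\bs{v}\bs{v}\t$ via Lemma 2.4 of \citet{neweymcfadden94hdbk}, and weak convergence of $\nu_n(\bs{v})$ to a mean-zero continuous Gaussian process via Theorem 10.2 of \citet{pollard90book} (total boundedness from compactness, convergence of finite-dimensional distributions from the multivariate central limit theorem, and stochastic equicontinuity from Lipschitz continuity via Theorem 2 of \citet{andrews94hdbk}); all the requisite moment bounds, including $E\sup_{\bs{\vartheta}_2 \in \mathcal{N}_\varepsilon}|\bs{v}(\bs{Y};\bs{\vartheta}_2)|^2 < \infty$, follow from the Gaussian tails of $f_v$ and Assumption \ref{A-taylor1}, exactly as in the proof of Lemma \ref{P-quadratic}, and $E[\bs{v}(\bs{Y};\bs{\vartheta}_2)] = 0$ since derivatives of a density integrate to zero.

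Then I would Taylor-expand $\ell(\bs{y};\bs{\phi},\bs{\lambda}) - 1$ and isolate the leading term $s(\bs{y};\bs{\eta},\alpha) = (\bs{\eta}-\bs{\eta}^*)\t \bs{s}_{\bs{\eta}} + \alpha\, [\nabla_\alpha h(\bs{x}|\bs{z};\bs{\phi}^*,\bs{\lambda})]/f_v^*$. A direct computation — differentiating both mixture weights and the $\alpha$-dependence inside the two copies of $f_v$ in (\ref{loglike-homo-2}) and evaluating at $\alpha = 0$, $\bs{\eta} = \bs{\eta}^*$ — gives $[\nabla_\alpha h(\bs{x}|\bs{z};\bs{\phi}^*,\bs{\lambda})]/f_v^* = (f_v^*(\bs{x}|\bs{z};\bs{\lambda}) - f_v^* - \nabla_{\bs{\mu}\t} f_v^* \bs{\lambda} - \nabla_{\bs v\t} f_v^* \bs{\lambda}_{\bs{\mu}^2})/f_v^* = |\bs{\lambda}|^3 s_\alpha(\bs{\lambda})$ (using $\bs{\lambda}_{\bs{\mu}^2} = \bs{w}(\bs{\lambda}\bs{\lambda}\t)$), so that $s(\bs{y};\bs{\eta},\alpha) = \bs{t}(\bs{\phi},\bs{\lambda})\t \bs{s}(\bs{x},\bs{z};\bs{\lambda})$ with $\bs{s}$ and $\bs{t}$ as in (\ref{score_defn-homo-2}) and (\ref{tpsi_defn-homo-2}); parts (a), (b), (e) of Assumption \ref{assn_expansion} then hold for $\bs{s}(\bs{x},\bs{z};\bs{\lambda})$ from the moment bounds and (\ref{weak_cgce}). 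The remainder $r(\bs{y};\bs{\eta},\alpha)$ consists of terms each equal to a derivative of $h$ of order $\ge 2$ divided by $f_v^*$ times a factor of order $|\bs{\phi}-\bs{\phi}^*|^2$; using $|\bs{\phi}-\bs{\phi}^*| \asymp |\bs{t}(\bs{\phi},\bs{\lambda})|$ I would rewrite $r$ as $\bs{\xi}(\bs{y};\bs{\vartheta}_2)\, O(|\bs{\phi}-\bs{\phi}^*|\,|\bs{t}(\bs{\phi},\bs{\lambda})|)$ with $\sup_{\bs{\vartheta}_2 \in \mathcal{N}_\varepsilon}|\bs{\xi}(\bs{y};\bs{\vartheta}_2)| \le \sup_{\bs{\vartheta}_2 \in \mathcal{N}_\varepsilon}|\bs{v}(\bs{y};\bs{\vartheta}_2)|$, which, with the normal-density moment bounds and (\ref{weak_cgce}), verifies Assumption \ref{assn_expansion}(c)(d). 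Parts (a) and (b) of the lemma then follow from Lemma \ref{Ln_thm2}. The only genuine computation is that of $\nabla_\alpha h(\bs{x}|\bs{z};\bs{\phi}^*,\bs{\lambda})$ and the recognition that the $|\bs{\lambda}|^{-3}$-normalized $s_\alpha(\bs{\lambda})$ is precisely the first-order coefficient — the normalization being cosmetic here because $|\bs{\lambda}|$ is bounded and bounded away from zero, but it is what renders $t_\alpha(\bs{\lambda}) = \alpha|\bs{\lambda}|^3$ and the comparability $|\bs{\phi}-\bs{\phi}^*| \asymp |\bs{t}(\bs{\phi},\bs{\lambda})|$ transparent; everything else is a routine adaptation of the proof of Lemma \ref{P-quadratic}, which is why I expect no serious obstacle.
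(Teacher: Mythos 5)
Your proposal follows essentially the same route as the paper's proof: a second-order expansion of the density ratio in $\bs{\phi}$ around $\bs{\phi}^*$ with $\bs{\lambda}$ fixed, the identification $\nabla_\alpha h(\bs{y};\bs{\phi}^*,\bs{\lambda})/f_v^* = |\bs{\lambda}|^3 s_\alpha(\bs{\lambda})$ so that the leading term equals $\bs{t}(\bs{\phi},\bs{\lambda})\t\bs{s}(\bs{x},\bs{z};\bs{\lambda})$, verification of Assumption \ref{assn_expansion}, and an appeal to Lemma \ref{Ln_thm2}. The only point where the paper does more than you propose is in verifying condition (C): it extends $s_\alpha(\bs{\lambda})$ to $\bs{\lambda}=\bs{0}$ via spherical coordinates and proves Lipschitz continuity there (needed downstream when $\zeta\to 0$ in Proposition \ref{P-LR-N1-homo}), whereas your observation that the $|\bs{\lambda}|^{-3}$ normalization is harmless on $\{|\bs{\lambda}|\geq\zeta\}$ suffices for the lemma as stated.
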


\begin{proof}[Proof of Lemma \ref{P-quadratic-homo-2}]
The proof is similar to that of Lemma \ref{P-quadratic}. Observe that $\bs{t}(\bs{\phi},\bs{\lambda})$ defined in (\ref{tpsi_defn-homo-2}) satisfies $\bs{t}(\bs{\phi},\bs{\lambda}) = \bs{0}$ if and only if $\bs{\phi}=\bs{\phi}^* = ((\bs{\eta}^*)\t,0)\t$ because $|\bs{\lambda}| \geq \zeta$. Let $\bs{y}:=(\bs{x}\t,\bs{z}\t)\t$, and write $h(\bs{x}|\bs{z};\bs{\phi},\bs{\lambda})$ as $h(\bs{y};\bs{\phi},\bs{\lambda})$. Let $\ell(\bs{y},\bs{\phi},\bs{\lambda}) := h(\bs{y};\bs{\phi},\bs{\lambda})/h(\bs{y};\bs{\phi}^*,\bs{\lambda})$. Expanding $\ell(\bs{y};\bs{\phi},\bs{\lambda})-1$ twice around $\bs{\phi} = \bs{\phi}^*$ while fixing the value of $\bs{\lambda}$ and using $h(\bs{y};\bs{\phi}^*,\bs{\lambda}) = f_v^*$ and $\nabla_{\eta}h(\bs{y};\bs{\phi}^*,\bs{\lambda}) = \nabla_{\eta} f_v^*$ gives
\[
\ell(\bs{y};\bs{\phi},\bs{\lambda})-1 = s(\bs{y};\bs{\phi},\bs{\lambda}) + r(\bs{y};\bs{\phi},\bs{\lambda}),
\]
where
\[
s(\bs{y};\bs{\phi},\bs{\lambda}) := \frac{\nabla_{\alpha} h(\bs{y};\bs{\phi}^*,\bs{\lambda}) }{f_v^*} \alpha+ \frac{\nabla_{\bs{\eta}\t} f_v^*}{f_v^*}\dot{\bs{\eta}},
\]
with $\dot{\bs{\eta}}:=\bs{\eta}-\bs{\eta}^*$ and, for some $\overline{\bs{\phi}}\in (\bs{\phi},\bs{\phi}^*)$,
\begin{align*}
r(\bs{y};\bs{\phi},\bs{\lambda}) & = \frac{1}{2}\frac{\nabla_{\alpha^2} h(\bs{y};\overline{\bs{\phi}},\bs{\lambda})}{f_v^*}\alpha^2 + \frac{1}{2}\frac{\nabla_{(\bs{\eta}^{\otimes 2})\t} h(\bs{y};\overline{\bs{\phi}},\bs{\lambda})}{f_v^*} \dot{\bs{\eta}}^{\otimes 2}+ \frac{\nabla_{\alpha\bs{\eta}\t} h(\bs{y};\overline{\bs{\phi}},\bs{\lambda})}{f_v^*} \alpha\dot{\bs{\eta}}.
\end{align*} 
Let $f_v^*(\bs{\lambda}):= f_v(\bs{x}|\bs{z};\bs{\gamma}^*,\bs{\mu}^*+ \bs{\lambda}, \bs{v}^*)$, so that $f_v^*(\bs{0})=f_v^*$. Define
\begin{equation}\label{v_defn-homo-2}
\bs{v}(\bs{y};\bs{\vartheta}_2) :=\left(\bs{s}_{\bs{\phi}}(\bs{y};\bs{\phi},\bs{\lambda})\t ,\bs{s}_{\bs{\eta}}\t, s_{\alpha}(\bs{\lambda}) \right)\t,
\end{equation} 
where $\bs{s}_{\bs{\phi}}(\bs{y};\bs{\phi},\bs{\lambda}) : = (\nabla_{((\bs{\phi}^{\otimes 2})\t, (\bs{\phi}^{\otimes 3})\t)} s(\bs{y};\bs{\phi},\bs{\lambda}))\t / f_v^*$. In view of (\ref{score_defn-homo-2})--(\ref{tpsi_defn-homo-2}) and the argument in the proof of Lemma \ref{P-quadratic} , the stated result holds if
\begin{equation} \label{gr_condition}
\begin{aligned}
\text{(A)} \quad & \nabla_{\alpha} h(\bs{y};\bs{\phi}^*,\bs{\lambda}) = f_v^*(\bs{\lambda}) - f_v^* - \nabla_{ \bs{\mu}\t} f_v^* \bs{\lambda} - \nabla_{ \bs{v}\t} f_v^* \bs{\lambda}_{\bs{\mu}^2},\\
\text{(B)} \quad & r(\bs{y};\bs{\phi},\bs{\lambda}) = \bs{\xi}(\bs{y};\bs{\vartheta}_2) O(|\bs{\phi}-\bs{\phi}^*||\bs{t}(\bs{\phi},\bs{\lambda})|), \\
\text{(C)} \quad & \bs{v}(\bs{y};\bs{\vartheta}_2)\text{ satisfies (\ref{uniform_lln})--(\ref{weak_cgce})},
\end{aligned}
\end{equation}
where $\sup_{\bs{\vartheta}_2}|\bs{\xi}(\bs{y};\bs{\vartheta}_2)| \leq \sup_{\bs{\vartheta}_2}|\bs{v}(\bs{y};\bs{\vartheta}_2)|$, and the domain of $\bs{\vartheta}_2$ is such that $\bs{\phi} \in \Theta_{\bs{\eta}} \times [0, 3/4]$ and  $\bs{\lambda} \in \Theta_{\bs{\lambda}}$.  

We proceed to show (A)--(C) in (\ref{gr_condition}). (A) is shown in Lemma \ref{s_der_alpha}  in \ref{section:auxiliary}, and (B) follows from Lemma \ref{s_der_alpha} and the definition of $\bs{t}(\bs{\phi},\bs{\lambda})$. For (C), $\bs{s}_{\bs{\phi}}(\bs{y};\bs{\phi},\bs{\lambda})$ and $\bs{s}_{\bs{\eta}}$ clearly satisfy (\ref{uniform_lln})--(\ref{weak_cgce}). The proof completes if we show that $s_{\alpha}(\bs{\lambda})$ satisfies (\ref{uniform_lln})--(\ref{weak_cgce}). We first extend the domain of $s_{\alpha}(\bs{\lambda})$ so that it is well-defined when $|\bs{\lambda}|=0$. Then, $s_{\alpha}(\bs{\lambda})$ satisfies (\ref{uniform_lln})--(\ref{weak_cgce}) if $s_{\alpha}(\bs{\lambda})$ is Lipschitz continuous in $\bs{\lambda}$ in this extended domain and the Lipschitz constant is in $L^{2+\delta}$. Write $\bs{\lambda}$ in the $d$-spherical coordinates as $\bs{\lambda} = r \widetilde{\bs{\lambda}}(\bs{\theta})$, where $r$ is scalar with $r \geq 0$, $\bs{\theta} := (\theta_1,\ldots,\theta_{d-1})\t \in \Theta := [0,\pi)^{d-2}\times [0,2\pi)$, and $\widetilde{\bs{\lambda}}(\cdot)$ is a function from $\mathbb{R}^{d-1}$ to $\mathbb{R}^d$ whose elements are products of $\sin(\theta_j)$'s and $\cos(\theta_j)$'s such that $|\widetilde{\bs{\lambda}}(\bs{\theta})|=1$ (e.g., $\widetilde{\bs{\lambda}}(\theta) = (\cos(\theta),\sin(\theta))\t$ when $d=2$). For $r>0$ and $\bs{\theta} \in \Theta$, define $s_\alpha(r,\bs{\theta}) := s_\alpha(r \widetilde{\bs{\lambda}}(\bs{\theta}))$, and write $s_\alpha(r,\bs{\theta})$ as 
\begin{equation*} 
{s}_{\alpha}(r,\bs{\theta}) := 
\frac{f_v^*(r \widetilde{\bs{\lambda}}(\bs{\theta})) -f_v^*(\bs{0})-\nabla_{\bs{\mu}\t} f^*_v(\bs{0}) r \widetilde{\bs{\lambda}}(\bs{\theta}) - \nabla_{(\bs{\mu}^{\otimes 2})\t} f^*_v(\bs{0}) r^2 \widetilde{\bs{\lambda}}(\bs{\theta})^{\otimes 2}/2 }{ r^3 f^*_v(\bs{0})},
\end{equation*}
where $f_v^*(\bs{\lambda}):=f_v\left(\bs{x}|\bs{z};\bs{\gamma}^*, \bs\mu^*+ \bs{\lambda}, \bs{v}^*\right)$. 
Define $s_\alpha(0,\bs{\theta}) = \nabla_{(\bs{\mu}^{\otimes 3})\t} f^*_v(\bs{0}) \widetilde{\bs{\lambda}}(\bs{\theta})^{\otimes 3}/(3!f^*_v(\bs{0}))$, then ${s}_{\alpha}(r,\bs{\theta})$ converges to $s_\alpha(0,\bs{\theta})$ as $r \to 0$, and $s_\alpha(r,\bs{\theta})$ is continuous in $r \geq 0$ and $\bs{\theta} \in \Theta$. 
 
We show that $s_\alpha(r,\bs{\theta})$ is Lipschitz continuous in $(r,\bs{\theta})\in [0,M]\times \Theta$. Let $\bs{\Lambda}(\bs{\theta}):=\nabla_{\bs{\theta}\t}\widetilde{\bs{\lambda}}(\bs{\theta})$ denote the $d \times (d-1)$ Jacobian matrix of $\widetilde{\bs{\lambda}}(\bs{\theta})$. It follows from a direct calculation that $|\nabla_r {s}_{\alpha}(r,\bs{\theta})| \leq \mathcal{C} \sup_{\bs{\lambda} }|\nabla_{(\bs{\mu}^{\otimes 4})\t} f^*_v(\bs{\lambda})|| \bs{\lambda}^{\otimes 4}|/f^*_v(\bs{0})$ and $|\nabla_{\bs{\theta}} {s}_{\alpha}(r,\bs{\theta})| \leq C \sup_{\bs{\theta}}|\bs{\Lambda}(\bs{\theta})|\sup_{\bs{\lambda} }|\nabla_{(\bs{\mu}^{\otimes 3})\t} f^*_v(\bs{\lambda})|| \bs{\lambda}^{\otimes 2}|/f^*_v(\bs{0})$, which are in $L^{2+\delta}$ from $\sup_{\bs{\theta}}|\bs{\Lambda}(\bs{\theta})| < \infty$ and the property of the normal density. Consequently, $s_\alpha(r,\bs{\theta})$ is Lipschitz continuous in $(r,\bs{\theta})\in [0,M]\times \Theta$ and the Lipschitz constant is in $L^{2+\delta}$. Therefore, (C) of (\ref{gr_condition}) holds, and the stated result is proven.  
\end{proof}

\section{Quadratic expansion under singular Fisher information matrix}\label{section:expansion}

This appendix derives a Le Cam's differentiable in quadratic mean (DQM)-type expansion  that is useful for proving Lemmas \ref{P-quadratic}--\ref{P-quadratic-homo-2} in \ref{section:quadratic}.  \citet{liushao03as} develop a DQM expansion under the loss of identifiability in terms of the generalized score function. Lemmas \ref{Ln_thm1} and \ref{Ln_thm2} modify \citet{liushao03as} to fit our context of parametric models where the derivatives of the density of different orders are linearly dependent. \citet{kasaharashimotsu18markov} derive a similar expansion that accommodates dependent and heterogeneous $Y_i$'s under additional assumptions than ours. Lemmas \ref{Ln_thm1} and \ref{Ln_thm2} may be viewed as a specialization of \citet{kasaharashimotsu18markov} to the random sampling case.

Let $\bs{\vartheta}$ be a parameter vector, and let  $g(\bs{y};\bs{\theta})$ denote the density of $\bs{Y}$. Let $L_n(\bs{\vartheta}) := \sum_{i=1}^n \log g(\bs{Y}_i;\bs{\vartheta})$ denote the log-likelihood function. Split $\bs{\vartheta}$ as $\bs{\vartheta} = (\bs{\psi}\t,\bs{\pi}\t)\t$, and write $L_n(\bs{\vartheta}) = L_n(\bs{\psi},\bs{\pi})$. $\bs{\pi}$ corresponds to the part of $\bs{\vartheta}$ that is not identified under the null. Denote the true parameter value of $\bs{\psi}$ by $\bs{\psi}^*$, and denote the set of $(\bs{\psi},\bs{\pi})$ corresponding to the null hypothesis by $\Gamma^*= \{(\bs{\psi},\bs{\pi})\in\Theta: \bs{\psi}=\bs{\psi}^*\}$. 
Let $\bs{t}(\bs{\vartheta})$ be a continuous function of $\bs{\vartheta}$ such that $\bs{t}(\bs{\vartheta})=0$ if and only if $\bs{\psi}=\bs{\psi}^*$. For $\varepsilon>0$, define a neighborhood of $\Gamma^*$ by
\[
\mathcal{N}_{\varepsilon} := \{ \bs{\vartheta} \in \Theta: |\bs{t}(\bs{\vartheta})|< \varepsilon\}.
\]

We establish a general quadratic expansion that expresses $L_n(\bs{\psi},\bs{\pi})-L_n(\bs{\psi}^*,\bs{\pi})$ as a quadratic function of $\bs{t}(\bs{\vartheta})$ for $\bs{\vartheta}\in \mathcal{N}_{\varepsilon}$. Denote the density ratio by
\begin{equation} \label{density_ratio}
\ell(\bs{y};\bs{\vartheta}) := \frac{g (\bs{y}; \bs{\psi},\bs{\pi})}{ g (\bs{y}; \bs{\psi}^*,\bs{\pi})},
\end{equation} 
so that $L_n(\bs{\psi},\bs{\pi}) - L_n(\bs{\psi}^*,\bs{\pi}) = \sum_{i=1}^n\log \ell(\bs{Y}_i;\bs{\vartheta})$. We assume that $\ell(\bs{y};\bs{\vartheta})$ can be expanded around $\ell(\bs{y};\bs{\vartheta}^*)=1$ as follows. 
\begin{assumption}\label{assn_expansion}
$\ell(\bs{y};\bs{\vartheta}) -1$ admits an expansion
\begin{equation*} 
\ell(\bs{y};\bs{\vartheta}) -1 = \bs{t}(\bs{\vartheta})\t \bs{s}(\bs{y};\bs{\pi}) + r(\bs{y};\bs{\vartheta}),
\end{equation*}
where $\bs{s}(\bs{y};\bs{\pi})$ and $r(\bs{y};\bs{\vartheta})$ satisfy, for some $C \in (0,\infty)$ and $\varepsilon >0$, (a) $E\sup_{\bs{\pi} \in \Theta_{\bs{\pi}}} \left|\bs{s}(\bs{Y};\bs{\pi})\right|^2 < C$, (b) $\sup_{\bs{\pi} \in \Theta_{\bs{\pi}}}| P_n(\bs{s}(\bs{y};\bs{\pi})\bs{s}(\bs{y};\bs{\pi})\t) - \bs{\mathcal{I}}_{\bs{\pi}}| = o_p(1)$ with $0<\inf_{\bs{\pi}\in \Theta_{\bs{\pi}}} \lambda_{\min}(\bs{\mathcal{I}}_{\bs{\pi}}) \leq \sup_{\bs{\pi}\in \Theta_{\bs{\pi}} } \lambda_{\max}(\bs{\mathcal{I}}_{\bs{\pi}})<C$, (c) $E[\sup_{\bs{\vartheta} \in \mathcal{N}_\varepsilon} |r(\bs{Y};\bs{\vartheta})/(|\bs{t}(\bs{\vartheta})||\bs{\psi}-\bs{\psi}^*|)|^2] < \infty$, (d) $\sup_{\bs{\vartheta} \in \mathcal{N}_\varepsilon} [ \nu_n(r(\bs{y};\bs{\vartheta}))/(|\bs{t}(\bs{\vartheta})||\bs{\psi}-\bs{\psi}^*|)] = O_p(1)$, (e) $\sup_{\bs{\pi} \in \Theta_{\bs{\pi}}}|\nu_n(\bs{s}(\bs{y};\bs{\pi}))|=O_p(1)$. 
\end{assumption} 
We first establish an expansion $L_n(\bs{\psi},\bs{\pi})$ in a neighborhood $\mathcal{N}_{c/\sqrt{n}}$ that holds for any $c>0$. 
\begin{lemma} \label{Ln_thm1}
Suppose that Assumption \ref{assn_expansion}(a)--(d) holds. Then, for all $c>0$,
\begin{equation*} 
\sup_{\bs{\vartheta} \in \mathcal{N}_{c/\sqrt{n}}} \left| L_n(\bs{\psi},\bs{\pi}) - L_n(\bs{\psi}^*,\bs{\pi}) - \sqrt{n}\bs{t}(\bs{\vartheta})\t \nu_n (\bs{s}(\bs{y};\bs{\pi})) + n \bs{t}(\bs{\vartheta})\t \bs{\mathcal{I}}_{\bs{\pi}} \bs{t}(\bs{\vartheta})/2 \right| = o_p(1).
\end{equation*}
\end{lemma}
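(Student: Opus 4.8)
The plan is to establish Lemma \ref{Ln_thm1} by inserting the expansion of Assumption \ref{assn_expansion} into the log-likelihood ratio and controlling the remainder term uniformly over the $n^{-1/2}$-neighborhood $\mathcal{N}_{c/\sqrt{n}}$. First I would write $L_n(\bs{\psi},\bs{\pi}) - L_n(\bs{\psi}^*,\bs{\pi}) = \sum_{i=1}^n \log \ell(\bs{Y}_i;\bs{\vartheta}) = \sum_{i=1}^n \log(1 + \delta_i)$ with $\delta_i := \ell(\bs{Y}_i;\bs{\vartheta}) - 1 = \bs{t}(\bs{\vartheta})\t \bs{s}(\bs{Y}_i;\bs{\pi}) + r(\bs{Y}_i;\bs{\vartheta})$, and then use the elementary inequality $|\log(1+x) - x + x^2/2| \leq C|x|^3$ valid for $|x| \leq 1/2$, so that
\[
\sum_{i=1}^n \log(1+\delta_i) = \sum_{i=1}^n \delta_i - \tfrac{1}{2}\sum_{i=1}^n \delta_i^2 + \sum_{i=1}^n R_{3i},\qquad |R_{3i}| \leq C|\delta_i|^3.
\]
The three groups of terms then need to be handled separately. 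For $\sum_i \delta_i$, split it as $\bs{t}(\bs{\vartheta})\t \sum_i \bs{s}(\bs{Y}_i;\bs{\pi}) + \sum_i r(\bs{Y}_i;\bs{\vartheta})$; the first piece is $\sqrt{n}\,\bs{t}(\bs{\vartheta})\t \nu_n(\bs{s}) + n\,\bs{t}(\bs{\vartheta})\t E[\bs{s}(\bs{Y};\bs{\pi})]$, and I expect $E[\bs{s}(\bs{Y};\bs{\pi})] = \bs{0}$ (this follows because $\bs{s}$ is a score-type object; it can be read off from the constructions in \ref{section:quadratic}, or incorporated as part of Assumption \ref{assn_expansion} via the law-of-large-numbers and CLT statements in (b) and (e)). The second piece is handled by Assumption \ref{assn_expansion}(d): on $\mathcal{N}_{c/\sqrt{n}}$ we have $|\bs{t}(\bs{\vartheta})| \leq c/\sqrt{n}$ and $|\bs{\psi}-\bs{\psi}^*| \to 0$, and by continuity of $\bs{t}$ and the fact that $\bs{t}(\bs{\vartheta}) = 0$ iff $\bs{\psi}=\bs{\psi}^*$ I can argue $|\bs{\psi}-\bs{\psi}^*| = o(1)$ uniformly on this shrinking neighborhood, hence $\sum_i r(\bs{Y}_i;\bs{\vartheta}) = \sqrt{n}\,\nu_n(r) + n E[r]$, with $\sqrt{n}|\nu_n(r)| = O_p(\sqrt{n} \cdot |\bs{t}||\bs{\psi}-\bs{\psi}^*|) = O_p(|\bs{\psi}-\bs{\psi}^*| \cdot c) = o_p(1)$ and $n|E[r]| \leq n \cdot C |\bs{t}||\bs{\psi}-\bs{\psi}^*| \cdot (E[\sup r^2/(\cdots)^2])^{1/2}$—wait, more carefully, $n|E r| = O_p(n |\bs{t}|^2) \cdot |\bs{\psi}-\bs{\psi}^*|/|\bs{t}| $ which needs the Cauchy–Schwarz bound $|E r| \le |\bs t||\bs\psi-\bs\psi^*| (E|r/(|\bs t||\bs\psi-\bs\psi^*|)|^2)^{1/2}$ from Assumption \ref{assn_expansion}(c), giving $n|Er| \le n (c/\sqrt n)^2 \cdot o(1) \cdot C = o(1)$.

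Next, for $\tfrac{1}{2}\sum_i \delta_i^2$, I would expand $\delta_i^2 = (\bs{t}\t \bs{s}_i)^2 + 2(\bs{t}\t \bs{s}_i) r_i + r_i^2$. The leading piece $\sum_i (\bs{t}\t \bs{s}_i)^2 = \bs{t}\t \big(\sum_i \bs{s}_i \bs{s}_i\t\big)\bs{t} = n\,\bs{t}\t \bs{\mathcal{I}}_{\bs{\pi}} \bs{t} + n\,\bs{t}\t\big(P_n(\bs{s}\bs{s}\t) - \bs{\mathcal{I}}_{\bs{\pi}}\big)\bs{t}$, and the second summand is $o_p(1)$ uniformly by Assumption \ref{assn_expansion}(b) together with $n|\bs{t}|^2 \leq c^2$. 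The cross term is bounded via Cauchy–Schwarz by $2|\bs{t}| \big(\sum_i |\bs{s}_i|^2\big)^{1/2}\big(\sum_i r_i^2\big)^{1/2}$; here $\sum_i |\bs{s}_i|^2 = O_p(n)$ by (a), and $\sum_i r_i^2 = O_p(n |\bs{t}|^2 |\bs{\psi}-\bs{\psi}^*|^2) = o_p(|\bs{t}|^2)$ uniformly by (c) and the $o(1)$ bound on $|\bs{\psi}-\bs{\psi}^*|$, so the cross term is $O_p(|\bs{t}| \cdot \sqrt{n} \cdot \sqrt{n}|\bs{t}| \cdot o(1)) = O_p(n|\bs{t}|^2) o(1) = o_p(1)$. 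The term $\sum_i r_i^2$ itself is even smaller. Finally the cubic remainder $\sum_i |R_{3i}| \leq C\sum_i |\delta_i|^3$; using $|\delta_i| \leq |\bs{t}||\bs{s}_i| + |r_i|$ and boundedness properties this is dominated by $C|\bs{t}|^3 \sum_i |\bs{s}_i|^3 + \text{(smaller)}$, which is $O_p(|\bs{t}|^3 n) = O_p(n^{-1/2}) = o_p(1)$—modulo needing a bound on $\sum_i |\bs{s}_i|^3$, which I can obtain either from a $(2+\delta)$-moment strengthening of (a) or by truncation arguments, or by noting that on $\mathcal{N}_{c/\sqrt n}$ the contribution of observations with large $|\bs s_i|$ is controlled since $\sup_i |\bs t\t\bs s_i| \to 0$ in probability only along subsequences. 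Collecting all pieces yields the claimed identity uniformly over $\mathcal{N}_{c/\sqrt{n}}$.

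The main obstacle I anticipate is the uniformity over $\bs{\pi}$ and over $\bs{\vartheta}\in\mathcal{N}_{c/\sqrt{n}}$ in handling the higher-order remainders — in particular, justifying that $|\bs{\psi}-\bs{\psi}^*| = o(1)$ uniformly on the $n^{-1/2}$-neighborhood (which requires care about how the level set $\{\bs t = 0\}$ relates to $\{\bs\psi = \bs\psi^*\}$ and possibly a compactness or continuity argument on $\Theta$), and controlling the validity of the inequality $|\log(1+\delta_i)-\delta_i+\delta_i^2/2|\le C|\delta_i|^3$, which needs $\max_i |\delta_i| \le 1/2$ with probability approaching one uniformly over the neighborhood. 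This last point is where a maximal-inequality or empirical-process argument (e.g. using stochastic equicontinuity of $\nu_n(\bs{s}(\cdot;\bs{\pi}))$ from (e), plus moment bounds from (a) and (c)) does the real work; everything else is bookkeeping with Cauchy–Schwarz and the rate $n|\bs{t}(\bs\vartheta)|^2 = O(1)$ on $\mathcal{N}_{c/\sqrt n}$.
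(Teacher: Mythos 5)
Your strategy --- a direct third-order Taylor expansion of $\log(1+\delta_i)$ with $\delta_i=\ell(\bs{Y}_i;\bs{\vartheta})-1$ --- is genuinely different from the paper's, which works with $h_i=\sqrt{\ell(\bs{Y}_i;\bs{\vartheta})}-1$ and a second-order expansion of $2\log(1+h_i)$, exploiting the exact identities $2h_i+h_i^2=\ell_i-1$ and $h_i^2=(\ell_i-1)^2/(\sqrt{\ell_i}+1)^2$ so that only second moments are ever needed and the population mean of $\ell-1$ never has to be split apart. Your route has a genuine gap at exactly that point. When you decompose $\sum_i\delta_i=\sqrt{n}\,\bs{t}(\bs{\vartheta})\t\nu_n(\bs{s})+n\bs{t}(\bs{\vartheta})\t E[\bs{s}]+\sqrt{n}\,\nu_n(r)+nE[r]$, the term $nE[r]$ cannot be shown to be $o(1)$ from Assumption \ref{assn_expansion}(c) alone: that assumption normalizes $r$ by $|\bs{t}(\bs{\vartheta})||\bs{\psi}-\bs{\psi}^*|$, so Cauchy--Schwarz gives only $|E[r]|\le \mathcal{C}|\bs{t}(\bs{\vartheta})||\bs{\psi}-\bs{\psi}^*|$ and hence $n|E[r]|\le \mathcal{C}c\sqrt{n}|\bs{\psi}-\bs{\psi}^*|$. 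Your bound $n(c/\sqrt{n})^2$ silently replaces $|\bs{\psi}-\bs{\psi}^*|$ by $|\bs{t}(\bs{\vartheta})|$, but in this degenerate-information setting $|\bs{\psi}-\bs{\psi}^*|$ is \emph{not} of the order of $|\bs{t}(\bs{\vartheta})|$ on $\mathcal{N}_{c/\sqrt{n}}$: in the intended application $\bs{t}_{\bs{\lambda}}$ is quartic in $\bs{\lambda}_{\bs{\mu}}$, so $|\bs{\lambda}_{\bs{\mu}}|\sim n^{-1/8}$ is compatible with $|\bs{t}(\bs{\vartheta})|\le c/\sqrt{n}$, and then $n|\bs{t}(\bs{\vartheta})||\bs{\psi}-\bs{\psi}^*|\sim n^{3/8}\to\infty$. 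The repair is not a sharper bound on $E[r]$ but the exact cancellation $n\bs{t}(\bs{\vartheta})\t E[\bs{s}]+nE[r]=nE[\ell(\bs{Y};\bs{\vartheta})-1]=0$, which holds because $\ell$ is a ratio of two densities integrated against the true one; this identity (together with $E[\bs{s}]=\bs{0}$ for the concrete scores) is what the paper's square-root decomposition exploits implicitly, and without it your first-order term does not close.

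The cubic remainder is handled too loosely but is fixable without new assumptions: use $\sum_i|\delta_i|^3\le\max_{i\le n}|\delta_i|\cdot\sum_i\delta_i^2$, where $\sum_i\delta_i^2=O_p(n|\bs{t}(\bs{\vartheta})|^2)=O_p(1)$ by (a)--(c) and $\max_{i\le n}\sup_{\bs{\vartheta}\in\mathcal{N}_{c/\sqrt{n}}}|\delta_i|=o_p(1)$ because a maximum of $n$ variables with uniformly bounded second moments is $o_p(n^{1/2})$ while $|\bs{t}(\bs{\vartheta})|=O(n^{-1/2})$ --- this is exactly how the paper kills its own third-order term, and it also delivers $\max_i|\delta_i|\le 1/2$ with probability approaching one, validating your Taylor inequality. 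Your proposed $(2+\delta)$-moment strengthening of (a) is therefore unnecessary, and the remark that $\sup_i|\bs{t}\t\bs{s}_i|\to 0$ ``only along subsequences'' is incorrect --- the maximal bound holds outright. With the density-ratio cancellation inserted and the max-times-sum-of-squares bound made explicit, your expansion would go through and would in fact be slightly more elementary than the paper's, at the cost of having to track the centering terms that the Hellinger transform makes invisible.
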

\begin{proof}[Proof of Lemma \ref{Ln_thm1}] 
Define $h(\bs{y},\bs{\vartheta}) := \sqrt{\ell(\bs{y},\bs{\vartheta})}-1$. By using the Taylor expansion of $2 \log(1+x) = 2x - x^2(1+o(1))$ for small $x$, we have, uniformly for $\bs{\vartheta} \in \mathcal{N}_{c/\sqrt{n}}$,
\begin{equation} \label{Ln_expand}
L_n(\bs{\psi},\bs{\pi}) - L_n(\bs{\psi}^*,\bs{\pi}) = 2 \sum_{i=1}^n \log(1+h(\bs{Y}_i,\bs{\vartheta})) = n P_n(2h(\bs{y},\bs{\vartheta}) - [1+o_p(1)] h(\bs{y},\bs{\vartheta})^2).
\end{equation}
The stated result holds if we show
\begin{align} 
& \sup_{\bs{\vartheta} \in \mathcal{N}_{c/\sqrt{n}}} \left| nP_n(h(\bs{y},\bs{\vartheta})^2) - n \bs{t}(\bs{\vartheta})\t \bs{\mathcal{I}}_{\bs{\pi}} \bs{t}(\bs{\vartheta})/4 \right| = o_p(1), \label{hk_appn} \\
& \sup_{\bs{\vartheta} \in \mathcal{N}_{c/\sqrt{n}}} \left| nP_n(h(\bs{y},\bs{\vartheta})) - \sqrt{n} \bs{t}(\bs{\vartheta})\t \nu_n(\bs{s}(\bs{y};\bs{\pi}) )/2 + n\bs{t}(\bs{\vartheta})\t \bs{\mathcal{I}}_{\bs{\pi}} \bs{t}(\bs{\vartheta})/8 \right| = o_p(1), \label{hk_appn2}
\end{align}
because then the right hand side of (\ref{Ln_expand}) equals $\sqrt{n} \bs{t}(\bs{\vartheta})\t \nu_n(\bs{s}(\bs{y};\bs{\pi}) ) - n\bs{t}(\bs{\vartheta})\t \bs{\mathcal{I}}_{\bs{\pi}} \bs{t}(\bs{\vartheta})/2$ uniformly in $\bs{\vartheta} \in \mathcal{N}_{c/\sqrt{n}}$.

To show (\ref{hk_appn}), write $4P_n ( h(\bs{y},\bs{\vartheta})^2)$ as
\begin{equation} \label{B0}
4P_n ( h(\bs{y},\bs{\vartheta})^2) = P_n \left(\frac{4(\ell(\bs{y};\bs{\vartheta})-1)^2}{(\sqrt{\ell(\bs{y};\bs{\vartheta})} + 1)^2}\right) = P_n(\ell(\bs{y},\bs{\vartheta})-1)^2 - P_n \left( (\ell(\bs{y};\bs{\vartheta})-1)^3 \frac{(\sqrt{\ell(\bs{y};\bs{\vartheta})}+3)}{(\sqrt{\ell(\bs{y};\bs{\vartheta})} + 1)^3}\right).
\end{equation}
It follows from Assumption \ref{assn_expansion}(a)(b)(c) and $(E|XY|)^2 \leq E|X|^2E|Y|^2$ that, uniformly for $\bs{\vartheta} \in \mathcal{N}_\varepsilon$,
\begin{align} \label{lk_lln}
P_n(\ell(\bs{y};\bs{\vartheta})-1)^2 & = \bs{t}(\bs{\vartheta})\t P_n(\bs{s}(\bs{y};\bs{\pi})\bs{s}(\bs{y};\bs{\pi})\t)\bs{t}(\bs{\vartheta}) + 2 \bs{t}(\bs{\vartheta})\t P_n[\bs{s}(\bs{y};\bs{\pi}) r(\bs{y};\bs{\vartheta})] + P_n(r(\bs{y};\bs{\vartheta}))^2 \nonumber \\
& = (1+o_p(1))\bs{t}(\bs{\vartheta})\t\bs{\mathcal{I}}_{\bs{\pi}} \bs{t}(\bs{\vartheta}) +O_p(|\bs{t}(\bs{\vartheta})|^2|\bs{\psi}-\bs{\psi}^*|). 
\end{align} 
Therefore, the second term on the right of (\ref{B0}) is $\bs{t}(\bs{\vartheta})\t\bs{\mathcal{I}}_{\bs{\pi}} \bs{t}(\bs{\vartheta})$ + $o_p(n^{-1})$. Note that, if $X_1,\ldots,X_n$ are random variables with $\max_{1\leq i \leq n}\mathbb{E}|X_i|^q < C$ for some $q>0$ and $C < \infty$, then we have $\max_{1 \leq i \leq n} |X_i|= o_p(n^{1/q})$. Therefore, from Assumption \ref{assn_expansion}(a)(c), we have
\begin{equation*} 
\max_{1\leq i \leq n} \sup_{\bs{\vartheta} \in \mathcal{N}_{c/\sqrt{n}}} |\ell(\bs{Y}_i,\bs{\vartheta})-1| = \max_{1\leq i \leq n} \sup_{\bs{\vartheta} \in \mathcal{N}_{c/\sqrt{n}}} |\bs{t}(\bs{\vartheta})\t\bs{s}(\bs{Y}_i;\bs{\pi}) + r(\bs{Y}_i;\bs{\vartheta})| = o_p(1) . 
\end{equation*}
Consequently, the third term on the right of (\ref{B0}) is $o_p(n^{-1})$, and (\ref{hk_appn}) follows.

We proceed to show (\ref{hk_appn2}). Consider the following expansion of $h(\bs{y},\bs{\vartheta})$: 
\begin{equation} \label{hk_1}
h(\bs{y},\bs{\vartheta}) = (\ell(\bs{y};\bs{\vartheta})-1)/2 - h(\bs{y},\bs{\vartheta})^2/2 = (\bs{t}(\bs{\vartheta})\t \bs{s}(\bs{y};\bs{\pi}) + r(\bs{y};\bs{\vartheta}))/2 - h(\bs{y},\bs{\vartheta})^2/2 .
\end{equation}
Then, (\ref{hk_appn2}) follows from (\ref{hk_1}), (\ref{hk_appn}), and Assumption \ref{assn_expansion}(d), and the stated result follows.
\end{proof}

The next lemma expands $L_n(\bs{\psi},\bs{\pi})$ in $A_{n\varepsilon}(\delta) := \{\bs{\vartheta} \in \mathcal{N}_\varepsilon: L_n(\bs{\psi},\bs{\pi}) - L_n(\bs{\psi}^*,\bs{\pi}) \geq -\delta \}$ for $\delta \in (0,\infty)$. This lemma is useful for deriving the asymptotic distribution of the LRTS because a consistent MLE is in $A_{n\varepsilon}(\delta)$ by definition.  Define $O_{p\varepsilon}(\cdot)$ and $o_{p\varepsilon}(\cdot)$ as in \ref{section:quadratic}.   
\begin{lemma} \label{Ln_thm2}
Suppose that Assumption \ref{assn_expansion} holds. Then, for any $\delta>0$, (a) $\sup_{\vartheta \in A_{n\varepsilon}(\delta)} |\bs{t}(\bs{\vartheta})| = O_{p\varepsilon}(n^{-1/2})$; 
\begin{equation*} 
(b)\ \sup_{\bs{\vartheta} \in A_{n\varepsilon}(\delta)}\left|L_n(\bs{\psi},\bs{\pi}) - L_n(\bs{\psi}^*,\bs{\pi}) - \sqrt{n} \bs{t}(\bs{\vartheta})\t \nu_n (\bs{s}(\bs{y};\bs{\pi})) + n \bs{t}(\bs{\vartheta})\t \bs{\mathcal{I}}_{\bs{\pi}} \bs{t}(\bs{\vartheta})/2 \right| = o_{p\varepsilon}(1).
\end{equation*}
\end{lemma}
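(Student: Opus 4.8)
The plan is to upgrade Lemma~\ref{Ln_thm1}, which gives the quadratic expansion on the shrinking neighborhood $\mathcal{N}_{c/\sqrt n}$ for a \emph{fixed} $c$, to a statement valid on the likelihood-based set $A_{n\varepsilon}(\delta)$, whose diameter is not a priori known. The crux is part~(a): once we know $\sup_{\bs{\vartheta}\in A_{n\varepsilon}(\delta)}|\bs{t}(\bs{\vartheta})| = O_{p\varepsilon}(n^{-1/2})$, part~(b) follows immediately by choosing $c$ large enough that, with probability at least $1-\Delta$ (for $\varepsilon$ small and $n$ large), $A_{n\varepsilon}(\delta)\subseteq \mathcal{N}_{c/\sqrt n}$, and then applying Lemma~\ref{Ln_thm1} verbatim on $\mathcal{N}_{c/\sqrt n}$. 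So essentially all the work is in the rate bound~(a), and that is the step I expect to be the main obstacle.

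For part~(a), I would argue by a peeling/chaining argument on the ``shell'' decomposition of $\mathcal{N}_\varepsilon$ according to the size of $|\bs{t}(\bs{\vartheta})|$. Fix a large constant $K$ and consider the annuli $S_{n,j} := \{\bs{\vartheta}\in\mathcal{N}_\varepsilon : 2^{j-1} K/\sqrt n \le |\bs{t}(\bs{\vartheta})| \le 2^j K/\sqrt n\}$ for $j=1,2,\ldots$ up to $|\bs{t}(\bs{\vartheta})|\le\varepsilon$. On each such shell I want to show $L_n(\bs{\psi},\bs{\pi})-L_n(\bs{\psi}^*,\bs{\pi})$ is strictly negative with high probability, which forces $A_{n\varepsilon}(\delta)$ to avoid all shells with $j$ large. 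The key estimate is that on $\mathcal{N}_\varepsilon$ (not just on $\mathcal{N}_{c/\sqrt n}$) one has the one-sided bound
\begin{equation*}
L_n(\bs{\psi},\bs{\pi}) - L_n(\bs{\psi}^*,\bs{\pi}) \le \sqrt n\,\bs{t}(\bs{\vartheta})\t \nu_n(\bs{s}(\bs{y};\bs{\pi})) - \tfrac{n}{4}\bs{t}(\bs{\vartheta})\t \bs{\mathcal{I}}_{\bs{\pi}}\bs{t}(\bs{\vartheta}) + n\cdot o_{p\varepsilon}(|\bs{t}(\bs{\vartheta})|^2) + \sqrt n\,O_{p\varepsilon}(|\bs{t}(\bs{\vartheta})|\,|\bs{\psi}-\bs{\psi}^*|),
\end{equation*}
obtained from the concavity bound $2\log(1+x)\le 2x-x^2+\tfrac23|x|^3$ applied to $h(\bs{y},\bs{\vartheta})$ together with the expansion of $\ell(\bs{y};\bs{\vartheta})-1$ in Assumption~\ref{assn_expansion}, the uniform LLN in (b), and the uniform stochastic bounds in (c)--(e). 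The linear term is $O_{p\varepsilon}(\sqrt n |\bs{t}(\bs{\vartheta})|)$ by Assumption~\ref{assn_expansion}(e) and Cauchy--Schwarz, and the error term $|\bs{\psi}-\bs{\psi}^*|$ is dominated (for $\bs{\vartheta}$ in the relevant shells, using continuity of $\bs{t}$ and that $\bs{t}(\bs{\vartheta})=0$ iff $\bs{\psi}=\bs{\psi}^*$, so $|\bs{\psi}-\bs{\psi}^*|=o(1)$ on $\mathcal{N}_\varepsilon$ with $\varepsilon$ small). On the shell $S_{n,j}$ the quadratic term is of order $n\,(2^jK/\sqrt n)^2 = 2^{2j}K^2$ while the linear term is $O_{p\varepsilon}(2^jK)$; hence for $K$ large the quadratic term dominates uniformly over $j$, making the right-hand side $\le -\delta-1 <-\delta$ with probability $\ge 1-\Delta$, contradicting membership in $A_{n\varepsilon}(\delta)$. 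This yields $\sup_{A_{n\varepsilon}(\delta)}|\bs{t}(\bs{\vartheta})|\le K/\sqrt n$ with probability $\ge 1-\Delta$, which is exactly~(a).

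The delicate points, which I would handle carefully rather than wave through, are: (i) controlling the cubic remainder $\tfrac23 P_n|h(\bs{y},\bs{\vartheta})|^3$ uniformly on $\mathcal{N}_\varepsilon$ — here I use $\max_{i\le n}\sup_{\mathcal{N}_\varepsilon}|\ell(\bs{Y}_i,\bs{\vartheta})-1| = o_{p}(n^{1/2})$ from Assumption~\ref{assn_expansion}(a)(c) so that $|h|^3 \le \mathcal{C}|h|^2\cdot o_p(1)$ and this term is absorbed into the $o_{p\varepsilon}(n|\bs{t}|^2)$ error; (ii) making the ``$o_{p\varepsilon}$'' uniformity over the countably many shells rigorous, which is standard once one notes the bound on each shell is monotone in a way that lets a single large $K$ work simultaneously (peeling argument as in, e.g., empirical process theory); and (iii) ensuring the cross term $P_n[\bs{s}(\bs{y};\bs{\pi})r(\bs{y};\bs{\vartheta})]$ is $O_{p\varepsilon}(|\bs{t}(\bs{\vartheta})|\,|\bs{\psi}-\bs{\psi}^*|)$ uniformly, which is exactly Assumption~\ref{assn_expansion}(c) combined with the Cauchy--Schwarz inequality $(E|XY|)^2\le E|X|^2E|Y|^2$ as already used in~\eqref{lk_lln}. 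Having dispensed with~(a), part~(b) is then a one-line deduction from Lemma~\ref{Ln_thm1} as described above, since $A_{n\varepsilon}(\delta)\subseteq\mathcal{N}_{c/\sqrt n}$ on an event of probability $\ge 1-\Delta$.
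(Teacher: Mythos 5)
Your overall architecture is the same as the paper's: establish a one-sided upper bound for $L_n(\bs{\psi},\bs{\pi})-L_n(\bs{\psi}^*,\bs{\pi})$ on all of $\mathcal{N}_\varepsilon$ in which a negative quadratic in $\sqrt{n}\,\bs{t}(\bs{\vartheta})$ dominates a linear term, combine it with the defining inequality $L_n(\bs{\psi},\bs{\pi})-L_n(\bs{\psi}^*,\bs{\pi})\geq-\delta$ of $A_{n\varepsilon}(\delta)$ to force $\sqrt{n}|\bs{t}(\bs{\vartheta})|=O_{p\varepsilon}(1)$, and then read off part (b) from Lemma \ref{Ln_thm1}. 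The last step is exactly the paper's. However, your derivation of the key one-sided bound has a genuine gap. You obtain it from $2\log(1+x)\leq 2x-x^2+\tfrac23|x|^3$ and claim the cubic remainder satisfies $|h|^3\leq \mathcal{C}|h|^2\cdot o_p(1)$ because $\max_{i\leq n}\sup_{\mathcal{N}_\varepsilon}|\ell(\bs{Y}_i,\bs{\vartheta})-1|=o_p(n^{1/2})$. That is a non sequitur: $o_p(n^{1/2})$ is not $o_p(1)$. What the assumptions give on a shell with $|\bs{t}(\bs{\vartheta})|\asymp\rho_j$ is $\max_{i\leq n}\sup|h(\bs{Y}_i,\bs{\vartheta})|\lesssim \rho_j\cdot o_p(n^{1/2})$, which is $o_p(1)$ only when $\rho_j=O(n^{-1/2})$ --- precisely the regime you are trying to prove you are in. On the outer shells, where $\rho_j\asymp\varepsilon$, the factor is $\varepsilon\cdot o_p(n^{1/2})$ and the term $nP_n(|h|^3)\leq n\max_i\sup|h|\cdot P_n(h^2)$ can swamp $-\tfrac{n}{4}\bs{t}\t\bs{\mathcal{I}}_{\bs{\pi}}\bs{t}$, so the claimed bound with the sharp constant $1/4$ is not justified there and the peeling argument breaks exactly where it is needed.

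The paper avoids this entirely: it uses the cruder global inequality $\log(1+x)\leq x$, which leaves no cubic remainder at all, giving $L_n(\bs{\psi},\bs{\pi})-L_n(\bs{\psi}^*,\bs{\pi})\leq \sqrt{n}\,\nu_n(\ell(\bs{y};\bs{\vartheta})-1)-nP_n(h(\bs{y},\bs{\vartheta})^2)$, and then lower-bounds $P_n(h^2)$ uniformly on $\mathcal{N}_\varepsilon$ by truncation: $h^2\geq \mathbb{I}\{\ell\leq\kappa\}(\ell-1)^2/(\sqrt{\kappa}+1)^2$, with the tail $P_n(\mathbb{I}\{\ell>\kappa\}(\ell-1)^2)$ killed by dominated convergence using $E\sup_{\mathcal{N}_\varepsilon}|\ell-1|^2<\infty$. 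This sacrifices the sharp constant (one only gets $\tau=(\sqrt{\kappa}+1)^{-2}/2$ in front of the quadratic), but any positive constant suffices for the rate, and the resulting scalar inequality in $|\bs{\mathcal{I}}_{\bs{\pi}}^{1/2}\sqrt{n}\,\bs{t}(\bs{\vartheta})|$ yields part (a) in one step by completing the square --- no peeling over shells is needed. If you want to keep your route, you must either supply a separate preliminary argument excluding the outer shells (a consistency-type step showing $A_{n\varepsilon}(\delta)$ avoids $\{|\bs{t}(\bs{\vartheta})|\geq\rho\}$ for fixed $\rho$), or replace the third-order Taylor inequality by the truncation device. A minor additional slip: the cross term should read $P_n[\bs{t}(\bs{\vartheta})\t\bs{s}(\bs{y};\bs{\pi})\,r(\bs{y};\bs{\vartheta})]=O_{p\varepsilon}(|\bs{t}(\bs{\vartheta})|^2|\bs{\psi}-\bs{\psi}^*|)$, not $O_{p\varepsilon}(|\bs{t}(\bs{\vartheta})||\bs{\psi}-\bs{\psi}^*|)$.
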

\begin{proof}[Proof of Lemma \ref{Ln_thm2}]
For part (a), applying the inequality $\log(1+x) \leq x$ to the log-likelihood ratio function and using (\ref{hk_1}) give
\begin{equation} \label{Ln_ineq}
L_n(\bs{\psi},\bs{\pi}) - L_n(\bs{\psi}^*,\bs{\pi}) = 2 \sum_{i=1}^n \log(1+h(\bs{Y}_i,\bs{\vartheta})) \leq 2 n P_n(h(\bs{y},\bs{\vartheta})) = \sqrt{n} \nu_n(\ell(\bs{y};\bs{\vartheta})-1) - n P_n(h(\bs{y},\bs{\vartheta})^2). 
\end{equation}
We derive a lower bound on $P_n (h(\bs{y},\bs{\vartheta})^2)$. Observe that $h(\bs{y},\bs{\vartheta})^2= {(\ell(\bs{y};\bs{\vartheta})-1)^2}/(\sqrt{\ell(\bs{y};\bs{\vartheta})} + 1)^2\geq \mathbb{I}{\{\ell(\bs{y};\bs{\vartheta}) \leq \kappa\}(\ell(\bs{y};\bs{\vartheta})-1)^2}/(\sqrt{\kappa} + 1)^2$ for any $\kappa>0$. Therefore,
\begin{align*}
P_n (h(\bs{y},\bs{\vartheta})^2) & \geq (\sqrt{\kappa}+1)^{-2}P_n \left( \mathbb{I}{\{\ell(\bs{y};\bs{\vartheta}) \leq \kappa\}} (\ell(\bs{y};\bs{\vartheta})-1)^2\right) \\
& \geq (\sqrt{\kappa}+1)^{-2} \left[ P_n ((\ell(\bs{y};\bs{\vartheta})-1)^2) - P_n \left( \mathbb{I}{\{\ell(\bs{y};\bs{\vartheta}) > \kappa\}} (\ell(\bs{y};\bs{\vartheta})-1)^2\right) \right] .
\end{align*} 
Let $B:=\sup_{\bs{\vartheta} \in \mathcal{N}_\varepsilon}|\ell(\bs{y};\bs{\vartheta})-1|$. From Assumption \ref{assn_expansion}(a)(c), we have $EB^2 < \infty$, and hence $\lim_{\kappa \rightarrow \infty} \sup_{\bs{\vartheta} \in \mathcal{N}_\varepsilon} P_n \left( \mathbb{I}{\{\ell(\bs{y};\bs{\vartheta}) > \kappa\}} (\ell(\bs{y};\bs{\vartheta})-1)^2\right) \leq \lim_{\kappa\rightarrow \infty} P_n \left( \mathbb{I}\{B+1 > \kappa\}B^2\right) =0$ almost surely. Let $\tau=(\sqrt{\kappa}+1)^{-2}/2$. By choosing $\kappa$ sufficiently large, it follows from (\ref{lk_lln}) and Assumption \ref{assn_expansion}(e) that, uniformly for $\bs{\vartheta} \in \mathcal{N}_\varepsilon$,
\begin{equation} \label{rk_lower}
P_n (h(\bs{y},\bs{\vartheta})^2) \geq \tau (1+o_p(1))\bs{t}(\bs{\vartheta})\t \bs{\mathcal{I}}_{\bs{\pi}} \bs{t}(\bs{\vartheta}) +O_p(|\bs{t}(\bs{\vartheta})|^2|\bs{\psi}-\bs{\psi}^*|) .
\end{equation}

Because $\sqrt{n} \nu_n(\ell(\bs{y};\bs{\vartheta})-1) = \sqrt{n} \bs{t}(\bs{\vartheta})\t [\nu_n(\bs{s}(\bs{y};\bs{\pi})) + O_p(1)]$ from Assumption \ref{assn_expansion}(d), it follows from (\ref{Ln_ineq}) and (\ref{rk_lower}) that
\begin{align} 
-\delta& \leq L_n(\bs{\psi},\bs{\pi}) - L_n(\bs{\psi}^*,\bs{\pi})\nonumber\\
& \leq \sqrt{n} \bs{t}(\bs{\vartheta})\t [\nu_n(\bs{s}(\bs{y};\bs{\pi})) + O_p(1)] - \tau (1+o_p(1)) n \bs{t}(\bs{\vartheta})\t \bs{\mathcal{I}}_{\bs{\pi}} \bs{t}(\bs{\vartheta}) + O_p(n|\bs{t}(\bs{\vartheta})|^2|\bs{\psi}-\bs{\psi}^*|) .\label{rk_lower2}
\end{align} 
Let $\bs{T}_{n}:= \bs{\mathcal{I}_{\pi}}^{1/2}\sqrt{n} \bs{t}(\bs{\vartheta})$. From (\ref{rk_lower2}), Assumption \ref{assn_expansion}(c)(e), and the fact $\bs{\psi}-\bs{\psi}^* \to 0$ if $\bs{t}(\bs{\vartheta}) \to 0$, we obtain the following result: for any $\Delta>0$, there exist $\varepsilon>0$ and $M,n_0<\infty$ such that
\begin{equation*}
\Pr\left( \inf_{\bs{\vartheta} \in \mathcal{N}_{\varepsilon}} \left( |\bs{T}_n| M - (\tau/2) |\bs{T}_n|^2 + M \right) \geq 0\right) \geq 1-\Delta,\ \text{ for all }\ n > n_0.
\end{equation*}
Rearranging the terms inside $\Pr(\cdot)$ gives $ \sup_{\bs{\vartheta} \in \mathcal{N}_{\varepsilon}} (|\bs{T}_{n}|-(M/\tau))^2 \leq 2M/\tau+(M/\tau)^2$, and part (a) follows. Part (b) follows from part (a) and Lemma \ref{Ln_thm1}.
\end{proof}

\section{Auxiliary results and their proofs}\label{section:auxiliary}

\begin{lemma} \label{mv_derivative}
Let $f_v(\bs{x};\bs{\mu},\bs{v}) := (2\pi)^{-d/2}(\det \bs{S}(\bs{v}))^{-1/2} \exp(-(\bs{x}-\bs{\mu})\t\bs{S}(\bs{v})^{-1}(\bs{x}-\bs{\mu})/2)$ denote the density of a $d$-variate normal distribution with mean $\bs{\mu}= (\mu_1,\ldots,\mu_d)\t$ and variance $\bs{S}(\bs{v})$ with $\bs{v} = \{ v_{ij} \}_{1\leq i \leq j \leq d}$ as specified in (\ref{fv_defn}). Then, the following holds for any $t_1,t_2,t_3,t_4,t_5,t_6 \in \{1,\ldots,d\}$:
\begin{align*}
\frac{\partial f_v(\bs{x};\bs{\mu},\bs{v})}{\partial v_{t_1t_2}} &= \frac{1}{2}\frac{\partial^2 f_v(\bs{x};\bs{\mu},\bs{v})}{\partial \mu_{t_1} \partial \mu_{t_2}}, \quad
\frac{\partial^2 f_v(\bs{x};\bs{\mu},\bs{v})}{\partial v_{t_1t_2}\partial v_{t_3t_4}} = \frac{1}{4}\frac{\partial^4 f_v(\bs{x};\bs{\mu},\bs{v})}{\partial \mu_{t_1} \partial \mu_{t_2} \partial \mu_{t_3} \partial \mu_{t_4}}, \\
\frac{\partial^3 f_v(\bs{x};\bs{\mu},\bs{v})}{\partial v_{t_1t_2}\partial v_{t_3t_4}\partial v_{t_5t_6}} & = \frac{1}{8}\frac{\partial^6 f_v(\bs{x};\bs{\mu},\bs{v})}{\partial \mu_{t_1} \partial \mu_{t_2} \partial \mu_{t_3} \partial \mu_{t_4} \partial \mu_{t_5} \partial \mu_{t_6}} .
\end{align*}
\end{lemma}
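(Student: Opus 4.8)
\textbf{Proof proposal for Lemma \ref{mv_derivative}.}

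The plan is to exploit the well-known heat-equation-type identity for the Gaussian density and then iterate it. First I would recall that for the density $f_v(\bs{x};\bs{\mu},\bs{v}) = (2\pi)^{-d/2}(\det \bs{S}(\bs{v}))^{-1/2}\exp(-(\bs{x}-\bs{\mu})\t\bs{S}(\bs{v})^{-1}(\bs{x}-\bs{\mu})/2)$, a direct computation of $\partial f_v/\partial \mu_i$ and $\partial^2 f_v/\partial\mu_i\partial\mu_j$ shows
\[
\frac{\partial^2 f_v}{\partial\mu_i\partial\mu_j} = \left[(\bs{S}^{-1}(\bs{x}-\bs{\mu}))_i (\bs{S}^{-1}(\bs{x}-\bs{\mu}))_j - (\bs{S}^{-1})_{ij}\right] f_v,
\]
while differentiating $\log f_v$ with respect to the entries of $\bs{S}$ (equivalently, using Jacobi's formula $\partial \log\det \bs{S} = \mathrm{tr}(\bs{S}^{-1}\partial\bs{S})$ and $\partial \bs{S}^{-1} = -\bs{S}^{-1}(\partial\bs{S})\bs{S}^{-1}$) gives exactly the same bracketed expression up to the factor $1/2$, once one accounts for the symmetry convention $S_{ij}(\bs v)=v_{ij}/2$ for $i\ne j$ and $S_{ii}(\bs v)=v_{ii}$ encoded in (\ref{fv_defn}). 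This establishes the first identity $\partial f_v/\partial v_{t_1t_2} = \tfrac12 \partial^2 f_v/\partial\mu_{t_1}\partial\mu_{t_2}$; the chosen scaling of the off-diagonal entries of $\bs v$ is precisely what makes the constant uniformly $1/2$ regardless of whether $t_1=t_2$.

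Next I would obtain the second- and third-order identities by iterating the first one. Since mixed partial derivatives commute (the Gaussian density is $C^\infty$ in $(\bs\mu,\bs v)$ on the open set where $\bs S(\bs v)$ is positive definite), we have
\[
\frac{\partial^2 f_v}{\partial v_{t_1t_2}\partial v_{t_3t_4}}
= \frac{\partial}{\partial v_{t_3t_4}}\!\left(\frac12 \frac{\partial^2 f_v}{\partial\mu_{t_1}\partial\mu_{t_2}}\right)
= \frac12 \frac{\partial^2}{\partial\mu_{t_1}\partial\mu_{t_2}}\!\left(\frac{\partial f_v}{\partial v_{t_3t_4}}\right)
= \frac14 \frac{\partial^4 f_v}{\partial\mu_{t_1}\partial\mu_{t_2}\partial\mu_{t_3}\partial\mu_{t_4}},
\]
where the middle step uses that $\partial/\partial v_{t_3t_4}$ commutes with $\partial^2/\partial\mu_{t_1}\partial\mu_{t_2}$ and the last step reapplies the first identity. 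Applying the same maneuver once more to $\partial^3 f_v/\partial v_{t_1t_2}\partial v_{t_3t_4}\partial v_{t_5t_6}$ yields the factor $1/8$ and six $\mu$-derivatives. This inductive structure is clean, so no serious difficulty arises beyond the base case.

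The only genuine obstacle is therefore the base identity, and specifically getting the bookkeeping of the $1/2$ scaling convention right: one must verify that differentiating with respect to $v_{ij}$ (not $S_{ij}$) produces the factor $1/2$ both on and off the diagonal, which requires carefully applying the chain rule through $S_{ij}(\bs v)$ and noting that an off-diagonal entry $S_{ij}=S_{ji}=v_{ij}/2$ contributes through two symmetric positions. I would present the base case as a short explicit calculation — writing $f_v = (2\pi)^{-d/2}\exp(-\tfrac12 q(\bs x;\bs\mu,\bs v) - \tfrac12\log\det\bs S(\bs v))$ with $q$ the quadratic form — computing $\partial f_v/\partial v_{t_1t_2}$ via $\partial q/\partial v_{t_1t_2}$ and $\partial\log\det\bs S/\partial v_{t_1t_2}$, computing $\partial^2 f_v/\partial\mu_{t_1}\partial\mu_{t_2}$ directly, and observing the two coincide up to the factor $\tfrac12$. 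The higher-order claims then follow by the commutation-and-iteration argument above.
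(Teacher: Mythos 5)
Your proposal is correct, and for the two higher-order identities it takes a genuinely different route from the paper. The paper establishes all three identities by brute force: it writes out the fourth-order $\mu$-derivative and the second-order $\Sigma$-derivative explicitly as sums over partitions of the index set (and likewise the sixth-order $\mu$-derivative and third-order $\Sigma$-derivative), and checks term by term that the two expressions coincide up to the factors $1/4$ and $1/8$. You instead prove only the base identity $\partial f_v/\partial v_{t_1t_2} = \tfrac12\,\partial^2 f_v/\partial\mu_{t_1}\partial\mu_{t_2}$ and then iterate it, using that $\partial/\partial v_{t_3t_4}$ commutes with $\partial^2/\partial\mu_{t_1}\partial\mu_{t_2}$ on the open set where $\bs{S}(\bs{v})$ is positive definite; since the base identity holds as an identity of smooth functions of $(\bs{\mu},\bs{v})$, it may be differentiated and re-substituted, which immediately yields the factors $(1/2)^2$ and $(1/2)^3$. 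This is sound and considerably more economical --- the explicit partition formulas the paper derives are not used elsewhere, so nothing is lost by bypassing them. For the base case your argument is essentially the paper's: both reduce $\partial/\partial v_{t_1t_2}$ to the symmetric matrix derivative via the chain rule through $S_{ij}(\bs{v})$, and you correctly flag the one place where care is needed, namely that an off-diagonal $v_{ij}$ enters $\bs{S}$ through two positions each scaled by $1/2$, which is exactly what makes the constant $1/2$ uniform over $t_1=t_2$ and $t_1\neq t_2$.
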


\begin{proof}
Henceforth, we suppress $(\bs{x};\bs{\mu},\bs{\Sigma})$ and $(\bs{x};\bs{\mu},\bs{v})$ and from $f(\bs{x};\bs{\mu},\bs{\Sigma})$ and $f_v(\bs{x};\bs{\mu},\bs{v})$ unless confusions might arise. In view of the definition of $\bs{S}(\bs{v})$ in (\ref{fv_defn}), the following holds for any function $g(\bs{\Sigma})$ of $\bs{\Sigma}$:
\begin{equation} \label{del_v_sigma}
\frac{\partial g(\bs{S}(\bs{v}))}{\partial v_{t_1t_2}} = \frac{\partial g(\bs{\Sigma}) / \partial \Sigma_{t_1t_2} + \partial g(\bs{\Sigma}) / \partial \Sigma_{t_2t_1}}{2} = \frac{\partial g(\bs{\Sigma})}{\partial \Sigma_{t_1t_2}} .
\end{equation}
Let $\bs{s}_i$ denote the $i$th column of $\bSig^{-1}$, and let $s_{ij}$ denote the $(i,j)$th element of $\bSig^{-1}$. A direct calculation gives $\partial^2 f(\bs{x};\bs{\mu},\bs{\Sigma}) / \partial \bs{\mu}\partial \bs{\mu}\t = - \bs{\Sigma}^{-1} f + \bs{\Sigma}^{-1}(\bs{x}-\bs{\mu}) (\bs{x}-\bs{\mu})\t \bs{\Sigma}^{-1} f$ and $\partial f(\bs{x};\bs{\mu},\bs{\Sigma}) / \partial \bSig = - (1/2) \bs{\Sigma}^{-1} f + (1/2)\bs{\Sigma}^{-1}(\bs{x}-\bs{\mu}) (\bs{x}-\bs{\mu})\t \bs{\Sigma}^{-1} f$. Therefore, the first result follows immediately from (\ref{del_v_sigma}).

To prove the second result, we first derive ${\partial^4 f(\bs{x};\bs{\mu},\bs{\Sigma})}/{\partial \mu_{t_1} \partial \mu_{t_2} \partial \mu_{t_3} \partial \mu_{t_4}}$. Noting that ${\partial \bs{s}_{j}\t (\bs{x}-\bs{\mu})}/{\partial \mu_{i}} = -s_{ji}$ and ${\partial f(\bs{x};\bs{\mu},\bs{\Sigma})}/{\partial \mu_{i}} = \bs{s}_{i}\t (\bs{x}-\bs{\mu})f$ and differentiating ${\partial^2 f(\bs{x};\bs{\mu},\bs{\Sigma})}/{\partial \mu_{t_1} \partial \mu_{t_2}} = [ - s_{t_1t_2} + \bs{s}_{t_1}\t (\bs{x}-\bs{\mu}) \bs{s}_{t_2}\t (\bs{x}-\bs{\mu}) ] f$ with respect to $\mu_{t_3}$ and $\mu_{t_4}$, we obtain
\begin{equation} \label{f_del_mu_4}
\begin{aligned}
& \frac{\partial^4 f(\bs{x};\bs{\mu},\bs{\Sigma})}{\partial \mu_{t_1} \partial \mu_{t_2} \partial \mu_{t_3} \partial \mu_{t_4}} = \left( \sum_{\{i,j\},\{k,\ell\}} s_{t_it_j} s_{t_kt_\ell} - \sum_{\{i,j\},\{k\}, \{\ell\}} s_{t_it_j} \bs{s}_{t_k}\t (\bs{x}-\bs{\mu}) \bs{s}_{t_\ell}\t (\bs{x}-\bs{\mu}) + \prod_{i=1}^4\bs{s}_{t_i}\t (\bs{x}-\bs{\mu}) \right) f ,
\end{aligned}
\end{equation}
where $\sum_{\{i,j\},\{k,\ell\}}$ denotes the sum over all 3 possible partitions of $\{1,2,3,4\}$ into $\{\{i,j\},\{k,\ell\}\}$, and $\sum_{\{i,j\},\{k\}, \{\ell\}}$ denotes the sum over all 6 possible partitions of $\{1,2,3,4\}$ into three sets $\{\{i,j\},\{k\},\{\ell\}\}$. Recall that
\begin{equation} \label{f_del_st_1t_2}
\frac{\partial f(\bs{x};\bs{\mu},\bs{\Sigma}) }{ \partial \Sigma_{t_1t_2}} = (1/2)[ - s_{t_1t_2} + \bs{s}_{t_1}\t (\bs{x}-\bs{\mu}) \bs{s}_{t_2}\t (\bs{x}-\bs{\mu}) ] f.
\end{equation}
Let ${\bs 1}_{i}$ denote a $d\times 1$ vector whose elements are 0 except for the $i$th element, which is 1. We then have $s_{t_1t_2} = {\bs 1}_{t_1}\t \bSig^{-1} {\bs 1}_{t_2} $ and $\bs{s}_{t_1} =\bSig^{-1} {\bs 1}_{t_1} $. Using the symmetry of $\bSig$, we obtain
\begin{align*}
\frac{\partial s_{t_1t_2}}{\partial \Sigma_{t_3t_4}} & = \frac{\partial ( s_{t_1t_2}+ s_{t_2t_1})/2}{\partial \Sigma_{t_3t_4}} \\
& = \frac{\partial }{\partial \Sigma_{t_3t_4}} \left({\bs 1}_{t_1}\t \bSig^{-1} {\bs 1}_{t_2} + {\bs 1}_{t_2}\t \bSig^{-1} {\bs 1}_{t_1} \right)/2 \\
& = - (1/2) \left(\bSig^{-1} {\bs 1}_{t_1} {\bs 1}_{t_2}\t \bSig^{-1} + \bSig^{-1} {\bs 1}_{t_2} {\bs 1}_{t_1}\t \bSig^{-1} \right)_{t_3t_4} \\
&= - (1/2) \left(s_{t_3t_1} s_{t_2t_4} + s_{t_3t_2} s_{t_1t_4} \right),
\end{align*}
and
\begin{align*}
\frac{\partial \bs{s}_{t_1}\t(\bs{x}-\bs{\mu})}{\partial \Sigma_{t_3t_4}} & = \frac{\partial }{\partial \Sigma_{t_3t_4}} \left({\bs 1}_{t_1}\t \bSig^{-1} (\bs{x}-\bs{\mu}) +(\bs{x}-\bs{\mu})\t \bSig^{-1} {\bs 1}_{t_1} \right)/2 \\
& = - (1/2) \left(\bSig^{-1} {\bs 1}_{t_1} (\bs{x}-\bs{\mu})\t \bSig^{-1} + \bSig^{-1} (\bs{x}-\bs{\mu}) {\bs 1}_{t_1}\t \bSig^{-1} \right)_{t_3t_4} \\
&= - (1/2) \left(s_{t_3t_1} (\bs{x}-\bs{\mu})\t \bs{s}_{t_4} + \bs{s}_{t_3}\t (\bs{x}-\bs{\mu}) s_{t_1t_4} \right).
\end{align*}
Therefore, taking the derivative of the right hand side of (\ref{f_del_st_1t_2}) with respect to $\Sigma_{t_3t_4}$ gives
\begin{equation} \label{f_del_sigma_2}
\begin{aligned}
\frac{\partial^2 f(\bs{x};\bs{\mu},\bs{\Sigma}) }{ \partial \Sigma_{t_1t_2} \partial \Sigma_{t_3t_4}} & = \frac{1}{4} \left[ s_{t_3t_1} s_{t_2t_4} + s_{t_3t_2} s_{t_1t_4} - \left(s_{t_3t_1} (\bs{x}-\bs{\mu})\t \bs{s}_{t_4} + \bs{s}_{t_3}\t (\bs{x}-\bs{\mu}) s_{t_1t_4} \right) \bs{s}_{t_2}\t (\bs{x}-\bs{\mu}) \right. \\
& \quad \left. - \bs{s}_{t_1}\t (\bs{x}-\bs{\mu})\left(s_{t_3t_2} (\bs{x}-\bs{\mu})\t \bs{s}_{t_4} + \bs{s}_{t_3}\t (\bs{x}-\bs{\mu}) s_{t_2t_4} \right) \right] f \\
& \quad + \frac{1}{2} \left( - s_{t_1t_2} + \bs{s}_{t_1}\t (\bs{x}-\bs{\mu}) \bs{s}_{t_2}\t (\bs{x}-\bs{\mu}) \right) \frac{\partial f(\bs{x};\bs{\mu},\bs{\Sigma}) }{ \partial \Sigma_{t_3t_4}} \\
& = \frac{1}{4}\left( \sum_{\{i,j\},\{k,\ell\}} s_{t_it_j} s_{t_kt_\ell} - \sum_{\{i,j\},\{k\}, \{\ell\}} s_{t_it_j} \bs{s}_{t_k}\t (\bs{x}-\bs{\mu}) \bs{s}_{t_\ell}\t (\bs{x}-\bs{\mu}) + \prod_{i=1}^4\bs{s}_{t_i}\t (\bs{x}-\bs{\mu}) \right) f.
\end{aligned}
\end{equation}
Comparing this with (\ref{f_del_mu_4}) and using (\ref{del_v_sigma}) gives the second result.

For the third result, differentiating (\ref{f_del_mu_4}) with respect to $\mu_{t_5}$ and $\mu_{t_6}$ gives
\begin{equation} \label{f_del_mu_6}
\begin{aligned}
& \frac{\partial^6 f(\bs{x};\bs{\mu},\bs{\Sigma})}{\partial \mu_{t_1} \partial \mu_{t_2} \partial \mu_{t_3} \partial \mu_{t_4} \partial \mu_{t_5} \partial \mu_{t_6}} \\
& = \left( - \sum_{\{i,j\},\{k,\ell\},\{m,n\}} s_{t_it_j} s_{t_kt_\ell}s_{t_m t_n} + \sum_{\{i,j\},\{k,\ell\}, \{m\},\{n\}} s_{t_it_j} s_{t_kt_\ell} \bs{s}_{t_m}\t (\bs{x}-\bs{\mu}) \bs{s}_{t_n}\t (\bs{x}-\bs{\mu}) \right. \\
& \quad \left. - \sum_{\{i,j\},\{k,\ell,m,n\}} s_{t_it_j} \bs{s}_{t_k}\t (\bs{x}-\bs{\mu}) \bs{s}_{t_\ell}\t (\bs{x}-\bs{\mu}) \bs{s}_{t_m}\t (\bs{x}-\bs{\mu}) \bs{s}_{t_n}\t (\bs{x}-\bs{\mu}) + \prod_{i=1}^6\bs{s}_{t_i}\t (\bs{x}-\bs{\mu}) \right) f ,
\end{aligned}
\end{equation}
where $\sum_{\{i,j\},\{k,\ell\},\{m,n\}}$ denotes the sum over all 15 possible partitions of $\{1,2,3,4,5,6\}$ into $\{\{i,j\},\{k,\ell\},\{m,n\}\}$, $\sum_{\{i,j\},\{k,\ell\}, \{m\},\{n\}}$ denotes the sum over all 45 possible partitions of $\{1,2,3,4,5,6\}$ into three sets $\{\{i,j\},\{k,\ell\},\{m\},\{n\}\}$, and $\sum_{\{i,j\},\{k,\ell,m,n\}}$ denotes the sum over all 15 possible partitions of $\{1,2,3,4,5,6\}$ into $\{\{i,j\},\{k,\ell,m,n\}\}$. Differentiating (\ref{f_del_sigma_2}) with respect to $\Sigma_{t_5t_6}$ gives (\ref{f_del_mu_6}) divided by 8, and the third result follows.
\end{proof}

\begin{lemma} \label{dv3}
Suppose that $g(\bs{x}|\bs{z};\bs{\psi},\alpha)$ is given by (\ref{loglike}), where $\bs{\psi} = (\bs{\eta}^{\top},\bs{\lambda}_{\bs\mu}\t,\bs{\lambda}_{\bs v}\t)^{\top}$ and $\bs{\eta} = (\bs{\gamma}^{\top},{\bs \nu}_{\bs \mu}\t,\bs{\nu}_{\bs{v}}\t)^{\top}$. Let $g^*$ and $\nabla g^*$ denote $g(\bs{x}|\bs{z}; \bs{\psi},\alpha)$ and $\nabla g(\bs{x}|\bs{z}; \bs{\psi},\alpha)$ evaluated at $(\bs{\psi}^*,\alpha)$, respectively. Let $\nabla f^*$ denote $\nabla f(\bs{x}|\bs{z};\bs{\gamma}^*,\bs{\mu}^*,\bs{\Sigma}^*)$. Then, with $b(\alpha): = -(2/3) (\alpha^2 - \alpha + 1)<0$, 
\begin{align*}
&(a)\ \text{for}\ k = 1, 2, 3\ \text{and}\ \ell = 0, 1,\ldots,\ \nabla_{\bs{\lambda}_{\bs\mu}^{\otimes k} \otimes \bs{\eta}^{\otimes \ell} } g^* = \bs{0}; \\
&(b)\ \nabla_{\lambda_{\mu_i}\lambda_{\mu_j}\lambda_{\mu_k}\lambda_{\mu_\ell}} g^* = \alpha(1 - \alpha)b(\alpha) \nabla_{\mu_i\mu_j\mu_k\mu_\ell} f^*; \\
&(c)\ \text{for}\ \ell = 0, 1,\ldots,\ \nabla_{\bs{\lambda}_{\bs v} \otimes \bs{\eta}^{\otimes\ell} } g^* = \bs{0}; \\
&(d)\ \nabla_{\lambda_{\mu_{i}} \lambda_{v_{jk}}} g^* = \alpha(1 - \alpha)\nabla_{\mu_i\mu_j\mu_k} f^*; \\
&(e)\ \nabla_{\lambda_{v_{ij}} \lambda_{v_{k\ell}}} g^* = \alpha(1 - \alpha)\nabla_{\mu_i\mu_j\mu_k\mu_\ell} f^*.
\end{align*}
\end{lemma}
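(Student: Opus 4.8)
The plan is to compute all the required derivatives by the chain rule, using Lemma \ref{mv_derivative} to trade every derivative with respect to a variance parameter for derivatives with respect to the mean, so that the whole computation collapses onto a single scalar differential operator. Fix $\alpha$ and write $g(\bs{x}|\bs{z};\bs{\psi},\alpha) = \alpha f_v(\bs{\gamma},\bs{m}_1,\bs{u}_1) + (1-\alpha) f_v(\bs{\gamma},\bs{m}_2,\bs{u}_2)$, where $\bs{m}_1 = \bs{\nu}_{\bs\mu} + (1-\alpha)\bs{\lambda}_{\bs\mu}$, $\bs{u}_1 = \bs{\nu}_{\bs v} + (1-\alpha)(2\bs{\lambda}_{\bs v} + C_1\bs{w}(\bs{\lambda}_{\bs\mu}\bs{\lambda}_{\bs\mu}\t))$, and $\bs{m}_2,\bs{u}_2$ are the analogues with $(1-\alpha)$ replaced by $-\alpha$ and $C_1$ by $C_2$. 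For parts (a) and (c) I keep $\bs{\eta}=(\bs{\gamma}\t,\bs{\nu}_{\bs\mu}\t,\bs{\nu}_{\bs v}\t)\t$ arbitrary and show that the relevant $\bs{\lambda}$-derivative of $g$, evaluated at $\bs{\lambda}_{\bs\mu}=\bs{\lambda}_{\bs v}=\bs{0}$, is identically zero as a function of $\bs{\eta}$; an identically-zero function has all $\bs{\eta}$-derivatives equal to zero, which yields the claims for every additional differentiation order $\ell$. For parts (b), (d), (e) no $\bs{\eta}$-derivatives occur and I may set $\bs{\eta}=\bs{\eta}^*$ at the outset.

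The key reduction is the following. Taylor-expanding $f_v(\bs{\gamma}^*,\bs{\mu}^*+\bs{a},\bs{v}^*+\bs{\delta})$ in $(\bs{a},\bs{\delta})$ to the order needed (at most fourth in $\bs{a}$, second in $\bs{\delta}$, with mixed terms) and invoking Lemma \ref{mv_derivative} together with commutativity of partial derivatives — which gives $\nabla_{\mu_a}\nabla_{v_{pq}}f^*_v = \tfrac12\nabla_{\mu_a}\nabla_{\mu_p}\nabla_{\mu_q}f^*_v$, $\nabla_{v_{pq}}\nabla_{v_{rs}}f^*_v = \tfrac14\nabla_{\mu_p}\nabla_{\mu_q}\nabla_{\mu_r}\nabla_{\mu_s}f^*_v$, and so on — shows that a shift $\bs{\delta}$ of the variance vector acts on $f^*_v$, through the relevant order, exactly like the second-order mean operator $\tfrac12\sum_{p\le q}\delta_{pq}\nabla_{\mu_p}\nabla_{\mu_q}$. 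In particular $\bs{w}(\bs{\lambda}_{\bs\mu}\bs{\lambda}_{\bs\mu}\t)$, viewed as a variance shift, acts as $\tfrac12 D^2$ with $D:=\sum_i\lambda_{\mu_i}\nabla_{\mu_i}$, and $2\bs{\lambda}_{\bs v}$ acts as $E:=\sum_{p\le q}(\bs{\lambda}_{\bs v})_{pq}\nabla_{\mu_p}\nabla_{\mu_q}$. Hence, through fourth order in $\bs{\lambda}_{\bs\mu}$ and second order in $\bs{\lambda}_{\bs v}$,
\begin{equation*}
g = \alpha\,\exp\bigl((1-\alpha)D + (1-\alpha)E + \tfrac{C_1(1-\alpha)}{2}D^2\bigr)f^*_v + (1-\alpha)\,\exp\bigl(-\alpha D - \alpha E - \tfrac{C_2\alpha}{2}D^2\bigr)f^*_v,
\end{equation*}
read as a truncated formal power series, with the same identity holding at a general base point $f_v(\bs{\gamma},\bs{\nu}_{\bs\mu},\bs{\nu}_{\bs v})$ for parts (a) and (c).

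It then remains to read off coefficients. With $\beta_1=1-\alpha$, $\beta_2=-\alpha$, the coefficient of $D^k$ in the $\bs{\lambda}_{\bs v}=\bs{0}$ specialization is a weighted sum of the two components' expansions; for $k=1,2,3$ each such sum vanishes, trivially for $k=1$ and after using the identities $C_1-C_2=-1$ and $(1-\alpha)C_1+\alpha C_2=-\tfrac13(1-2\alpha)$ for $k=2,3$, which gives (a). The single-$E$ coefficient is $[\alpha(1-\alpha)-\alpha(1-\alpha)]Ef^*_v=0$, giving (c). The mixed $DE$ coefficient is $[\alpha(1-\alpha)^2+(1-\alpha)\alpha^2]DEf^*_v=\alpha(1-\alpha)DEf^*_v$, and extracting the coefficient of $\lambda_{\mu_i}(\bs{\lambda}_{\bs v})_{jk}$ together with Lemma \ref{mv_derivative} gives (d); similarly the $E^2$ coefficient $\tfrac12\alpha(1-\alpha)E^2f^*_v$ gives (e). For (b), the coefficient of $D^4$ is $\alpha\bigl(\tfrac{C_1^2(1-\alpha)^2}{8}+\tfrac{C_1(1-\alpha)^3}{4}+\tfrac{(1-\alpha)^4}{24}\bigr)+(1-\alpha)\bigl(\tfrac{C_2^2\alpha^2}{8}-\tfrac{C_2\alpha^3}{4}+\tfrac{\alpha^4}{24}\bigr)$; substituting $C_1=-\tfrac13(1+\alpha)$, $C_2=\tfrac13(2-\alpha)$ and simplifying reduces this to $-\tfrac{1}{36}\alpha(1-\alpha)(\alpha^2-\alpha+1)$, and multiplying by $4!$ yields exactly $\alpha(1-\alpha)b(\alpha)$ with $b(\alpha)=-\tfrac23(\alpha^2-\alpha+1)$, which is (b); the sign $b(\alpha)<0$ follows since $\alpha^2-\alpha+1>0$.

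The main obstacle is the bookkeeping in part (b): tracking the three contributions to the $D^4$-coefficient and verifying the algebraic cancellation that makes them collapse to the claimed $b(\alpha)$ — this is precisely where the particular values of $C_1$ and $C_2$ in the reparameterization (\ref{repara2}) are designed to do their work. A secondary point needing care is justifying the operator reduction rigorously at the level of finite Taylor expansions rather than formal exponentials, and keeping straight the factors of $2$ generated by the $\bs{w}$ and $\bs{S}(\cdot)$ maps; everything else is a direct application of the chain rule and Lemma \ref{mv_derivative}.
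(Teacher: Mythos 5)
Your proof is correct and follows essentially the same route as the paper's: both rest on Lemma \ref{mv_derivative} to convert every variance derivative into a second-order mean derivative, apply the chain rule to the composite arguments in (\ref{repara2}), and verify the same algebraic identities $C_1-C_2=-1$, $(1-\alpha)C_1+\alpha C_2=-\tfrac13(1-2\alpha)$, and the $D^4$-coefficient collapsing to $-\tfrac{1}{36}\alpha(1-\alpha)(\alpha^2-\alpha+1)$. Your exponential-operator packaging is just a tidier bookkeeping of the paper's explicit partition sums in (\ref{comp1})--(\ref{comp2}), and your coefficient computations (including the factor $4!$ recovering the symmetric derivative tensor in part (b)) all check out.
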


\begin{proof}
We prove part (a) for $\ell = 0$ first. Suppress all arguments in $g(\bs{x}|\bs{z}; \bs{\psi}, \alpha)$ and $f_v(\bs{x}|\bs{z}; \bs{\gamma}, \bs{\mu}, \bs{v})$ except for $\bs{\lambda}_{\bs\mu}$, and rewrite (\ref{loglike}) as follows:
\begin{equation} \label{g_function}
g(\bs{\lambda}_{\bs\mu}) = \alpha f_v((1 - \alpha) \bs{\lambda}_{\bs\mu}, (1 - \alpha)C_1 \bs{w}(\bs{\lambda}_{\bs\mu}\bs{\lambda}_{\bs\mu}^{\top}) ) + (1 - \alpha) f_v(-\alpha \bs{\lambda}_{\bs\mu}, - \alpha C_2 \bs{w}(\bs{\lambda}_{\bs\mu}\bs{\lambda}_{\bs\mu}^{\top}) ) . 
\end{equation}
 For a composite function $h(\bs{a}, {\bs r}(\bs{a}))$ of a $d\times 1$ vector ${\bs a}=(a_1,\ldots,a_d)^{\top}$, the following result holds:
\begin{equation}\label{comp1}
\begin{aligned}
\nabla_{a_{i_1} \cdots a_{i_k}} h({\bs a}, {\bs r}({\bs a})) 
& = \{ (\nabla_{a_{i_1}} + \nabla_{u_{i_1}}) \cdots (\nabla_{a_{i_k}} + \nabla_{u_{i_k}}) \} h({\bs a}, {\bs r}({\bs u}))|_{{\bs u} = {\bs a}} \\
& = \sum_{j = 0}^k \sum_{p(j,\{i_1,\ldots,i_k\})} \nabla_{ u_{t_1} \cdots u_{t_j} a_{t_{j+1}} \cdots a_{t_k} } h({\bs a}, {\bs r}({\bs u}))|_{{\bs u} = {\bs a}} , 
\end{aligned}
\end{equation}
where $\sum_{p(j,\{i_1,\ldots,i_k\})}$ denotes the sum over all the partitions of $\{i_1,\ldots,i_k\}$ into two sets $\{t_1,\ldots,t_j\}$ and $\{t_{j+1},\ldots,t_k\}$. Applying (\ref{comp1}) to the right hand side of (\ref{g_function})  with $\bs{a} = \bs{\lambda}_{\bs \mu}$ gives the derivatives of $g(\bs{\lambda}_{\bs\mu})$. First, we derive $\nabla_{ u_{t_1} \cdots u_{t_j} } f_v((1 - \alpha) \bs{\lambda}_{\bs\mu}, (1 - \alpha)C_1 \bs{w}(\bs{u}\bs{u}^{\top}) )|_{\bs{u} = \bs{0}}$. Let $\tilde c:=(1-\alpha)C_1$. For notational convenience, if $i >j$, define $\nabla_{v_{ij}}h(\bs{v}):=\nabla_{v_{ji}}h(\bs{v})$ for any function $h(\bs{v})$. Using the fact $\nabla_{u_k} w_{ij}(\bs{u}\bs{u}\t) = 2 u_i \mathbb{I}\{j=k\} + 2 u_j\mathbb{I}\{i=k\}$  if $i<j$ and $\nabla_{u_k} w_{ii}(\bs{u}\bs{u}\t) = 2 u_i \mathbb{I}\{i=k\}$, we obtain 
\begin{align*}
\nabla_{ u_{t_1} } f_v( \cdot, \tilde c \bs{w}(\bs{u}\bs{u}^{\top}) ) &= \sum_{i=1}^d \sum_{j=i}^d \nabla_{v_{ij}} f_v( \cdot, \tilde c \bs{w}(\bs{u}\bs{u}^{\top}) ) \tilde c \nabla_{u_{t_1}} w_{ij}(\bs{u}\bs{u}\t) \\
&= 2\sum_{i=1}^{t_1} \nabla_{v_{i t_1}} f_v( \cdot, \tilde c \bs{w}(\bs{u}\bs{u}^{\top}) ) \tilde c u_i + 2\sum_{j=t_1+1}^d \nabla_{v_{t_1 j}} f_v( \cdot, \tilde c \bs{w}(\bs{u}\bs{u}^{\top}) ) \tilde c u_j \\
&= 2\sum_{i=1}^d \nabla_{v_{t_1 i}} f_v( \cdot, \tilde c \bs{w}(\bs{u}\bs{u}^{\top}) ) \tilde c u_i .
\end{align*}
Differentiating the right hand side with respect to $u_{t_2}$ give
\begin{align*}
\nabla_{ u_{t_1} u_{t_2} } f_v( \cdot, \tilde c \bs{w}(\bs{u}\bs{u}^{\top}) ) & = 
4 \sum_{i=1}^d \sum_{j=1}^d \nabla_{v_{t_1 i} v_{t_2 j}} f_v( \cdot, \tilde c \bs{w}(\bs{u}\bs{u}^{\top}) ) \tilde c^2 u_i u_j
+ 2 \nabla_{v_{t_1 t_2}} f_v( \cdot, \tilde c \bs{w}(\bs{u}\bs{u}^{\top}) ) \tilde c .
\end{align*}
Differentiating the right hand side with respect to $u_{t_3}$ gives
\begin{align*}
\nabla_{ u_{t_1} u_{t_2} u_{t_3} } f_v( \cdot, \tilde c \bs{w}(\bs{u}\bs{u}^{\top}) ) & = 
8 \sum_{i=1}^d \sum_{j=1}^d \sum_{k=1}^d \nabla_{v_{t_1 i} v_{t_2 j} v_{t_3 k} } f_v( \cdot, \tilde c \bs{w}(\bs{u}\bs{u}^{\top}) ) \tilde c^3 u_i u_j u_k \\
& \quad + 4 \sum_{i=1}^d\nabla_{v_{t_1 i} v_{t_2 t_3}} f_v( \cdot, \tilde c \bs{w}(\bs{u}\bs{u}^{\top}) ) \tilde c^2 u_i + 4 \sum_{j=1}^d\nabla_{v_{t_1 t_3} v_{t_2 j}} f_v( \cdot, \tilde c \bs{w}(\bs{u}\bs{u}^{\top}) ) \tilde c^2 u_j \\
& \quad + 4 \sum_{k=1}^d\nabla_{v_{t_1 t_2} v_{t_3 k}} f_v( \cdot, \tilde c \bs{w}(\bs{u}\bs{u}^{\top}) ) \tilde c^2 u_k .
\end{align*}
Finally, evaluating these derivatives at $\bs{u}=\bs{0}$ and differentiating $\nabla_{ u_{t_1} u_{t_2} u_{t_3} } f_v( \cdot, \tilde c \bs{w}(\bs{u}\bs{u}^{\top}) )$ with respect to $u_{t_4}$ and evaluating at $\bs{u}=\bs{0}$ gives
\begin{equation} \label{comp2}
\begin{aligned}
\nabla_{ u_{t_1} } f_v( \cdot, \tilde c \bs{w}(\bs{u}\bs{u}^{\top}) ) |_{\bs{u}=\bs{0}} & = 0,\\
\nabla_{ u_{t_1} u_{t_2}} f_v( \cdot, \tilde c \bs{w}(\bs{u}\bs{u}^{\top}) ) |_{\bs{u}=\bs{0}} & = 2 \tilde c \nabla_{v_{t_1 t_2}} f_v( \cdot, \tilde c \bs{w}(\bs{u}\bs{u}^{\top}) ) ,\\
\nabla_{ u_{t_1} u_{t_2} u_{t_3} } f_v( \cdot, \tilde c \bs{w}(\bs{u}\bs{u}^{\top}) ) |_{\bs{u}=\bs{0}} & = 0 ,\\
\nabla_{ u_{t_1} u_{t_2} u_{t_3} u_{t_4}} f_v( \cdot, \tilde c \bs{w}(\bs{u}\bs{u}^{\top}) ) |_{\bs{u}=\bs{0}} & = 
4 \tilde c^2 \nabla_{v_{t_1 t_4} v_{t_2 t_3}} f_v( \cdot, \bs{0}) + 4 \tilde c^2\nabla_{v_{t_1 t_3} v_{t_2 t_4}} f_v( \cdot, \bs{0}) \\
& \quad + 4 \tilde c^2 \nabla_{v_{t_1 t_2} v_{t_3 t_4}} f_v( \cdot, \bs{0}) ,
\end{aligned}
\end{equation}
and a similar result holds for $\nabla_{u_{t_1} \cdots u_{t_j} \bs{\lambda}_{\bs{\mu}}^{k-j} } f_v((1 - \alpha) \bs{\lambda}_{\bs\mu}, (1 - \alpha)C_1 \bs{w}(\bs{u}\bs{u}^{\top}) )|_{\bs{u} = \bs{0}}$ and \\$ \nabla_{ u_{t_1} \cdots u_{t_j} \bs{\lambda}_{\bs{\mu}}^{k-j}} f (- \alpha \bs{\lambda}_{\bs\mu}, - \alpha C_2 \bs{w}(\bs{u}\bs{u}^{\top}) )|_{\bs u = \bs{0}}$.

With (\ref{comp1}) and (\ref{comp2}) at hand, we are ready to derive part (a).
Differentiating (\ref{g_function}) with respect to $\bs{\lambda}_{\bs{\mu}}$ and using (\ref{comp1}), (\ref{comp2}), $C_1-C_2 = -1$, $3((1-\alpha)C_1+\alpha C_2) = 2\alpha - 1$, and Lemma \ref{mv_derivative}, we obtain
\begin{align*}
\nabla_{\bs{\lambda}_{\bs\mu}}g(\bs{0}) & = \bs{0}, \\
\nabla_{\lambda_{\mu_i} \lambda_{\mu_j} } g(\bs{0}) & = \alpha(1 - \alpha) \nabla_{\mu_i \mu_j} f_v(\bs{0}, \bs{0}) + 2\alpha(1 - \alpha)(C_1 - C_2) \nabla_{v_{ij}} f_v(\bs{0}, \bs{0}) = 0, \\
\nabla_{\lambda_{\mu_i}\lambda_{\mu_j}\lambda_{\mu_k}}g(\bs{0}) & = \alpha(1 - \alpha)(1 - 2\alpha) \nabla_{\mu_i \mu_j \mu_k} f_v(\bs{0}, \bs{0}) \\
& \quad + 3\alpha(1 - \alpha) ((1 - \alpha)C_1 + \alpha C_2) 2 \nabla_{\mu_i v_{jk}} f_v(\bs{0}, \bs{0}) = 0,
\end{align*}
and part (a) for $\ell = 0$ follows. Repeating the same argument with $\nabla_{\bs{\eta}^{\otimes \ell}}g(\bs{\lambda}_{\bs\mu},\bs{\eta})$ gives part (a) for $\ell \geq 1$.

For part (b), differentiating (\ref{g_function}) and using (\ref{comp1}), (\ref{comp2}), and Lemma \ref{mv_derivative} gives
\begin{align*}
& \nabla_{\lambda_{\mu_i}\lambda_{\mu_j}\lambda_{\mu_k}\lambda_{\mu_\ell}} g(\bs{0}) \\
& = \alpha(1 - \alpha)[(1 - \alpha)^3 + \alpha^3] \nabla_{\mu_i \mu_j \mu_k \mu_\ell} f_v(\bs{0}, \bs{0}) + 6 \alpha(1 - \alpha)((1 - \alpha)^2C_1 - \alpha^2C_2) 2 \nabla_{\mu_i \mu_j v_{k\ell}} f_v(\bs{0}, \bs{0}) \nonumber \\
& \quad + 12 \alpha(1 - \alpha)((1 - \alpha)C_1^2 + \alpha C_2^2) \nabla_{v_{ij}v_{k\ell}} f_v(\bs{0}, \bs{0}) \nonumber \\
&= \alpha(1 - \alpha) [-(2/3) (\alpha^2 - \alpha + 1)]\nabla_{\mu_i \mu_j \mu_k \mu_\ell} f_v(\bs{0}, \bs{0}),
\end{align*}
and the stated result follows because $\nabla_{\mu_i \mu_j \mu_k \mu_\ell} f_v(\bs{0}, \bs{0})=\nabla_{\mu_i \mu_j \mu_k \mu_\ell} f(\bs{0}, \bs{0})$. Part (c) follows from a direct calculation.

Parts (d) and (e) follow from direct calculation and using (\ref{comp1}), (\ref{comp2}) and Lemma \ref{mv_derivative}.
\end{proof}

\begin{lemma} \label{dv3-homo}
Suppose that $g(\bs{x}|\bs{z};\bs{\psi},\alpha)$ is given by (\ref{loglike-homo}), where $\bs{\psi} = (\bs{\eta}^{\top},\bs{\lambda})^{\top}$ and $\bs{\eta} = (\bs{\gamma}^{\top},{\bs \nu}_{\bs \mu},\bs{\nu}_{\bs{v}})^{\top}$. Let $g^*$ and $\nabla g^*$ denote $g(\bs{x}|\bs{z}; \bs{\psi},\alpha)$ and $\nabla g(\bs{x}|\bs{z}; \bs{\psi},\alpha)$ evaluated at $(\bs{\psi}^*,\alpha)$, respectively. Let $\nabla f^*$ denote $\nabla f(\bs{x}|\bs{z};\bs{\gamma}^*,\bs{\mu}^*,\bs{\Sigma}^*)$. Then, 
\begin{align*}
&(a)\ \text{for}\ k = 1, 2\ \text{and}\ \ell = 0, 1,\ldots,\ \nabla_{\bs{\lambda}^{\otimes k} \otimes \bs{\eta}^{\otimes \ell} } g^* = \bs{0}; \\
&(b)\ \nabla_{\lambda_{i}\lambda_{j}\lambda_{k}} g^* = \alpha(1 - \alpha)(1-2\alpha) \nabla_{\mu_i\mu_j\mu_k} f^*; \\
&(b)\ \nabla_{\lambda_{i}\lambda_{j}\lambda_{k}\lambda_{\ell}} g^* = \alpha(1 - \alpha)(1-6\alpha+6\alpha^2) \nabla_{\mu_i\mu_j\mu_k\mu_\ell} f^*.
\end{align*}
\end{lemma}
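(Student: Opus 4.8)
The plan is to mirror the proof of Lemma~\ref{dv3}, exploiting the fact that in the homoscedastic reparameterization (\ref{repara2-homo}) the variance perturbation $-\alpha(1-\alpha)\bs{w}(\bs{\lambda}\bs{\lambda}\t)$ is \emph{common} to the two mixture components, which makes the bookkeeping strictly simpler (no analogue of the constants $C_1,C_2$ is needed). First I would suppress all arguments of $g(\bs{x}|\bs{z};\bs{\psi},\alpha)$ and $f_v(\bs{x}|\bs{z};\bs{\gamma},\bs{\mu},\bs{v})$ other than $\bs{\lambda}$ (and, for part (a) with $\ell\geq1$, $\bs{\eta}$), and write, with $c:=\alpha(1-\alpha)$,
\[
g(\bs{\lambda}) = \alpha\, f_v\bigl((1-\alpha)\bs{\lambda},\,-c\,\bs{w}(\bs{\lambda}\bs{\lambda}\t)\bigr) + (1-\alpha)\, f_v\bigl(-\alpha\bs{\lambda},\,-c\,\bs{w}(\bs{\lambda}\bs{\lambda}\t)\bigr) =: \alpha\,\phi_{1-\alpha}(\bs{\lambda}) + (1-\alpha)\,\phi_{-\alpha}(\bs{\lambda}).
\]
Each summand $\phi_\kappa(\bs{\lambda}) := f_v(\kappa\bs{\lambda},-c\,\bs{w}(\bs{\lambda}\bs{\lambda}\t))$ is a composite $h(\bs{a},\bs{r}(\bs{a}))$ with $\bs{a}=\bs{\lambda}$, $h(\bs{a},\bs{w}):=f_v(\kappa\bs{a},\bs{w})$, and $\bs{r}(\bs{a}):=-c\,\bs{w}(\bs{a}\bs{a}\t)$, so I would apply the chain-rule identity (\ref{comp1}) from the proof of Lemma~\ref{dv3} together with the evaluations (\ref{comp2}), which here (with $\tilde c=-c$) read $\nabla_{u_{t_1}\cdots u_{t_j}}f_v(\cdot,-c\,\bs{w}(\bs{u}\bs{u}\t))|_{\bs{u}=\bs{0}}=\bs{0}$ for $j=1,3$, $=-2c\,\nabla_{v_{t_1t_2}}f^*$ for $j=2$, and $=4c^2(\nabla_{v_{t_1t_4}v_{t_2t_3}}+\nabla_{v_{t_1t_3}v_{t_2t_4}}+\nabla_{v_{t_1t_2}v_{t_3t_4}})f^*$ for $j=4$. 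Because $\bs{r}$ is quadratic, $\nabla_{\bs{u}}\bs{r}(\bs{0})=\bs{0}$, so every partition in (\ref{comp1}) that differentiates an odd number of indices ``through $\bs{r}$'' contributes zero; this is why the $j'=1$ and $j'=3$ partitions never enter.

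Carrying this out, for part (a) one gets $\nabla_{\lambda_i}\phi_\kappa|_{\bs{0}}=\kappa\,\nabla_{\mu_i}f^*$ and, after reducing the $\nabla_v$-term via $\nabla_{v_{ij}}f^*=\tfrac12\nabla_{\mu_i\mu_j}f^*$ (Lemma~\ref{mv_derivative}), $\nabla_{\lambda_i\lambda_j}\phi_\kappa|_{\bs{0}}=(\kappa^2-c)\nabla_{\mu_i\mu_j}f^*$; the $\alpha$-weighted combinations vanish since $\alpha(1-\alpha)+(1-\alpha)(-\alpha)=0$ and $\alpha(1-\alpha)^2+(1-\alpha)\alpha^2=c$. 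The statement for $\ell\geq1$ follows because $\bs{\eta}$ enters $f_v$ only additively through the mean/variance slots, so differentiating additionally in $\bs{\eta}$ preserves the same cancellation. For parts (b) and (c) I would push the expansion to third and fourth order: the pure-mean ($j'=0$) part of $\nabla_{\lambda^{\otimes k}}\phi_\kappa|_{\bs{0}}$ is $\kappa^k\nabla_{\mu^k}f^*$; the mixed ($j'=2$) part is $-3c\kappa\,\nabla_{\mu^3}f^*$ for $k=3$ and $-6c\kappa^2\,\nabla_{\mu^4}f^*$ for $k=4$ (using Lemma~\ref{mv_derivative} on $\nabla_{\mu v}f^*$ and $\nabla_{\mu\mu v}f^*$); the all-variance ($j'=4$) part, only for $k=4$, is $3c^2\,\nabla_{\mu^4}f^*$. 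Hence $\nabla_{\lambda^{\otimes 3}}\phi_\kappa|_{\bs{0}}=(\kappa^3-3c\kappa)\nabla_{\mu^3}f^*$ and $\nabla_{\lambda^{\otimes 4}}\phi_\kappa|_{\bs{0}}=(\kappa^4-6c\kappa^2+3c^2)\nabla_{\mu^4}f^*$, and taking the $\alpha$-weighted sums the $c$-terms either cancel ($k=3$, leaving $\alpha(1-\alpha)[(1-\alpha)^2-\alpha^2]=\alpha(1-\alpha)(1-2\alpha)$) or collapse to $-6c^2+3c^2=-3c^2$ ($k=4$, leaving $\alpha(1-\alpha)[(1-\alpha)^3+\alpha^3]-3c^2=\alpha(1-\alpha)(1-6\alpha+6\alpha^2)$, via $(1-\alpha)^3+\alpha^3=1-3\alpha+3\alpha^2$ and $3c^2=3\alpha(1-\alpha)\cdot\alpha(1-\alpha)$).

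The only real obstacle is keeping the combinatorics of (\ref{comp1}) straight---tracking, for each surviving partition, how many factors of $\kappa$ (from the linear mean slot) and of $-c$ (from the quadratic variance slot) are produced, and then uniformly applying Lemma~\ref{mv_derivative} to re-express every $\nabla_v$-derivative as half (resp.\ a quarter of) a $\nabla_{\bs{\mu}}$-derivative so that all contributions become multiples of the single symmetric tensor $\nabla_{\mu_i\mu_j\mu_k}f^*$ resp.\ $\nabla_{\mu_i\mu_j\mu_k\mu_\ell}f^*$. Since the heteroscedastic version of this computation is already carried out in detail in the proof of Lemma~\ref{dv3}, I would present the proof compactly, pointing to the corresponding steps there and spelling out only the simplifications afforded by the common variance perturbation.
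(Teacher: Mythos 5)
Your proposal is correct and follows essentially the same route as the paper's proof: write $g$ as the $\alpha$-weighted sum of the two composite components, apply the chain-rule identity (\ref{comp1}) with the evaluations (\ref{comp2}) (with $\tilde c=-\alpha(1-\alpha)$), and use Lemma \ref{mv_derivative} to reduce every $\nabla_{\bs{v}}$-derivative to a $\nabla_{\bs{\mu}}$-derivative before collecting coefficients. Your per-component polynomials $(\kappa^2-c)$, $(\kappa^3-3c\kappa)$, and $(\kappa^4-6c\kappa^2+3c^2)$ and their weighted sums reproduce exactly the coefficients $0$, $\alpha(1-\alpha)(1-2\alpha)$, and $\alpha(1-\alpha)(1-6\alpha+6\alpha^2)$ obtained in the paper.
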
 

\begin{proof}
We prove part (a) for $\ell = 0$ first. Suppress all arguments in $g(\bs{x}|\bs{z}; \bs{\psi}, \alpha)$ and $f_v(\bs{x}|\bs{z}; \bs{\gamma}, \bs{\mu}, \bs{v})$ except for $\bs{\lambda}$, and rewrite as follows:
\begin{equation} \label{g_function-homo}
g(\bs{\lambda}_{\bs\mu}) = \alpha f_v((1 - \alpha) \bs{\lambda}_{\bs\mu}, -\alpha(1 - \alpha) \bs{w}(\bs{\lambda}_{\bs\mu}\bs{\lambda}_{\bs\mu}^{\top}) ) + (1 - \alpha) f_v(-\alpha \bs{\lambda}_{\bs\mu}, -\alpha(1 - \alpha) \bs{w}(\bs{\lambda}_{\bs\mu}\bs{\lambda}_{\bs\mu}^{\top}) ) . 
\end{equation}
Differentiating (\ref{g_function-homo}) with respect to $\bs{\lambda}$ and using (\ref{comp1}), (\ref{comp2}), and Lemma \ref{mv_derivative}, we obtain
\begin{align*}
\nabla_{\bs{\lambda}_{\bs\mu}}g(\bs{0}) & = \bs{0}, \\
\nabla_{\lambda_{i} \lambda_{j} } g(\bs{0}) & = \alpha(1 - \alpha) \nabla_{\mu_i \mu_j} f_v(\bs{0}, \bs{0}) - 2\alpha(1 - \alpha) \nabla_{v_{ij}} f_v(\bs{0}, \bs{0}) = 0, \end{align*}
and part (a) for $\ell = 0$ follows. Repeating the same argument with $\nabla_{\bs{\eta}^{\otimes \ell}}g(\bs{\lambda},\bs{\eta})$ gives part (a) for $\ell \geq 1$.

For parts (b) and (c), differentiating (\ref{g_function-homo}) and using (\ref{comp1}), (\ref{comp2}), and Lemma \ref{mv_derivative} gives
\begin{align*} 
& \nabla_{\lambda_{i}\lambda_{j}\lambda_{k}}g(\bs{0}) \\
&= \alpha(1 - \alpha)(1 - 2\alpha) \nabla_{\mu_i \mu_j \mu_k} f_v(\bs{0}, \bs{0}) + 3\alpha(1 - \alpha) (-\alpha(1 - \alpha) + \alpha (1-\alpha)) 2 \nabla_{\mu_i v_{jk}} f_v(\bs{0}, \bs{0}) \\
&= \alpha(1 - \alpha)(1 - 2\alpha) \nabla_{\mu_i \mu_j \mu_k} f_v(\bs{0}, \bs{0}) ,\\
& \nabla_{\lambda_{i}\lambda_{j}\lambda_{k}\lambda_{\ell}} g(\bs{0}) \\
& = \alpha(1 - \alpha)[(1 - \alpha)^3 + \alpha^3] \nabla_{\mu_i \mu_j \mu_k \mu_\ell} f_v(\bs{0}, \bs{0}) + 6 \alpha(1 - \alpha)(-\alpha(1 - \alpha)^2 - \alpha^2(1-\alpha)) 2 \nabla_{\mu_i \mu_j v_{k\ell}} f_v(\bs{0}, \bs{0}) \nonumber \\
& \quad + 12 \alpha(1 - \alpha)((1 - \alpha)\alpha^2 + \alpha (1-\alpha)^2) \nabla_{v_{ij}v_{k\ell}} f_v(\bs{0}, \bs{0}) \nonumber \\
&= \alpha(1 - \alpha) (1-6\alpha+6\alpha^2)\nabla_{\mu_i \mu_j \mu_k \mu_\ell} f_v(\bs{0}, \bs{0}),
\end{align*}
and the stated result follows because $\nabla_{\mu_i \mu_j \mu_k \mu_\ell} f_v(\bs{0}, \bs{0})=\nabla_{\mu_i \mu_j \mu_k \mu_\ell} f(\bs{0}, \bs{0})$. 
\end{proof}

\begin{lemma} \label{lemma_lambda_e}
Suppose $\bs{\lambda} = (\bs{\lambda}_{\bs{\mu}}\t,\bs{\lambda}_{\bs{v}}\t)\t \in \Theta_{\bs{\lambda}}$ satisfies $\bs{t}_{\bs{\lambda}}(\bs{\lambda},\alpha) =O_p(n^{-1/2})$ for some $\alpha \in (0,1)$ with $\bs{t}_{\bs{\lambda}}(\bs{\lambda},\alpha)$ defined in (\ref{tpsi_defn}). Then, if $|\lambda_{\mu_i}| \geq n^{-1/8} (\log n)^{-1}$ for some $i \in \{1,\ldots,d\}$, we have $\bs{\lambda}_{\bs{v}} = O_p(n^{-3/8}(\log n)^3)$.
\end{lemma}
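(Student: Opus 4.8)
The plan is to use only the first block of $\bs{t}_{\bs{\lambda}}(\bs{\lambda},\alpha)$ from (\ref{tpsi_defn}), namely that $\alpha(1-\alpha)12\bs{\lambda}_{\bs{\mu v}} = O_p(n^{-1/2})$; since $\alpha\in(0,1)$ is fixed, $\alpha(1-\alpha)$ is a positive constant, so this already gives $(\bs{\lambda}_{\bs{\mu v}})_{ijk} = O_p(n^{-1/2})$ for every $1\le i\le j\le k\le d$. The second block $12\bs{\lambda}_{\bs{v}^2}+b(\alpha)\bs{\lambda}_{\bs{\mu}^4}$ will not be needed, nor will any boundedness of $\Theta_{\bs{\lambda}}$. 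First I would fix an index $i^{*}$ attaining $M:=\max_{m}|\lambda_{\mu_{m}}|$; by hypothesis there is some $i$ with $|\lambda_{\mu_{i}}|\ge n^{-1/8}(\log n)^{-1}$, so $M=|\lambda_{\mu_{i^{*}}}|\ge n^{-1/8}(\log n)^{-1}>0$ and in particular $\lambda_{\mu_{i^{*}}}\neq 0$.

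The heart of the proof is a three-stage bootstrap over the entries of $\bs{\lambda}_{\bs{v}}$, based on the observation that each component $(\bs{\lambda}_{\bs{\mu v}})_{abc}$ is a finite sum, with small positive integer coefficients, of products $\lambda_{\mu_{x}}\lambda_{v_{yz}}$ in which the multiset $\{x,y,z\}$ equals $\{a,b,c\}$, and that the coefficient of the ``target'' product $\lambda_{\mu_{i^{*}}}\lambda_{v_{jk}}$ in $(\bs{\lambda}_{\bs{\mu v}})_{\mathrm{sort}(i^{*},j,k)}$ is exactly $1$. Stage 1: $(\bs{\lambda}_{\bs{\mu v}})_{i^{*}i^{*}i^{*}}=\lambda_{\mu_{i^{*}}}\lambda_{v_{i^{*}i^{*}}}$, so $M\,|\lambda_{v_{i^{*}i^{*}}}| = O_p(n^{-1/2})$. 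Stage 2: for $k\neq i^{*}$, the relevant component equals $\lambda_{\mu_{i^{*}}}\lambda_{v_{i^{*}k}}+\mathcal{C}\,\lambda_{\mu_{k}}\lambda_{v_{i^{*}i^{*}}}$, and since $|\lambda_{\mu_{k}}|\le M$, the last term is bounded by $\mathcal{C}\,M\,|\lambda_{v_{i^{*}i^{*}}}| = O_p(n^{-1/2})$, whence $M\,|\lambda_{v_{i^{*}k}}| = O_p(n^{-1/2})$. Stage 3: for arbitrary $j\le k$, the component $(\bs{\lambda}_{\bs{\mu v}})_{\mathrm{sort}(i^{*},j,k)}$ equals $\lambda_{\mu_{i^{*}}}\lambda_{v_{jk}}$ plus terms of the form $\mathcal{C}\,\lambda_{\mu_{m}}\lambda_{v_{i^{*}q}}$ with $m\in\{j,k\}$, each of which is bounded by $M$ times a $v$-entry already controlled in Stage 1 or 2, hence is $O_p(n^{-1/2})$; therefore $M\,|\lambda_{v_{jk}}| = O_p(n^{-1/2})$ for every $j\le k$. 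Since $M\ge n^{-1/8}(\log n)^{-1}$ deterministically, this forces $|\lambda_{v_{jk}}| = O_p(n^{-1/2}\cdot n^{1/8}\log n) = O_p(n^{-3/8}\log n)$ for all $j\le k$, which is even stronger than the claimed $\bs{\lambda}_{\bs{v}} = O_p(n^{-3/8}(\log n)^{3})$.

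In the write-up I would separate the four cases $j=k=i^{*}$, exactly one of $j,k$ equal to $i^{*}$, $j\neq k$ with $j,k\neq i^{*}$, and $j=k\neq i^{*}$, and in each case read off from the definition of $p_{12}$ precisely which permutations of the multiset $\{i^{*},j,k\}$ satisfy the constraint $t_{2}\le t_{3}$, so as to list the cross terms exactly; this is the only slightly fiddly part, and it is purely mechanical. The two things to be careful about are (i) running the bootstrap in the right order — $\lambda_{v_{i^{*}i^{*}}}$ must be controlled before the $\lambda_{v_{i^{*}k}}$, and those before the general $\lambda_{v_{jk}}$ — and (ii) treating the $O_p$ bounds while $M$ is itself random, which is legitimate because the bound $M\ge n^{-1/8}(\log n)^{-1}$ holds on the entire event in play, so any bound of the form $M\,|\lambda_{v_{jk}}|\le Kn^{-1/2}$ passes immediately to $|\lambda_{v_{jk}}|\le Kn^{-3/8}\log n$. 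I do not anticipate any genuine obstacle beyond this bookkeeping.
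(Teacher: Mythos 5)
Your proof is correct, and it takes a genuinely different route from the paper's. The paper works with the hypothesized index $i$ directly, which forces it to first establish $\lambda_{\mu_j}=O_p(n^{-1/8})$ for \emph{every} $j$ before it can control the cross terms $\lambda_{\mu_j}\lambda_{v_{ii}}$, $\lambda_{\mu_j}\lambda_{v_{ik}}$, etc.; that intermediate step is proved by a case split that invokes the second block of $\bs{t}_{\bs{\lambda}}$, namely the relation $12(\lambda_{v_{jj}})^2+b(\alpha)(\lambda_{\mu_j})^4=O_p(n^{-1/2})$ together with $b(\alpha)<0$. You sidestep this entirely by taking $i^*$ to be the argmax of $|\lambda_{\mu_m}|$ (which still satisfies the lower bound $M\ge n^{-1/8}(\log n)^{-1}$), so every cross term $\lambda_{\mu_m}\lambda_{v_{i^*q}}$ is automatically dominated by $M|\lambda_{v_{i^*q}}|$, a quantity already controlled at an earlier stage of your bootstrap. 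The payoff is twofold: you use only the $\bs{\lambda}_{\bs{\mu v}}$ block of $\bs{t}_{\bs{\lambda}}$, and you obtain the uniform rate $O_p(n^{-3/8}\log n)$ for all entries of $\bs{\lambda}_{\bs{v}}$, strictly sharper than the paper's $O_p(n^{-3/8}(\log n)^3)$, which accumulates an extra power of $\log n$ at each of its stages (A)--(D). Your two cautionary remarks are the right ones: the combinatorics of $p_{12}$ do give coefficient $1$ on the target product $\lambda_{\mu_{i^*}}\lambda_{v_{jk}}$ in every case, and the randomness of $i^*$ is harmless because $d$ is finite (each stage's bound is a maximum over finitely many components of a vector that is $O_p(n^{-1/2})$ as a whole) and the inequality $M\ge n^{-1/8}(\log n)^{-1}$ holds deterministically on the event under consideration. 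Either bound suffices for the only place the lemma is used, namely to conclude $\ddot{\bs{\lambda}}_{\bs{v}^2}^{2}=o_p(n^{-1/2})$ in the proof of Proposition \ref{P-LR-N1}.
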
 
 
\begin{proof}
The stated result holds if we show, for all $(j,k)$,
\[
\begin{aligned}
(A)\ {\lambda}_{{v}_{ii}} &= O_p(n^{-3/8} \log n), \quad (B)\ {\lambda}_{{v}_{ij}} = O_p(n^{-3/8} (\log n)^2), \\
(C)\ {\lambda}_{{v}_{jj}} &= O_p(n^{-3/8} (\log n)^3), \quad (D)\ {\lambda}_{{v}_{jk}} = O_p(n^{-3/8} (\log n)^3).
\end{aligned}
\]
Observe that $\bs{t}_{\bs{\lambda}}(\bs{\lambda},\alpha) =O_p(n^{-1/2})$ implies that, for any $(i,j,k)$, 
\begin{align} 
{\lambda}_{{\mu}_{i}} {\lambda}_{{v}_{ii}} & = O_p(n^{-1/2}), \label{lambda_e_bound_1}\\
({\lambda}_{{\mu}_{i}} {\lambda}_{{v}_{ij}} + {\lambda}_{{\mu}_{j}}{\lambda}_{{v}_{ii}}) & = O_p(n^{-1/2}), \label{lambda_e_bound_2} \\
({\lambda}_{{\mu}_{i}} {\lambda}_{{v}_{jk}} + {\lambda}_{{\mu}_{j}}{\lambda}_{{v}_{ik}} + {\lambda}_{{\mu}_{k}}{\lambda}_{{v}_{ij}}) & = O_p(n^{-1/2}), \label{lambda_e_bound_3} \\
[12 ({\lambda}_{{v}_{ii}})^2 + b(\alpha) ({\lambda}_{{\mu}_{i}})^4] & = O_p(n^{-1/2}) . \label{lambda_e_bound_4} 
\end{align} 
Part (A) follows from $|\lambda_{\mu_i}| \geq n^{-1/8} (\log n)^{-1}$ and (\ref{lambda_e_bound_1}). Before deriving part (B), we first show that $\lambda_{\mu_j} = O_p(n^{-1/8})$ holds for any $j$. Consider the two cases, $|\lambda_{\mu_j}| \leq n^{-1/8} (\log n)^{-1}$ and $|\lambda_{\mu_j}| \geq n^{-1/8} (\log n)^{-1}$. When $|\lambda_{\mu_j}| \leq n^{-1/8} (\log n)^{-1}$, the result $\lambda_{\mu_j} = O_p(n^{-1/8})$ follows immediately. When $|\lambda_{\mu_j}| \geq n^{-1/8} (\log n)^{-1}$, (\ref{lambda_e_bound_1}) implies ${\lambda}_{{v}_{jj}} = O_p(n^{-3/8} \log n)$, and in view of (\ref{lambda_e_bound_4}) we obtain $\lambda_{\mu_j} = O_p(n^{-1/8})$. Therefore, $\lambda_{\mu_j} = O_p(n^{-1/8})$ holds for any $j$. Combining this with (\ref{lambda_e_bound_2}) and part (A), we obtain ${\lambda}_{{\mu}_{i}} {\lambda}_{{v}_{ij}} = O_p(n^{-1/2}\log n)$. Hence, noting that $|\lambda_{\mu_i}| \geq n^{-1/8} (\log n)^{-1}$ gives part (B).

For part (C), reversing the role of $i$ and $j$ in (\ref{lambda_e_bound_2}) gives ${\lambda}_{{\mu}_{j}} {\lambda}_{{v}_{ij}} + {\lambda}_{{\mu}_{i}}{\lambda}_{{v}_{jj}} = O_p(n^{-1/2})$. In conjunction with $\lambda_{\mu_j} = O_p(n^{-1/8})$ and part (B), we obtain ${\lambda}_{{\mu}_{i}}{\lambda}_{{v}_{jj}} = O_p(n^{-1/2}(\log n)^2)$. Then, part (C) follows from $|\lambda_{\mu_i}| \geq n^{-1/8} (\log n)^{-1}$.

For part (D), we already show that $\lambda_{\mu_j}, \lambda_{\mu_k} = O_p(n^{-1/8})$, and part (B) implies ${\lambda}_{{v}_{ij}}, {\lambda}_{{v}_{ik}}= O_p(n^{-3/8} (\log n)^2)$. Substituting this to (\ref{lambda_e_bound_3}) gives ${\lambda}_{{\mu}_{i}} {\lambda}_{{v}_{jk}} = O_p(n^{-1/2} (\log n)^2)$, and part (D) follows from $|\lambda_{\mu_i}| \geq n^{-1/8} (\log n)^{-1}$.
\end{proof}

\begin{lemma} \label{tau_update}
Suppose that Assumptions\ref{assn_consis} and \ref{A-vec-2} hold. If $\bs{\vartheta}_{m_0+1}^{m(k)}(\tau_0) - \bs{\vartheta}_{m_0+1}^{m*}(\tau_0)= o_p(1)$  and $\tau^{(k)} - \tau_0 = o_p(1)$, then (a) $\alpha_m^{(k+1)}/[\alpha_m^{(k+1)}+\alpha_{m+1}^{(k+1)}] - \tau_0 = o_p(1)$  and (b) $\tau^{(k+1)} - \tau_0 = o_p(1)$.
\end{lemma}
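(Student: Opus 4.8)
The plan is to exploit that every E-step weight $w_{ij}^{(k)}$ is a posterior probability, hence lies in $[0,1]$, and is continuous in $(\bs{\vartheta}_{M_0+1}^{m(k)},\tau^{(k)})$, so that a uniform law of large numbers applies even though these arguments are random and depend on $n$. I would write $w_{im}^{(k)}=\pi_m(\bs{X}_i;\bs{\vartheta}_{M_0+1}^{m(k)},\tau^{(k)})$ and $w_{i,m+1}^{(k)}=\pi_{m+1}(\bs{X}_i;\bs{\vartheta}_{M_0+1}^{m(k)},\tau^{(k)})$ for the obvious bounded nonnegative functions $\pi_m,\pi_{m+1}\le1$, and fix a small neighborhood $\mathcal{B}$ of $(\bs{\vartheta}_{M_0+1}^{m*}(\tau_0),\tau_0)$ on which all component variances stay positive definite. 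Since $\pi_m,\pi_{m+1}$ are bounded and continuous in $(\bs{\vartheta},\tau)$ on $\mathcal{B}$, a ULLN (e.g., Lemma 2.4 of \citet{neweymcfadden94hdbk}) gives $\sup_{(\bs{\vartheta},\tau)\in\mathcal{B}}\big|n^{-1}\sum_{i=1}^n\pi_m(\bs{X}_i;\bs{\vartheta},\tau)-E\pi_m(\bs{X};\bs{\vartheta},\tau)\big|\to_p0$, and likewise for $\pi_{m+1}$, while dominated convergence makes $E\pi_m,E\pi_{m+1}$ continuous in $(\bs{\vartheta},\tau)$. Combining this with the hypotheses $\bs{\vartheta}_{M_0+1}^{m(k)}(\tau_0)\to_p\bs{\vartheta}_{M_0+1}^{m*}(\tau_0)$ and $\tau^{(k)}\to_p\tau_0$ (so that $(\bs{\vartheta}_{M_0+1}^{m(k)},\tau^{(k)})\in\mathcal{B}$ with probability approaching one), and using that $\bs{\vartheta}_{M_0+1}^{m*}(\tau_0)\in\Upsilon_{1m}^*$ yields the true density $f_{M_0}(\bs{x};\bs{\vartheta}_{M_0}^*)$ with $(\bs{\mu}_m,\bs{\Sigma}_m)=(\bs{\mu}_{m+1},\bs{\Sigma}_{m+1})=(\bs{\mu}_m^*,\bs{\Sigma}_m^*)$ and $\alpha_m+\alpha_{m+1}=\alpha_m^*$, I would obtain
\[
n^{-1}\sum_{i=1}^n w_{im}^{(k)}\to_p \tau_0\alpha_m^*\,E\!\left[\frac{f(\bs{X};\bs{\mu}_m^*,\bs{\Sigma}_m^*)}{f_{M_0}(\bs{X};\bs{\vartheta}_{M_0}^*)}\right]=\tau_0\alpha_m^*, \qquad n^{-1}\sum_{i=1}^n w_{i,m+1}^{(k)}\to_p (1-\tau_0)\alpha_m^*,
\]
since the normalizing integral equals one.

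Part (a) would then follow from the continuous mapping theorem: $\alpha_m^{(k+1)}=n^{-1}\sum_i w_{im}^{(k)}\to_p\tau_0\alpha_m^*$ and $\alpha_{m+1}^{(k+1)}=n^{-1}\sum_i w_{i,m+1}^{(k)}\to_p(1-\tau_0)\alpha_m^*$, and since $\alpha_m^*\ge\epsilon_1>0$ by Assumption \ref{A-vec-2}(a), the ratio $\alpha_m^{(k+1)}/(\alpha_m^{(k+1)}+\alpha_{m+1}^{(k+1)})\to_p\tau_0$.

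For part (b) I would study the scaled M-step objective $Q_n(\tau):=n^{-1}[\sum_i w_{im}^{(k)}\log\tau+\sum_i w_{i,m+1}^{(k)}\log(1-\tau)+p(\tau)]$, which $\tau^{(k+1)}$ maximizes. On any compact subinterval $[\delta,1-\delta]\subset(0,1)$, $n^{-1}p(\tau)\to0$ uniformly ($p$ being continuous, hence bounded there) and the two sample averages converge by the first step, so $Q_n$ converges in probability, uniformly in $\tau$, to $Q_\infty(\tau):=\alpha_m^*[\tau_0\log\tau+(1-\tau_0)\log(1-\tau)]$, which is strictly concave and uniquely maximized at $\tau=\tau_0$. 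The remaining task is to confine $\tau^{(k+1)}$ to such a subinterval with probability approaching one: because $p(\tau)\le0$ with $p(\tau)\to-\infty$ as $\tau\to0$, because near $\tau=1$ the term $\log(1-\tau)\to-\infty$ enters with coefficient $n^{-1}\sum_i w_{i,m+1}^{(k)}\ge(1-\tau_0)\alpha_m^*/2>0$ w.p.a.\ 1, and because $Q_n(\tau_0)\to_p Q_\infty(\tau_0)>-\infty$, the maximizer cannot be arbitrarily close to $0$ or $1$. A standard argmax-consistency argument (which only requires the limit objective to have a unique maximizer) would then deliver $\tau^{(k+1)}\to_p\tau_0$.

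The hard part will be this last step of part (b): since $\tau$ ranges over the open interval $(0,1)$ and the penalty $p(\tau)$ diverges at the left endpoint, I cannot invoke a compact-parameter consistency theorem directly and must first show the maximizer stays bounded away from the endpoints before applying uniform convergence on a compact core. The remaining ingredients — the ULLN for the weights and the continuous-mapping conclusion — are routine, and the boundedness of the posterior weights in $[0,1]$ makes the ULLN essentially automatic.
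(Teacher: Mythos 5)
Your proposal is correct, and part (a) reaches the same conclusion as the paper (the paper gets $n^{-1}\sum_i w_{im}^{(k)} = \tau_0\alpha_m^* + o_p(1)$ by a Taylor expansion of the weights around $\bs{\vartheta}_{M_0+1}^{m*}(\tau_0)$ followed by the LLN and $E[f(\bs{X};\bs{\mu}_m^*,\bs{\Sigma}_m^*)/f_{M_0}(\bs{X};\bs{\vartheta}_{M_0}^*)]=1$; your ULLN-over-a-neighborhood device is a legitimate alternative way to justify the same limit when the plug-in arguments are random). For part (b), however, you take a genuinely different and more laborious route. The paper does not normalize the M-step objective and appeal to argmax consistency; it writes $H(\tau) := n\alpha_m^{(k+1)}\log\tau + n\alpha_{m+1}^{(k+1)}\log(1-\tau)$, notes that $H$ is exactly maximized at $\tilde\tau = \alpha_m^{(k+1)}/[\alpha_m^{(k+1)}+\alpha_{m+1}^{(k+1)}]$ (which is $\tau_0+o_p(1)$ by part (a)), and expands $H$ twice around $\tilde\tau$ to get $H(\tilde\tau)-H(\tau)\ge(\epsilon+o_p(1))\alpha_m^* n(\tau-\tilde\tau)^2$. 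Combining this with $H(\tau^{(k+1)})+p(\tau^{(k+1)})\ge H(\tilde\tau)+p(\tilde\tau)$, $p\le 0$, and $p(\tilde\tau)=O_p(1)$ yields $n(\tau^{(k+1)}-\tilde\tau)^2=O_p(1)$, i.e.\ a $\sqrt{n}$-rate, not merely consistency. This sidesteps entirely the endpoint-confinement issue you flag as the hard part of your argument: because the bound is a global consequence of the concavity of $H$ and the sign of the penalty, there is no need to first show the maximizer stays in a compact core of $(0,1)$ or to establish uniform convergence of $Q_n$. Your argmax-consistency argument does go through (the $\log\tau$ and $\log(1-\tau)$ terms, with coefficients converging to $\tau_0\alpha_m^*>0$ and $(1-\tau_0)\alpha_m^*>0$, together with $p(\tau)\le 0$ diverging at $0$, do confine the maximizer w.p.a.\ 1), but it buys you only $o_p(1)$ where the paper's two-line quadratic expansion buys $O_p(n^{-1/2})$ with less work.
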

\begin{proof}
We suppress $(\tau_0)$ from $\bs{\vartheta}_{M_0+1}^{m(k)}(\tau_0)$ and $\bs{\vartheta}_{M_0+1}^{m*}(\tau_0)$. The proof is similar to the proof of Lemma 3 of \citet{lichen10jasa}. We suppress $\bs{Z}$ for brevity. Let $f_i( \bs{\mu}, \bs{\Sigma})$ and $f_i(\bs{\vartheta}_{M_0+1})$ denote $f(\bs{X}_i; \bs{\mu}, \bs{\Sigma})$ and $f_{M_0+1}(\bs{X}_i; \bs{\vartheta}_{M_0+1})$, respectively. Applying a Taylor expansion to $\alpha_m^{(k+1)}= n^{-1}\sum_{i = 1}^n w_{im}^{(k)}$ and using $\bs{\vartheta}_{M_0+1}^{m(k)} - \bs{\vartheta}_{M_0+1}^{m*} = o_p(1)$, we obtain
\[
\begin{aligned}
\alpha_m^{(k+1)}& = \frac{1}{n} \sum_{i = 1}^n \frac{\tau^{(k)}(\alpha_{m}^{(k)}+\alpha_{m+1}^{(k)})f_i(\bs{\mu}_m^{(k)},\bs{\Sigma}_m^{(k)})}{f_i(\bs{\vartheta}_{M_0+1}^{m(k)})} \\
& = \frac{1}{n} \sum_{i = 1}^n \frac{\tau_0 \alpha_{m}^*f_i(\bs{\mu}_{m}^{*},\bs{\Sigma}_{m}^{*})}{f_i(\bs{\vartheta}_{M_0+1}^{m*})} + o_p(1)
= \tau_0 \alpha_{m}^*+ o_p(1),
\end{aligned}
\]
where  the last equality follows from $E[f_i(\bs{\mu}_{m}^{*},\bs{\Sigma}_{m}^{*})/f_i(\bs{\vartheta}_{M_0+1}^{m*}) ] = 1$ and the law of large numbers. A similar argument gives $\alpha_{m+1}^{(k+1)} = (1 - \tau_0)\alpha_{m}^* + o_p(1)$, and part (a) follows.

For part (b), define $H(\tau) := \sum_{i=1}^n w_{im}^{(k)} \log(\tau) + \sum_{i=1}^n w_{i,m+1}^{(k)} \log(1-\tau) = n \alpha_m^{(k+1)} \log(\tau) + n \alpha_{m+1}^{(k+1)} \log(1-\tau) $, then $\tau^{(k+1)}$ maximizes $H(\tau) + p(\tau)$. $H(\tau)$ is maximized at $\tilde \tau = \alpha_m^{(k+1)}/[\alpha_m^{(k+1)}+\alpha_{m+1}^{(k+1)}] = \tau_0 + o_p(1)$. Expanding $H(\tau)$ twice around $\tilde \tau$ gives $H(\tilde\tau) - H(\tau) \geq (\epsilon+o_p(1)) \alpha_m^* n (\tau - \tilde \tau)^2$ for some $\epsilon>0$. In conjunction with $H(\tau^{(k+1)}) + p(\tau^{(k+1)}) - H(\tilde \tau) - p(\tilde \tau) \geq 0$, we obtain $p(\tau^{(k+1)}) - p(\tilde \tau) \geq (\epsilon+o_p(1)) \alpha_m^* n (\tau^{(k+1)} - \tilde \tau)^2$. Because $p(\tau) \leq 0$ and $p(\tilde \tau) = O_p(1)$, we have $n (\tau^{(k+1)} - \tilde \tau)^2 = O_p(1)$, and part (b) follows.
\end{proof}

The following lemma follows from Le Cam's first and third lemmas and facilitates the derivation of the asymptotic distribution of the LRTS under $\mathbb{P}_{\vartheta_n}^n$. 

\begin{lemma} \label{P-LAN} Suppose that the assumptions of Lemma \ref{P-quadratic} hold and $\bs{\vartheta}_n$ is given by (\ref{local-alternative}). Then, (a) $\mathbb{P}_{\bs{\vartheta}_n}^n$ is mutually contiguous with respect to $\mathbb{P}_{\bs{\vartheta}^*}^n$, and (b) under $\mathbb{P}_{\bs{\vartheta}_n}^n$, we have $\log (d\mathbb{P}_{\bs{\vartheta}_n}^n/d \mathbb{P}_{\bs{\vartheta}^*}^n) = \bs{h}\t \nu_n(\bs{s}(\bs{x},\bs{z})) - \bs{h}\t \bs{\mathcal{I}} \bs{h}/2+ o_{p}(1)$ with $\nu_n(\bs{s}(\bs{x},\bs{z})) \overset{d}{\rightarrow} N(\bs{\mathcal{I}} \bs{h}, \bs{\mathcal{I}})$, where $\bs{s}(\bs{x},\bs{z})$ is defined in (\ref{score_defn}) and $\bs{\mathcal{I}}:=E[\bs{s}(\bs{X},\bs{Z})\bs{s}(\bs{X},\bs{Z})\t]$.
\end{lemma}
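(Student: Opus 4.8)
The plan is to derive the claimed local asymptotic normality expansion from the quadratic approximation of the log-likelihood already established in Lemma \ref{P-quadratic}, and then to upgrade it to the statements under $\mathbb{P}_{\bs{\vartheta}_n}^n$ by appealing to Le Cam's first and third lemmas. The first step is to note that, because $\bs{\lambda}^*=\bs{0}$, the density $g(\bs{x}|\bs{z};\bs{\psi}^*,\alpha)$ in (\ref{loglike}) coincides with the one-component density $f_v(\bs{x}|\bs{z};\bs{\gamma}^*,\bs{\mu}^*,\bs{v}^*)$ for every value of $\alpha$, so that $\log(d\mathbb{P}_{\bs{\vartheta}_n}^n/d\mathbb{P}_{\bs{\vartheta}^*}^n)=L_n(\bs{\psi}_n,\alpha_n)-L_n(\bs{\psi}^*,\alpha_n)$. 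By (\ref{local-alternative}), $\sqrt{n}(\bs{\eta}_n-\bs{\eta}^*)=\bs{h}_{\bs\eta}$ and $\sqrt{n}\bs{t}_{\bs\lambda}(\bs{\lambda}_n,\alpha_n)=\bs{h}_{\bs\lambda}+o(1)$, hence $\sqrt{n}\bs{t}(\bs{\psi}_n,\alpha_n)=\bs{h}+o(1)$ with $\bs{t}$ as in (\ref{tpsi_defn}); in particular $\bs{\vartheta}_n$ lies in $\mathcal{N}_{c/\sqrt{n}}$ for some finite $c$ and all large $n$, which is where the quadratic approximation of Lemma \ref{P-quadratic} (equivalently, the expansion of Lemma \ref{Ln_thm1}) holds.

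Substituting $\bs{\vartheta}=\bs{\vartheta}_n$ into Lemma \ref{P-quadratic}(b) then gives, under $\mathbb{P}_{\bs{\vartheta}^*}^n$,
\begin{equation*}
\log\frac{d\mathbb{P}_{\bs{\vartheta}_n}^n}{d\mathbb{P}_{\bs{\vartheta}^*}^n}=\sqrt{n}\,\bs{t}(\bs{\psi}_n,\alpha_n)\t\nu_n(\bs{s}(\bs{x},\bs{z}))-\frac{n}{2}\bs{t}(\bs{\psi}_n,\alpha_n)\t\bs{\mathcal{I}}\,\bs{t}(\bs{\psi}_n,\alpha_n)+o_p(1).
\end{equation*}
Since $\bs{s}(\bs{x},\bs{z})$ in (\ref{score_defn}) has mean zero and finite variance $\bs{\mathcal{I}}$ under $\mathbb{P}_{\bs{\vartheta}^*}^n$ by Assumption \ref{A-taylor1} and the properties of the normal density, the i.i.d.\ central limit theorem gives $\nu_n(\bs{s}(\bs{x},\bs{z}))\rightarrow_d N(\bs{0},\bs{\mathcal{I}})$ and in particular $\nu_n(\bs{s}(\bs{x},\bs{z}))=O_p(1)$; replacing $\sqrt{n}\bs{t}(\bs{\psi}_n,\alpha_n)$ by $\bs{h}$ up to $o(1)$ and absorbing the error into $o_p(1)$ yields $\log(d\mathbb{P}_{\bs{\vartheta}_n}^n/d\mathbb{P}_{\bs{\vartheta}^*}^n)=\bs{h}\t\nu_n(\bs{s}(\bs{x},\bs{z}))-\frac{1}{2}\bs{h}\t\bs{\mathcal{I}}\bs{h}+o_p(1)$, which under $\mathbb{P}_{\bs{\vartheta}^*}^n$ converges in distribution to $N(-\frac{1}{2}\bs{h}\t\bs{\mathcal{I}}\bs{h},\,\bs{h}\t\bs{\mathcal{I}}\bs{h})$. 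Because the limiting mean equals minus one half of the limiting variance, Le Cam's first lemma gives the mutual contiguity of $\mathbb{P}_{\bs{\vartheta}_n}^n$ and $\mathbb{P}_{\bs{\vartheta}^*}^n$, which is part (a). For part (b), contiguity ensures the $o_p(1)$ remainder is still $o_p(1)$ under $\mathbb{P}_{\bs{\vartheta}_n}^n$, so the expansion in the statement holds; and applying Le Cam's third lemma to $(\nu_n(\bs{s}(\bs{x},\bs{z})),\log(d\mathbb{P}_{\bs{\vartheta}_n}^n/d\mathbb{P}_{\bs{\vartheta}^*}^n))$, whose asymptotic covariance between the two blocks under $\mathbb{P}_{\bs{\vartheta}^*}^n$ is $E[\bs{s}(\bs{X},\bs{Z})(\bs{h}\t\bs{s}(\bs{X},\bs{Z}))]=\bs{\mathcal{I}}\bs{h}$, shows that $\nu_n(\bs{s}(\bs{x},\bs{z}))\rightarrow_d N(\bs{\mathcal{I}}\bs{h},\bs{\mathcal{I}})$ under $\mathbb{P}_{\bs{\vartheta}_n}^n$.

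The only delicate point, and the step I expect to be the main obstacle, is justifying the evaluation of Lemma \ref{P-quadratic}(b) along the sequence $\bs{\vartheta}_n$ with a drifting mixing weight $\alpha_n$. This requires checking that $\sqrt{n}\bs{t}(\bs{\psi}_n,\alpha_n)$ stays bounded, which follows from (\ref{local-alternative}) once one observes that $\bs{t}_{\bs\lambda}(\bs{\lambda}_n,\alpha_n)$ depends on $\alpha_n$ only through the bounded factors $\alpha_n(1-\alpha_n)$ and $b(\alpha_n)$, and that neither the score $\bs{s}(\bs{x},\bs{z})$ nor the information $\bs{\mathcal{I}}$ in (\ref{score_defn}) depends on $\alpha$, so the approximation is insensitive to whether $\alpha$ is held at its limit or set equal to $\alpha_n$. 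Once membership in $\mathcal{N}_{c/\sqrt{n}}$ and this $\alpha$-insensitivity are in hand, the remainder is a routine application of Le Cam's first and third lemmas (see, e.g., \citet{lehmannromano05book}).
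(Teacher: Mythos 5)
Your proof is correct and follows essentially the same route as the paper: establish the quadratic expansion of $\log(d\mathbb{P}_{\bs{\vartheta}_n}^n/d\mathbb{P}_{\bs{\vartheta}^*}^n)$ along $\bs{\vartheta}_n\in\mathcal{N}_{c/\sqrt{n}}$, then apply Le Cam's first and third lemmas. The one small point is that the expansion you actually need is that of Lemma \ref{Ln_thm1} (valid on all of $\mathcal{N}_{c/\sqrt{n}}$) rather than Lemma \ref{P-quadratic}(b) (valid only on $A_{n\varepsilon}(\delta)$, membership in which is not guaranteed for a deterministic drifting sequence), but you flag this equivalence yourself and it is exactly the lemma the paper invokes.
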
 
\begin{proof}
Observe that Lemma \ref{Ln_thm1} holds under $\mathbb{P}_{\bs\vartheta^*}^n$ under the assumptions of Lemma \ref{P-quadratic}. Because $\bs{\vartheta}_{n} = (\bs{\eta}_n\t,\bs\lambda_n\t,\alpha_n)\t\in \mathcal{N}_{c/\sqrt{n}}$ by choosing $c> |\bs h|$, it follows from Lemma \ref{Ln_thm1} that
\begin{equation}\label{expansion}
 \left| \log \frac{d\mathbb{P}_{\bs{\vartheta}_n}^n}{d \mathbb{P}_{\bs{\vartheta}^*}^n}- \bs{h}\t \nu_n(\bs{s}(\bs{x},\bs{z})) - \bs{h}\t \bs{\mathcal{I}} \bs{h}/2 \right|=o_{\mathbb{P}_{\bs\vartheta^*}^n}(1).
\end{equation}
Furthermore, $\nu_n(\bs{s}(\bs{x},\bs{z})) \rightarrow_d \bs{G} \sim N(\bs{0},\bs{\mathcal{I}})$ under $\mathbb{P}_{\bs\vartheta^*}^n$. Therefore, $d\mathbb{P}_{\bs\vartheta_n}^n / d \mathbb{P}_{\bs\vartheta^*}^n$ converges in distribution under $\mathbb{P}_{\vartheta^*}^n$ to $\exp\left( N( \mu,\sigma^2) \right)$ with $\mu=-(1/2) \bs{h}\t \bs{\mathcal{I}} \bs{h}$ and $\sigma^2= \bs{h}\t \bs{\mathcal{I}} \bs{h}$, so that $E(\exp\left( N( \mu,\sigma^2) \right))=1$. Consequently, part (a) follows from Le Cam's first lemma (see, e.g., Corollary 12.3.1 of \citet{lehmannromano05book}). Part (b) follows from Le Cam's third lemma (see, e.g., Corollary 12.3.2 of \citet{lehmannromano05book}) because part (a) and (\ref{expansion}) imply that
\[
\begin{pmatrix}
\nu_n(\bs{s}(\bs{x},\bs{z})) \\
\log\frac{d\mathbb{P}_{\bs{\vartheta}_n}^n}{d \mathbb{P}_{\bs{\vartheta}^*}^n}
\end{pmatrix}
 \overset{d}{\rightarrow} 
N\left(
\begin{pmatrix}
0\\
-\frac{1}{2} \bs{h}\t \bs{\mathcal{I}} \bs{h}
\end{pmatrix}, 
\begin{pmatrix}
 \bs{\mathcal{I}}& \bs{\mathcal{I}}\bs{h}\\
\bs{h}\t \bs{\mathcal{I}}&\bs{h}\t \bs{\mathcal{I}} \bs{h}
\end{pmatrix}
\right)\quad\text{under $\mathbb{P}_{\bs{\vartheta}^*}^n$.} 
\] 
\end{proof}

\begin{lemma} \label{lemma_btsp} 
Suppose that the assumptions of Proposition \ref{P-LR-N1} hold. Let $\bf{C}_{\bs\eta}$ be a set of sequences $\{\bs\eta_n\}$ satisfying $\sqrt{n}(\bs\eta_n - \bs\eta^*) \to \bs h_{\bs\eta}$ for some finite $\bs h_{\bs\eta}$. Let $\mathbb{P}^n_{\bs{\eta}_n} := \prod_{i=1}^n g(X_i|Z_i;\bs{\eta}_n,\bs{0},\alpha)$ denote the probability measure under $\bs{\eta}_n$ with $\bs{\lambda}_n=\bs{0}$. Then, for every sequence $\{\bs\eta_n\} \in \bf{C}_{\bs\eta}$, the LRTS under $\{\mathbb{P}^n_{\bs\eta_n}\}$ converges in distribution to $\max_{ j \in \{1,2\}} \left( (\widehat{\bs{t}}_{\bs{\lambda}}^{j})\t \bs{\mathcal{I}}_{\bs{\lambda}.\bs{\eta}} \widehat{\bs{t}}_{\bs{\lambda}}^{j} \right)$ given in Propositions \ref{P-LR-N1}.
\end{lemma}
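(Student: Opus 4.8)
The plan is to reduce the claim to the proof of Proposition~\ref{P-LR-N1} via mutual contiguity, exploiting the fact that a local shift in $\bs\eta$ enters only the nuisance directions and is annihilated when those directions are partialled out, so that the limiting distribution is unchanged.

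First I would note that the one-component reparameterized density $g(\bs x|\bs z;\bs\eta,\bs0,\alpha)$ does not depend on $\alpha$, so for any fixed $\alpha\in[\epsilon_1,1-\epsilon_1]$ the measure $\mathbb{P}^n_{\bs\eta_n}$ equals $\mathbb{P}^n_{\bs\vartheta_n}$ with $\bs\vartheta_n=((\bs\eta_n)\t,\bs0\t,\alpha)\t$; this is a contiguous local sequence of the form (\ref{local-alternative}) with $\bs\lambda_n=\bs0$, hence $\bs h_{\bs\lambda}=\sqrt n\,\bs t_{\bs\lambda}(\bs0,\alpha)=\bs0$ and local parameter $\bs h=(\bs h_{\bs\eta}\t,\bs0\t)\t$. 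I would then invoke Lemma~\ref{P-LAN}, whose proof only uses the uniform quadratic expansion of Lemma~\ref{Ln_thm1} over $\mathcal N_{c/\sqrt n}$ (with $c>|\bs h|$) and therefore remains valid for any finite $\bs h_{\bs\eta}$, as already observed in the proof of Proposition~\ref{P-bootstrap}. This yields mutual contiguity of $\mathbb{P}^n_{\bs\eta_n}$ and $\mathbb{P}^n_{\bs\vartheta^*}$ and, under $\mathbb{P}^n_{\bs\eta_n}$, $\nu_n(\bs s(\bs x,\bs z))\overset{d}{\to}N(\bs{\mathcal I}\bs h,\bs{\mathcal I})$.

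Next I would transfer the argument in the proof of Proposition~\ref{P-LR-N1} to $\mathbb{P}^n_{\bs\eta_n}$. By contiguity every $o_p(1)$ and $O_p(1)$ statement valid under $\mathbb{P}^n_{\bs\vartheta^*}$ remains valid under $\mathbb{P}^n_{\bs\eta_n}$; in particular the consistency of $\widehat{\bs\psi}$ and of the one-component MLE, Lemma~\ref{P-quadratic}, and the reduction carried out there all go through, giving $LR_n(\epsilon_1)=\max_{j\in\{1,2\}}C_n(\sqrt n\,\widetilde{\bs t}^j_{\bs\lambda})+o_p(1)$ under $\mathbb{P}^n_{\bs\eta_n}$, where $C_n$, $\widetilde\Lambda^j_{\bs\lambda}$ and $\widetilde{\bs t}^j_{\bs\lambda}$ are as in that proof and $C_n$ depends on the data only through $\bs G_{\bs\lambda.\bs\eta n}=\bs G_{\bs\lambda n}-\bs{\mathcal I}_{\bs\lambda\bs\eta}\bs{\mathcal I}_{\bs\eta}^{-1}\bs G_{\bs\eta n}$ and the fixed matrix $\bs{\mathcal I}_{\bs\lambda.\bs\eta}$. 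The only computation that differs from the null case is the limit of $\bs G_{\bs\lambda.\bs\eta n}$: since $\bs h_{\bs\lambda}=\bs0$, the limiting means of $\bs G_{\bs\eta n}$ and $\bs G_{\bs\lambda n}$ are $\bs{\mathcal I}_{\bs\eta}\bs h_{\bs\eta}$ and $\bs{\mathcal I}_{\bs\lambda\bs\eta}\bs h_{\bs\eta}$, so the limiting mean of $\bs G_{\bs\lambda.\bs\eta n}$ is $\bs{\mathcal I}_{\bs\lambda\bs\eta}\bs h_{\bs\eta}-\bs{\mathcal I}_{\bs\lambda\bs\eta}\bs{\mathcal I}_{\bs\eta}^{-1}\bs{\mathcal I}_{\bs\eta}\bs h_{\bs\eta}=\bs0$, i.e.\ $\bs G_{\bs\lambda.\bs\eta n}\overset{d}{\to}N(\bs0,\bs{\mathcal I}_{\bs\lambda.\bs\eta})$ just as under $\mathbb{P}^n_{\bs\vartheta^*}$. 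Feeding this into the application of Theorem~3(c) of \citet{andrews99em} in the last paragraph of the proof of Proposition~\ref{P-LR-N1} gives the stated limit $\max_{j\in\{1,2\}}(\widehat{\bs t}^j_{\bs\lambda})\t\bs{\mathcal I}_{\bs\lambda.\bs\eta}\widehat{\bs t}^j_{\bs\lambda}$ under $\mathbb{P}^n_{\bs\eta_n}$.

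The main obstacle — indeed the only delicate point — is the bookkeeping behind the contiguity transfer: one must verify that each step of the proof of Proposition~\ref{P-LR-N1} is expressible as an $o_p(1)$ or $O_p(1)$ assertion about a data functional, and that the structural ingredients it uses (interiority of $\bs\eta^*$ in $\Theta_{\bs\eta}$, so that the feasible set of $\sqrt n(\bs\eta-\bs\eta_n)$ still fills $\mathbb R^{d_\eta}$; nonsingularity of $\bs{\mathcal I}$; the fact that $\widetilde\Lambda^j_{\bs\lambda}$ is locally the cone $\Lambda^j_{\bs\lambda}$) are unaffected by replacing the fixed $\bs\eta^*$ with the drifting $\bs\eta_n\to\bs\eta^*$. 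Once this is checked, nothing beyond the one-line partialling-out computation above is needed.
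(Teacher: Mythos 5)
Your proposal is correct and follows essentially the same route as the paper's proof: invoke Lemma \ref{P-LAN} (contiguity plus the shifted Gaussian limit $\nu_n(\bs{s}) \rightarrow_d N(\bs{\mathcal{I}}\bs{h},\bs{\mathcal{I}})$ with $\bs{h}=(\bs{h}_{\bs\eta}\t,\bs{0}\t)\t$), rerun the proof of Proposition \ref{P-LR-N1}, and observe that the mean shift cancels in $\bs{G}_{\bs{\lambda}.\bs{\eta} n}$ because $\bs{\mathcal{I}}_{\bs{\lambda\eta}}\bs{h}_{\bs\eta}-\bs{\mathcal{I}}_{\bs{\lambda\eta}}\bs{\mathcal{I}}_{\bs{\eta}}^{-1}\bs{\mathcal{I}}_{\bs{\eta}}\bs{h}_{\bs\eta}=\bs{0}$, which is exactly the paper's computation $\bs{G}_{\bs{\lambda}.\bs{\eta} n}^{\bs h}=\bs{G}_{\bs{\lambda} n}-\bs{\mathcal{I}}_{\bs{\lambda\eta}}\bs{\mathcal{I}}_{\bs{\eta}}^{-1}\bs{G}_{\bs{\eta} n}$. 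The only cosmetic difference is that the paper also explicitly notes the analogous expansion of the one-component log-likelihood under $\mathbb{P}^n_{\bs\eta_n}$, which your appeal to contiguity and consistency of the one-component MLE covers.
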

\begin{proof}
Observe that $\bs{\vartheta}_n:=(\bs\eta_n\t,\bs\lambda_n\t,\alpha_n)\t = ( (\bs\eta^*+\bs{h}_{\bs\eta}/\sqrt{n})\t,\bs{0}\t,\alpha)$ satisfies the assumptions of Lemma \ref{P-LAN}. Therefore, Lemma \ref{P-LAN} holds under $\bs{\vartheta}_n$ with $\nu_n(\bs{s}(\bs{x},\bs{z})) \overset{d}{\rightarrow} N(\bs{\mathcal{I}} \bs{h}, \bs{\mathcal{I}})$ with $\bs{h}=(\bs{h}_{\bs\eta}\t,\bs{0}\t)\t$ under $\mathbb{P}_{\bs\vartheta_n}^n$. Furthermore, the log-likelihood function of the one-component model admits a similar expansion, and $\log (d \mathbb{P}_{\bs\eta_n}^n / d \mathbb{P}_{\bs\eta^*}^n ) = \bs{h}_{\bs\eta}\t \nu_n (\bs{s}_{\bs\eta}(\bs{x},\bs{z})) - (1/2)\bs{h}_{\bs\eta}\t \bs{\mathcal{I}}_{\bs \eta} \bs{h}_{\bs\eta} + o_p(1)$ holds under $\mathbb{P}_{\bs\eta_n}^n$. Therefore, the proof of Proposition \ref{P-LR-N1} goes through by replacing $\bs{G}_{n}$ with $\bs{G}_{n}^{\bs h} = \left[\begin{smallmatrix} \bs{G}_{\bs\eta n}^{\bs h} \\ \bs{G}_{\bs\lambda n}^{\bs h} \end{smallmatrix} \right] := \bs G_{ n} + \bs{\mathcal{I}}\bs h$. In view of $\bs{G}_{\bs \eta n}^{\bs h} = \bs G_{\bs \eta n} + \bs{\mathcal{I}}_{\bs\eta}\bs{h}_{\bs \eta}$ and $\bs{G}_{\bs\lambda n}^{\bs h} = \bs{G}_{\bs\lambda n} + \bs{\mathcal{I}}_{\bs\lambda \bs \eta } \bs h_{\bs\eta}$, we have $\bs G_{\bs \lambda.\bs \eta n}^{\bs h} := \bs G_{\bs \lambda n}^{\bs h} - \bs{\mathcal{I}}_{\bs \lambda \bs \eta}\bs{\mathcal{I}}_{\bs \eta}^{-1} \bs G_{\bs \eta n}^{\bs h} =\bs G_{\bs \lambda n} - \bs{\mathcal{I}}_{\bs \lambda \bs \eta }\bs{\mathcal{I}}_{\bs \eta}^{-1}\bs G_{\bs \eta n} =\bs G_{\bs \lambda n}$. Therefore, the asymptotic distribution of the LRTS under $\mathbb{P}_{\bs \eta_n}^n$ is the same as that under $\mathbb{P}_{\bs \eta^*}^n$, and the stated result follows.
\end{proof}

\begin{lemma}\label{s_der_alpha}
For $h(\bs{x}|\bs{z};\bs{\phi},\bs{\lambda})$ and $\bs{t}(\bs{\phi},\bs{\lambda})$ defined in (\ref{loglike-homo-2}) and (\ref{tpsi_defn-homo-2}), the following holds:
\begin{align*}
\text{(A)} \quad & \nabla_{\alpha} h(\bs{y};\bs{\phi}^*,\bs{\lambda}) = f_v^*(\bs{\lambda}) - f_v^* - \nabla_{ \bs{\mu}\t} f_v^* \bs{\lambda} - \nabla_{ \bs{v}\t} f_v^* \bs{\lambda}_{\bs{\mu}^2},\\
\text{(B)} \quad & \nabla_{\alpha^2} h(\bs{y};\bs{\phi},\bs{\lambda}) = \bs{\xi}(\bs{y};\bs{\vartheta}_2) O(|\bs{\lambda}|^3), 
\end{align*}
where $\sup_{\bs{\vartheta}_2 } |\bs{\xi}(\bs{y};\bs{\vartheta}_2)| \leq \sup_{\bs{\vartheta}_2 } |\bs{v}(\bs{y};\bs{\vartheta}_2)|$ with $\bs{v}(\bs{y};\bs{\vartheta}_2)$ defined in (\ref{v_defn-homo-2}) in the proof of Lemma \ref{P-quadratic-homo-2}, and the domain of $\bs{\vartheta}_2$ is such that $\bs{\phi} \in \Theta_{\bs{\eta}} \times [0, 3/4]$.
\end{lemma}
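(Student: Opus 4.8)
The plan is to treat part (A) as a short direct computation and to put the real work into part (B), where the point is to show that two derivatives in $\alpha$ do not destroy the $|\bs{\lambda}|^3$ smallness that the reparameterization (\ref{repara2-homo}) builds into $h$.

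For part (A), I would differentiate the two-term expression (\ref{loglike-homo-2}) once with respect to $\alpha$ with $\bs{\eta}$ and $\bs{\lambda}$ held fixed, and then set $\bs{\phi}=\bs{\phi}^*=((\bs{\eta}^*)\t,0)\t$. The product rule yields four groups of terms. Differentiating the mixing weights $\alpha$ and $1-\alpha$ contributes $f_v(\bs{x}|\bs{z};\bs{\gamma}^*,\bs{\mu}^*+\bs{\lambda},\bs{v}^*)-f_v(\bs{x}|\bs{z};\bs{\gamma}^*,\bs{\mu}^*,\bs{v}^*)=f_v^*(\bs{\lambda})-f_v^*$ at $\bs{\phi}^*$; the chain-rule term attached to the first component carries an explicit factor $\alpha$ and hence vanishes at $\alpha=0$; and the chain-rule term attached to the second component, multiplied by $1-\alpha=1$, equals $\nabla_{\bs{\mu}\t}f_v^*(-\bs{\lambda})+\nabla_{\bs{v}\t}f_v^*(-\bs{w}(\bs{\lambda}\bs{\lambda}\t))$ because $\tfrac{d}{d\alpha}(\bs{\nu}_{\bs\mu}-\alpha\bs{\lambda})|_{\alpha=0}=-\bs{\lambda}$ and $\tfrac{d}{d\alpha}(\bs{\nu}_{\bs{v}}-\alpha(1-\alpha)\bs{w}(\bs{\lambda}\bs{\lambda}\t))|_{\alpha=0}=-\bs{w}(\bs{\lambda}\bs{\lambda}\t)$. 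Adding these and recalling that $\bs{\lambda}_{\bs{\mu}^2}=\bs{w}(\bs{\lambda}\bs{\lambda}\t)$ gives exactly the expression in (A).

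For part (B), the key structural fact is that, for every fixed $\alpha$ and every $\bs{\eta}$ (not merely $\bs{\eta}^*$), the map $\bs{\lambda}\mapsto h(\bs{x}|\bs{z};(\bs{\eta}\t,\alpha)\t,\bs{\lambda})$ agrees with $f_v(\bs{x}|\bs{z};\bs{\gamma},\bs{\nu}_{\bs\mu},\bs{\nu}_{\bs{v}})$ to second order in $\bs{\lambda}$, i.e.
\[
h(\bs{x}|\bs{z};(\bs{\eta}\t,\alpha)\t,\bs{0})=f_v(\bs{x}|\bs{z};\bs{\gamma},\bs{\nu}_{\bs\mu},\bs{\nu}_{\bs{v}}),\qquad \nabla_{\bs{\lambda}}h|_{\bs{\lambda}=\bs{0}}=\bs{0},\qquad \nabla_{\bs{\lambda}^{\otimes 2}}h|_{\bs{\lambda}=\bs{0}}=\bs{0}.
\]
The first identity is immediate, and the other two follow from the composite-function differentiation carried out in the proof of Lemma \ref{dv3-homo}(a) together with the identity $\nabla_{v_{t_1t_2}}f_v=\tfrac12\nabla_{\mu_{t_1}\mu_{t_2}}f_v$ of Lemma \ref{mv_derivative}; that computation never uses the particular values of $(\bs{\nu}_{\bs\mu},\bs{\nu}_{\bs{v}},\alpha)$. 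Since $f_v(\bs{x}|\bs{z};\bs{\gamma},\bs{\nu}_{\bs\mu},\bs{\nu}_{\bs{v}})$ and the zero right-hand sides above are constant in $\alpha$, applying $\nabla_{\alpha^2}$ gives $\nabla_{\alpha^2}h|_{\bs{\lambda}=\bs{0}}=0$, $\nabla_{\bs{\lambda}}\nabla_{\alpha^2}h|_{\bs{\lambda}=\bs{0}}=\bs{0}$ and $\nabla_{\bs{\lambda}^{\otimes 2}}\nabla_{\alpha^2}h|_{\bs{\lambda}=\bs{0}}=\bs{0}$. A third-order Taylor expansion of $\bs{\lambda}\mapsto\nabla_{\alpha^2}h(\bs{x}|\bs{z};(\bs{\eta}\t,\alpha)\t,\bs{\lambda})$ about $\bs{\lambda}=\bs{0}$ then represents it as $|\bs{\lambda}|^3$ times a weighted average over the Taylor segment of the mixed derivatives $\nabla_{\alpha^2}\nabla_{\bs{\lambda}^{\otimes 3}}h$ at interior points. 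Finally, because the reparameterization maps are polynomial in $(\alpha,\bs{\lambda})$ and affine in $\bs{\eta}$, the chain rule expresses each such mixed derivative of $h$ as a finite linear combination of mean/variance derivatives of $f_v$ of bounded order, evaluated at arguments that remain in the compact parameter set (using Assumption \ref{assn_compact}), with coefficients that are polynomials in $(\alpha,\bs{\lambda})$ and hence bounded on the domain; so the whole quantity is dominated, after division by $f_v^*$, by a constant times $\sup_{\bs{\vartheta}_2}|\bs{v}(\bs{y};\bs{\vartheta}_2)|$ with $\bs{v}$ as in (\ref{v_defn-homo-2}). Taking $\bs{\xi}(\bs{y};\bs{\vartheta}_2)$ equal to that dominating function yields (B), which is exactly what is needed to control the $\alpha^2$-term of the remainder $r(\bs{y};\bs{\phi},\bs{\lambda})$ in the proof of Lemma \ref{P-quadratic-homo-2} since $\alpha^2|\bs{\lambda}|^3=\alpha\cdot(\alpha|\bs{\lambda}|^3)=O(|\bs{\phi}-\bs{\phi}^*|\,|\bs{t}(\bs{\phi},\bs{\lambda})|)$, as required for Assumption \ref{assn_expansion}(c)(d) via (\ref{gr_condition}).

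The main obstacle is verifying that the order-$0$, order-$1$ and order-$2$ $\bs{\lambda}$-derivatives of $h$ genuinely lose all $\alpha$-dependence before $\nabla_{\alpha^2}$ is applied: this is the only reason the power gained is $3$ rather than $2$, and it rests on the exact cancellation produced by $\nabla_{v_{ij}}f_v=\tfrac12\nabla_{\mu_i\mu_j}f_v$ that the variance part of (\ref{repara2-homo}) is designed to create. The remaining step — bounding the fifth-order mixed derivatives of the composite $h$ by the envelope $\sup|\bs{v}|$ — is routine once one observes that all arguments stay in a compact set and normal-density derivatives of any fixed order have all moments, but it does require some care to keep the bookkeeping of the chain rule through the three nested reparameterization maps under control.
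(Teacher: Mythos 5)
Your part (A) is exactly the paper's computation: differentiate $h=\alpha(f_v^1-f_v^2)+f_v^2$ once in $\alpha$, note the term carrying an explicit factor $\alpha$ dies at $\alpha=0$, and evaluate the remaining chain-rule terms at $\bs{\phi}^*$. For part (B) you take a genuinely different but valid route. The paper expands in $\alpha$ about $\alpha=0$, writing $\nabla_{\alpha^2}h=\nabla_{\alpha^2}h|_{\alpha=0}+\alpha\,\nabla_{\alpha^3}h|_{\bar\alpha}$; it then computes the $\alpha=0$ term explicitly, uses the identity $2\nabla_{\bs{v}\t}f_v(\bs{\lambda})\bs{w}(\bs{\lambda}\bs{\lambda}\t)=\nabla_{(\bs{\mu}^{\otimes 2})\t}f_v(\bs{\lambda})\bs{\lambda}^{\otimes 2}$ to rewrite it as increments that are each $O(|\bs{\lambda}|^3)$ by one-dimensional Taylor expansions, and disposes of the $\nabla_{\alpha^3}$ remainder by noting every $\alpha$-derivative of $f_v^j$ brings down at least one power of $\bs{\lambda}$. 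You instead expand in $\bs{\lambda}$ about $\bs{0}$: your key observation — that $h|_{\bs{\lambda}=\bs{0}}$, $\nabla_{\bs{\lambda}}h|_{\bs{\lambda}=\bs{0}}$ and $\nabla_{\bs{\lambda}^{\otimes 2}}h|_{\bs{\lambda}=\bs{0}}$ are free of $\alpha$ for \emph{every} $(\bs{\eta},\alpha)$, the latter two vanishing identically because the cancellation in the proof of Lemma \ref{dv3-homo}(a) uses only $\nabla_{v_{ij}}f_v=\tfrac12\nabla_{\mu_i\mu_j}f_v$ and not $\bs{\psi}=\bs{\psi}^*$ — is correct (one checks the second-derivative coefficients are $\alpha(1-\alpha)(1-2\alpha)$ and $\alpha(1-\alpha)(2\alpha-1)$, which cancel), so $\nabla_{\alpha^2}$ annihilates the first three $\bs{\lambda}$-Taylor coefficients and the third-order remainder gives the $O(|\bs{\lambda}|^3)$ factor directly. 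Both arguments hinge on the same structural cancellation; yours is more conceptual and explains in one stroke why the power is $3$ rather than $2$, while the paper's keeps the surviving terms fully explicit. The price of your route is that the Taylor remainder involves the mixed derivatives $\nabla_{\alpha^2}\nabla_{\bs{\lambda}^{\otimes 3}}h$, which after the chain rule reach higher-order $\bs{\mu}$-derivatives of $f_v$ than the paper's explicit $\alpha=0$ computation requires; this is harmless for normal densities on a compact parameter set, but it makes the final domination by the specific envelope $\bs{v}(\bs{y};\bs{\vartheta}_2)$ of (\ref{v_defn-homo-2}) a little more delicate — a step on which the paper's own proof is, admittedly, equally terse.
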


\begin{proof}
Define
\begin{align*}
f_v^1 &:=f_v\left(\bs{x}\middle|\bs{z};\bs{\gamma}, \bs{\nu}_{\bs\mu}+(1-\alpha)\bs{\lambda}, \bs{\nu}_{\bs{v}} - \alpha(1-\alpha) \bs{w}(\bs{\lambda}\bs{\lambda}\t)\right) , \\
f_v^2 &:= f_v \left(\bs{x}\middle|\bs{z};\bs{\gamma}, \bs{\nu}_{\bs\mu} -\alpha\bs{\lambda},\bs{\nu}_{\bs{v}} - \alpha(1-\alpha) \bs{w}(\bs{\lambda}\bs{\lambda}\t) \right),
\end{align*}
and define $\nabla f_v^1$ and $\nabla f_v^2$ analogously. With this definition, we have $h(\bs{y};\bs{\phi},\bs{\lambda})= \alpha ( f_v^1 - f_v^2 ) + f_v ^2$.
First, we collect the derivatives of $f_v^1$ and $f_v^2$. Noting that $\nabla_\alpha ( - \alpha(1-\alpha) ) = 2\alpha -1$, we obtain, for $j=1,2$,
\begin{equation} \label{g_der_alpha}
\begin{aligned}
\nabla_\alpha f_v^j & = - \nabla_{\bs{\mu}\t} f_v^j \bs{\lambda} + \nabla_{\bs{v}\t} f_v^j (2\alpha-1) \bs{w}(\bs{\lambda}\bs{\lambda}\t), \\
\nabla_{\alpha^2} f_v^j & = \nabla_{(\bs{\mu}^{\otimes 2})\t} f_v^j \bs{\lambda}^{\otimes 2} - 2 \nabla_{(\bs{\mu} \otimes \bs{v})\t} f_v^j (2\alpha -1) (\bs{\lambda} \otimes \bs{w}(\bs{\lambda}\bs{\lambda}\t)) \\
& \quad + \nabla_{(\bs{v}^{\otimes 2})\t} f_v^j (2\alpha -1)^2 \bs{w}(\bs{\lambda}\bs{\lambda}\t)^{\otimes 2} + \nabla_{\bs{v}\t} f_v^j 2 \bs{w}(\bs{\lambda}\bs{\lambda}\t).
\end{aligned}
\end{equation}

Part (A) follows from differentiating $h(\bs{y};\bs{\phi},\bs{\lambda})= \alpha ( f_v^1 - f_v^2 ) + f_v ^2$ with respect to $\alpha$, applying (\ref{g_der_alpha}), evaluating it at $(\bs{\eta} = \bs{\eta}^*, \alpha=0)$, and noting that $\bs{w}(\bs{\lambda}\bs{\lambda}\t) = \bs{\lambda}_{\bs{\mu}^2}$.

For part (B), expanding $\nabla_{\alpha^2} h(\bs{y};\bs{\phi},\bs{\lambda})$ around $\alpha=0$ gives
\begin{equation} \label{h_del_alpha_3}
\nabla_{\alpha^2} h(\bs{y};\bs{\phi},\bs{\lambda}) = \nabla_{\alpha^2} h(\bs{y};(\bs{\eta}\t,0)\t,\bs{\lambda}) + \nabla_{\alpha^3} h(\bs{y};(\bs{\eta}\t,\bar{\alpha})\t,\bs{\lambda}) \alpha. 
\end{equation}
Define $f_v(\bs{\lambda}):=f_v \left(\bs{x}\middle|\bs{z};\bs{\gamma}, \bs{\nu}_{\bs{\mu}} + \bs{\lambda},\bs{\nu}_{\bs{v}} \right)$. For the first term on the right hand side of (\ref{h_del_alpha_3}), a direct calculation and $2 \nabla_{\bs{v}\t} f_v \left( \bs{\lambda} \right) \bs{w}(\bs{\lambda}\bs{\lambda}\t) = \nabla_{(\bs{\mu}^{\otimes 2})\t} f_v \left( \bs{\lambda} \right) \bs{\lambda}^{\otimes 2}$ gives
\begin{align*}
& \nabla_{\alpha^2} h(\bs{y};(\bs{\eta}\t,0)\t,\bs{\lambda}) \\
& = - 2 \left[ \nabla_{\bs{\mu}\t} f_v \left(\bs{\lambda} \right) \bs{\lambda} - \nabla_{\bs{\mu}\t} f_v \left(\bs{0}\right) \bs{\lambda} -\nabla_{(\bs{\mu}^{\otimes 2})\t} f_v \left( \bs{0} \right) \bs{\lambda}^{\otimes 2} \right] - 2 \left[ \nabla_{\bs{v}\t} f_v \left(\bs{\lambda} \right) \bs{\lambda}_{\bs{\mu}^2} - \nabla_{\bs{v}\t} f_v \left(\bs{0} \right) \bs{\lambda}_{\bs{\mu}^2} \right] \\
& \quad - 2 \nabla_{(\bs{\mu} \otimes \bs{v})\t} f_v \left(\bs{0}\right) (\bs{\lambda} \otimes \bs{w}(\bs{\lambda}\bs{\lambda}\t)) + \nabla_{(\bs{v}^{\otimes 2})\t} f_v \left(\bs{0}\right) \bs{w}(\bs{\lambda}\bs{\lambda}\t)^{\otimes 2}.
\end{align*}
Applying a Taylor expansion to the terms in the brackets, the right hand side is written as $\nabla_{(\bs{\mu}^{\otimes 3})\t} f_v(\overline{\bs{\lambda}})O(|\bs{\lambda}|^3) + \nabla_{(\bs{\mu}^{\otimes 3})\t} f_v(\bs{\lambda})O(|\bs{\lambda}|^3)+ \nabla_{(\bs{\mu}^{\otimes 4})\t} f_v(\bs{\lambda})O(|\bs{\lambda}|^4)$ with $\overline{\bs{\lambda}} \in (\bs{0},\bs{\lambda})$. Finally, it follows from a direct calculation in conjunction with (\ref{g_der_alpha}) that $\nabla_{\alpha^3} h(\bs{y};\bs{\psi},\bs{\lambda})$ is bounded by the product of the derivatives of $f_v \left(\bs{x}\middle|\bs{z};\bs{\gamma}, \bs{\mu},\bs{v} \right)$ and an $O(|\bs{\lambda}|^3)$ term, and the required result follows.
\end{proof}

\singlespace{ 
\bibliography{/Users/shimotsu/Dropbox/mv_normal/mvnormal}
}

\clearpage

\begin{table}[h]\caption{Type I errors (\%) of the EM test of $H_0:M=1$} \label{table1}
\centering
\small{
\begin{tabular}{cc|ccc|ccc}
\hline \hline
&& \multicolumn{3}{c|}{$n=200$} & \multicolumn{3}{c}{$n=400$} \\
&Level & $K=1$ & $K=2$ & $K=3$ &$K=1$ & $K=2$ & $K=3$  \\
\hline 
Model 1&10\% &9.95&10.15&10.20&9.65&9.85&9.80  \\ 
$a_n = n^{-1/2}$&5\% &4.05&3.75&4.10&5.35&5.35&5.45  \\
&1\%  &0.85&0.80&0.60&0.90&0.90&1.00
\\ \hline
Model 2&10\% &9.10&8.85&8.95&9.55&9.70&9.70  \\
$a_n = n^{-1/2}$&5\% & 4.35&4.25&4.35&4.50&4.60&4.40   \\
&1\% & 0.40&0.40&0.45&1.10&0.95&0.90 \\ \hline  \hline 
Model 1&10\% &8.30&8.25&8.25&8.95&8.95&8.90\\
$a_n = 1$&5\% &3.95&3.90&3.90&4.45&4.60&4.60\\
&1\%  &0.40&0.40&0.40&0.90&0.90&0.85\\\hline
Model 2&10\% &8.20&8.20&8.10&8.95&9.00&9.00\\
$a_n = 1$&5\% &3.75&3.70&3.75&4.60&4.65&4.70\\
&1\%  &0.60&0.60&0.60&1.20&1.20&1.20\\\hline
\end{tabular}
} \\ \smallskip
\begin{flushleft} 
Notes: Based on 2000 replications with 399 bootstrapped samples.  Model 1 is $\bs{\mu} = \begin{pmatrix} 0 \\ 0 \end{pmatrix}$,\ $\bs{\Sigma}=\begin{pmatrix} 1 & 0 \\ 0 & 1 \end{pmatrix}$.
Model 2 is $\bs{\mu} = \begin{pmatrix} 0 \\ 0 \end{pmatrix}$,\ $\bs{\Sigma}=\begin{pmatrix} 1 & 0.5 \\ 0.5 & 1 \end{pmatrix}$.
\end{flushleft}
\end{table}

\begin{table}[h]\caption{Parameter specifications for testing the power of the EM test of $H_0:M=1$} \label{table2}
\centering
\small{
\begin{tabular}{c|ccccc}
\hline \hline
& $\bs{\alpha}$ & $\bs{\mu}_1$ & $\bs{\mu}_2$ &  $\bs{\Sigma}_1$ & $\bs{\Sigma}_2$  \\ \hline
Model 1&  $\begin{pmatrix}  0.3 \\  0.7 \end{pmatrix}$ &$\begin{pmatrix} -0.5 \\ -0.5 \end{pmatrix}$ & $\begin{pmatrix}0.5 \\ 0.5 \end{pmatrix}$ &     $\begin{pmatrix} 1 & 0 \\ 0 & 1 \end{pmatrix}$ & $\begin{pmatrix} 1 & 0 \\ 0 & 1 \end{pmatrix}$  \\
Model 2& $\begin{pmatrix}  0.3 \\  0.7 \end{pmatrix}$ &$ \begin{pmatrix} -1 \\ -1 \end{pmatrix}$& $ \begin{pmatrix} 1 \\ 1 \end{pmatrix}$&  $ \begin{pmatrix} 1 & 0 \\ 0 & 1 \end{pmatrix}$&  $\begin{pmatrix} 1 & 0 \\ 0 & 1 \end{pmatrix}$\\
Model 3&$\begin{pmatrix}  0.3 \\  0.7 \end{pmatrix}$ & $\begin{pmatrix} -0.5 \\ -0.5 \end{pmatrix}$& $ \begin{pmatrix} 0.5 \\ 0.5 \end{pmatrix}$&  $ \begin{pmatrix} 2 & 0 \\ 0 & 2 \end{pmatrix}$&  $\begin{pmatrix} 1 & 0 \\ 0 & 1 \end{pmatrix}$\\  \hline
\end{tabular}
}
\end{table}

\begin{table}[h]\caption{Powers (\%) of the EM test of $H_0:M=1$} \label{table3}
\centering
\small{
\begin{tabular}{cc|ccc|ccc}
\hline \hline
&& \multicolumn{3}{c|}{$n=200$} & \multicolumn{3}{c}{$n=400$} \\
&Level & $K=1$ & $K=2$ & $K=3$ &$K=1$ & $K=2$ & $K=3$  \\ \hline
Model 1&10\% &13.20&13.20&13.25&18.95&19.00&19.10   \\
&5\%  &7.05&7.05&7.10&10.85&10.75&10.80  \\
&1\%  &1.25&1.25&1.25&2.50&2.50&2.50   \\\hline
Model 2&10\% &97.25&97.25&97.25&99.95&99.95&99.95   \\
&5\%  &94.55&94.50&94.50&99.90&99.90&99.90  \\
&1\%  &81.70&81.65&81.80&99.80&99.80&99.80    \\\hline
Model 3 &10\% &36.05&36.10&36.00&60.85&60.70&60.65   \\
&5\%  &23.15&23.20&23.35&48.35&48.35&48.30  \\
&1\%  &8.25&8.30&8.20&25.40&25.25&25.20   \\  \hline
\end{tabular}
}\\ \smallskip.\\
\begin{flushleft} 
Notes: Based on 2000 replications with 399 bootstrapped samples. We set $a_n = 1$. Model 1, 2, and 3 are given in Table \ref{table2}. 
\end{flushleft}
\end{table}

\begin{table}[h]\caption{Type I errors (\%) of the EM test of $H_0:M=2$}\label{table4}
\centering
\small{
\begin{tabular}{cc|ccc|ccc}
\hline \hline
&& \multicolumn{3}{c|}{$n=200$} & \multicolumn{3}{c}{$n=400$} \\
&Level & $K=1$ & $K=2$ & $K=3$ &$K=1$ & $K=2$ & $K=3$  \\  
\hline
Model 1&10\% &10.1&9.9&9.9&8.5&8.4&8.6 \\
$a_n = n^{-1/2}$&5\%  &6.4&6.1&5.9&3.7&3.8&3.7 \\
&1\%  &0.8&0.6&0.7&0.9&0.9&0.9\\\hline
Model 2&10\% &9.1&9.2&9.3&10.6&11.1&10.8 \\
$a_n = n^{-1/2}$&5\%  &4.2&4.3&4.2&5.1&5.2&5.3 \\
&1 \% &0.9&1.0&1.0&0.3&0.3&0.4\\
\hline\hline  
Model 1&10\% &8.5&8.3&8.5&9.4&9.4&9.5 \\
$a_n = 1$&5\%  &3.6&3.6&3.5&3.5&3.5&3.5  \\
&1\%  &1.0&1.0&0.9&0.8&0.8&0.7 \\\hline
Model 2&10\% &9.2&9.1&9.0&10.3&10.3&10.4 \\
$a_n = 1$&5\%  &4.0&4.1&4.0&5.2&5.1&4.9 \\
&1 \% &0.5&0.4&0.4&1.0&1.0&1.0 \\
\hline
\end{tabular}
}\\ \smallskip
\begin{flushleft} 
Based on 1000 replications with 199 bootstrapped samples.\\ 
Model 1:  $\bs{\alpha} = \begin{pmatrix} 0.7 \\ 0.3 \end{pmatrix}$,\ $\bs{\mu}_1 = \begin{pmatrix} -1 \\ -1 \end{pmatrix}$,\ $\bs{\mu}_2 = \begin{pmatrix} 1 \\ 1 \end{pmatrix}$,\  $\bs{\Sigma}_1 = \begin{pmatrix} 1 & 0 \\ 0 & 1 \end{pmatrix}$,\  $\bs{\Sigma}_2 = \begin{pmatrix} 1 & 0 \\ 0 & 1 \end{pmatrix}$.\\
Model 2: $\bs{\alpha} = \begin{pmatrix} 0.7 \\ 0.3 \end{pmatrix}$,\ $\bs{\mu}_1 = \begin{pmatrix} -2 \\ -2 \end{pmatrix}$,\ $\bs{\mu}_2 = \begin{pmatrix} 2 \\ 2 \end{pmatrix}$,\  $\bs{\Sigma}_1 = \begin{pmatrix} 1 & 0 \\ 0 & 1 \end{pmatrix}$,\  $\bs{\Sigma}_2 = \begin{pmatrix} 1 & 0 \\ 0 & 1 \end{pmatrix}$.
\end{flushleft}
\end{table}

\begin{table}[h]\caption{Parameter specifications for testing the power of the EM test of $H_0:M=2$} \label{table5}
\centering
\small{
\begin{tabular}{cccccccc}
\hline \hline
& $\bs{\alpha}$ & $\bs{\mu}_1$ & $\bs{\mu}_2$ & $\bs{\mu}_3$ & $\bs{\Sigma}_1$ & $\bs{\Sigma}_2$ & $\bs{\Sigma}_3$ \\ \hline
Model 1& $\begin{pmatrix}0.35 \\ 0.35 \\ 0.3 \end{pmatrix}$ & $\begin{pmatrix} -2 \\ -2 \end{pmatrix}$ & $\begin{pmatrix} 0 \\ 0 \end{pmatrix}$ &  $\begin{pmatrix} 2 \\ 2 \end{pmatrix}$ &  $\begin{pmatrix} 1 & 0 \\ 0 & 1 \end{pmatrix}$ & $\begin{pmatrix} 1 & 0 \\ 0 & 1 \end{pmatrix}$ & $\begin{pmatrix} 1 & 0 \\ 0 & 1 \end{pmatrix}$ \\
Model 2& $\begin{pmatrix}0.35 \\ 0.35 \\ 0.3\end{pmatrix}$ & $\begin{pmatrix} -2 \\ -2 \end{pmatrix}$ & $\begin{pmatrix} 0 \\ 0 \end{pmatrix}$ & $\begin{pmatrix} 2 \\ 2 \end{pmatrix}$ &  $\begin{pmatrix} 0.5 & 0 \\ 0 & 0.5 \end{pmatrix}$ & $\begin{pmatrix} 1 & 0 \\ 0 & 1 \end{pmatrix}$ & $\begin{pmatrix} 2 & 0 \\ 0 & 2 \end{pmatrix}$ \\ \hline
\end{tabular}
}
\end{table}

\begin{table}[h]\caption{Powers (\%) of the EM test of $H_0:M=2$} \label{table6}
\centering
\small{
\begin{tabular}{cc|ccc|ccc}
\hline \hline
&& \multicolumn{3}{c|}{$n=200$} & \multicolumn{3}{c}{$n=400$} \\
&Level & $K=1$ & $K=2$ & $K=3$ &$K=1$ & $K=2$ & $K=3$  \\  
\hline 
Model 1&10\% &28.0&28.4&28.9&80.2&80.2&80.3 \\
&5\%  &16.3&16.8&17.0&69.1&69.3&69.5   \\
&1\%  &4.9&5.1&5.4&41.0&41.2&41.3  \\ \hline
Model 2&10\% &58.9&59.3&59.3&94.2&93.8&94.2  \\
 &5\%  &45.1&46.0&46.4&90.5&90.6&90.7 \\
&1 \% &19.8&20.7&21.6&72.0&71.9&72.0 \\
\hline
\end{tabular}
}\\ \smallskip
Based on 1000 replications with 199 bootstrapped samples. Model 1 and 2 are given in Table \ref{table5}. We set $a_n=1$.\\ 
\end{table}

\begin{figure}[h] 
        \centering
        \caption{ Scatter plot of two physical measurements of   flea beetles} \label{figure1}\vspace{-0.4cm}
        \includegraphics[width=0.8\textwidth]{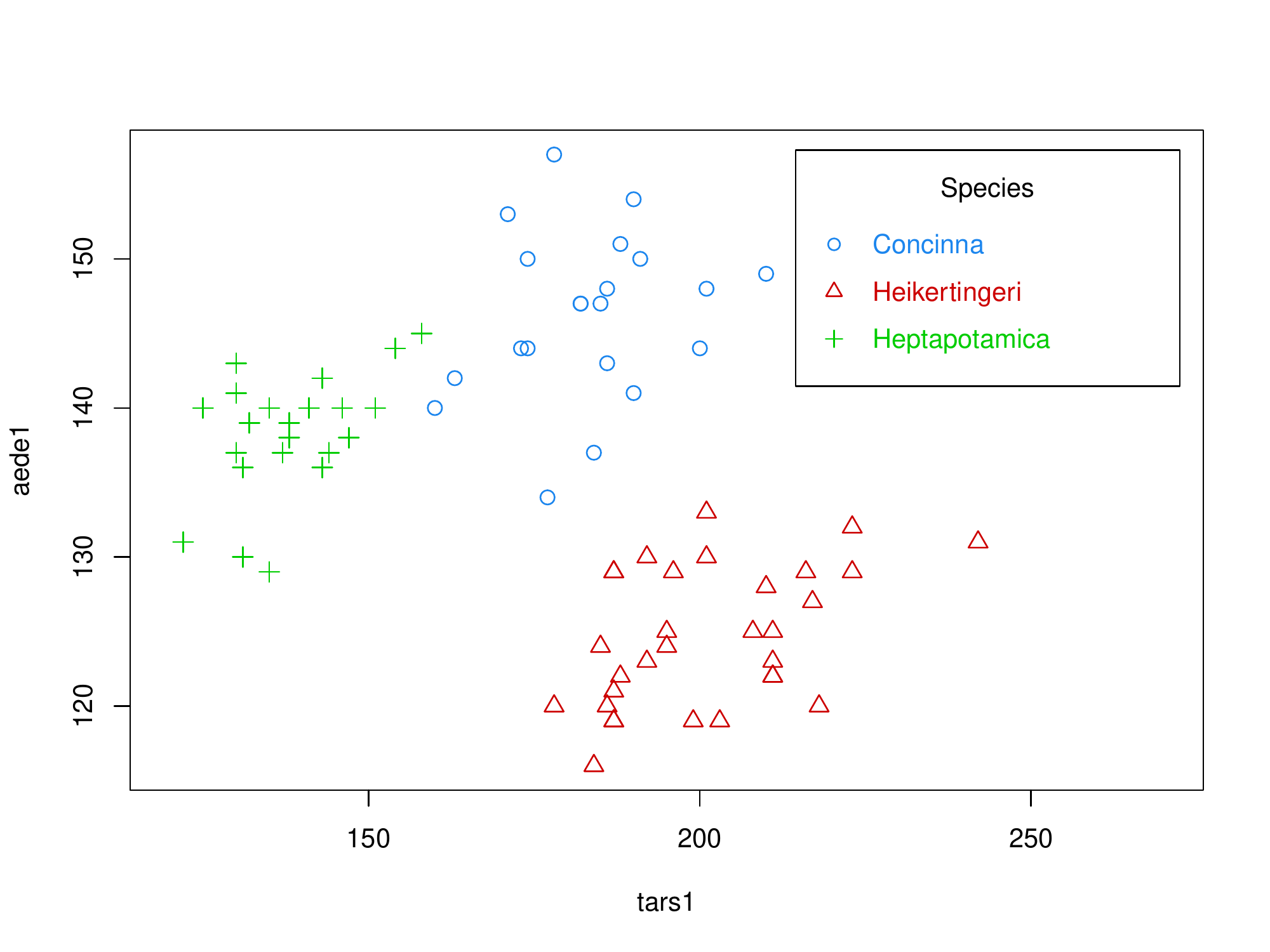}  
\end{figure}

\begin{table}[h]\caption{$P$-values of the EM test  in the flea beetles data} \label{table7}
\centering
\small{
\begin{tabular}{c|ccc|cc}
\hline \hline
&  \multicolumn{3}{c|}{p-values} & & \\  
  & $K=1$ & $K=2$ & $K=3$ & AIC & BIC\\ \hline
$H_0: M=1$ & 0.000 & 0.000 &0.000&\textbf{1129.5}&\textbf{1141.0}\\
$H_0:M=2$ & 0.010& 0.005& 0.005&1202.8&1228.1\\
$H_0:M=3$ &\textbf{0.337}&\textbf{0.347}&\textbf{0.362}&1191.8&1230.9\\
$H_0:M=4$ &0.588&0.633&0.618&1197.1&1250.1\\ \hline 
\hline
\end{tabular}
}\\ \smallskip
Notes: Based on  199 bootstrapped samples.  \\ 
\end{table}

\begin{table}[h]\caption{Parameter estimates from the three-component model for the flea beetles data} \label{table8}
\centering
\small{
\begin{tabular}{cccccccc}
\hline \hline
& $\widehat{\bs{\alpha}}$ & $\widehat{\bs{\mu}}_1$ & $\widehat{\bs{\mu}}_2$ & $\widehat{\bs{\mu}}_3$ & $\widehat{\bs{\Sigma}}_1$ & $\widehat{\bs{\Sigma}}_2$ & $\widehat{\bs{\Sigma}}_3$ \\ \hline
Mixture & $\begin{pmatrix} 0.312\\0.270\\0.418	\end{pmatrix}$ & $\begin{pmatrix}   139.4\\
138.3 \end{pmatrix}$ & $\begin{pmatrix}  184.3\\
146.5 \end{pmatrix}$ &  $\begin{pmatrix}  201.0\\
124.6\end{pmatrix}$ &  $\begin{pmatrix} 114.0& 18.5\\ 18.5& 16.5  \end{pmatrix}$ & $\begin{pmatrix}  134.6&
4.9\\ 4.9& 31.5  \end{pmatrix}$ & $\begin{pmatrix}  221.2& 28.0\\28.0&
21.4\end{pmatrix}$
\\ \hline\hline
& $ {\bs{\alpha}}$ & $\widehat{\bs{\mu}}_{\text{Hep}}$ & $\widehat{\bs{\mu}}_{\text{Con}}$ & $\widehat{\bs{\mu}}_{\text{Hei}}$ & $\widehat{\bs{\Sigma}}_{\text{Hep}}$ & $\widehat{\bs{\Sigma}}_{\text{Con}}$ & $\widehat{\bs{\Sigma}}_{\text{Hei}}$ \\ \hline
By species & $\begin{pmatrix}  0.297\\ 0.284\\	0.419 \end{pmatrix}$ & $\begin{pmatrix} 138.2\\ 138.3  \end{pmatrix}$ & $\begin{pmatrix}  183.1\\ 146.2 \end{pmatrix}$ &  $\begin{pmatrix}  201.0\\ 124.6 \end{pmatrix}$ &  $\begin{pmatrix}  83.4&
18.3\\ 18.3& 16.4
\end{pmatrix}$ & $\begin{pmatrix}  140.5& 14.4\\ 14.4& 30.2
\end{pmatrix}$ & $\begin{pmatrix} 215.0& 29.4\\29.4& 20.7 \end{pmatrix}$ \\ \hline \hline
\end{tabular}
}
Notes: $\widehat{\bs{\mu}}_{j}$  and $\widehat{\bs{\Sigma}}_{j}$ for $j\in \{\text{Hep}, \text{Con}, \text{Hei}\}$ reports the mean and the variance estimated from a subsample of observations that belong  to   ``Heptapotamica,'' ``Concinna," and ``Heikertingeri," respectively.
\end{table}

\begin{table}[h]\caption{$P$-values of the EM test for the rat data} \label{table9}
\centering
\small{
\begin{tabular}{c|ccc|cc}
\hline \hline
&  \multicolumn{3}{c|}{$p$-values} & & \\ 
  & $K=1$ & $K=2$ & $K=3$ & AIC & BIC\\ \hline
$H_0: M=1$ &0.000&0.000&0.000&-4935.9&-4910.5\\
$H_0: M=2$ &0.000&0.000&0.000&-6413.9&-6358.2\\
$H_0: M=3$ &0.000&0.000&0.000&-6812.3&-6726.1\\
$H_0: M=4$ &0.000&0.000&0.000&-6932.9&-6816.3\\
$H_0: M=5$ &0.000&0.000&0.000&-6963.7&\tbf{-6816.6}\\
$H_0: M=6$ &\tbf{0.101}&\tbf{0.101}&\tbf{0.101}&\tbf{-6978.8}&-6801.3\\ \hline  
\hline
\end{tabular}
}\\ \smallskip
Notes: Based on 199 bootstrapped samples.  \\ 
\end{table}

\begin{table}[h]\caption{Parameter estimates from the six-component model for the rat data} \label{table10}
\centering
\footnotesize{
\begin{tabular}{cccccc}
\hline \hline
$\widehat{\alpha}_1$ & $\widehat{\alpha}_2$ & $\widehat{\alpha}_3$\\
0.530 & 0.228 & 0.132  \\  \hline
$\widehat{\bs{\mu}}_1$ & $\widehat{\bs{\mu}}_2$ & $\widehat{\bs{\mu}}_3$\\
$\begin{pmatrix}
-0.014 & -0.029
 \end{pmatrix}$ & $\begin{pmatrix}
-0.004 & 0.019
\end{pmatrix}$ & $\begin{pmatrix}
 0.039 & 0.095
\end{pmatrix}$   \\ \hline 
$\widehat{\bs{\Sigma}}_1$ & $\widehat{\bs{\Sigma}}_2$ & $\widehat{\bs{\Sigma}}_3$ \\
$\begin{pmatrix}
0.0010 & 0.0000 \\ 0.0000 & 0.0005 
\end{pmatrix}$ & $\begin{pmatrix}
0.0011 & 0.0005 \\  0.0005 & 0.0009 
\end{pmatrix}$ & $\begin{pmatrix}
0.0026 & 0.0005 \\  0.0005 & 0.0029  
\end{pmatrix}$ \\ \hline
$\widehat{\alpha}_4$ & $\widehat{\alpha}_5$ & $\widehat{\alpha}_6$\\
0.068 & 0.028 & 0.013 \\  \hline
$\widehat{\bs{\mu}}_4$ & $\widehat{\bs{\mu}}_5$& $\widehat{\bs{\mu}}_6$ \\  
$\begin{pmatrix}
 0.138 & 0.261
\end{pmatrix}$ &  $\begin{pmatrix}
 0.404 & 0.636
\end{pmatrix}$  &  $\begin{pmatrix}
 1.215 & 1.420
\end{pmatrix}$   \\ \hline 
$\widehat{\bs{\Sigma}}_4$ & $\widehat{\bs{\Sigma}}_5$& $\widehat{\bs{\Sigma}}_6$\\  
$\begin{pmatrix}
0.0100 & 0.0010 \\  0.0010 & 0.0142  
\end{pmatrix}$ & $\begin{pmatrix}
0.0274 & -0.0100 \\ -0.0100 & 0.0588
\end{pmatrix}$ & $\begin{pmatrix}
0.2289 & 0.1671 \\ 0.1671 & 0.2423  
\end{pmatrix}$ \\\hline\hline \\
\end{tabular}
} 
\end{table}
\bigskip

\begin{figure}[h] 
        \centering
        \caption{ Scatter plot of gene expression levels for the rats with and without middle-ear infection}\label{figure2}\vspace{-0.4cm}
        \includegraphics[width=0.8\textwidth]{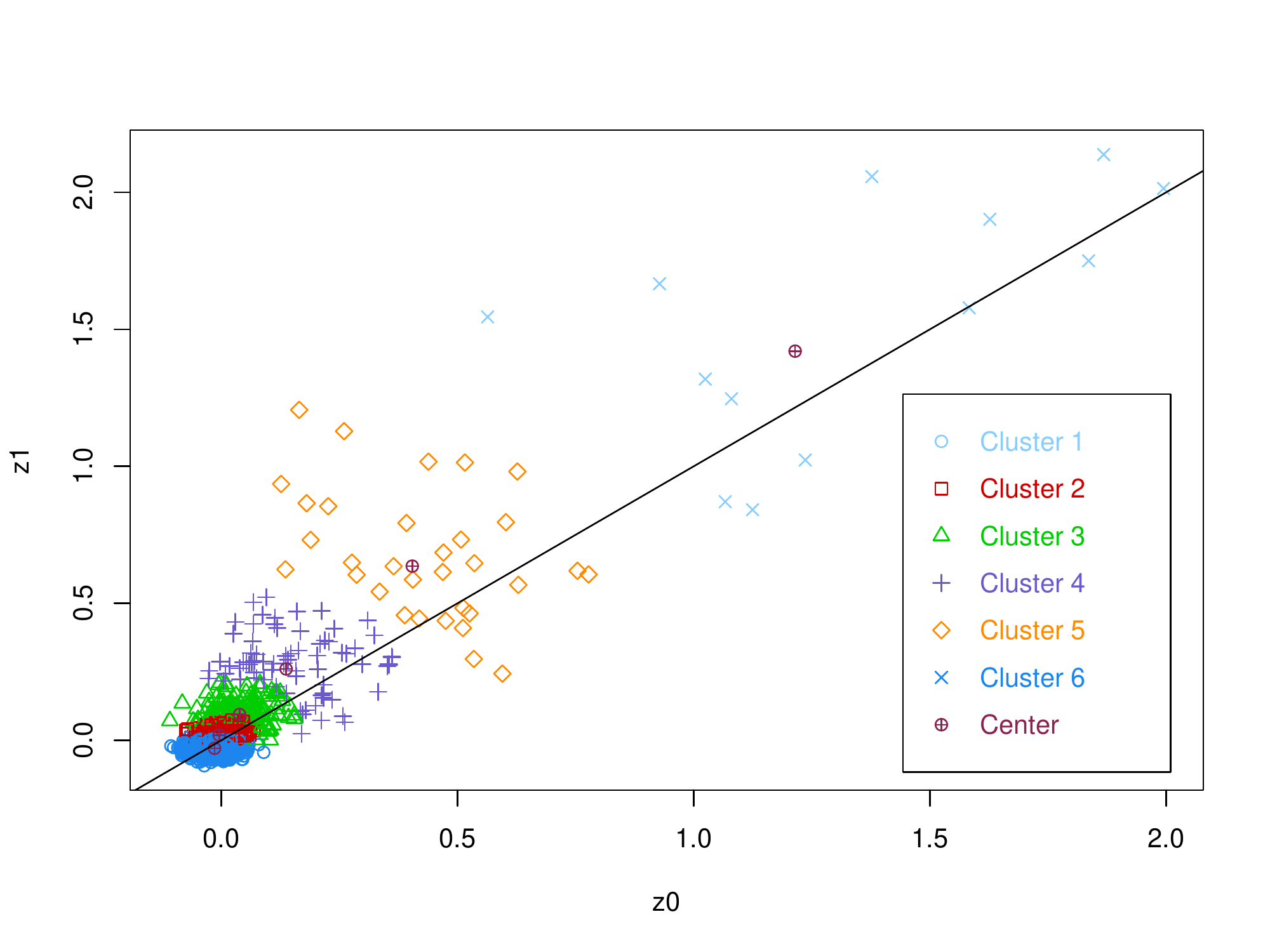}  
\end{figure}

\end{document}